\setlist[1]{itemsep=5pt}
\newcommand{\comment}[1]{}
      \def\@setcopyright{}
      \def\serieslogo@{}
\newcommand{\Complex}{\mathbb C}
\newcommand{\Real}{\mathbb R}
\newcommand{\N}{\mathbb N}
\newcommand{\ddbar}{\overline\partial}
\newcommand{\pr}{\partial}
\newcommand{\ol}{\overline}
\newcommand{\Td}{\widetilde}
\newcommand{\norm}[1]{\left\Vert#1\right\Vert}
\newcommand{\abs}[1]{\left\vert#1\right\vert}
\newcommand{\rabs}[1]{\left.#1\right\vert}
\newcommand{\set}[1]{\left\{#1\right\}}
\newcommand{\To}{\rightarrow}
 \def\cC{\mathscr{C}}
\theoremstyle{plain}
\newtheorem{thm}{Theorem}[section]
\newtheorem{cor}[thm]{Corollary}
\newtheorem{lem}[thm]{Lemma}
\theoremstyle{definition}
\newtheorem{defn}[thm]{Definition}
\theoremstyle{remark}
\newtheorem{rem}[thm]{Remark}
\newtheorem{ex}[thm]{Example}
\newtheorem{que}[thm]{Question}
\numberwithin{equation}{section}
\renewcommand{\labelenumi}{(\alph{enumi})}
\begin{document}
\title[]{On the singularities of the Szeg\H{o} projections on lower energy forms
%on CR manifolds
}
\author[]{Chin-Yu Hsiao}
\address{Institute of Mathematics, Academia Sinica, 6F, Astronomy-Mathematics Building,
No.1, Sec.4, Roosevelt Road, Taipei 10617, Taiwan}
\thanks{The first author was partially supported by Taiwan Ministry of Science of Technology project 
103-2115-M-001-001, the DFG project MA 2469/2-2 and
funded through the Institutional Strategy of
the University of Cologne within the German Excellence Initiative}
\email{chsiao@math.sinica.edu.tw or chinyu.hsiao@gmail.com}
\author[]{George Marinescu}
\address{Universit{\"a}t zu K{\"o}ln,  Mathematisches Institut,
    Weyertal 86-90,   50931 K{\"o}ln, Germany\\
    \& Institute of Mathematics `Simion Stoilow', Romanian Academy,
Bucharest, Romania}
\thanks{The second author partially supported by the DFG projects SFB/TR 12, MA 2469/2-1 and Universit\'e Paris 7}
\email{gmarines@math.uni-koeln.de}
\subjclass[2010]{32V20 (primary), and 32V30, 32W10, 35H20 (secondary)} 
\dedicatory{Dedicated to our teachers Professors Louis Boutet de Monvel and Johannes Sj\"ostrand}

\setlength{\headheight}{14pt}
\pagestyle{fancy}
\lhead{\itshape{Chin-Yu Hsiao \& George Marinescu}}
%\chead{}
\rhead{\itshape{On the singularities of the Szeg\H{o} projections}}
\cfoot{\thepage}

\begin{abstract}
Let $X$ be an abstract not necessarily compact orientable CR manifold of dimension $2n-1$, $n\geqslant2$. Let
$\Box^{(q)}_{b}$ be the Gaffney extension of Kohn Laplacian for $(0,q)$--forms. We show that the spectral 
function of $\Box^{(q)}_b$ admits a full asymptotic expansion on the non-degenerate part of the Levi form. 
As a corollary, we deduce that if $X$ is compact and the Levi form is non-degenerate of constant signature on $X$, 
then the spectrum of $\Box^{(q)}_b$ in $]0,\infty[$ consists of point eigenvalues of finite multiplicity. 
Moreover, we show that a certain microlocal conjugation of the associated Szeg\H{o} kernel admits an asymptotic expansion 
under a local closed range condition. As applications, we establish the Szeg\H{o} kernel asymptotic expansions 
on some weakly pseudoconvex CR manifolds and on CR manifolds with transversal CR $S^1$ actions. 
By using these asymptotics, we establish some local embedding theorems on CR manifolds and we give 
an analytic proof  of a theorem of Lempert asserting that a compact strictly pseudoconvex 
CR manifold of dimension three with a transversal CR $S^1$ action can be CR embedded 
into $\Complex^N$, for some $N\in\mathbb N$.
\end{abstract}

\maketitle \tableofcontents

\section{Introduction and statement of the main results} \label{s-intro}

Let $(X, T^{1,0}X)$ be a CR manifold of hypersurface type and dimension $2n-1$, $n\geq2$.
Let $\Box^{(q)}_b$ be the Gaffney extension of the Kohn Lalpacian acting on $(0,q)$ forms.
The orthogonal projection $\Pi^{(q)}:L^2_{(0,q)}(X)\To {\rm Ker\,}\Box^{(q)}_b$ onto ${\rm Ker\,}\Box^{(q)}_b$
is called the Szeg\H{o} projection, while its distribution kernel $\Pi^{(q)}(x,y)$ is called the Szeg\H{o} kernel.
The study of the Szeg\H{o} projection and kernel is a classical subject in several complex variables and CR geometry.

When the Levi form satisfies condition $Y(q)$ on $X$ (see Definition~\ref{d-suIIa}), then Kohn's subelliptic
estimates with loss of one dervative for the solutions of $\Box^{(q)}_b u=f$ hold, cf.~\cite{CS01,FK:72,Koh64},
and hence $\Pi^{(q)}$ is a smoothing operator.
When condition $Y(q)$ fails, one is interested in the singularities of the Szeg\H{o} kernel $\Pi^{(q)}(x,y)$.

A very important case is when $X$ is a compact strictly pseudoconvex
CR manifold (in this case $Y(0)$ fails). Assume first that $X$ is the boundary of a strictly pseudoconvex domain.
Boutet de Monvel-Sj\"ostrand~\cite{BouSj76} showed that $\Pi^{(0)}(x,y)$
is a Fourier integral operator with complex phase. In particular,
$\Pi^{(0)}(x,y)$ is smooth outside the diagonal of $X\times X$
and there is a precise description of the singularity on the diagonal $x=y$,
where $\Pi^{(0)}(x,x)$ has a certain asymptotic expansion. 
%The diagonal expansion had been previously obtained by Fefferman \cite{Fef74}.

The Boutet de Monvel-Sj\"ostrand description of the Szeg\H{o} kernel had a profound impact
in many research areas, especially through \cite{BG81}: several complex variables, 
symplectic and contact geometry, geometric quantization, K\"ahler geometry, semiclassical analysis,
quantum chaos, cf.\ \cite{BMS,BU,BPU,BdM:86,BdM:88,Ca99,Charles03,Do01,Engl:02,Gu89,Hir06,Ka77,
LuTian:04,Ma10,ShZ99,Zelditch98},
to quote just a few. These ideas also partly motivated the introduction of alternative approaches, 
see \cite{Ma10,MM06,MM08a,MM07}.

%The Boutet de Monvel-Sj\"ostrand description of the Szeg\H{o} kernel had a profound impact
%in many research areas, especially through \cite{BG81}, as direct application of their
%microlocal techniques or as inspiration for other methods (cf.\ \cite{MM07}).
%To quote just a few applications, several complex variables \cite{Hir06,NeSha06}, 
%symplectic and contact geometry \cite{BU,BG81,ShZ02}, 
%geometric quantization \cite{BMS,Charles03,Gu89}, K\"ahler geometry \cite{Ca99,Do01,Lu00,Zelditch98}, 
%semiclassical analysis \cite{BPU}, distribution of zeros of random sections \cite{DMS,ShZ99}. %%?

From the works of Boutet de Monvel \cite{BdM1:74}, Boutet de Monvel-Sj\"ostrand \cite{BouSj76},
Harvey-Lawson \cite{HaLa75}, Burns \cite{Bur:79} and Kohn \cite{Koh85,Koh86}
follows that the conditions below are equivalent for a compact strictly pseudoconvex
CR manifold $X$, $\dim_{\mathbb R}X\geqslant3$:
\begin{enumerate}
\renewcommand{\labelenumi}{\rm(\alph{enumi})}
\item $X$ is embeddable in the Euclidean space $\mathbb{C}^N$, for $N$ sufficiently large;
\item $X$ bounds a strictly pseudoconvex complex manifold;
\item The Kohn Laplacian $\Box^{(0)}_b$ on functions of $X$
has closed range in $L^2$.
\end{enumerate}
Therefore the description of the Szeg\H{o} kernel given by \cite{BouSj76} holds for CR manifolds satisfying
the equivalent conditions (a)--(c).
Moreover, if $X$ an abstract compact strictly pseudoconvex of dimension $\geq5$, then $X$ satisfies condition (a),
by a theorem of Boutet de Monvel \cite{BdM1:74}. 
Among embeddable strictly pseudoconvex CR manifolds of dimension three there are those
carrying interesting geometric structures, such as transverse $S^1$ actions, 
 cf.\ \cite{BlDu91,Eps92,Lem92},
conformal structures, cf.\ \cite{Biq02}, or Sasakian structures, cf.\ \cite{MY07}.

The first author ~\cite{Hsiao08} showed that if the Levi form is non-degenerate and
$\Box^{(q)}_b$ has
$L^2$ closed range for some $q\in\set{0,1,\ldots,n-1}$, then $\Pi^{(q)}(x,y)$ is a complex Fourier integral operator.
Therefore the study of the singularities of the Szeg\H{o} kernel is closely related to the 
closed range property of the Kohn Laplacian.

Kohn \cite{Koh86} proved that if $Y(q)$ fails but $Y(q-1)$ and $Y(q+1)$ hold on a compact CR manifold $X$,
then $\Box^{(q)}_b$ has $L^2$ closed
range. In this case the result of \cite{Hsiao08} applies and we can describe the Szeg\H{o} kernel 
$\Pi^{(q)}(x,y)$.
On the negative side, Burns \cite{Bur:79} showed that the closed range property fails in the case of the
non-embeddable exemples of Grauert, Andreotti-Siu and Rossi \cite{Gra:94,AS:70,Ros:65}. 

Beside Kohn's criterion, it is very difficult to determine when $\Box^{(q)}_b$ has $L^2$ closed range. Let's see a simple
example. Let $[z]=[z_1,\ldots,z_N]$ be the homogeneous coordinates of $\Complex\mathbb P^{N-1}$. Consider
$1\leq m\leq N-1$. Put
%\begin{equation}\label{e-gue140303}\begin{split}
%X:=&\{[z_1,\ldots,z_N]\in\Complex\mathbb P^{N-1};\,\lambda_1\abs{z_1}^2+\ldots+\lambda_m\abs{z_m}^2+
%\lambda_{m+1}\abs{z_{m+1}}^2+\ldots+\lambda_N\abs{z_N}^2=0\},\end{split}\end{equation}
\begin{equation}\label{e-gue140303}
X:=\Big\{[z_1,\ldots,z_N]\in\Complex\mathbb P^{N-1};\,\sum_{j=1}^N\lambda_j\abs{z_j}=0\Big\},
\end{equation}
where $\lambda_j<0$ for $1\leq j\leq m$ and  $\lambda_j>0$ for $m+1\leq j\leq N$.
%$\lambda_1<0,\lambda_2<0,\ldots,\lambda_m<0$, $\lambda_{m+1}>0,\lambda_{m+2}>0,\ldots,\lambda_N>0$, $m\geq1$,
%$N-m\geq1$.
Then, $X$ is a compact CR manifold of dimension $2(N-1)-1$ with CR structure
$T^{1,0}X:=T^{1,0}\Complex\mathbb
P^{N-1}\cap\Complex TX$. It is straightforward to see that the Levi form has
exactly $m-1$ negative eigenvalues
and $N-m-1$ positive eigenvalues at every point of $X$.
Thus, when $q=m-1$, $N-m-1=q+1$, $Y(q)$ and $Y(q+1)$ fail and Kohn's
criterion does not work in this case. Even in this simple example,
it doesn't follow from Kohn's criterion that $\Box^{(q)}_b$ has $L^2$ closed range.
We are lead to ask the following questions:
%------------------------
\begin{que}\label{q-gue140303I}
Let $X$ be compact CR manifold whose Levi form is non-degenerate of signature $(n_-,n_+)$. 
Assume $n_+\in\{n_--1,n_-+1\}$. When does $\Box^{(q)}_b$ have $L^2$ closed range for $q=n_-$? 
Note that in this case, $Y(q)$ and $Y(q+1)$ fail.
%Has the Kohn Laplacian on the manifold $X$ in \eqref{e-gue140303} closed range?
\end{que}
This question was asked by Boutet de Monvel. We will introduce another condition, called $W(q)$ (see Definition \ref{d-gue140223})
which applied for CR manifolds with $S^1$ action shows that $\Box^{(q)}_b$ has $L^2$ closed range
in the situation of Question \ref{q-gue140303I}, see Theorem \ref{t-gue140223III}. In particular,
the Kohn Laplacian on the manifold $X$ in the examples \eqref{e-gue140303} has closed range.
%-------------------
\begin{que}\label{q-gue140303II}
Let $X$ be a not-necessarily compact CR manifold and the Levi form can be degenerate 
(for example $X$ is weakly pseudoconvex). Assume that $\Box^{(q)}_b$ has $L^2$ closed range.
Does the Szeg\H{o} kernel $\Pi^{(q)}(x,y)$ admit an asymptotic expansion on the set where the Levi form is non-degenerate? 
\end{que}

\begin{que}\label{q-gue140303III}
Find a natural local analytic condition (weaker than $L^2$ closed range condition) which implies
 that the Szeg\H{o} kernel admits a local
asymptotic expansion. 
\end{que}

Without any regularity
assumption, ${\rm Ker\,}\Box^{(q)}_b$ could be trivial and therefore we consider the spectral projections
$\Pi^{(q)}_{\leq\lambda}:=E([0,\lambda])$, for $\lambda>0$, where $E$ denotes the
spectral measure of $\Box^{(q)}_{b}$. 
\begin{que}\label{q-gue140303IV}
Is $\Pi^{(q)}_{\leq\lambda}$ a Fourier integral operator, for every $\lambda>0$?  
\end{que}\
The purpose of this work is to answer %Question~\ref{q-gue140303} and
Questions~\ref{q-gue140303I}-\ref{q-gue140303IV}. Our first main results
%(see Theorem~\ref{t-gue140305_a} and  Corollary~\ref{c-gue140211I}) 
tell us that on the
non-degenerate part of the Levi form, $\Pi^{(q)}_{\leq\lambda}$ is a Fourier integral operator with complex
phase, for every $\lambda>0$, and $\Pi^{(q)}_{(\lambda_1,\lambda_2]}:=E((\lambda_1,\lambda_2])
$ is a smoothing operator, for every $0<\lambda_1<\lambda_2$. 
%----------------
\begin{thm}\label{t-gue140305_a}
Let $X$ be a CR manifold whose Levi form is non-degenerate of constant signature $(n_-,n_+)$
at each point of an open set $D\Subset X$.  Then for every $\lambda>0$ the restriction 
of the spectral projector $\Pi^{(q)}_{\leq\lambda}$ to $D$
is a smoothing operator for $q\notin\set{n_-,n_+}$ and
%differs by a smoothing operator from
is a Fourier integral operator with complex phase for $q\in\set{n_-,n_+}$.
Moreover, in the latter case, the singularity
of $\Pi^{(q)}_{\leq\lambda}$ does not depend on $\lambda$,
in the sense that the difference $\Pi^{(q)}_{\leq\lambda_1}-\Pi^{(q)}_{\leq\lambda_2}$
is smoothing on $D$ for any $\lambda_1,\lambda_2>0$.
\end{thm}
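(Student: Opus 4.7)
The plan is to combine a microlocal Szeg\H{o} parametrix in the spirit of Boutet de Monvel--Sj\"ostrand on $D$ with the trivial spectral bound
\begin{equation*}
\norm{(\Box^{(q)}_b)^N\Pi^{(q)}_{\leq\lambda}}\leq\lambda^N\quad\text{for every }N\geq 0,
\end{equation*}
which allows me to treat $\Pi^{(q)}_{\leq\lambda}$ as an element of $\ker\Box^{(q)}_b$ up to smoothing on $D$.

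I first recall the microlocal structure of $\Box^{(q)}_b$ on $D$. Its characteristic variety is $\Sigma=\Sigma^+\cup\Sigma^-$, two half-lines at each point generated by $\pm\omega_0$, where $\omega_0$ is the local Reeb one-form. Off $\Sigma$ the operator is elliptic; on $\Sigma^+$ (resp.\ $\Sigma^-$) the principal part has non-trivial kernel on $(0,q)$-forms only when $q=n_+$ (resp.\ $q=n_-$), by the signature assumption. Consequently, for $q\notin\set{n_-,n_+}$ condition $Y(q)$ holds microlocally on all of $\Sigma|_D$, so Kohn's subelliptic estimates together with the spectral bound above force the Schwartz kernel of $\Pi^{(q)}_{\leq\lambda}$ to be smooth on $D\times D$. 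For $q\in\set{n_-,n_+}$ the same reasoning yields smoothness off the diagonal of $D\times D$.

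For $q\in\set{n_-,n_+}$ I would then construct, in a conic microlocal neighbourhood of the relevant component of $\Sigma|_D$, a complex-phase Fourier integral operator
\begin{equation*}
Su(x)=\int e^{i\varphi(x,y,t)}s(x,y,t)u(y)\,dt\,dy,
\end{equation*}
with $\varphi$ a complex phase satisfying $\Im\varphi\geq 0$ and solving the complex eikonal equation associated to the principal symbol of $\Box^{(q)}_b$ at the degenerate direction, and $s$ a classical symbol built by iteratively solving transport equations, so that
\begin{equation*}
\Box^{(q)}_b S\equiv 0,\qquad S\equiv S^*\equiv S^2\pmod{C^\infty}\text{ on }D.
\end{equation*}
The existence of such $\varphi$ and $s$ follows from the Melin--Sj\"ostrand calculus of Fourier integral operators with complex phase; the non-degeneracy of the Levi form on $D$ with signature $(n_-,n_+)$ is exactly what is needed to solve the eikonal equation in the complex domain with the required sign condition on $\Im\varphi$, and no global or closed-range hypothesis enters.

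Finally I would compare $\Pi^{(q)}_{\leq\lambda}$ with $S$ via
\begin{equation*}
\Pi^{(q)}_{\leq\lambda}-S=(I-S)\Pi^{(q)}_{\leq\lambda}-S(I-\Pi^{(q)}_{\leq\lambda}).
\end{equation*}
For the first summand, $I-S$ captures microlocally the elliptic part of $\Box^{(q)}_b$, so $\Box^{(q)}_b(I-S)\equiv\Box^{(q)}_b\pmod{C^\infty}$ is microlocally elliptic; combined with the spectral bound, this yields Sobolev estimates of every order for $(I-S)\Pi^{(q)}_{\leq\lambda}$. For the second summand, on the range of $I-\Pi^{(q)}_{\leq\lambda}$ the operator $\Box^{(q)}_b$ is bounded below by $\lambda$, hence $I-\Pi^{(q)}_{\leq\lambda}=\Box^{(q)}_b\,A$ with $A$ bounded; combined with $S\Box^{(q)}_b\equiv 0\pmod{C^\infty}$ (the adjoint of $\Box^{(q)}_b S\equiv 0$) this forces $S(I-\Pi^{(q)}_{\leq\lambda})$ to be smoothing. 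The $\lambda$-independence is then immediate: $E((\lambda_1,\lambda_2])$ has range on which $\Box^{(q)}_b$ is bounded both above and below by positive constants, so the same factorisation argument applied to $E((\lambda_1,\lambda_2])$ in place of $\Pi^{(q)}_{\leq\lambda}$ shows it is smoothing on $D$. The main obstacle is the parametrix construction: solving the complex eikonal and transport equations with appropriately controlled imaginary part in the non-strictly-pseudoconvex setting of arbitrary signature $(n_-,n_+)$ requires a careful adaptation of the Melin--Sj\"ostrand machinery, since the standard Boutet de Monvel--Sj\"ostrand setup relies on strict pseudoconvexity.
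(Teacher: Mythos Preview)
Your overall strategy coincides with the paper's: quote the microlocal parametrix for $\Box^{(q)}_b$ on $D$ (this is exactly the content of the paper's Section~3, taken from \cite{Hsiao08}, so your worry about adapting Melin--Sj\"ostrand to arbitrary signature is already resolved there), and then compare the approximate Szeg\H{o} projector $S=S_-+S_+$ with $\Pi^{(q)}_{\leq\lambda}$ using the spectral bound $\norm{(\Box^{(q)}_b)^N\Pi^{(q)}_{\leq\lambda}}\leq\lambda^N$.

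The gap is in your comparison argument. Your assertion that ``$\Box^{(q)}_b(I-S)\equiv\Box^{(q)}_b$ is microlocally elliptic'' is not correct: $\Box^{(q)}_b$ is \emph{not} elliptic on $\Sigma$, and $I-S$ does not cut away $\Sigma$ in any way that lets you invoke ellipticity directly. What actually makes $(I-S)\Pi^{(q)}_{\leq\lambda}$ smoothing is the non-obvious fact that $\Box^{(q)}_b\Pi^{(q)}_{\leq\lambda}\equiv 0$ on $D$; the paper proves this as a separate lemma (Theorem~\ref{t-gue140206}) by an explicit bootstrap: from $A^*\Box^{(q)}_b+S^*=I$ and $(S^*_-+S^*_+)\Box^{(q)}_b\equiv0$ one gets $\Box^{(q)}_b\Pi^{(q)}_{\leq\lambda}:H^0_{\rm comp}\to H^1_{\rm loc}$, then iterates to gain arbitrary Sobolev regularity in both directions. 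Your spectral bound alone is only the seed of this bootstrap, not a replacement for it.

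Your treatment of the second summand $S(I-\Pi^{(q)}_{\leq\lambda})=S\Box^{(q)}_bN^{(q)}_\lambda$ also needs care: $N^{(q)}_\lambda$ is a global $L^2$ operator, while $S$ and $\Box^{(q)}_b$ are properly supported on $D$, so the composition only gives smoothing in one Sobolev direction. The paper sidesteps this asymmetry by a Calder\'on-type positivity trick: writing $S^*-\Pi^{(q)}_{\leq\lambda}+F_1=F^*N^{(q)}_\lambda$ with $F,F_1$ smoothing and properly supported, one computes
\[
\bigl(S^*+F_1-\Pi^{(q)}_{\leq\lambda}\bigr)\bigl(S+F_1^*-\Pi^{(q)}_{\leq\lambda}\bigr)=F^*(N^{(q)}_\lambda)^2F\equiv 0\ \ \text{on }D,
\]
then expands the left-hand side (using $S^*\Pi^{(q)}_{\leq\lambda}\equiv\Pi^{(q)}_{\leq\lambda}\equiv\Pi^{(q)}_{\leq\lambda}S$, which follows from $\Box^{(q)}_b\Pi^{(q)}_{\leq\lambda}\equiv0$) to obtain $S^*S\equiv\Pi^{(q)}_{\leq\lambda}$, hence $S\equiv\Pi^{(q)}_{\leq\lambda}$. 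This sandwiches $N^{(q)}_\lambda$ between two smoothing operators and avoids the local/global mismatch in your direct decomposition.
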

%---------------
The Fourier integral operators $A$ considered here (and in this paper) have kernels of the form
%----------------
\begin{equation}\label{e:fio}
A(x,y)=\int^{\infty}_{0}\!\! e^{i\varphi_-(x, y)t}s_-(x, y, t)dt+\int^{\infty}_{0}\!\! e^{i\varphi_+(x, y)t}s_+(x, y, t)dt+R(x,y)\,,
\end{equation}
%---------------
where the integrals are oscillatory integrals, $\varphi_-$, $\varphi_+$ are complex phase functions, $s_-$, $s_+$ classical symbols of type $(1,0)$ and order $n-1$, $s_-=0$ if $q\neq n_-$,  $s_+=0$ if $q\neq n_+$ and $R$ a smooth function,
see Section \ref{s:prelim} for a precise definition.

A detailed version of Theorem \ref{t-gue140305_a} will be given in Theorems \ref{t-gue140305_b}, 
\ref{t-gue140207} and \ref{t-gue140211}.
As a corollary of Theorem~\ref{t-gue140305_a}, we deduce:
%---------------
\begin{cor}\label{c-gue140211I}
Let $X$ be a CR manifold of dimension $2n-1$, whose Levi form is non-degenerate of constant signature %$(n_-,n_+)$
at each point of an open set $D\Subset X$. Let $0\leq q\leq n-1$ and $0<\lambda_1<\lambda_2$. 
Then, the projector $\Pi^{(q)}_{(\lambda_1,\lambda_2]}$ is
a smoothing operator on $D$.
%%---------------
%\begin{equation}\label{e-gue140211bI}
%\mbox{$\Pi^{(q)}_{(\lambda_1,\lambda_2]}\equiv0$ on $D$}.
%\end{equation}
%%---------------
In particular, if $X$ is compact and the Levi form is non-degenerate of constant signature %$(n_-,n_+)$
on $X$, then the projector $\Pi^{(q)}_{(\lambda_1,\lambda_2]}$ is
a smoothing operator on $X$.
\end{cor}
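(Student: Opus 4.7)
The plan is to derive the corollary directly from Theorem~\ref{t-gue140305_a} by writing $\Pi^{(q)}_{(\lambda_1,\lambda_2]}$ as a difference of spectral projectors. Since $0<\lambda_1<\lambda_2$, the standard calculus of the spectral measure $E$ of the self-adjoint operator $\Box^{(q)}_b$ gives
\begin{equation*}
\Pi^{(q)}_{(\lambda_1,\lambda_2]}=E((\lambda_1,\lambda_2])=E([0,\lambda_2])-E([0,\lambda_1])=\Pi^{(q)}_{\leq\lambda_2}-\Pi^{(q)}_{\leq\lambda_1}.
\end{equation*}
Thus it suffices to show that this difference has smooth distribution kernel on $D\times D$.

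I would split into two cases according to the degree. If $q\notin\{n_-,n_+\}$, then Theorem~\ref{t-gue140305_a} asserts that each of $\Pi^{(q)}_{\leq\lambda_1}$ and $\Pi^{(q)}_{\leq\lambda_2}$ is already smoothing on $D$, and hence so is their difference. If instead $q\in\{n_-,n_+\}$, then each $\Pi^{(q)}_{\leq\lambda_j}$ is a Fourier integral operator with complex phase of the form~\eqref{e:fio}, so one cannot directly discard the leading singularities; however, the second assertion of Theorem~\ref{t-gue140305_a} says precisely that the singularity of $\Pi^{(q)}_{\leq\lambda}$ is independent of $\lambda>0$, i.e.\ $\Pi^{(q)}_{\leq\lambda_2}-\Pi^{(q)}_{\leq\lambda_1}$ is smoothing on $D$. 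Either way, the kernel of $\Pi^{(q)}_{(\lambda_1,\lambda_2]}$ is $C^\infty$ on $D\times D$.

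For the global statement, I would invoke the locality of smoothness together with a compactness argument. If $X$ is compact and the Levi form is non-degenerate of constant signature $(n_-,n_+)$ on all of $X$, then every point $x\in X$ has a relatively compact open neighborhood $D_x\Subset X$ satisfying the hypothesis of Theorem~\ref{t-gue140305_a} with the same $(n_-,n_+)$. By the first part of the corollary, the kernel of $\Pi^{(q)}_{(\lambda_1,\lambda_2]}$ is smooth on each $D_x\times D_x$. Extracting a finite subcover $\{D_{x_j}\}_{j=1}^N$ of $X$, we conclude that $\Pi^{(q)}_{(\lambda_1,\lambda_2]}(x,y)$ is smooth on $X\times X$, since smoothness can be checked locally near each pair $(x,y)\in X\times X$ by choosing $j,k$ with $x\in D_{x_j}$, $y\in D_{x_k}$ and applying the local statement on $D_{x_j}\cup D_{x_k}$ (itself a relatively compact open set of constant Levi signature).

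There is no genuine obstacle here: all the analytic difficulty is packaged into Theorem~\ref{t-gue140305_a}, and the corollary is a formal consequence of the $\lambda$-independence of the singular part of the spectral projector.
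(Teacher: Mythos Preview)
Your argument is correct and is exactly the derivation the paper has in mind: the corollary is stated without proof precisely because it follows immediately from Theorem~\ref{t-gue140305_a} via the identity $\Pi^{(q)}_{(\lambda_1,\lambda_2]}=\Pi^{(q)}_{\leq\lambda_2}-\Pi^{(q)}_{\leq\lambda_1}$ and the $\lambda$-independence of the singular part. One small simplification for the global statement: when $X$ is compact with constant Levi signature, you may simply take $D=X$ (since $X\Subset X$) and apply the first part directly, avoiding the covering argument.
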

%---------------
As a consequence, we deduce that if $X$ is compact and the Levi form is non-degenerate of constant signature on 
$X$, then the spectrum of $\Box^{(q)}_b$ in
$]0,\infty[$ consists of point eigenvalues of finite multiplicity. Burns-Epstein~\cite[Theorem\,1.3]{BE90} proved
that if $X$ is compact, strictly pseudoconvex of dimension three, then the spectrum of $\Box^{(0)}_b$ in $]0,\infty[$ consists of
point eigenvalues of finite multiplicity. We generalize their result to any $q\in\set{0,1,\ldots,n-1}$ and any dimension.
%---------------
\begin{thm}\label{t-gue140211Im}
We assume that $X$ is compact and the Levi form is non-degenerate of constant signature $(n_-,n_+)$ on $X$. 
Fix $q\in\set{0,1,\ldots,n-1}$. Then, for any $\mu>0$, ${\rm Spec\,}\Box^{(q)}_b\cap\,[\mu,\infty[$ 
is a discrete subset of $\Real$, any $\nu\in{\rm Spec\,}\Box^{(q)}_b$ with $\nu>0$
is an eigenvalue of $\Box^{(q)}_b$ and the eigenspace
$H^q_{b,\nu}(X):=\big\{u\in{\rm Dom\,}\Box^{(q)}_b;\, \Box^{(q)}_bu=\nu u\big\}$
is finite dimensional with $H^q_{b,\nu}(X)\subset\Omega^{0,q}(X)$.
\end{thm}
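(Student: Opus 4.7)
The plan is to deduce the theorem from Corollary~\ref{c-gue140211I}, which does essentially all the work. Under the hypotheses of Theorem~\ref{t-gue140211Im}, that corollary asserts that $\Pi^{(q)}_{(\lambda_1,\lambda_2]}$ is a smoothing operator on $X$ for every $0<\lambda_1<\lambda_2$. The remainder is a standard spectral-theoretic argument that exploits the fact that smoothing operators on a compact manifold are compact as operators on $L^2$.

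First I would show that ${\rm Range\,}\Pi^{(q)}_{(\lambda_1,\lambda_2]}$ is finite-dimensional. On the compact manifold $X$ a smoothing operator has a smooth Schwartz kernel, which as an element of $L^2(X\times X)$ realizes the operator as Hilbert--Schmidt, hence compact on $L^2_{(0,q)}(X)$. An orthogonal projection that is compact must be of finite rank. By the spectral theorem for the self-adjoint Gaffney extension $\Box^{(q)}_b$, the finite-dimensionality of $E((\lambda_1,\lambda_2])L^2_{(0,q)}(X)$ forces ${\rm Spec\,}\Box^{(q)}_b\cap(\lambda_1,\lambda_2]$ to consist of finitely many points, each an eigenvalue of $\Box^{(q)}_b$ with finite multiplicity.

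Given $\mu>0$, applying this with $\lambda_1=\mu/2$ and arbitrary $\lambda_2>\mu$ yields that ${\rm Spec\,}\Box^{(q)}_b\cap[\mu,\infty[$ is a discrete subset of $\Real$ and that every $\nu>0$ in ${\rm Spec\,}\Box^{(q)}_b$ is an eigenvalue of finite multiplicity. For the regularity statement $H^q_{b,\nu}(X)\subset\Omega^{0,q}(X)$, I would argue that any $u\in{\rm Dom\,}\Box^{(q)}_b$ with $\Box^{(q)}_bu=\nu u$ and $\nu>0$ satisfies $u=\Pi^{(q)}_{(\lambda_1,\lambda_2]}u$ whenever $0<\lambda_1<\nu\leqslant\lambda_2$; since $\Pi^{(q)}_{(\lambda_1,\lambda_2]}$ is smoothing, its image consists of forms in $\Omega^{0,q}(X)$, and hence so does $u$.

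The main obstacle in the whole theorem lies upstream, in the microlocal proof of Corollary~\ref{c-gue140211I}; once that is in hand, the present statement is an essentially formal consequence and no further analytic input on $\Box^{(q)}_b$ (such as subelliptic estimates, which in any case fail under the hypotheses when $q\in\{n_-,n_+\}$) is required.
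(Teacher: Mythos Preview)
Your proposal is correct and follows essentially the same approach as the paper: both deduce the result from Corollary~\ref{c-gue140211I} by observing that the smoothing spectral projector $\Pi^{(q)}_{(\lambda_1,\lambda_2]}$ is compact on $L^2_{(0,q)}(X)$, hence of finite rank, and then invoke standard spectral theory together with the fact that the range of a smoothing operator consists of smooth forms. The paper phrases the discreteness via a contradiction with an orthonormal sequence and Rellich's theorem, and verifies separately that each $\nu>0$ in the spectrum is an eigenvalue via a closed-range argument, but these are minor presentational differences rather than a different method.
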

%---------------
If $\Box^{(q)}_b$ has closed range, then $\Pi^{(q)}=\Pi^{(q)}_{\leq\lambda}$ for some $\lambda>0$,
so we can deduce the asymptotic of the Szeg\H{o} kernel from Theorem \ref{t-gue140305_a}. 
We introduce now the following local and more flexible version of the closed range property.
%------------
\begin{defn}\label{d-gue140212}
Fix $q\in\set{0,1,2,\ldots,n-1}$. Let $Q:L^2_{(0,q)}(X)\To L^2_{(0,q)}(X)$
be a continuous operator. We say that $\Box^{(q)}_b$ has local $L^2$ closed range
on an open set $D\subset X$ with respect to $Q$ if for every $D'\Subset D$, 
there exist constants $C_{D'}>0$ and $p\in\mathbb N$, such that
\[\norm{Q(I-\Pi^{(q)})u}^2\leq C_{D'}\big(\,(\Box^{(q)}_b)^pu\,|\,u\big),\ \ \forall u\in\Omega^{0,q}_0(D').\]
\end{defn}
%------------
\noindent
When $D=X$, $Q$ is the identity map  and $p=2$, this property is just the $L^2$ closed range property for $\Box^{(q)}_b$. 
When $D=X$, $Q$ is the identity map, $p=1$ and $q=0$, this property is the $L^2$ closed range property for $\ddbar_b$.
%------------
\begin{thm}\label{t-gue140305VIa}
Let $X$ be a CR manifold of dimension $2n-1$, whose Levi form is non-degenerate of constant signature 
$(n_-,n_+)$ at
each point of an open set $D\Subset X$. Let $q\in\set{0,1,\ldots,n-1}$ and let
$Q\in L^0_{{\rm cl\,}}(X,T^{*0,q}X\boxtimes T^{*0,q}X)$
be a classical pseudodifferential operator on $X$ and let 
$Q^*\in L^0_{{\rm cl\,}}(X,T^{*0,q}X\boxtimes T^{*0,q}X)$ be
the $L^2$
adjoint of $Q$ with respect to $(\,\cdot\,|\,\cdot\,)$. Suppose that $\Box^{(q)}_b$ has local $L^2$
closed range on $D$ with respect
to $Q$ and $Q\Pi^{(q)}=\Pi^{(q)}Q$ on $L^2_{(0,q)}(X)$. Then, $Q^*\Pi^{(q)}Q$ is smoothing on $D$ if
$q\notin\set{n_-,n_+}$ and is a Fourier integral operator with complex phase if $q\in\set{n_-,n_+}$.
\end{thm}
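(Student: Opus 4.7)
The argument bifurcates by $q$. If $q\notin\set{n_-,n_+}$, then condition $Y(q)$ holds throughout $D$, Kohn's subelliptic estimate with loss of one derivative makes $\Box^{(q)}_b$ hypoelliptic on $D$, and hence $\Pi^{(q)}$ is itself smoothing on $D$; since $Q$ and $Q^*$ are order-zero classical $\Psi$DOs, $Q^*\Pi^{(q)}Q$ is smoothing on $D$ (the closed-range hypothesis plays no role in this case). I therefore concentrate on $q\in\set{n_-,n_+}$.

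Fix any $\lambda>0$. By Theorem~\ref{t-gue140305_a}, the spectral projector $\Pi^{(q)}_{\leq\lambda}$ restricted to $D$ is a Fourier integral operator of the form \eqref{e:fio}, with phases $\varphi_\pm$ and singularity structure independent of $\lambda$ modulo smoothing. Order-zero classical pseudodifferential operators compose with such FIOs to produce FIOs of the same type—preserving the phases and modifying only the amplitudes $s_\pm$ through the standard symbol calculus on the canonical relation—so $Q^*\Pi^{(q)}_{\leq\lambda}Q$ is already an FIO of the required form on $D$. Writing
\[
Q^*\Pi^{(q)}Q=Q^*\Pi^{(q)}_{\leq\lambda}Q-Q^*\Pi^{(q)}_{(0,\lambda]}Q,
\]
the theorem reduces to showing that $Q^*\Pi^{(q)}_{(0,\lambda]}Q$ is smoothing on $D$. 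By Corollary~\ref{c-gue140211I}, $\Pi^{(q)}_{(\varepsilon,\lambda]}$ is smoothing on $D$ for every $\varepsilon\in(0,\lambda)$, so $Q^*\Pi^{(q)}_{(\varepsilon,\lambda]}Q$ is smoothing, and only the low-spectrum piece $Q^*\Pi^{(q)}_{(0,\varepsilon]}Q$ remains to be handled.

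Smoothing of this low-spectrum piece is the heart of the proof, and here the closed-range hypothesis, together with the intertwining $Q\Pi^{(q)}=\Pi^{(q)}Q$, is decisive. Using the intertwining to write $(I-\Pi^{(q)})Qu=Q(I-\Pi^{(q)})u$, the assumption gives
\[
\|\Pi^{(q)}_{(0,\varepsilon]}Qu\|^2\leq\|(I-\Pi^{(q)})Qu\|^2\leq C_{D'}\bigl((\Box^{(q)}_b)^pu\,\big|\,u\bigr)
\]
for every $u\in\Omega^{0,q}_0(D')$. To upgrade this $L^2$ bound to smoothing I would iterate by substituting $u\mapsto(\Box^{(q)}_b)^Mu$ and controlling the commutator $[Q,\Box^{(q)}_b]$ as a $\Psi$DO of order at most one, which yields estimates of the form $\|(\Box^{(q)}_b)^N\Pi^{(q)}_{(0,\varepsilon]}Qu\|\leq C_{N,M}\|u\|_{H^{p+2M}(D')}$ for all $N,M$. \emph{The main obstacle} is then converting these $L^2$-spectral bounds into genuine smoothing of the Schwartz kernel on $D$: because $\Box^{(q)}_b$ fails to be elliptic precisely on its characteristic variety—exactly the set carrying the wavefront set of $\Pi^{(q)}_{\leq\lambda}$—powers of $\Box^{(q)}_b$ alone do not deliver Sobolev regularity. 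The resolution is microlocal: off the characteristic variety, ellipticity of $\Box^{(q)}_b$ gives Sobolev regularity directly; along the characteristic variety, one compares the spectral remainder with the FIO parametrix furnished by the proof of Theorem~\ref{t-gue140305_a} and exploits the commutation $Q\Pi^{(q)}=\Pi^{(q)}Q$ to absorb the potentially singular FIO contribution into the kernel of $\Pi^{(q)}$, against which $\Pi^{(q)}_{(0,\varepsilon]}$ vanishes. This yields the desired smoothing of $Q^*\Pi^{(q)}_{(0,\varepsilon]}Q$ on $D$ and completes the proof.
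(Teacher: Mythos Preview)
Your treatment of the case $q\notin\set{n_-,n_+}$ is essentially correct (and in fact slightly sharper than the paper, which runs the same closed-range machinery uniformly in $q$): once $Y(q)$ holds on $D$, hypoellipticity plus self-adjointness of $\Pi^{(q)}$ already forces $\Pi^{(q)}\equiv0$ on $D$.

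For $q\in\set{n_-,n_+}$ your spectral decomposition $Q^*\Pi^{(q)}Q=Q^*\Pi^{(q)}_{\leq\lambda}Q-Q^*\Pi^{(q)}_{(0,\lambda]}Q$ is a legitimate starting point, and the first term is indeed an FIO of the required type. The gap is in your argument that $Q^*\Pi^{(q)}_{(0,\varepsilon]}Q$ is smoothing on $D$. The closed-range hypothesis gives only the $L^2$ bound
\[
\norm{\Pi^{(q)}_{(0,\varepsilon]}Qu}^2\leq\norm{Q(I-\Pi^{(q)})u}^2\leq C_{D'}\bigl((\Box^{(q)}_b)^pu\,\big|\,u\bigr),\qquad u\in\Omega^{0,q}_0(D'),
\]
and your proposed bootstrap---substituting $u\mapsto(\Box^{(q)}_b)^Mu$---goes the wrong way: it enlarges the right-hand side rather than gaining Sobolev decay. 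Likewise, the observation that $\norm{(\Box^{(q)}_b)^N\Pi^{(q)}_{(0,\varepsilon]}Qu}\leq\varepsilon^N\norm{Qu}$ gives no regularity, precisely because $\Box^{(q)}_b$ is characteristic along $\Sigma$. Your final ``microlocal resolution'' paragraph gestures at using the parametrix along $\Sigma$, but does not supply a mechanism that converts the $L^2$ bound into smoothing of the kernel.

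The paper's device is exactly the missing ingredient: one applies the closed-range inequality not to $u$ but to $Su$, where $S=S_-+S_+$ is the approximate Szeg\H{o} projector from Theorem~\ref{t-gue140205I}. Since $\Box^{(q)}_bS\equiv0$ on $D$, the right-hand side $\bigl((\Box^{(q)}_b)^pSu\,\big|\,Su\bigr)$ is controlled by $\norm{u}_{-s}$ for every $s$, so $Q(I-\Pi^{(q)})S=QS-Q\Pi^{(q)}$ (using $\Pi^{(q)}S=\Pi^{(q)}$) maps $H^{-s}_{\rm comp}(D)\to L^2$ for all $s$. Taking adjoints and composing, one expands $(S^*Q^*-\Pi^{(q)}Q^*)(QS-Q\Pi^{(q)})\equiv0$; the commutation $Q\Pi^{(q)}=\Pi^{(q)}Q$ together with $S^*\Pi^{(q)}=\Pi^{(q)}=\Pi^{(q)}S$ collapses this to $S^*Q^*QS\equiv Q^*\Pi^{(q)}Q$ on $D$, after which the FIO description is read off from a stationary-phase computation of $S^*Q^*QS$. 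If you want to keep your spectral decomposition, the same trick---feeding $Su$ rather than $u$ into the closed-range estimate---is what you need to make the smoothing step go through; without it the argument does not close.
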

%------------
This result will be proved in Section \ref{s-gue140212}, see Theorem \ref{t-gue140305VIab} 
for a detailed version of Theorem~\ref{t-gue140305VIa}.

For $Q\in L^m_{{\rm cl\,}}(X)$, 
let $\sigma_Q(x,\xi)\in\cC^\infty(T^*M)$ denote the principal symbol of $Q$. 
Let $\Sigma=\Sigma^{-}\cup\Sigma^{+}$ be the characteristic manifold of $\Box^{(q)}_b$ 
(see \eqref{e-gue140205I}). 
By using Theorem~\ref{t-gue140305VIa}, we establish the following local embedding theorem 
(see Section~\ref{s-gue140212} for a proof).

\begin{thm}\label{t-emb}
Let $X$ be a CR manifold of dimension $2n-1$, whose Levi form is positive at each point of an open set 
$D\Subset X$. Let $Q\in L^0_{{\rm cl\,}}(X)$ with $Q\Pi^{(0)}=\Pi^{(0)}Q$ and $\sigma_Q(x,\xi)\neq 0$ 
at each point of $\Sigma^{-}$. Suppose that $\Box^{(0)}_b$ 
has local $L^2$ closed range on $D$ with respect to $Q$. Then, for any point $x_0\in D$, 
there is an open neighborhood $\hat D\Subset D$ of $x_0$ such that $\hat D$ can be embedded into 
$\Complex^n$ by a global CR map.
\end{thm}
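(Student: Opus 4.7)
The aim is to construct $n$ CR functions on a small neighborhood $\hat D\Subset D$ of $x_0$ whose joint differential is injective at $x_0$, so that the associated map $\Phi:\hat D\to\Complex^n$ is a CR embedding after shrinking. The plan follows the classical Boutet de Monvel strategy, with the local FIO representation supplied by Theorem~\ref{t-gue140305VIa} replacing the global one.

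First I would apply Theorem~\ref{t-gue140305VIa} with $q=0$. Since the Levi form is positive on $D$, we have $(n_-,n_+)=(0,n-1)$, so $q=0=n_-\in\{n_-,n_+\}$ and $K:=Q^*\Pi^{(0)}Q$ is a Fourier integral operator with complex phase on $D$. Because $q=0\neq n-1=n_+$, the amplitude $s_+$ vanishes, and
$$
K(x,y)\equiv\int_0^\infty e^{i\varphi_-(x,y)t}\,s_-(x,y,t)\,dt\pmod{C^\infty(D\times D)},
$$
with $s_-$ a classical symbol of order $n-1$ whose leading term on the diagonal is a positive multiple of $|\sigma_Q|^2$ restricted to $\Sigma^-$, hence non-vanishing by hypothesis. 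Next, from $Q\Pi^{(0)}=\Pi^{(0)}Q$ and self-adjointness of $\Pi^{(0)}$ I get $Q^*\Pi^{(0)}=\Pi^{(0)}Q^*$, whence $K=\Pi^{(0)}Q^*Q$; thus for any $f\in \cC^\infty_0(D)$, the function $Kf=\Pi^{(0)}(Q^*Qf)$ lies in $\Ker\Box^{(0)}_b$, i.e.\ is a CR function on $X$.

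It remains to produce $n$ such CR functions whose differentials are independent at $x_0$. Working in local coordinates near $x_0$ in which $\varphi_-$ takes its Boutet de Monvel--Sj\"ostrand normal form ($\varphi_-(x,x)=0$, $d_x\varphi_-(x,x)$ a nonzero multiple of the contact form, and $\mathrm{Im}\,\varphi_-$ controlled transversally by the positive Levi form), I would choose a cutoff $\chi\in\cC^\infty_0(D)$ and auxiliary smooth functions $h_1,\dots,h_n$ (for instance polynomials in the local coordinates), and form the CR functions $F_j:=K(\chi h_j)$. Stationary phase in $t\to\infty$ applied to the oscillatory integral representation of $K$ computes the leading asymptotics of the $F_j$ and of their differentials at $x_0$; by the non-vanishing of the leading symbol of $K$ on the diagonal and the non-degeneracy of $\mathrm{Im}\,\varphi_-$, the $h_j$ can be chosen so that $dF_1,\dots,dF_n$ are $\Complex$-linearly independent at $x_0$. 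The map $\Phi=(F_1,\dots,F_n)$ is then a CR immersion at $x_0$, hence a CR embedding on a sufficiently small $\hat D\Subset D$.

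The main obstacle is this last step: the explicit choice of the $h_j$ and the stationary phase computation verifying linear independence of the differentials. It is precisely here that both hypotheses---positivity of the Levi form and $\sigma_Q\neq 0$ on $\Sigma^-$---are indispensable, providing respectively the transverse non-degeneracy of the phase $\varphi_-$ and the non-vanishing of the leading amplitude of $K$ on the diagonal.
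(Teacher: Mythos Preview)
Your overall strategy matches the paper's: apply Theorem~\ref{t-gue140305VIa} to obtain that $K:=Q^*\Pi^{(0)}Q=\Pi^{(0)}Q^*Q$ is a complex FIO on $D$ with non-vanishing leading amplitude on the diagonal, and then use $K$ to manufacture global CR functions. The observation that $Kf\in\Ker\Box^{(0)}_b$ for $f\in\cC^\infty_0(D)$ is exactly what the paper uses.

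The gap is in your final step. Applying $K$ to \emph{fixed} test functions $\chi h_j$ produces fixed CR functions $F_j$, and there is no large parameter against which to run stationary phase: the variable $t$ in $\int_0^\infty e^{i\varphi_- t}s_-\,dt$ is integrated out in the very definition of the distributional kernel $K(x,y)$, so ``stationary phase in $t\to\infty$'' does not compute $dF_j(x_0)$. What you are missing is the paper's key technical device, namely the \emph{introduction} of a semiclassical parameter $k$. One sets
\[
E_ku:=K\bigl(e^{-iky_{2n-1}}\chi(y_{2n-1})u(y')\bigr),\qquad u\in\cC^\infty_0(D'),
\]
and Melin--Sj\"ostrand stationary phase in $(y_{2n-1},t)$ yields $E_k(x,y')\equiv e^{ik\Phi(x,y')}g(x,y',k)\bmod O(k^{-\infty})$ with $g_0(x_0,x_0')\neq0$ and $\Phi$ inheriting the Taylor expansion \eqref{e-gue140205VII} of $\varphi_-$. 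One then feeds in rescaled bump functions $\chi(ky_1)\cdots\chi(ky_{2n-2})k^{2n-2}$, respectively their first moments $k(y_{2s-1}+iy_{2s})$ times the same bump, so that as $k\to\infty$ the leading contributions to $\partial_{x_{2n-1}}u_k(0)$ and $\partial_{z_t}u^s_k(0)$ can be read off directly from the second-order terms of $\Phi$. The positivity of the Levi form forces $\partial_{z_s}\Phi(0,y')=2\mu_s\ol w_s+\ldots$, which makes the Jacobian of the map $x\mapsto(u_k,u^1_k,\ldots,u^{n-1}_k)$ upper-triangular with non-zero diagonal at $x_0$ for $k$ large. Without such a parameter there is no mechanism to isolate the leading-symbol information of $K$, and the linear independence of $dF_1,\dots,dF_n$ cannot be concluded from an unspecified choice of polynomials $h_j$.
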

We notice that in Theorem~\ref{t-emb},
$\Box^{(0)}_b$ might not have $L^2$ closed range, however, with the help of the operator $Q$, 
we can still understand the Szeg\H{o} projection and produce many global CR functions.

We will apply Theorem~\ref{t-gue140305VIa} to establish Szeg\H{o} kernel asymptotic
expansions on compact CR manifolds with transversal CR $S^1$ actions under certain Levi curvature
assumptions.
%------------
\begin{thm}\label{t-bis}
Let $(X,T^{1,0}X)$ be a compact CR manifold of dimension $2n-1$, $n\geq2$, with a transversal CR $S^1$ action and let 
$T\in \cC^\infty(X,TX)$ be the real vector field induced by this $S^1$ action. For $m\in\mathbb Z$, let 
$\mathcal{B}^{0,q}_m(X)\subset L^2_{(0,q)}(X)$ be the completion of 
\[B^{0,q}_m(X):=\set{u\in\Omega^{0,q}(X);\, Tu=-\sqrt{-1}mu}\] and let 
$Q^{(q)}_{\leq0}:L^2_{(0,q)}(X)\To\oplus_{m\in\mathbb Z, m\leq0}\mathcal{B}^{0,q}_m(X)$ 
be the orthogonal projection. Assume that $Z(q)$ fails but $Z(q-1)$ and $Z(q+1)$ hold at every point of $X$. Then, $\Box^{(q)}_b$ has local $L^2$ closed range on $X$ with respect to $Q^{(q)}_{\leq0}$.
Suppose further that the Levi form is non-degenerate of constant signature $(n_-,n_+)$ on an open 
canonical coordinate patch $D\Subset X$. Then, $Q^{(q)}_{\leq0}\Pi^{(q)}Q^{(q)}_{\leq0}$ 
is smoothing on $D$ if $q\neq n_-$ and $Q^{(q)}_{\leq0}\Pi^{(q)}Q^{(q)}_{\leq0}$ is a Fourier integral 
operator with complex phase if $q=n_-$\,.
\end{thm}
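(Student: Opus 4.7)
The plan is to verify the hypotheses of Theorem~\ref{t-gue140305VIa} with $Q=Q^{(q)}_{\leq 0}$, and then read off the one-sided Fourier integral description of $Q^{(q)}_{\leq 0}\Pi^{(q)}Q^{(q)}_{\leq 0}$ from the microlocal support of the cut-off.

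The commutation $Q^{(q)}_{\leq 0}\Pi^{(q)}=\Pi^{(q)}Q^{(q)}_{\leq 0}$ is automatic: since the $S^1$ action is CR and transversal, $T$ preserves $T^{1,0}X$, so $[T,\ddbar_b]=0$ and hence $[T,\Box^{(q)}_b]=0$; therefore $\Pi^{(q)}$ commutes with every spectral projection of $-iT$, in particular with $Q^{(q)}_{\leq 0}$, which is moreover self-adjoint.

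The main work is the local $L^2$ closed range estimate. I would decompose $u\in\Omega^{0,q}_0(D')$ into Fourier modes $u=\sum_{m}u_m$, $u_m\in B^{0,q}_m(X)$; the $S^1$-equivariance of $\ddbar_b$ gives $Tu_m=-\sqrt{-1}mu_m$, and on each mode $\Box^{(q)}_b$ descends, locally, to a Laplacian on a complex leaf space. The conditions $Z(q-1)$ and $Z(q+1)$ supply Kohn-type subelliptic estimates in the neighboring degrees, with sign-definite coercivity governed by the sign of $m$. Following Kohn's three-term argument (used to treat the failure of $Y(q)$ under $Y(q\pm 1)$) on the non-positive modes only, one obtains, for some integer $p$,
\[
\norm{Q^{(q)}_{\leq 0}(I-\Pi^{(q)})u}^2\leq C_{D'}\big(\,(\Box^{(q)}_b)^p u\,|\,u\big),\quad u\in\Omega^{0,q}_0(D'),
\]
which is the local closed range property of Definition~\ref{d-gue140212}. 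The key point is that one does not try to control the full $(I-\Pi^{(q)})u$: only its projection onto non-positive modes is controlled, which is what makes the argument succeed even though $Z(q)$ fails.

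Granted these two hypotheses, Theorem~\ref{t-gue140305VIa} (extended slightly to cover the fact that $Q^{(q)}_{\leq 0}$, while not literally classical pseudodifferential, microlocally coincides on a canonical coordinate patch $D$ with a half-space spectral cut-off in the dual variable to $T$) describes $Q^{(q)}_{\leq 0}\Pi^{(q)}Q^{(q)}_{\leq 0}$ on $D$. The asymmetry between $n_-$ and $n_+$ reflects the one-sidedness of the cut-off: in the representation \eqref{e:fio} the phase $\varphi_+$ is microlocally carried by $\Sigma^+$ (where the dual variable to $T$ is positive) and $\varphi_-$ by $\Sigma^-$, so $Q^{(q)}_{\leq 0}$ annihilates the $\varphi_+$ contribution and retains only the $\varphi_-$ one; by \eqref{e:fio} the latter is non-zero precisely when $q=n_-$. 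Hence the conjugated operator is smoothing for $q\neq n_-$ and a complex-phase Fourier integral operator for $q=n_-$. The hardest part is the mode-by-mode closed range estimate, because Kohn's balancing trick must be arranged so that the coercivity loss from the failure of $Z(q)$ comes entirely from the positive modes that are cut off by $Q^{(q)}_{\leq 0}$.
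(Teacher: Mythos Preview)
Your overall architecture matches the paper: verify the hypotheses of Theorem~\ref{t-gue140305VIa} for $Q=Q^{(q)}_{\leq0}$ (commutation with $\Pi^{(q)}$, local $L^2$ closed range, pseudodifferential nature, vanishing at $\Sigma^+$), and then read off the result. Two points, however, deserve correction.

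First, your hedge that $Q^{(q)}_{\leq0}$ is ``not literally classical pseudodifferential'' is unnecessary and in fact wrong: the paper proves (Lemma~\ref{l-gue140223}) that $Q^{(q)}_{\leq0}\in L^0_{{\rm cl}}(X,T^{*0,q}X\boxtimes T^{*0,q}X)$, by writing out its kernel in canonical coordinates and comparing with a smooth half-line cut-off in the dual variable to $T$. The same computation gives $Q^{(q)}_{\leq0}\equiv0$ at $\Sigma^+\cap T^*D$, which is exactly the mechanism that kills the $\varphi_+$ term, via \eqref{e-gue140213aImA}. So Theorem~\ref{t-gue140305VIa} applies without any extension.

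Second, your closed range sketch is in the right spirit but misses the actual structure. The paper does \emph{not} work mode-by-mode on a local leaf space; it proves something stronger and global: $\Box^{(q)}_b:{\rm Dom\,}\Box^{(q)}_b\cap\mathcal{B}^{0,q}_{\leq0}(X)\to\mathcal{B}^{0,q}_{\leq0}(X)$ has closed range (Theorem~\ref{t-gue140220}). The mechanism is this. The key microlocal fact (Theorem~\ref{t-gue140218I}) is that on forms with $i(Tu|u)\leq0$, condition $Z(r)$ alone already gives Kohn's subelliptic estimate in degree $r$; hence $Z(q\pm1)$ yield genuine partial inverses $N^{(q\pm1)}_-$ of $\Box^{(q\pm1)}_b$ on $\mathcal{B}^{0,q\pm1}_{\leq0}(X)$. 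One then manufactures a partial inverse in degree $q$ on $\mathcal{B}^{0,q}_{\leq0}$ by the explicit formula
\[
\ol{\pr}^*_b\bigl(N^{(q+1)}_-\bigr)^2\ddbar_b+\ddbar_b\bigl(N^{(q-1)}_-\bigr)^2\ol{\pr}^*_b,
\]
checking via Lemmas~\ref{l-gue140220I}--\ref{l-gue140221} that this lands in ${\rm Dom\,}\Box^{(q)}_b$ and inverts $\Box^{(q)}_b$ modulo the kernel. From closed range on $\mathcal{B}^{0,q}_{\leq0}$ one gets $\norm{(I-\Pi^{(q)}_-)v}\leq C\norm{\Box^{(q)}_bv}$ there; the passage to the local closed range estimate of Definition~\ref{d-gue140212} then hinges on the identity $Q^{(q)}_{\leq0}\Pi^{(q)}=\Pi^{(q)}_-Q^{(q)}_{\leq0}$ (proved in Theorem~\ref{t-gue140222I} by comparing two orthogonal decompositions of $Q^{(q)}_{\leq0}f$), which converts the estimate into $\norm{Q^{(q)}_{\leq0}(I-\Pi^{(q)})u}\leq C\norm{\Box^{(q)}_bu}$ with $p=1$. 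Your ``three-term argument on non-positive modes'' is morally this, but the partial-inverse construction and the identification $\Pi^{(q)}_-Q^{(q)}_{\leq0}=Q^{(q)}_{\leq0}\Pi^{(q)}$ are the concrete steps you should name.
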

%------------
This result will be proved in \S \ref{s-gue140211}, see Theorem \ref{t-gue140223I} for the details and see 
Definition~\ref{d-gue140211I} and Definition~\ref{d-gue140218} for the meanings of transversal CR $S^1$ 
action and condition $Z(q)$. 
As a consequence we obtain (cf.\ Theorem~\ref{t-gue140223III} and Corollary \ref{c-gue140306}):
%----------------------
\begin{thm}\label{c-gue140306a}
Let $(X,T^{1,0}X)$ be a compact CR manifold of dimension $2n-1$, $n\geq2$, with a transversal CR $S^1$ action. 
Assume $W(q)$ holds on $X$ for some $q\in\set{0,1,\ldots,n-1}$. Then $\Box^{(q)}_b$ has $L^2$ closed range. 
In particular, for any CR submanifold in $\Complex\mathbb P^N$ of the form \eqref{e-gue140303},
the associated Szeg\H{o} kernel $\Pi^{(q)}(x,y)$ admits a full asymptotic expansion.
\end{thm}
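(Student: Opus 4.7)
The proof has two separate claims: a general closed-range statement under condition $W(q)$, and a concrete application to the submanifolds \eqref{e-gue140303}. I would organize the argument accordingly.

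\textbf{Part 1: Closed range under $W(q)$.} The plan is to exploit the Fourier decomposition induced by the transversal CR $S^1$ action. Since this action is CR and $T$ is transversal, $\Box^{(q)}_b$ commutes with $T$ and therefore preserves the orthogonal decomposition $L^2_{(0,q)}(X)=\bigoplus_{m\in\mathbb Z}\cali{B}^{0,q}_m(X)$. Let $Q^{(q)}_{\leq 0}$ and $Q^{(q)}_{\geq 0}$ denote the projections onto the non-positive and non-negative Fourier parts. I expect condition $W(q)$ (Definition \ref{d-gue140223}) to be designed precisely so that its hypotheses imply both the hypotheses of Theorem \ref{t-bis} for $Q^{(q)}_{\leq 0}$ and the mirror hypotheses obtained by reversing the $S^1$ orientation, giving local $L^2$ closed range with respect to $Q^{(q)}_{\geq 0}$ as well. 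The $m=0$ component consists of $T$-invariant $(0,q)$-forms and descends to an elliptic Hodge problem on the orbit space, so closed range on $\cali{B}^{0,q}_0(X)$ is automatic. Combining the three pieces via $I=Q^{(q)}_{\leq 0}+Q^{(q)}_{\geq 0}-\Pi_{m=0}$ yields a global estimate
\[
\norm{(I-\Pi^{(q)})u}^2\leq C\big((\Box^{(q)}_b)^p u\,|\,u\big),\quad u\in\Omega^{0,q}(X),
\]
which is exactly the $L^2$ closed range property for $\Box^{(q)}_b$.

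\textbf{Part 2: The family \eqref{e-gue140303}.} Here I would first check that the natural $S^1$-action on $\Complex\mathbb P^{N-1}$ (induced from a suitable diagonal weighting of the homogeneous coordinates whose weights are compatible with the signs of the $\lambda_j$) preserves $X$ and restricts to a transversal CR action. Next I would verify that condition $W(q)$ holds for $q=n_-=m-1$: since the Levi form on $X$ is non-degenerate of constant signature $(m-1,N-m-1)$ everywhere, this reduces to a purely algebraic check of the spectral hypothesis in $W(q)$ against the explicit $S^1$-weights. Part 1 then provides $L^2$ closed range, so $\Pi^{(q)}=\Pi^{(q)}_{\leq\lambda}$ for any $\lambda>0$ less than the first positive eigenvalue guaranteed by Theorem \ref{t-gue140211Im}. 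The full asymptotic expansion of $\Pi^{(q)}(x,y)$ is then inherited from Theorem \ref{t-gue140305_a}, since the Levi form is non-degenerate on all of $X$.

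\textbf{Expected main obstacle.} The delicate step is Part 1, specifically the passage from the one-sided local closed range given by Theorem \ref{t-bis} to a two-sided global statement. One must show that $W(q)$ is genuinely symmetric under $T\mapsto -T$ in the sense required to feed both $Q^{(q)}_{\leq 0}$ and $Q^{(q)}_{\geq 0}$ into Theorem \ref{t-bis}, and then verify that adding the two local inequalities (after subtracting the elliptic $m=0$ contribution) produces a clean \emph{global} inequality rather than one which degenerates at the boundary between positive and negative modes. A secondary technical point is verifying $W(q)$ for the specific examples \eqref{e-gue140303}; this should be routine but requires matching the algebraic data $(\lambda_1,\ldots,\lambda_N,m)$ to the $S^1$-weights in the correct way so as to separate the problematic Fourier range on which $Z(q)$ fails.
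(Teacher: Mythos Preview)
Your overall architecture (split $L^2_{(0,q)}$ into the $Q^{(q)}_{\leq 0}$ and $Q^{(q)}_{\geq 0}$ halves, get closed range on each, then recombine) matches the paper's, but the tool you plan to feed each half into is wrong. Theorem~\ref{t-bis} has the specific hypothesis that $Z(q)$ \emph{fails} while $Z(q-1)$ and $Z(q+1)$ hold; this is only one of the four clauses in Definition~\ref{d-gue140223}. In cases (I) and (II) of $W(q)$ the condition $Z(q)$ (or $Y(q)$) actually \emph{holds}, so Theorem~\ref{t-bis} does not apply to $Q^{(q)}_{\leq 0}$ at all. What the paper does instead is prove closed range of $\Box^{(q)}_b$ on $\mathcal{B}^{0,q}_{\leq 0}$ (resp.\ $\mathcal{B}^{0,q}_{\geq 0}$) by two separate mechanisms: when $Z(q)$ holds one gets a genuine subelliptic estimate on that half (Theorems~\ref{t-gue140218I}--\ref{t-gue140218III}); when $Z(q)$ fails but $Z(q\pm 1)$ hold one builds an explicit right inverse from the partial inverses $N^{(q\pm 1)}_-$ on the adjacent degrees (Theorems~\ref{t-gue140220}, \ref{t-gue140222}). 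Condition $W(q)$ is precisely the disjunction ensuring that on \emph{each} of the two halves at least one of these two mechanisms fires. Your proposal supplies only the second mechanism and so leaves a real gap in cases (I), (II), and the $\geq 0$ side of (III).

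Two smaller points. First, $W(q)$ is a pure Levi-form condition and has nothing to do with $S^1$-weights; for the hypersurfaces \eqref{e-gue140303} the Levi form is non-degenerate of constant signature, and the paper notes immediately after Definition~\ref{d-gue140223} that this alone forces $W(q)$ for every $q$. So your ``algebraic check against the $S^1$-weights'' and ``separating the problematic Fourier range'' are not needed. Second, the $m=0$ mode needs no separate elliptic argument on an orbit space (which would require care at non-free orbits anyway): it lies in $\mathcal{B}^{0,q}_{\leq 0}\cap\mathcal{B}^{0,q}_{\geq 0}$ and is automatically handled once either half has closed range.
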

%----------------------
We notice that if the Levi form is non-degenerate of constant signature on $X$ then $W(q)$ holds on $X$ 
(see Definition~\ref{d-gue140223}). In particular, for a $3$-dimensional compact strictly pseudoconvex CR manifold, 
$W(0)$ holds on $X$. Hence, $\Box^{(0)}_b$ has $L^2$ closed range if $X$ admits a transversal CR $S^1$ action. 
From this, we deduce the following global embeddablity of
Lempert \cite[Theorem\,2.1]{Lem92}, cf.\ also \cite[Theorem\,A16]{Eps92} (see Section~\ref{s-gue140211}).
\begin{thm}\label{t-gue140306f}
Let $(X,T^{1,0}X)$ be a compact strictly pseudoconvex CR manifold of dimension three with a transversal
CR $S^1$ action. Then $X$ can be CR embedded into $\Complex^N$, for some $N\in\mathbb N$\,.
\end{thm}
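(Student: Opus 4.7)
The strategy is to combine the closed-range result of Theorem~\ref{c-gue140306a} with the Szeg\H{o} kernel asymptotics of Theorem~\ref{t-gue140305_a} and a Boutet de Monvel-Sj\"ostrand type peak-function construction, adapted to the $S^1$-equivariant setting. Since $X$ is compact strictly pseudoconvex of dimension three, the Levi form is non-degenerate of constant signature $(n_-,n_+)=(0,1)$ on $X$, so by the remark following Theorem~\ref{c-gue140306a} condition $W(0)$ holds. Theorem~\ref{c-gue140306a} then yields that $\Box^{(0)}_b$ has $L^2$ closed range, and hence $\Pi^{(0)}=\Pi^{(0)}_{\leq\lambda}$ for every sufficiently small $\lambda>0$.

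Given closed range, Theorem~\ref{t-gue140305_a} applied with $D=X$ and $q=0=n_-$ shows that $\Pi^{(0)}$ is a Fourier integral operator with complex phase, with Schwartz kernel of the form \eqref{e:fio}; only the $\varphi_-$-branch contributes because $q=n_-\neq n_+$. This produces a full asymptotic expansion of $\Pi^{(0)}(x,y)$ near the diagonal of $X\times X$. Because the transversal CR $S^1$ action commutes with $\ddbar_b$ and with $\Pi^{(0)}$, the space $\Ker\,\Box^{(0)}_b$ decomposes as a Hilbert direct sum of the smooth eigenspaces $\mathcal{B}^{0,0}_m(X)\cap\Ker\,\ddbar_b$, and each summand is finite dimensional by subelliptic regularity together with the spectral gap supplied by closed range. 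Testing $\Pi^{(0)}$ against localized sections and then projecting onto Fourier modes converts the diagonal asymptotics \eqref{e:fio} into a supply of globally defined, $S^1$-finite CR functions whose jets at any prescribed point $x_0\in X$ fill every finite-order model. In particular, near any $x_0\in X$ one obtains a tuple of $S^1$-finite global CR functions on $X$ that realize a local CR embedding of a neighborhood of $x_0$ into some $\Complex^{N(x_0)}$.

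The main obstacle is upgrading these pointwise local embeddings to a single global CR embedding $F:X\to\Complex^N$. Injective immersion at each point is delivered by the jet-surjectivity above, and by compactness of $X$ only finitely many Fourier modes $\abs{m}\leq M$ are needed to immerse $X$ everywhere. The subtler issue is injectivity: distinct points $x\neq y$ must be separated by at least one of the chosen CR functions. For this I would use that $\varphi_-(x,y)$ has strictly positive imaginary part off the diagonal and that the remainder $R$ in \eqref{e:fio} is smooth, so $\Pi^{(0)}(\,\cdot\,,y)$ is a CR distribution concentrated near $y$ modulo smoothing; approximating it by its $S^1$-finite components gives CR functions that separate any prescribed pair $(x,y)$. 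A standard compactness argument on $(X\times X)\setminus\{x=y\}$, supplemented by the local immersion near the diagonal, then allows one to assemble a single finite tuple $(f_1,\ldots,f_N)$ of $S^1$-finite global CR functions whose associated map $F=(f_1,\ldots,f_N):X\to\Complex^N$ is everywhere injective with everywhere injective differential, providing the desired CR embedding.
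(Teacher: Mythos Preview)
Your overall strategy is sound, but it takes a genuinely different route from the paper. The paper's proof (Theorem~\ref{t-gue140223III-I}) is much shorter and more indirect: after obtaining closed range of $\Box^{(0)}_b$ from Theorem~\ref{t-gue140223III} (equivalently Theorem~\ref{c-gue140306a}), it uses the resulting partial inverse $N^{(0)}\in L^{-1}_{\frac12,\frac12}(X)$ to show, via a brief functional-analytic argument with $N^{(0)}\ol\pr_b^{*,f}$, that $\ddbar_b:{\rm Dom\,}\ddbar_b\to L^2_{(0,1)}(X)$ also has closed range. It then invokes Kohn's theorem \cite{Koh86} (or equivalently the equivalences (a)--(c) from the introduction) to conclude embeddability. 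Thus the paper never constructs the CR embedding by hand; it outsources that step to the established Boutet de Monvel--Kohn theory.

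Your approach instead reproduces the content of Boutet de Monvel's embedding argument directly: once $\Pi^{(0)}$ is a complex-phase FIO on all of $X$, you build peak CR functions from the Szeg\H{o} asymptotics (much as in the proof of Theorem~\ref{t-emb}), project onto $S^1$-Fourier modes, and then use compactness to separate points and tangents globally. This is more self-contained---it avoids citing \cite{Koh86}---but considerably longer to execute carefully. One minor imprecision: your justification for finite-dimensionality of $\mathcal{B}^{0,0}_m(X)\cap\Ker\ddbar_b$ via ``subelliptic regularity'' is off, since $Y(0)$ fails here; the correct reason is that on each fixed Fourier mode the operator $\Box^{(0)}_b$ becomes elliptic (the $T$-direction is frozen), so the kernel is finite-dimensional by standard elliptic theory on the compact manifold $X$.
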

Note that Baouendi-Rothschild-Treves \cite{BRT85} proved that the existence
of a local transverse CR action implies local embeddability. Let us point out that transversality
in Theorem \ref{t-gue140306f} cannot be dispensed with: the non-embeddable example of 
Grauert, Andreotti-Siu and Rossi \cite{Gra:94,AS:70,Ros:65} admits a nontransversal
circle action; see also the example of
of Barrett \cite{Ba88} which admits a transverse CR torus action, but no one-dimensional sub-action
exists which itself is transverse. The embeddable small deformations of of $S^1$ invariant strictly
pseudoconvex CR structures
on circle bundles over Riemann surfaces were described by Epstein \cite{Eps92}.

Theorem \ref{t-gue140305VIa} yields immediately the following.
%--------------------------
\begin{thm}\label{t-gue140305VIc}
Suppose that $X$ is a CR manifold such that $\Box^{(0)}_b$ has closed range in $L^2$. 
Then the Szeg\H{o} projector $\Pi^{(0)}$ is a Fourier integral operator on the subset where the Levi form is positive definite.
\end{thm}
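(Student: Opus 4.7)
The plan is to derive Theorem~\ref{t-gue140305VIc} as a direct specialization of Theorem~\ref{t-gue140305VIa} with $Q=I$. First I would unpack what the global $L^2$ closed range assumption gives us: by the standard characterization (see the remark following Definition~\ref{d-gue140212}, with $p=2$, or equivalently via $\ddbar_b$-closed range with $p=1$), there exists a constant $C>0$ such that
\[
\norm{(I-\Pi^{(0)})u}^2 \leq C\,\bigl(\Box^{(0)}_b u\,\bigl|\,u\bigr),
\qquad u\in\mathrm{Dom}\,\Box^{(0)}_b.
\]
Since $\Omega^{0,0}_0(D')\subset\mathrm{Dom}\,\Box^{(0)}_b$ for any relatively compact open set $D'\Subset X$, the operator $\Box^{(0)}_b$ has local $L^2$ closed range with respect to the identity operator $Q=I$ on every open subset of $X$, with the uniform constant $C$ and $p=1$.

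Next I would check the remaining hypotheses of Theorem~\ref{t-gue140305VIa}. Let $D_+\subset X$ denote the (open) locus where the Levi form is positive definite; there the Levi form is non-degenerate of constant signature $(n_-,n_+)=(0,n-1)$, so in particular $q=0\in\set{n_-,n_+}$. The identity operator lies in $L^0_{\mathrm{cl}}(X,T^{*0,0}X\boxtimes T^{*0,0}X)$, is self-adjoint, and trivially commutes with $\Pi^{(0)}$. Exhausting $D_+$ by open sets $D\Subset D_+$ of constant signature and applying Theorem~\ref{t-gue140305VIa} on each, we obtain that $Q^*\Pi^{(0)}Q=\Pi^{(0)}$ is a Fourier integral operator with complex phase of the form \eqref{e:fio} on $D$. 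Since $q=0=n_-$, only the $\varphi_-$, $s_-$ part of \eqref{e:fio} contributes, and since the singularity description is local in nature the resulting phase/amplitude data patch together over $D_+$.

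The argument as sketched is essentially formal once Theorem~\ref{t-gue140305VIa} is in hand, so there is no substantive obstacle; the only point that requires any care is the passage from the abstract global closed range estimate to the local form required by Definition~\ref{d-gue140212}, and this is immediate because the test forms $u\in\Omega^{0,0}_0(D')$ belong automatically to the domain of $\Box^{(0)}_b$. The real work of the theorem has already been done in the construction of the microlocal Hodge decomposition underlying Theorem~\ref{t-gue140305VIa}.
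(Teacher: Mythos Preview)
Your proposal is correct and matches the paper's own treatment: the paper states that Theorem~\ref{t-gue140305VIc} follows immediately from Theorem~\ref{t-gue140305VIa}, and you have carried out precisely this specialization with $Q=I$, verifying the local closed range hypothesis of Definition~\ref{d-gue140212} and the commutation condition, and noting that on the strictly pseudoconvex locus $q=0=n_-$ so that only the $\varphi_-$ contribution survives. The only remark is that the paper phrases the global closed range property as the case $p=2$ in Definition~\ref{d-gue140212}, whereas you use $p=1$; both are valid (closed range of $\Box^{(0)}_b$ gives a spectral gap, hence either estimate), so this is a cosmetic difference.
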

%--------------------------
\begin{cor}\label{c-140621}
Let $X$ be a compact pseudoconvex CR manifold satisfying one of the following conditions:
\begin{itemize}
\item[(i)] $X=\partial M$, where $M$ is a relatively compact pseudoconvex domain
in a complex manifold, such that there exists a strictly psh function in a neighborhood of $X$.  
\item[(ii)] $X$ admits a CR embedding into some Euclidean space $\mathbb{C}^N$.
\end{itemize}
Then the Szeg\H{o} projector $\Pi^{(0)}$ is a Fourier integral operator on the subset where the Levi form is
positive definite.
\end{cor}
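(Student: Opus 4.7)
The plan is to reduce Corollary \ref{c-140621} to Theorem \ref{t-gue140305VIc}: it suffices to verify that under either hypothesis (i) or (ii), the Kohn Laplacian $\Box^{(0)}_b$ has $L^2$ closed range on the compact pseudoconvex CR manifold $X$. Once this is established, Theorem \ref{t-gue140305VIc} applies verbatim, yielding that $\Pi^{(0)}$ is a Fourier integral operator with complex phase (of the form \eqref{e:fio} with $s_+ = 0$, since the Levi form is positive where we look) on the subset where the Levi form is positive definite.

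For case (i), the strategy would be to use the weighted $L^2$ method of Kohn-H\"ormander. With a strictly plurisubharmonic function $\varphi$ defined in a neighborhood of $X = \pr M$, one sets up the $\ddbar$-Neumann problem on $M$ with weight $e^{-t\varphi}$ and observes that for $t$ sufficiently large the basic estimate gives a coercive lower bound on $(0,1)$-forms. Pseudoconvexity of $M$ together with this coercivity yields closed range for $\ddbar$ and $\ddbar^*$ in the weighted spaces, hence in the unweighted $L^2$ spaces, at form level $(0,1)$. Standard boundary reduction (Kohn--Rossi type arguments) then transfers this to closed range of $\ddbar_b\colon L^2(X)\to L^2_{(0,1)}(X)$, and consequently to closed range of $\Box^{(0)}_b$ acting on functions.

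For case (ii), the plan is to exploit the CR embedding $\iota\colon X\hookrightarrow \Complex^N$ to pull back the canonical strictly plurisubharmonic function $|z|^2$. A small Stein tubular neighborhood $U$ of $\iota(X)$ in $\Complex^N$ carries $|z|^2$ as a strictly psh exhaustion, so one can run the weighted $L^2$ machinery on suitably chosen pseudoconvex domains adjacent to $\iota(X)$ inside $U$; alternatively, one invokes a Harvey--Lawson-type realization of $\iota(X)$ as the boundary of a (possibly singular) pseudoconvex Stein variety and reduces directly to the setting of case (i) after a desingularization or an approximation argument. Either way one arrives at the $L^2$ closed range property for $\ddbar_b$ on functions on $X$, and hence for $\Box^{(0)}_b$.

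The main obstacle is genuinely case (ii): since $X$ has CR dimension $n-1$ but sits in $\Complex^N$ with $N\geqslant n$, the embedded $\iota(X)$ is not a hypersurface in a canonical ambient complex manifold, and the reduction to a one-sided $\ddbar$-Neumann problem is not as immediate as in case (i). The cleanest route is probably to quote a closed-range theorem for embeddable compact pseudoconvex CR manifolds (available in the literature through work of Kohn and of Boas--Straube), whose proofs combine the weighted estimates sketched above with a Harvey--Lawson or Heunemann-type global bounding construction. Modulo invoking such a result, plugging it into Theorem \ref{t-gue140305VIc} then finishes the proof of the corollary.
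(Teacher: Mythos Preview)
Your reduction to Theorem~\ref{t-gue140305VIc} via the $L^2$ closed range property is exactly the paper's approach; the paper's proof consists of a single sentence citing the literature for closed range of $\ddbar_b$ (hence of $\Box^{(0)}_b$) under each hypothesis and then invoking Theorem~\ref{t-gue140305VIc}.

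One correction on attribution: for case (ii) the paper cites Nicoar\u{a} \cite{Ni06} and Baracco \cite{Bar12}, not Kohn or Boas--Straube. Nicoar\u{a}'s result covers CR manifolds embedded in Stein manifolds (in particular $\Complex^N$) via microlocal methods rather than a Harvey--Lawson bounding construction plus desingularization, and Baracco gave a shorter argument in the Euclidean case. Your sketched route through bounding varieties and reduction to case (i) is plausible in spirit but is neither what the paper invokes nor the most direct path; since the corollary is intended as an immediate consequence of known closed-range theorems, simply citing \cite{Koh86} for (i) and \cite{Ni06,Bar12} for (ii) is what is needed.
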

%--------------------------
Indeed, it was shown that $\overline\partial_b$ has closed range in $L^2$ under condition (i) in \cite[p.\,543]{Koh86}
and under condition (ii)  in \cite{Bar12,Ni06}. For boundaries of pseudoconvex domains in $\mathbb{C}^n$
the closed range property was shown in \cite{BoSh:86,Koh86,Shaw85}. Note also that any three-dimensional 
pseudoconvex and of finite type CR manifold $X$ admits a CR embedding into some 
$\mathbb{C}^N$ if $\ddbar_b$ has $L^2$ closed range, cf.\ \cite{Ch89}. 
%--------------------------

We can give a very concrete description of the Szeg\H{o} kernel in case (i) of Corollary \ref{c-140621}.
Let $M$ be a relatively compact domain with smooth boundary in a complex manifold $M'$ 
and $M=\{\rho<0\}$ where $\rho\in\cC^\infty(M')$ is a defining function of $M$. 
We assume that the Levi form $\mathcal{L}(\rho)$ is everywhere positive semi-definite on the complex 
tangent space to $X=\partial M$ and is positive definite of a subset $D\subset X$. Fix $D_0\Subset D$ and let $U$ be a small neighbourhood of $D_0$ in $M'$. As in \cite{BouSj76}, 
one can construct an almost-analytic extension 
$\varphi=\varphi(x,y):M'\times M'\to\mathbb{C}$ of $\rho$ with the following properties:
%\begin{enumerate}
%\item $\varphi(x,x)=\frac{1}{\sqrt{-1}}\varrho(x)$ and $\partial_y\varphi$, $\overline\partial_x\varphi$
%vanish to infinite order on the diagonal $x=y$.
%\item $\varphi(y,x)=\overline{\varphi(x,y)}$.
%\end{enumerate}
\begin{equation}\label{e:aae}
\begin{split}
&\text{$\varphi(x,x)=\frac{1}{\sqrt{-1}}\varrho(x)$ and $\partial_y\varphi$, $\overline\partial_x\varphi$
vanish to infinite order on the diagonal $x=y$.}\\
&\text{$\varphi(y,x)=\overline{\varphi(x,y)}$.}\\
&\text{${\rm Im\,}\varphi(x,y)\geq c\abs{x-y}^2$ on $U\times U$, where $c>0$ is a constant}.
\end{split}
\end{equation}
Then on $D_0$, the Szeg\H{o} kernel $\Pi^{(0)}(x,y)$ of $X$ has the form
%----------------
\begin{equation}\label{e:szego}
\begin{split}
\Pi^{(0)}(x,y)&=\int^{\infty}_{0}\!\! e^{i\varphi(x, y)t}s(x, y, t)dt+R(x,y)\\
&=F(x,y)\big(\!-i\varphi(x, y)+0\big)^{-n}+G(x,y)\log\!\big(\!-i\varphi(x, y)+0\big)
\end{split}
\end{equation}
%---------------
for some smooth functions $F$, $G$ and $R$. Here we denote by $\big(\!-i\varphi(x, y)+0\big)^{-n}$,
$\log\!\big(\!-i\varphi(x, y)+0\big)$ the distributions
limit of $\big(\!-i\varphi(x, y)+\varepsilon\big)^{-n}$ and $\log\big(\!-i\varphi(x, y)+\varepsilon\big)$
as $\varepsilon\to0+$.
See Section \ref {s-gue140224} for more details.
%Theorem \ref{t-gue140305VIb} for a more detailed version of Corollary \ref{c-140621}.

Our method can be extended to non-compact weakly pseudoconvex tube domains in $\mathbb{C}^{n}$ with basis a strictly
pseudoconvex domain in $\mathbb{C}^{n-1}$ cf.\ Theorems \ref{t-gue140306} and \ref{t-gue140306I}.

Let us finally mention that the analysis of the Szeg\H{o} kernels was also used to study embeddings given by CR sections of a 
positive CR bundle, introduced
in \cite{HM12a} (see also \cite{Hsiao12,Hsiao14}).

The Szeg\H{o} projector plays an important role in embedding problems
also through the framework of relative index for Szeg\H{o} projectors introduced by Epstein cf.\ \cite{Eps06}. 
One outcome of this analysis is the solution of the relative index conjecture \cite{Eps12}, which implies that the set of 
embeddable deformations of a strictly pseudoconvex CR structure on a compact three-dimensional
manifold is closed in the $\mathscr{C}^\infty$-topology.
  
The layout of this paper is as follows. In Section~\ref{s:prelim}, we collect some notations, 
definitions and statements we use throughout.

In Section~\ref{s-gue140205}, we review some results in~\cite{Hsiao08}
about the existence of a microlocal Hodge decomposition of the Kohn Laplacian on an open set
of a CR manifold where the Levi form is non-degenerate.

In Section~\ref{s-gue140205I},
we first study the microlocal behahaviour of the spectral function and by using the
microlocal Hodge decomposition of the Kohn Laplacian established in~\cite{Hsiao08},
we prove Theorem~\ref{t-gue140305_a}. Furthermore,  by using Theorem~\ref{t-gue140305_a}
and some standard technique in functional analysis, we prove Theorem~\ref{t-gue140211Im}.

Section~\ref{s-gue140212} is devoted to proving Theorem~\ref{t-gue140305VIa} and \ref{t-emb}.

In Section~\ref{s-gue140211}, we study CR manifolds with transversal CR $S^1$ actions.
We introduce the microlocal cut-off functions $Q^{(q)}_{\leq0}$ and $Q^{(q)}_{\geq0}$
and study the closed range property with respect to these operators. Finally we establish 
Theorems~\ref{t-bis},~\ref{c-gue140306a} and \ref{t-gue140306f}.
%Theorem~\ref{t-gue140223II}, Theorem~\ref{t-gue140306III} and Theorem~\ref{t-gue140306f}.
%and we establish Theorem~\ref{t-gue140306II}, Theorem~\ref{t-gue140223I},
%Theorem~\ref{t-gue140223II}, Theorem~\ref{t-gue140306III} and Theorem~\ref{t-gue140306f}.

In Section~\ref{s-gue140224}, by using H\"ormander's $L^2$ estimates,
we establish the local $L^2$ closed range property for $\Box^{(0)}_{b}$ with respect to
 $Q^{(0)}$ for some weakly pseudocnovex tube domains in $\mathbb{C}^n$, hence establish the asymptotics of the Szeg\H{o} kernel
(see Theorem~\ref{t-gue140306} and Theorem~\ref{t-gue140306I}).

Finally, in Section~\ref{s-gue140215}, we prove the technical
Theorem~\ref{t-gue140215} by using semi-classical analysis and global theory of complex
Fourier integral operators of Melin-Sj\"ostrand \cite{MS74}.
Theorem~\ref{t-gue140215} will be used in the proof of Theorem~\ref{t-gue140305VIa}.

\section{Preliminaries}\label{s:prelim}
\subsection{Standard notations} \label{s-ssna}
We shall use the following notations: $\mathbb N=\set{1,2,\ldots}$, $\mathbb N_0=\mathbb N\cup\set{0}$, $\Real$ 
is the set of real numbers, $\ol\Real_+:=\set{x\in\Real;\, x\geq0}$.
For a multiindex $\alpha=(\alpha_1,\ldots,\alpha_n)\in\mathbb N_0^n$
we denote by $\abs{\alpha}=\alpha_1+\ldots+\alpha_n$ its norm and by $l(\alpha)=n$ its length.
For $m\in\mathbb N$, write $\alpha\in\set{1,\ldots,m}^n$ if $\alpha_j\in\set{1,\ldots,m}$, 
$j=1,\ldots,n$. $\alpha$ is strictly increasing if $\alpha_1<\alpha_2<\ldots<\alpha_n$. For $x=(x_1,\ldots,x_n)$ we write
\[
\begin{split}
&x^\alpha=x_1^{\alpha_1}\ldots x^{\alpha_n}_n,\\
& \pr_{x_j}=\frac{\pr}{\pr x_j}\,,\quad
\pr^\alpha_x=\pr^{\alpha_1}_{x_1}\ldots\pr^{\alpha_n}_{x_n}=\frac{\pr^{\abs{\alpha}}}{\pr x^\alpha}\,,\\
&D_{x_j}=\frac{1}{i}\pr_{x_j}\,,\quad D^\alpha_x=D^{\alpha_1}_{x_1}\ldots D^{\alpha_n}_{x_n}\,,
\quad D_x=\frac{1}{i}\pr_x\,.
\end{split}
\]
Let $z=(z_1,\ldots,z_n)$, $z_j=x_{2j-1}+ix_{2j}$, $j=1,\ldots,n$, be coordinates of $\Complex^n$.
We write
\[
\begin{split}
&z^\alpha=z_1^{\alpha_1}\ldots z^{\alpha_n}_n\,,\quad\ol z^\alpha=\ol z_1^{\alpha_1}\ldots\ol z^{\alpha_n}_n\,,\\
&\pr_{z_j}=\frac{\pr}{\pr z_j}=
\frac{1}{2}\Big(\frac{\pr}{\pr x_{2j-1}}-i\frac{\pr}{\pr x_{2j}}\Big)\,,\quad\pr_{\ol z_j}=
\frac{\pr}{\pr\ol z_j}=\frac{1}{2}\Big(\frac{\pr}{\pr x_{2j-1}}+i\frac{\pr}{\pr x_{2j}}\Big),\\
&\pr^\alpha_z=\pr^{\alpha_1}_{z_1}\ldots\pr^{\alpha_n}_{z_n}=\frac{\pr^{\abs{\alpha}}}{\pr z^\alpha}\,,\quad
\pr^\alpha_{\ol z}=\pr^{\alpha_1}_{\ol z_1}\ldots\pr^{\alpha_n}_{\ol z_n}=
\frac{\pr^{\abs{\alpha}}}{\pr\ol z^\alpha}\,.
\end{split}
\]
For $j, s\in\mathbb Z$, set $\delta_{j,s}=1$ if $j=s$, $\delta_{j,s}=0$ if $j\neq s$.

Let $M$ be a $\cC^\infty$ paracompact manifold.
We let $TM$ and $T^*M$ denote the tangent bundle of $M$
and the cotangent bundle of $M$ respectively.
The complexified tangent bundle of $M$ and the complexified cotangent bundle of $M$ are be denoted by $\Complex TM$
and $\Complex T^*M$, respectively. Write $\langle\,\cdot\,,\cdot\,\rangle$ to denote the pointwise
duality between $TM$ and $T^*M$.
We extend $\langle\,\cdot\,,\cdot\,\rangle$ bilinearly to $\Complex TM\times\Complex T^*M$.
Let $G$ be a $\cC^\infty$ vector bundle over $M$. The fiber of $G$ at $x\in M$ will be denoted by $G_x$.
Let $E$ be another vector bundle over $M$. We write
$G\boxtimes E$ to denote the vector bundle over $M\times M$ with fiber over $(x, y)\in M\times M$
consisting of the linear maps from $G_x$ to $E_y$.  Let $Y\subset M$ be an open set. 
From now on, the spaces of distribution sections of $G$ over $Y$ and
smooth sections of $G$ over $Y$ will be denoted by $\mathscr D'(Y, G)$ and $\cC^\infty(Y, G)$ respectively.
Let $\mathscr E'(Y, G)$ be the subspace of $\mathscr D'(Y, G)$ whose elements have compact support in $Y$.
For $m\in\Real$, let $H^m(Y, G)$ denote the Sobolev space
of order $m$ of sections of $G$ over $Y$. Put
\begin{gather*}
H^m_{\rm loc\,}(Y, G)=\big\{u\in\mathscr D'(Y, G);\, \varphi u\in H^m(Y, G),
      \, \forall\varphi\in \cC^\infty_0(Y)\big\}\,,\\
      H^m_{\rm comp\,}(Y, G)=H^m_{\rm loc}(Y, G)\cap\mathscr E'(Y, G)\,.
\end{gather*}
We recall the Schwartz kernel theorem \cite[Theorems\,5.2.1, 5.2.6]{Hor03}, \cite[Thorem\,B.2.7]{MM07},
\cite[p.\,296]{Tay1:96}.
Let $G$ and $E$ be $\cC^\infty$ vector
bundles over a paracompact orientable $\cC^\infty$ manifold $M$ equipped with a smooth density of integration. If
$A: \cC^\infty_0(M,G)\To \mathscr D'(M,E)$
is continuous, we write $K_A(x, y)$ or $A(x, y)$ to denote the distribution kernel of $A$.
The following two statements are equivalent
\begin{enumerate}
\item $A$ is continuous: $\mathscr E'(M,G)\To \cC^\infty(M,E)$,
\item $K_A\in \cC^\infty(M\times M,G_y\boxtimes E_x)$.
\end{enumerate}
If $A$ satisfies (a) or (b), we say that $A$ is smoothing. Let
$A,B: \cC^\infty_0(M,G)\to \mathscr D'(M,E)$ be continuous operators.
We write $A\equiv B$ (on $M$) if $A-B$ is a smoothing operator. 

We say that $A$ is properly supported if the restrictions of the two projections 
$(x,y)\mapsto x$, $(x,y)\mapsto y$ to ${\rm Supp\,}K_A$
are proper.

Let $H(x,y)\in\mathscr D'(M\times M,G_y\boxtimes E_x)$. We write $H$ to denote the unique 
continuous operator $\cC^\infty_0(M,G)\To\mathscr D'(M,E)$ with distribution kernel $H(x,y)$. 
In this work, we identify $H$ with $H(x,y)$.

\subsection{Set up and Terminology} \label{s-su}
Let $(X,T^{1,0}X)$ be an orientable not necessarily compact, paracompact CR manifold of dimension $2n-1$, $n\geqslant2$,
where $T^{1,0}X$ is a
CR structure of $X$. Recall that $T^{1,0}X$ is a complex $n-1$ dimensional subbundle of $\Complex TX$, 
satisfying $T^{1,0}X\cap T^{0,1}X=\set{0}$, where $T^{0,1}X=\ol{T^{1,0}X}$,
and $[\mathcal{V},\mathcal{V}]\subset\mathcal{V}$, where $\mathcal{V}=\cC^\infty(X,T^{1,0}X)$.

Fix a smooth Hermitian metric $\langle\,\cdot\,|\,\cdot\,\rangle$ on $\Complex TX$ 
so that $\langle\,u\,|\,v\,\rangle$ is real if $u$, $v$ are real tangent vectors and $T^{1,0}X$
is orthogonal to $T^{0,1}X:=\ol{T^{1,0}X}$.
Then locally there is a real vector field $T$ of length one which is pointwise orthogonal to
$T^{1, 0}X\oplus T^{0, 1}X$. $T$ is unique up to the choice of sign. For $v\in\Complex TX$, 
we write $\abs{v}^2:=\langle\,v\,|\,v\,\rangle$.
We denote by $T^{*1,0}X$ and $T^{*0,1}X$ the dual bundles of $T^{1,0}X$ and $T^{0,1}X$, respectively.
Define the vector bundle of $(0, q)$ forms by $T^{*0,q}X:=\Lambda^{q}T^{*0,1}X$.
The Hermitian metric $\langle\,\cdot\,|\,\cdot\,\rangle$ on $\Complex TX$ induces,
by duality, a Hermitian metric on $\Complex T^*X$ and also on the bundles of $(0,q)$ 
forms $T^{*0,q}X$, $q=0,1,\ldots,n-1$. We shall also denote all these induced metrics 
by $\langle\,\cdot\,|\,\cdot\,\rangle$. For $u\in T^{*0,q}X$, we write $\abs{u}^2:=\langle\,u\,|\,u\,\rangle$.  
Let $D\subset X$ be an open set. Let $\Omega^{0,q}(D)$ denote the space of smooth sections 
of $T^{*0,q}X$ over $D$ and let $\Omega^{0,q}_0(D)$ be the subspace of
$\Omega^{0,q}(D)$ whose elements have compact support in $D$.

Locally there exists an orthonormal frame $\omega_1,\ldots,\omega_{n-1}$
of the bundle $T^{*1,0}X$. The real $(2n-2)$ form
$\omega=i^{n-1}\omega_1\wedge\ol\omega_1\wedge\ldots\wedge\omega_{n-1}\wedge\ol\omega_{n-1}$
is independent of the choice of the orthonormal frame. Thus $\omega$ is globally
defined. Locally there exists a real $1$-form $\omega_0$ of length one which is orthogonal to
$T^{*1,0}X\oplus T^{*0,1}X$. The form $\omega_0$ is unique up to the choice of sign.
Since $X$ is orientable, there is a nowhere vanishing $(2n-1)$ form $\Theta$ on $X$.
Thus, $\omega_0$ can be specified uniquely by requiring that $\omega\wedge\omega_0=f\Theta$,
where $f$ is a positive function. Therefore $\omega_0$, so chosen, is globally defined.
We call $\omega_0$
the uniquely determined global real $1$-form.
We take a vector field $T$ so that
\begin{equation}\label{e-suIa}
\abs{T}=1\,,\quad \langle\,T\,,\,\omega_0\,\rangle=-1\,.
\end{equation}
Therefore $T$ is uniquely determined. We call $T$ the uniquely determined global real vector field. We have the
pointwise orthogonal decompositions:
%\begin{equation} \label{e-suII}\begin{split}
%\Complex T^*X&=T^{*1,0}X\oplus T^{*0,1}X\oplus\set{\lambda\omega_0;\,
%\lambda\in\Complex},  \\
%\Complex TX&=T^{1,0}X\oplus T^{0,1}X\oplus\set{\lambda T;\,\lambda\in\Complex}.
%\end{split}\end{equation}
%-------------------
\begin{equation} \label{e-suIIa}
\Complex T^*X=T^{*1,0}X\oplus T^{*0,1}X\oplus\set{\lambda\omega_0;\,
\lambda\in\Complex},  \:\:
\Complex TX=T^{1,0}X\oplus T^{0,1}X\oplus\set{\lambda T;\,\lambda\in\Complex}.
\end{equation}
%-------------------
\begin{defn} \label{d-suIa}
For $p\in X$, the Levi form $\mathcal{L}_p$ is the Hermitian quadratic form on $T^{1,0}_pX$ defined as follows. 
For any $Z,\ W\in T^{1,0}_pX$, pick $\mathcal{Z},\mathcal{W}\in
\cC^\infty(X,T^{1,0}X)$ such that
$\mathcal{Z}(p)=Z$, $\mathcal{W}(p)=W$. Set
\begin{equation} \label{e-suIII}
\mathcal{L}_p(Z,\ol W)=\frac{1}{2i}\big\langle\big[\mathcal{Z}\ ,\ol{\mathcal{W}}\,\big](p)\ ,\omega_0(p)\big\rangle\,,
\end{equation}
where $\big[\mathcal{Z}\ ,\ol{\mathcal{W}}\,\big]=\mathcal{Z}\ \ol{\mathcal{W}}-\ol{\mathcal{W}}\ \mathcal{Z}$ 
denotes the commutator of $\mathcal{Z}$ and $\ol{\mathcal{W}}$.
Note that $\mathcal{L}_p$ does not depend of the choices of $\mathcal{Z}$ and $\mathcal{W}$.
\end{defn}

Locally there exists an orthonormal basis $\{\mathcal{Z}_1,\ldots,\mathcal{Z}_{n-1}\}$
of $T^{1,0}X$ with respect to the Hermitian metric $\langle\,\cdot\,|\,\cdot\,\rangle$ such that $\mathcal{L}_p$ is 
diagonal in this basis, $\mathcal{L}_p(\mathcal{Z}_j,\ol{\mathcal{Z}}_l)=\delta_{j,l}\lambda_j(p)$.
The entries $\lambda_1(p)$, \ldots, $\lambda_{n-1}(p)$ are called the eigenvalues of the Levi form
at $p\in X$ with respect to $\langle\,\cdot\,|\,\cdot\,\rangle$.

\begin{defn} \label{d-suIIa}
Given $q\in\{0,\ldots,n-1\}$, the Levi form is said to satisfy condition $Y(q)$ at $p\in X$, if $\mathcal{L}_p$ 
has at least either $\min{(q+1,n-q)}$ pairs of eigenvalues 
with opposite signs or $\max{(q+1, n-q)}$ eigenvalues of the same sign. 
Notice that the sign of the eigenvalues does not depend on the choice of the metric 
$\langle\,\cdot\,|\,\cdot\,\rangle$.
\end{defn}

Let %$\ddbar_b:\Omega^{0,q}(X)\To\Omega^{0,q+1}(X)$
\begin{equation} \label{e-suIV}
\ddbar_b:\Omega^{0,q}(X)\To\Omega^{0,q+1}(X)
\end{equation}
be the tangential Cauchy-Riemann operator. 
We will work with two volume forms on $X$:
\begin{itemize}
\item A given smooth positive $(2n-1)$-form $m(x)$ on $X$.
\item The volume form $v(x)$ induced by the Hermitian metric $\langle\,\cdot\,|\,\cdot\,\rangle$.
\end{itemize}  
The natural global $L^2$ inner product $(\,\cdot\,|\,\cdot\,)$ on $\Omega^{0,q}_0(X)$ 
induced by $m(x)$ and $\langle\,\cdot\,|\,\cdot\,\rangle$ is given by
\begin{equation}\label{e:l2}
(u|v):=\int_X\langle u(x)|v(x)\rangle\, m(x)\,,\quad u,v\in\Omega^{0,q}_0(X)\,.
\end{equation}
We denote by $L^2_{(0,q)}(X)$ 
the completion of $\Omega^{0,q}_0(X)$ with respect to $(\,\cdot\,|\,\cdot\,)$. 
We write $L^2(X):=L^2_{(0,0)}(X)$. We extend $(\,\cdot\,|\,\cdot\,)$ to $L^2_{(0,q)}(X)$ 
in the standard way. For $f\in L^2_{(0,q)}(X)$, we denote $\norm{f}^2:=(\,f\,|\,f\,)$.
We extend
$\ddbar_{b}$ to $L^2_{(0,r)}(X)$, $r=0,1,\ldots,n-1$, by
\begin{equation}\label{e-suVII}
\ddbar_{b}:{\rm Dom\,}\ddbar_{b}\subset L^2_{(0,r)}(X)\To L^2_{(0,r+1)}(X)\,,
\end{equation}
where ${\rm Dom\,}\ddbar_{b}:=\{u\in L^2_{(0,r)}(X);\, \ddbar_{b}u\in L^2_{(0,r+1)}(X)\}$, 
where for any $u\in L^2_{(0,r)}(X)$, $\ddbar_{b} u$ is defined in the sense of distributions.
We also write
\begin{equation}\label{e-suVIII}
\ol{\pr}^{*}_{b}:{\rm Dom\,}\ol{\pr}^{*}_{b}\subset L^2_{(0,r+1)}(X)\To L^2_{(0,r)}(X)
\end{equation}
to denote the Hilbert space adjoint of $\ddbar_{b}$ in the $L^2$ space with respect to $(\,\cdot\,|\,\cdot\, )$.
Let $\Box^{(q)}_{b}$ denote the (Gaffney extension) of the Kohn Laplacian given by
\begin{equation}\label{e-suIX}
\begin{split}
{\rm Dom\,}\Box^{(q)}_{b}=\Big\{s\in L^2_{(0,q)}(X);&\, 
s\in{\rm Dom\,}\ddbar_{b}\cap{\rm Dom\,}\ol{\pr}^{*}_{b},\,
\ddbar_{b}s\in{\rm Dom\,}\ol{\pr}^{*}_{b},\ \ol{\pr}^{*}_{b}s\in{\rm Dom\,}\ddbar_{b}\Big\}\,,\\
\Box^{(q)}_{b}s&=\ddbar_{b}\ol{\pr}^{*}_{b}s+\ol{\pr}^{*}_{b}\ddbar_{b}s
\:\:\text{for $s\in {\rm Dom\,}\Box^{(q)}_{b}$}\,.
 \end{split}
\end{equation}
By a result of Gaffney, for every $q=0,1,\ldots,n-1$, $\Box^{(q)}_{b}$ is a positive self-adjoint operator 
(see \cite[Proposition\,3.1.2]{MM07}). That is, $\Box^{(q)}_{b}$ is self-adjoint and 
the spectrum of $\Box^{(q)}_{b}$ is contained in $\ol\Real_+$, $q=0,1,\ldots,n-1$. 
We shall write ${\rm Spec\,}\Box^{(q)}_b$ to denote the spectrum of $\Box^{(q)}_b$. 
For a Borel set $B\subset\Real$ we denote by $E(B)$ the spectral projection of $\Box^{(q)}_{b}$ 
corresponding to the set $B$, where $E$ is the spectral measure of $\Box^{(q)}_{b}$ 
(see Davies~\cite[\S\,2]{Dav95} for the precise meanings of spectral projection and spectral measure). 
For $\lambda_1>\lambda\geq0$, we set
\begin{equation} \label{e-suX}
\begin{split}
&H^q_{b,\leq\lambda}(X):={\rm Ran\,}E\bigr((-\infty,\lambda]\bigr)\subset L^2_{(0,q)}(X)\,,\\
&H^q_{b,>\lambda}(X):={\rm Ran\,}E\bigr((\lambda,\infty)\bigr)\subset L^2_{(0,q)}(X),\\
&H^q_{b,(\lambda,\lambda_1]}(X):={\rm Ran\,}E\bigr((\lambda,\lambda_1]\bigr)\subset L^2_{(0,q)}(X).
\end{split}
\end{equation}
For $\lambda=0$, we denote
\begin{equation} \label{e-suXI}
H_b^q(X):=H^q_{b,\leq0}(X)={\rm Ker\,}\Box^{(q)}_{b}.
\end{equation}
For $\lambda_1>\lambda\geq0$, let
\begin{equation}\label{e-suXI-I}
\begin{split}
&\Pi^{(q)}_{\leq\lambda}:L^2_{(0,q)}(X)\To H^q_{b,\leq\lambda}(X),\\
&\Pi^{(q)}_{>\lambda}:L^2_{(0,q)}(X)\To H^q_{b,>\lambda}(X),\\
&\Pi^{(q)}_{(\lambda,\lambda_1]}:L^2_{(0,q)}(X)\To H^q_{b,(\lambda,\lambda_1]}(X)
\end{split}
\end{equation}
be the orthogonal projections with respect to the product $(\,\cdot\,|\,\cdot\,)$ defined in \eqref{e:l2} and let
\begin{equation}\label{e-suXI-II}
\Pi^{(q)}_{\leq\lambda}(x,y),\:\Pi^{(q)}_{>\lambda}(x,y),\: \Pi^{(q)}_{(\lambda,\lambda_1]}(x,y)
\in\mathscr D'(X\times X,T^{*0,q}_yX\boxtimes T^{*0,q}_xX),
\end{equation}
%
%\begin{equation}\label{e-suXI-II}
%\begin{split}
%&\Pi^{(q)}_{\leq\lambda}(x,y)\in\mathscr D'(X\times X,T^{*0,q}_yX\boxtimes T^{*0,q}_xX),\\
%&\Pi^{(q)}_{>\lambda}(x,y)\in\mathscr D'(X\times X,T^{*0,q}_yX\boxtimes T^{*0,q}_xX),\\
%&\Pi^{(q)}_{(\lambda,\lambda_1]}(x,y)\in\mathscr D'(X\times X,T^{*0,q}_yX\boxtimes T^{*0,q}_xX)\,,
%\end{split}
%\end{equation}
denote the distribution kernels of $\Pi^{(q)}_{\leq\lambda}$, $\Pi^{(q)}_{>\lambda}$ and 
$\Pi^{(q)}_{(\lambda,\lambda_1]}$, respectively. For $\lambda=0$, we denote 
$\Pi^{(q)}:=\Pi^{(q)}_{\leq0}$, $\Pi^{(q)}(x,y):=\Pi^{(q)}_{\leq0}(x,y)$.

We recall now some notions of microlocal analysis.
The characteristic manifold of $\Box^{(q)}_b$ is given by $\Sigma=\Sigma^+\cup\Sigma^-$, where
\begin{equation}\label{e-gue140205I}
\Sigma^+=\set{(x, \lambda\omega_0(x))\in T^*X;\, \lambda>0},\:\:
\Sigma^-=\set{(x, \lambda\omega_0(x))\in T^*X;\, \lambda<0},
\end{equation}
%\begin{equation}\label{e-gue140205I}
%\begin{split}
%&\Sigma^+=\set{(x, \lambda\omega_0(x))\in T^*X;\, \lambda>0},\\
%&\Sigma^-=\set{(x, \lambda\omega_0(x))\in T^*X;\, \lambda<0},
%\end{split}
%\end{equation}
where $\omega_0\in\cC^\infty(X,T^*X)$ is the uniquely determined global
$1$-form (see the discussion before \eqref{e-suIa}).

%For $x\in X$, let $\mathcal{L}_x$ be the Levi form at $x$ with respect to $\omega_0$ (see Definition~\ref{d-suI}).
Let $\Gamma$ be a conic open set of $\Real^M$, $M\in\mathbb N$, and let $E$ be a smooth
vector bundle over $\Gamma$. Let $m\in\Real$, $0\leq\rho,\delta\leq1$. Let $S^m_{\rho,\delta}(\Gamma,E)$
denote the H\"{o}rmander symbol space on $\Gamma$ with values in $E$ of order $m$ type $(\rho,\delta)$
and let $S^m_{{\rm cl\,}}(\Gamma,E)$
denote the space of classical symbols on $\Gamma$ with values in $E$ of order $m$, 
see  Grigis-Sj\"{o}strand~\cite[Definition 1.1 and p.\,35]{GS94} and Definition~\ref{d-gue140221a} below.

Let $D$ be an open set of $X$. Let
$L^m_{\frac{1}{2},\frac{1}{2}}(D,T^{*0,q}X\boxtimes T^{*0,q}X)$ and
$L^m_{{\rm cl\,}}(D,T^{*0,q}D\boxtimes T^{*0,q}D)$
denote the space of pseudodifferential operators on $D$ of order $m$ type $(\frac{1}{2},\frac{1}{2})$
from sections of $T^{*0,q}X$ to sections of $T^{*0,q}X$ and the space of classical
pseudodifferential operators on $D$ of order $m$ from sections of
$T^{*0,q}X$ to sections of $T^{*0,q}X$ respectively. The classical result of
Calderon and Vaillancourt tells us that for any $A\in L^m_{\frac{1}{2},\frac{1}{2}}(D,T^{*0,q}X\boxtimes T^{*0,q}X)$,
\begin{equation}\label{e-gue140205}
\mbox{$A:H^s_{\rm comp}(D,T^{*0,q}X)\To H^{s-m}_{\rm loc}(D,T^{*0,q}X)$ is continuous, for every $s\in\Real$}.
\end{equation}
We refer to H\"{o}rmander~\cite{Hor85} for a proof.

%------------------------
\begin{defn}\label{d-gue140221a}
For $m\in\Real$, $S^m_{1,0}\big(D\times D\times\mathbb{R}_+,T^{*0,q}_yX\boxtimes T^{*0,q}_xX\big)$ 
is the space of all $a(x,y,t)\in\cC^\infty\big(D\times D\times\mathbb{R}_+,T^{*0,q}_yX\boxtimes T^{*0,q}_xX\big)$ 
such that for all compact $K\Subset D\times D$ and all $\alpha, \beta\in\mathbb N^{2n-1}_0$, $\gamma\in\mathbb N_0$, 
there is a constant $C_{\alpha,\beta,\gamma}>0$ such that 
\[\abs{\pr^\alpha_x\pr^\beta_y\pr^\gamma_t a(x,y,t)}\leq C_{\alpha,\beta,\gamma}(1+\abs{t})^{m-\abs{\gamma}},\ \ 
\forall (x,y,t)\in K\times\Real_+,\ \ t\geq1.\]
Put 
\[S^{-\infty}\big(D\times D\times\mathbb{R}_+,T^{*0,q}_yX\boxtimes T^{*0,q}_xX\big):=
\bigcap_{m\in\Real}S^m_{1,0}\big(D\times D\times\mathbb{R}_+,T^{*0,q}_yX\boxtimes T^{*0,q}_xX\big).\]
Let $a_j\in S^{m_j}_{1,0}\big(D\times D\times\mathbb{R}_+,T^{*0,q}_yX\boxtimes T^{*0,q}_xX\big)$, 
$j=0,1,2,\ldots$ with $m_j\To-\infty$, $j\To\infty$. 
Then there exists $a\in S^{m_0}_{1,0}\big(D\times D\times\mathbb{R}_+,T^{*0,q}_yX\boxtimes T^{*0,q}_xX\big)$ 
unique modulo $S^{-\infty}$, such that 
$a-\sum^{k-1}_{j=0}a_j\in S^{m_k}_{1,0}\big(D\times D\times\mathbb{R}_+,T^{*0,q}_yX\boxtimes T^{*0,q}_xX\big)$ 
for $k=0,1,2,\ldots$. 

If $a$ and $a_j$ have the properties above, we write $a\sim\sum^{\infty}_{j=0}a_j$ in 
$S^{m_0}_{1,0}\big(D\times D\times\mathbb{R}_+,T^{*0,q}_yX\boxtimes T^{*0,q}_xX\big)$. We write
\begin{equation}  \label{e-gue140205III}
s(x, y, t)\in S^{n-1}_{{\rm cl\,}}\big(D\times D\times\mathbb{R}_+,T^{*0,q}_yX\boxtimes T^{*0,q}_xX\big)
\end{equation}
if $s(x, y, t)\in S^{n-1}_{1,0}\big(D\times D\times\mathbb{R}_+,T^{*0,q}_yX\boxtimes T^{*0,q}_xX\big)$ and 
\begin{equation}\label{e-fal}\begin{split}
&s(x, y, t)\sim\sum^\infty_{j=0}s^j(x, y)t^{n-1-j}\quad\text{ in }S^{n-1}_{1, 0}
\big(D\times D\times\mathbb{R}_+\,,T^{*0,q}_yX\boxtimes T^{*0,q}_xX\big)\,,\\
&s^j(x, y)\in \cC^\infty\big(D\times D,T^{*0,q}_yX\boxtimes T^{*0,q}_xX\big),\ \ j\in\N_0.\end{split}\end{equation}
\end{defn}

\begin{defn}\label{d-gue140221}
Let $Q\in L^0_{{\rm cl\,}}(X,T^{*0,q}X\boxtimes T^{*0,q}X)$ be a classical pseudodifferential operator on $X$. 
Let $D\Subset X$ be an open local coordinate patch of $X$ with local coordinates 
$x=(x_1,\ldots,x_{2n-1})$ and let $\eta=(\eta_1,\ldots,\eta_{2n-1})$
be the dual variables of $x$.
We write
\[\mbox{$Q\equiv0$ at $\Sigma^-\cap T^*D$}\,,\]
if for every $D'\Subset D$, 
\[
Q(x,y)\equiv\int e^{i\langle x-y,\eta\rangle}q(x,\eta)d\eta\:\:\text{on $D'$},
\]
where $q(x,\eta)\in S^0_{{\rm cl\,}}(T^*D',T^{*0,q}X\boxtimes T^{*0,q}X)$
and there exist $M>0$ and a conic open neighbourhood $\Lambda_-$ of $\Sigma^-$
such that for every $(x,\eta)\in T^*D'\cap\Lambda_-$ with $\abs{\eta}\geq M$, we have $q(x,\eta)=0$.
We define similarly $\mbox{$Q\equiv0$ at $\Sigma^+\cap T^*D$}$\,.
%\[\mbox{$Q\equiv0$ at $\Sigma^+\cap T^*D$}\,.\]
%if for every $D'\Subset D$, $Q(x,y)\equiv\int e^{i\langle x-y,\eta\rangle}q(x,\eta)d\eta$ on $D'$,
%where $q(x,\eta)\in S^0_{{\rm cl\,}}(T^*D',T^{*0,q}X\boxtimes T^{*0,q}X)$ and there exist $M>0$
%and a conic open neighbourhood $\Lambda_+$ of $\Sigma^+$ such that for every
%$(x,\eta)\in T^*D'\cap\Lambda_+$ with $\abs{\eta}\geq M$, we have $q(x,\eta)=0$.
\end{defn}

\section{Microlocal Hodge decomposition Theorems for $\Box^{(q)}_b$}\label{s-gue140205}

In this section we review some results in~\cite{Hsiao08} about the existence of a microlocal 
Hodge decomposition of the Kohn Laplacian on an open set of a CR manifold where the Levi form is non-degenerate.

Theorems \ref{t-gue140205}, Theorem~\ref{t-gue140205I} and Theorem~\ref{t-gue140205II} 
are proved in chapter 6, chapter 7 and chapter 8 of part I in \cite{Hsiao08}. In
\cite{Hsiao08} the existence of the microlocal Hodge decomposition is stated for compact CR manifolds, 
but the construction and arguments used are essentially local.

\begin{thm} \label{t-gue140205}
We assume that the Levi form is non-degenerate of constant signature $(n_-,n_+)$ at each point of an open set $D\Subset X$.
Let $q\neq n_-, n_+$. Then, there is a properly supported operator 
$A\in L^{-1}_{\frac{1}{2},\frac{1}{2}}(D,T^{*0,q}D\boxtimes T^{*0,q}D)$ such that $\Box^{(q)}_bA\equiv I$ on $D$.
\end{thm}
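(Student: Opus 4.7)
The plan is to build the parametrix $A$ by microlocal patching: a classical elliptic parametrix away from the characteristic set, together with a type-$(1/2,1/2)$ symbolic parametrix in a conic neighborhood of each half $\Sigma^\pm$ of the characteristic manifold.

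Since the Levi form is non-degenerate of signature $(n_-,n_+)$ at every point of $D$ and $q\notin\set{n_-,n_+}$, condition $Y(q)$ holds throughout $D$, so Kohn's subelliptic estimate with gain of $\tfrac12$ derivative is available. Away from $\Sigma=\Sigma^+\cup\Sigma^-$, the operator $\Box^{(q)}_b$ is classically elliptic of order $2$, so the usual symbolic parametrix construction produces an $A_{\mathrm{ell}}\in L^{-2}_{\mathrm{cl}}$ on any conic set avoiding $\Sigma$. The nontrivial work is to construct a microlocal inverse near $\Sigma^+$ and $\Sigma^-$.

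Near a reference point of $\Sigma^\pm$, I would choose local coordinates and a local orthonormal frame of $T^{*0,q}X$ in which the Levi form is diagonal at the base point, and write out the total symbol of $\Box^{(q)}_b$. The geometry forces a Heisenberg-type rescaling of the fiber: the characteristic direction $\pm\omega_0$ scales like $|\eta|$ while the transverse fiber directions scale like $|\eta|^{1/2}$, which is exactly the scaling that produces the symbol class $S^{\bullet}_{1/2,1/2}$. In these scaled variables, the leading operator on each fiber is a matrix-valued harmonic oscillator acting on $T^{*0,q}X$ whose spectrum, computed from the Levi eigenvalues $\lambda_1,\ldots,\lambda_{n-1}$ and the form-bundle action, consists of values $|\eta|\sum_j\lambda_j(2\alpha_j+1\pm\epsilon_j)$; precisely when $q\neq n_-,n_+$, zero is not among these values. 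Inverting the model operator on the fiber therefore yields a matrix-valued symbol in $S^{-1}_{1/2,1/2}$. Taking this as the leading part of local parametrices $A_\pm$ and iteratively correcting the remainder by Borel summation in $S^{\bullet}_{1/2,1/2}$ produces one-sided microlocal parametrices near $\Sigma^\pm$ with smoothing error.

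Finally I would patch $A_{\mathrm{ell}}$, $A_+$, $A_-$ with a microlocal partition of unity on $T^*D$ subordinate to $\Sigma^+$, $\Sigma^-$ and their complement, and arrange proper support by cutting the total kernel by a smooth function supported near the diagonal (the off-diagonal piece is smoothing). The Calder\'on-Vaillancourt continuity recalled in \eqref{e-gue140205} guarantees that the resulting $A\in L^{-1}_{1/2,1/2}(D,T^{*0,q}D\boxtimes T^{*0,q}D)$ maps the appropriate Sobolev spaces, so that $\Box^{(q)}_b A\equiv I$ holds on $D$. The main obstacle is the symbolic construction at $\Sigma^\pm$: because the $(1/2,1/2)$ calculus has degenerate composition, one has to verify at each iteration that the remainder strictly drops one step in the \emph{Heisenberg} filtration rather than merely in the standard one, so that asymptotic summation indeed produces an $S^{-\infty}$ error. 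The detailed execution of this step, in the arbitrary-signature setting considered here, is the content of chapters 6-8 of part I of \cite{Hsiao08}.
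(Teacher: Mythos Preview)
Your proposal is correct and matches the paper's treatment: the paper does not give an independent proof of this theorem but states at the beginning of Section~\ref{s-gue140205} that Theorems~\ref{t-gue140205}--\ref{t-gue140205II} are proved in chapters~6--8 of part~I of \cite{Hsiao08}, the construction there being local in nature. Your outline---elliptic parametrix off $\Sigma$, Heisenberg rescaling and inversion of the model harmonic oscillator on each $\Sigma^\pm$ (which is invertible precisely because $q\notin\{n_-,n_+\}$), asymptotic summation in the $(1/2,1/2)$ class, and microlocal patching---is an accurate summary of that construction, and you correctly identify both the key spectral fact and the delicate point that the iteration must be carried out in the Heisenberg filtration rather than the standard one.
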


Let $p_0(x, \xi)\in \cC^\infty(T^*X)$ be the principal symbol of $\Box^{(q)}_b$. 
Note that $p_0(x,\xi)$ is a polynomial of degree $2$ in $\xi$. Recall that the characteristic manifold 
of $\Box^{(q)}_b$ is given by $\Sigma=\Sigma^+\cup\Sigma^-$, where $\Sigma^+$ and $\Sigma^-$ 
are given by \eqref{e-gue140205I}.

\begin{thm} \label{t-gue140205I}
We assume that the Levi form is non-degenerate of constant signature $(n_-,n_+)$ 
at each point of an open set $D\Subset X$.
Let $q=n_-$ or $n_+$. Then there exist properly supported continuous operators 
$A\in  L^{-1}_{\frac{1}{2},\frac{1}{2}}(D,T^{*0,q}D\boxtimes T^{*0,q}D)$ , 
$S_-, S_+\in L^{0}_{\frac{1}{2},\frac{1}{2}}(D,T^{*0,q}D\boxtimes T^{*0,q}D)$, such that
\begin{equation}\label{e-gue140205II}
\begin{split}
&\mbox{$\Box^{(q)}_bA+S_-+S_+=I$ on $D$},\\
&\Box^{(q)}_bS_-\equiv0\ \ \mbox{on $D$},\ \ \Box^{(q)}_bS_+\equiv0\ \ \mbox{on $D$},\\
&A\equiv A^*\ \ \mbox{on $D$},\ \ S_-A\equiv0\ \ \mbox{on $D$}, \ \ S_+A\equiv0\ \ \mbox{on $D$},\\
&S_-\equiv S_-^*\equiv S_-^2\ \ \mbox{on $D$},\\
&S_+\equiv S_+^*\equiv S_+^2\ \ \mbox{on $D$},\\
&S_-S_+\equiv S_+S_-\equiv0\ \ \mbox{on $D$},
\end{split}
\end{equation}
where $A^*$, $S_-^*$ and $S_+^*$ are the formal adjoints of $A$, $S_-$ and $S_+$
with respect to $(\,\cdot\,|\,\cdot\,)$ respectively and $S_-(x,y)$ satisfies
\[S_-(x, y)\equiv\int^{\infty}_{0}e^{i\varphi_-(x, y)t}s_-(x, y, t)dt\ \ \mbox{on $D$}\]
with a symbol $s_-(x, y, t)\in S^{n-1}_{{\rm cl\,}}\big(D\times D\times\mathbb{R}_+,T^{*0,q}_yX\boxtimes T^{*0,q}_xX\big)$
as in \eqref{e-gue140205III}, \eqref{e-fal}
%\begin{equation}  \label{e-gue140205III}\begin{split}
%&s_-(x, y, t)\in S^{n-1}_{{\rm cl\,}}\big(D\times D\times\mathbb{R}_+,T^{*0,q}_yX\boxtimes T^{*0,q}_xX\big), \\
%&s_-(x, y, t)\sim\sum^\infty_{j=0}s^j_-(x, y)t^{n-1-j}\quad\text{ in }S^{n-1}_{1, 0}
%\big(D\times D\times\mathbb{R}_+\,,T^{*0,q}_yX\boxtimes T^{*0,q}_xX\big)\,,\\
%&s^j_-(x, y)\in \cC^\infty\big(D\times D,T^{*0,q}_yX\boxtimes T^{*0,q}_xX\big),\ \ j\in\N_0,
%\end{split}\end{equation}
and phase function $\varphi_-$ such that $\varphi=\varphi_-$ satisfies 
\begin{equation}\label{e-gue140205IV}
\begin{split}
&\varphi\in \cC^\infty(D\times D),\ \ {\rm Im\,}\varphi(x, y)\geq0,\\
&\varphi(x, x)=0,\ \ \varphi(x, y)\neq0\ \ \mbox{if}\ \ x\neq y,\\
&d_x\varphi(x, y)\big|_{x=y}=-\omega_0(x), \ \ d_y\varphi(x, y)\big|_{x=y}=\omega_0(x), \\
&\varphi(x, y)=-\ol\varphi(y, x).
\end{split}
\end{equation}
Moreover, there is a function $f\in \cC^\infty(D\times D)$ such that
\begin{equation} \label{e-gue140205V}
p_0(x, \varphi'_x(x,y))-f(x,y)\varphi(x,y)
\end{equation}
vanishes to infinite order at $x=y$.
Similarly,
\[S_+(x, y)\equiv\int^{\infty}_{0}\!\! e^{i\varphi_+(x, y)t}s_+(x, y, t)dt\ \ \mbox{on $D$}\]
with $s_+(x, y, t)\in S^{n-1}_{{\rm cl\,}}\big(D\times D\times\mathbb{R}_+,T^{*0,q}_yX\boxtimes T^{*0,q}_xX\big)$
as in \eqref{e-gue140205III}, \eqref{e-fal}
%\begin{equation}\label{e-gue140309}
%s_+(x, y, t)\sim\sum^\infty_{j=0}s^j_+(x, y)t^{n-1-j}
%\end{equation}
%in $S^{n-1}_{1, 0}\big(D\times D\times\mathbb{R}_+,T^{*0,q}_yX\boxtimes T^{*0,q}_xX\big)$,
%where
%\[s^j_+(x, y)\in \cC^\infty\big(D\times D,T^{*0,q}_yX\boxtimes T^{*0,q}_xX\big),\ \ j\in\N_0,\]
and $-\ol\varphi_+(x, y)$ satisfies \eqref{e-gue140205IV} and \eqref{e-gue140205V}. 
Moreover, if $q\neq n_+$, then $s_+(x,y,t)$ vanishes to infinite order at $x=y$. 
If $q\neq n_-$, then $s_-(x,y,t)$ vanishes to infinite order at $x=y$.
\end{thm}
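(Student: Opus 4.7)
The plan is to construct the parametrix microlocally by splitting phase space into three regions: the region off the characteristic manifold $\Sigma = \Sigma^- \cup \Sigma^+$, a conic neighbourhood of $\Sigma^-$, and a conic neighbourhood of $\Sigma^+$. Away from $\Sigma$, the principal symbol $p_0(x,\xi)$ is bounded below by $c|\xi|^2$, so $\Box^{(q)}_b$ is elliptic there and standard classical pseudodifferential calculus produces a properly supported $A_0$ of order $-2$ with $\Box^{(q)}_b A_0 \equiv I$ microlocally off $\Sigma$. Near $\Sigma^\pm$, where $p_0$ vanishes to second order transversally, I would build complex Fourier integral operators $S_\pm$ absorbing the missing part of the identity, following the Menikoff--Sj\"ostrand/Boutet de Monvel--Sj\"ostrand scheme.

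I would first construct the phase $\varphi_-$ near $\Sigma^-$ by solving the complex eikonal equation $p_0(x,\varphi'_x(x,y)) = f(x,y)\,\varphi(x,y)$ to infinite order at the diagonal, with initial data $\varphi(x,x)=0$ and $d_x\varphi|_{x=y}=-\omega_0(x)$. Because the Levi form has constant signature on $D$, the Hessian of $p_0$ at $\Sigma^-$ is positive semidefinite on the appropriate quotient, which forces ${\rm Im}\,\varphi_- \geq 0$ and $\varphi_-(x,y)\neq 0$ for $x\neq y$ in a neighbourhood of the diagonal; the skew symmetry $\varphi_-(x,y) = -\ol{\varphi_-(y,x)}$ follows from the fact that $\Box^{(q)}_b$ is formally self-adjoint. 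The phase $\varphi_+$ is built analogously starting from $\Sigma^+$, with $-\ol{\varphi_+}$ playing the role of $\varphi_-$.

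Next, I would construct the amplitude $s_-(x,y,t) \in S^{n-1}_{\rm cl}(D\times D\times \Real_+, T^{*0,q}_yX \boxtimes T^{*0,q}_xX)$ by solving transport equations along the Hamilton flow of $p_0$ at $\Sigma^-$. The leading symbol $s_-^0(x,x)$ must lie in the kernel of the model operator (a harmonic oscillator type operator on $T^{1,0}_xX$) obtained by freezing coefficients; this kernel is one-dimensional on $(0,q)$ forms if and only if $q=n_-$, and is zero otherwise. Hence when $q\neq n_-$ the transport equations force $s_-$ to vanish to infinite order at the diagonal, while for $q=n_-$ one obtains a genuine classical symbol with nonzero leading part. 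The same scheme near $\Sigma^+$ produces $s_+$, nontrivial only if $q=n_+$. Using the stationary phase lemma for complex phases, the induced operators $S_\pm$ lie in $L^0_{\frac{1}{2},\frac{1}{2}}$ and satisfy $\Box^{(q)}_b S_\pm \equiv 0$ on $D$.

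Finally, setting $A := A_0 + (\text{correction near }\Sigma)$ through a conic microlocal partition of unity $\chi^- + \chi^+ + \chi^0 \equiv 1$, one obtains $\Box^{(q)}_b A + S_- + S_+ \equiv I$. The idempotency $S_\pm^2 \equiv S_\pm$, self-adjointness $S_\pm \equiv S_\pm^*$, and mutual orthogonality $S_+ S_- \equiv S_- S_+ \equiv 0$ are then verified by the composition calculus for FIOs with complex phase due to Melin--Sj\"ostrand, which reduces each relation to a transport equation on the diagonal that can be arranged by successive modification of the lower order symbols $s_\pm^j$. For the theorem $q \notin \{n_-,n_+\}$ case (Theorem \ref{t-gue140205}), $A = A_0$ modulo a smoothing correction suffices since both $S_-, S_+$ become smoothing. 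The main obstacle is the construction of the complex phases $\varphi_\pm$ satisfying the eikonal equation to infinite order together with the positivity ${\rm Im}\,\varphi_\pm \geq 0$; this requires carefully exploiting the constant signature of the Levi form and working in almost-analytic extensions to handle the complex-valued geometry, and it is the step where the nondegeneracy assumption enters essentially.
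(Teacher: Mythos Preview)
The paper does not actually prove this theorem here: Section~\ref{s-gue140205} explicitly states that Theorems~\ref{t-gue140205}, \ref{t-gue140205I} and \ref{t-gue140205II} are proved in chapters 6--8 of part~I of \cite{Hsiao08}, and the present paper merely reviews the statements. So there is no in-paper proof to compare against; what you have written is a sketch of the construction carried out in that reference, which in turn adapts the Boutet de Monvel--Sj\"ostrand/Menikoff--Sj\"ostrand machinery.

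Your outline is essentially the correct strategy and matches the architecture of \cite{Hsiao08}: microlocal partition into the elliptic region and conic neighbourhoods of $\Sigma^\pm$; complex eikonal construction of $\varphi_\pm$ with ${\rm Im\,}\varphi_\pm\geq 0$ forced by the signature of the Levi form; transport equations whose model operator is a harmonic oscillator on $T^{1,0}_xX$ whose $(0,q)$-kernel is nontrivial exactly when $q=n_\mp$; and composition calculus for complex-phase FIOs to obtain the algebraic relations among $S_\pm$ and $A$. One point you gloss over is the nature of $A$ near $\Sigma$: the classical elliptic parametrix $A_0$ of order $-2$ is only valid away from $\Sigma$, and near $\Sigma$ one needs a Hermite-type parametrix in the sense of Boutet de Monvel's work on hypoelliptic operators with double characteristics; this is precisely why the final $A$ lives in $L^{-1}_{\frac12,\frac12}$ rather than $L^{-2}_{1,0}$, and it is the more delicate part of the construction in \cite{Hsiao08}. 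Your phrase ``$A_0 + (\text{correction near }\Sigma)$'' hides this, but the overall plan is right.
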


The operators $S_{+}$, $S_{-}$ are called \emph{approximate Szeg\H{o} kernels}.

\begin{rem}\label{r-gue140211}
With the notations and assumptions used in Theorem~\ref{t-gue140205I}, assume that $q=n_-\neq n_+$. Since
$s_+(x,y,t)$ vanishes to infinite order at $x=y$, we have $S_+\equiv0$ on $D$. 
Similarly, if $q=n_+\neq n_-$. then $S_-\equiv0$ on $D$.
\end{rem}

The following result describes the phase function in local coordinates.
%----------------------------------
\begin{thm} \label{t-gue140205II}
We assume that the Levi form is non-degenerate of constant signature $(n_-,n_+)$
at each point of an open set $D\Subset X$.
For a given point $x_0\in D$, let $\{W_j\}_{j=1}^{n-1}$
be an orthonormal frame of $T^{1, 0}X$ in a neighbourhood of $x_0$
such that
the Levi form is diagonal at $x_0$, i.e.\ $\mathcal{L}_{x_{0}}(W_{j},\overline{W}_{s})=\delta_{j,s}\mu_{j}$, $j,s=1,\ldots,n-1$.
We take local coordinates
$x=(x_1,\ldots,x_{2n-1})$, $z_j=x_{2j-1}+ix_{2j}$, $j=1,\ldots,n-1$,
defined on some neighbourhood of $x_0$ such that $\omega_0(x_0)=dx_{2n-1}$, $x(x_0)=0$, 
and for some $c_j\in\Complex$, $j=1,\ldots,n-1$\,,
\[W_j=\frac{\pr}{\pr z_j}-i\mu_j\ol z_j\frac{\pr}{\pr x_{2n-1}}-
c_jx_{2n-1}\frac{\pr}{\pr x_{2n-1}}+O(\abs{x}^2),\ j=1,\ldots,n-1\,.\]
Set
$y=(y_1,\ldots,y_{2n-1})$, $w_j=y_{2j-1}+iy_{2j}$, $j=1,\ldots,n-1$.
Then, for $\varphi_-$ in Theorem~\ref{t-gue140205I}, we have
\begin{equation} \label{e-gue140205VI}
{\rm Im\,}\varphi_-(x,y)\geq c\sum^{2n-2}_{j=1}\abs{x_j-y_j}^2,\ \ c>0,
\end{equation}
in some neighbourhood of $(0,0)$ and
\begin{equation} \label{e-gue140205VII}
\begin{split}
&\varphi_-(x, y)=-x_{2n-1}+y_{2n-1}+i\sum^{n-1}_{j=1}\abs{\mu_j}\abs{z_j-w_j}^2 \\
&\quad+\sum^{n-1}_{j=1}\Bigr(i\mu_j(\ol z_jw_j-z_j\ol w_j)+c_j(-z_jx_{2n-1}+w_jy_{2n-1})\\
&\quad+\ol c_j(-\ol z_jx_{2n-1}+\ol w_jy_{2n-1})\Bigr)+(x_{2n-1}-y_{2n-1})f(x, y) +O(\abs{(x, y)}^3),
\end{split}
\end{equation}
where $f$ is smooth and satisfies $f(0,0)=0$, $f(x, y)=\ol f(y, x)$.
\end{thm}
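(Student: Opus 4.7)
The plan is to derive the explicit quadratic expansion of $\varphi_-$ directly from the defining properties listed in Theorem~\ref{t-gue140205I} combined with an explicit computation of the principal symbol $p_0(x,\xi)$ in the chosen coordinates. The key inputs are the diagonal conditions $\varphi_-(x,x)=0$, $d_x\varphi_-|_{x=y}=-\omega_0(x)$, $d_y\varphi_-|_{x=y}=\omega_0(x)$, the symmetry $\varphi_-(x,y)=-\overline{\varphi_-(y,x)}$, the eikonal identity \eqref{e-gue140205V}, and the non-negativity of $\mathrm{Im}\,\varphi_-$.

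First I would write the ansatz
\[
\varphi_-(x,y)=-x_{2n-1}+y_{2n-1}+Q(x,y)+O(|(x,y)|^3),
\]
where $Q(x,y)$ is a quadratic form; the linear part is forced by $\varphi_-(x,x)=0$ together with $d_x\varphi_-|_{x=y}=-dx_{2n-1}$ and $d_y\varphi_-|_{x=y}=dx_{2n-1}$ at $x_0$. The symmetry condition $\varphi_-(y,x)=-\overline{\varphi_-(x,y)}$ forces $Q$ to transform as $Q(y,x)=-\overline{Q(x,y)}$ modulo the already-determined linear part, which cuts the space of possible quadratic forms in half. Next I would compute $p_0(x,\xi)$ in the given coordinates: since $\Box_b^{(q)}$ has principal symbol expressible through the symbols of the $W_j$ and $\overline W_j$, the explicit form
\[
W_j=\tfrac{\partial}{\partial z_j}-i\mu_j\overline z_j\tfrac{\partial}{\partial x_{2n-1}}-c_j x_{2n-1}\tfrac{\partial}{\partial x_{2n-1}}+O(|x|^2)
\]
yields $p_0(x,\xi)$ as a quadratic polynomial in $\xi$ whose coefficients I can read off up to order two in $x$.

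Then I would impose the eikonal identity \eqref{e-gue140205V}, which forces $p_0(x,\varphi_x'(x,y))$ to vanish to second order along the diagonal (after subtracting $f\varphi$). Differentiating twice in $(x,y)$ at $x=y=0$ and using the computed form of $p_0$, each mixed second derivative of $Q$ is determined: the $z_j\overline w_j$ and $\overline z_j w_j$ couplings produce the term $i\sum_j\mu_j(\overline z_j w_j-z_j\overline w_j)$; the coupling to $x_{2n-1}$, $y_{2n-1}$ through the coefficient $c_j$ in $W_j$ produces the $c_j(-z_j x_{2n-1}+w_j y_{2n-1})$ term and, by symmetry, its conjugate; finally the diagonal $|\mu_j|\,|z_j-w_j|^2$ contribution appears as the unique choice of the purely imaginary Hermitian piece of $Q$ consistent with the eikonal equation on $\Sigma^-$ and the requirement $\mathrm{Im}\,\varphi_-\geq 0$ (the sign $|\mu_j|$ rather than $\mu_j$ comes from working on $\Sigma^-$ where $\xi$-direction is $-\omega_0$, so the Levi eigenvalues enter with their absolute values). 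The remainder $(x_{2n-1}-y_{2n-1})f(x,y)$ with $f(0,0)=0$, $f(x,y)=\overline{f(y,x)}$ absorbs the undetermined freedom along the characteristic direction, consistent with the symmetry constraint.

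Finally, the bound \eqref{e-gue140205VI} would follow by inspecting the quadratic part: after the identifications above, the Hermitian form $\mathrm{Im}\,Q$ restricted to the $2(n-1)$ transverse variables $(x_1,\ldots,x_{2n-2},y_1,\ldots,y_{2n-2})$ equals $\sum_j|\mu_j|\,|z_j-w_j|^2$, which is positive definite in $z_j-w_j$; combined with the third-order remainder this gives the quadratic lower bound in a neighbourhood of $(0,0)$. The \textbf{main obstacle} is the bookkeeping in the eikonal equation: one must carefully identify which mixed second derivatives of $p_0$ at the characteristic point contribute to each entry of the Hessian of $Q$, and must verify that the ``wrong-sign'' Hermitian contributions are precisely cancelled by the $c_j$ and $\overline c_j$ cross terms so that the residual transverse quadratic form is $\sum_j|\mu_j|\,|z_j-w_j|^2$; this computation relies on the specific normal form of $W_j$ at $x_0$ and on the fact that we work on the component $\Sigma^-$ of the characteristic manifold.
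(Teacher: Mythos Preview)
Your outline is sound and is essentially the route taken in the reference the paper defers to. Note that the present paper does not give an in-text proof of this statement: Section~\ref{s-gue140205} explicitly says that Theorems~\ref{t-gue140205}, \ref{t-gue140205I} and \ref{t-gue140205II} are proved in Chapters~6--8 of Part~I of \cite{Hsiao08}, and merely quotes the result here. So there is no ``paper's own proof'' to compare against beyond that citation.

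That said, your sketch matches the standard derivation used in \cite{Hsiao08}: one writes $\varphi_-(x,y)=-x_{2n-1}+y_{2n-1}+Q(x,y)+O(|(x,y)|^3)$, feeds this into the eikonal relation~\eqref{e-gue140205V} together with the diagonal data~\eqref{e-gue140205IV}, and reads off the Hessian. One small clarification is worth making. The $c_j$ cross terms do not arise primarily from the eikonal equation itself but already from the first-order variation of $\omega_0$ away from $x_0$: the normal form of the $W_j$ forces
\[
\omega_0(x)=dx_{2n-1}+\sum_{j=1}^{n-1}\big[(i\mu_j\overline z_j+c_jx_{2n-1})\,dz_j+(-i\mu_j z_j+\overline c_jx_{2n-1})\,d\overline z_j\big]+O(|x|^2),
\]
so the conditions $d_x\varphi_-|_{x=y}=-\omega_0(x)$ and $d_y\varphi_-|_{x=y}=\omega_0(x)$ already pin down the $i\mu_j(\overline z_jw_j-z_j\overline w_j)$ and $c_j(-z_jx_{2n-1}+w_jy_{2n-1})$ pieces of $Q$, as well as their conjugates. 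The eikonal equation is then needed to determine the remaining ``pure'' second derivatives (those not fixed by the diagonal constraints), and it is there that the positivity requirement $\mathrm{Im}\,\varphi_-\ge 0$ on $\Sigma^-$ selects the branch giving $|\mu_j|$ rather than $-|\mu_j|$. With this adjustment your bookkeeping plan goes through; the lower bound~\eqref{e-gue140205VI} then follows exactly as you indicate, since the imaginary part of the quadratic form is $\sum_j|\mu_j|\,|z_j-w_j|^2$ modulo $O(|(x,y)|^3)$.
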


The following formula for the leading term $s^0_-$ on the diagonal follows from \cite[\S 8]{Hsiao08}, 
its calculation being local in nature.
For a given point $x_0\in D$, let $\{W_j\}_{j=1}^{n-1}$ be an
orthonormal frame of $(T^{1,0}X,\langle\,\cdot\,|\,\cdot\,\rangle)$ near $x_0$, for which the Levi form
is diagonal at $x_0$. Put
\begin{equation}\label{levi140530}
\mathcal{L}_{x_0}(W_j,\ol W_\ell)=\mu_j(x_0)\delta_{j\ell}\,,\;\; j,\ell=1,\ldots,n-1\,.
\end{equation}
We will denote by
\begin{equation}\label{det140530}
\det\mathcal{L}_{x_0}=\prod_{j=1}^{n-1}\mu_j(x_0)\,.
\end{equation}
Let $\{T_j\}_{j=1}^{n-1}$ denote the basis of $T^{*0,1}X$, dual to $\{\ol W_j\}^{n-1}_{j=1}$. We assume that
$\mu_j(x_0)<0$ if\, $1\leq j\leq n_-$ and $\mu_j(x_0)>0$ if\, $n_-+1\leq j\leq n-1$. Put
\begin{equation}\label{n140530}
\begin{split}
&\mathcal{N}(x_0,n_-):=\set{cT_1(x_0)\wedge\ldots\wedge T_{n_-}(x_0);\, c\in\Complex},\\
&\mathcal{N}(x_0,n_+):=\set{cT_{n_-+1}(x_0)\wedge\ldots\wedge T_{n-1}(x_0);\, c\in\Complex}\end{split}
\end{equation}
and let
\begin{equation}\label{tau140530}
\begin{split}
\tau_{x_0,n_-}:T^{*0,q}_{x_0}X\To\mathcal{N}(x_0,n_-)\,,\quad
\tau_{x_0,n_+}:T^{*0,q}_{x_0}X\To\mathcal{N}(x_0,n_+)\,,\end{split}
\end{equation}
be the orthogonal projections onto $\mathcal{N}(x_0,n_-)$ and $\mathcal{N}(x_0,n_+)$
with respect to $\langle\,\cdot\,|\,\cdot\,\rangle$ respectively.
We recall that $m(x)$ is the given smooth $2n-1$ form on $X$ and $v(x)$ is the volume form induced by $\langle\,\cdot\,|\,\cdot\,\rangle$, see the discussion after~\eqref{e-suIV}.
\begin{thm} \label{t-gue140205III}
We assume that the Levi form is non-degenerate of constant signature
$(n_-,n_+)$ at each point of an open set $D\Subset X$.
If $q=n_-$, then for the leading term $s^0_-(x,y)$ of the expansion \eqref{e-fal} of $s_-(x,y,t)$, we have
\begin{equation}\label{e-gue140205VIII}
s^0_-(x_0, x_0)=\frac{1}{2}\pi^{-n}\abs{\det\mathcal{L}_{x_0}}\frac{v(x_0)}{m(x_0)}\tau_{x_0,n_-}\,,\:\:x_0\in D.
\end{equation}
Similarly, if $q=n_+$, then for the leading term $s^0_+(x,y)$ of the expansion \eqref{e-fal} of $s_+(x,y,t)$, we have
\begin{equation}\label{e-gue140205VIIIb}
s^0_+(x_0, x_0)=\frac{1}{2}\pi^{-n}\abs{\det\mathcal{L}_{x_0}}\frac{v(x_0)}{m(x_0)}\tau_{x_0,n_+}\,,\:\:x_0\in D.
\end{equation}
\end{thm}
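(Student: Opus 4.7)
The formula is extracted from the two structural properties of the approximate Szeg\H{o} projector $S_-$ established in Theorem~\ref{t-gue140205I}: namely $\Box^{(q)}_b S_- \equiv 0$, which pins down the algebraic type of $s^0_-(x_0, x_0)$ as an endomorphism of $T^{*0,q}_{x_0}X$, and the idempotency $S_-^2 \equiv S_-$, which fixes the overall scalar normalization. The plan is to work throughout in the canonical coordinates of Theorem~\ref{t-gue140205II} centered at $x_0=0$, where the phase $\varphi_-$ has the explicit Taylor expansion~\eqref{e-gue140205VII} and the orthonormal frame $\{W_j\}$ diagonalizes the Levi form with eigenvalues $\mu_j$.

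First I substitute the ansatz $s_-(x,y,t)\sim\sum_{j\geq 0} s^j_-(x,y)\,t^{n-1-j}$ into the equation $\Box^{(q)}_b S_-\equiv 0$, expand the resulting oscillatory integral in powers of $t$, and examine the top-order equation. Property~\eqref{e-gue140205V}, together with the infinite order vanishing on the diagonal of $p_0(x,\varphi'_{-,x})-f\varphi_-$, ensures that the leading transport obstruction is purely algebraic at $x=y=x_0$. Since $\varphi'_{-,x}(x_0,x_0)=-\omega_0(x_0)\in\Sigma^-$, after reading off the principal part of $\Box^{(q)}_b$ in the frame $\{W_j\}$ one identifies the resulting algebraic constraint with the Landau-type model Kohn Laplacian whose zero-eigenspace on $(0,q)$-forms coincides with $\mathcal{N}(x_0,n_-)$ precisely when $q=n_-$. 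Applying the same analysis to $S_-^*$ on the $y$-variable and invoking $S_-\equiv S_-^*$ yields the corresponding constraint on the source side. Together these force
\[
s^0_-(x_0,x_0)=\alpha\,\tau_{x_0,n_-}
\]
for some scalar $\alpha\in\Complex$ to be determined.

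To compute $\alpha$, I compare $S_-(0,0)$ and $(S_-\circ S_-)(0,0)$ as asymptotic distributions. Using $\varphi_-(z,x_0)=-\overline{\varphi_-(x_0,z)}$ from~\eqref{e-gue140205IV}, the composition takes the form
\[
(S_-\circ S_-)(0,0) \equiv \int_0^\infty\!\!\int_0^\infty\!\!\int e^{i[\varphi_-(0,z)t-\overline{\varphi_-(0,z)}s]}\,s_-(0,z,t)\circ s_-(z,0,s)\,m(z)\,dt\,ds.
\]
The Taylor expansion~\eqref{e-gue140205VII} gives $\varphi_-(0,z)=y_{2n-1}+i\sum_j|\mu_j||w_j|^2+O(|z|^2)$, so the combined phase has imaginary part $(t+s)\sum_j|\mu_j||w_j|^2\geq 0$, strictly positive definite in the transverse complex directions $w=(w_1,\ldots,w_{n-1})$, while the longitudinal direction $y_{2n-1}$ contributes only real oscillation proportional to $t-s$. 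Applying the Melin--Sj\"ostrand complex stationary phase method~\cite{MS74}: the Gaussian integration in $w$ yields a factor $\pi^{n-1}/|\det\mathcal{L}_{x_0}|$ (up to powers of $t+s$ that merge into the symbolic $t$-asymptotics), the oscillatory integration in $y_{2n-1}$ contributes a distributional factor forcing $t=s$, and comparison of the coordinate density with $m(z)$ introduces the volume ratio $v(x_0)/m(x_0)$. Matching term by term with the leading asymptotics of $S_-(0,0)$ fixes $\alpha=\tfrac{1}{2}\pi^{-n}|\det\mathcal{L}_{x_0}|\,v(x_0)/m(x_0)$.

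The case $q=n_+$ is entirely parallel, with $-\overline{\varphi_+}$ satisfying~\eqref{e-gue140205IV} in place of $\varphi_-$ and $\tau_{x_0,n_+}$ in place of $\tau_{x_0,n_-}$; Remark~\ref{r-gue140211} handles the symmetric vanishing of the other approximate kernel. The main obstacle is the stationary phase bookkeeping: disentangling the Gaussian contribution from the $(n-1)$ transverse complex directions, justifying the formal delta function in the degenerate longitudinal direction $y_{2n-1}$ (most cleanly by deforming the $y_{2n-1}$-contour into $\mathrm{Im}\,y_{2n-1}>0$ where the modified phase becomes exponentially decreasing), and tracking the precise numerical constants $\tfrac{1}{2}$ and $\pi^{-n}$ that emerge from the interplay between these two contributions and the volume form ratio $v/m$.
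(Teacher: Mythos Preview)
Your approach is correct and is in fact the standard one; note, however, that the paper does not supply a proof in the text. The sentence immediately preceding Theorem~\ref{t-gue140205III} reads: ``The following formula for the leading term $s^0_-$ on the diagonal follows from \cite[\S 8]{Hsiao08}, its calculation being local in nature.'' So the paper treats this as an imported result from the first author's earlier memoir, and there is nothing in the present paper to compare your argument against line by line.

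That said, your two-step strategy (transport equation to isolate the projector $\tau_{x_0,n_-}$, then idempotency $S_-^2\equiv S_-$ to fix the scalar) is exactly the method used in \cite{Hsiao08}, and the paper itself later packages the stationary-phase step you describe as Lemma~\ref{l-gue140214}. Indeed, equation~\eqref{e-gue140215a} gives the composition rule
\[
d_0(x,x)=2\pi^{n}\,\frac{m(x)}{v(x)}\,\abs{\det\mathcal{L}_x}^{-1}\,b_0(x,x)\,c_0(x,x),
\]
and applying it with $b_0=c_0=s^0_-=\alpha\,\tau_{x_0,n_-}$ together with $\tau_{x_0,n_-}^2=\tau_{x_0,n_-}$ yields $\alpha=2\pi^n\frac{m}{v}\abs{\det\mathcal{L}}^{-1}\alpha^2$, hence $\alpha=\tfrac{1}{2}\pi^{-n}\frac{v}{m}\abs{\det\mathcal{L}}$ immediately. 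So rather than carrying out the Gaussian and longitudinal integrations by hand as you sketch, you can simply invoke Lemma~\ref{l-gue140214} (whose proof is precisely that Melin--Sj\"ostrand computation) and read off the constant in one line. The phase discrepancy between the $\varphi$ produced by Lemma~\ref{l-gue140214} and the original $\varphi_-$ is handled by Theorem~\ref{t-gue140215}, which shows they agree to infinite order on the diagonal, so the leading symbols match.
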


\section{Microlocal spectral theory for $\Box^{(q)}_b$} \label{s-gue140205I}

In this section, we will apply the microlocal Hodge decomposition theorems for $\Box^{(q)}_b$ from
Section~\ref{s-gue140205} in order to study the singularities for the kernel $\Pi^{(q)}_{\leq\lambda}(x,y)$
on the non-degenerate part of the Levi form. The section ends with the proof of Theorem \ref{t-gue140211Im}.

For any $\lambda>0$, it is clearly that there is a continuous operator
\[N^{(q)}_\lambda:L^2_{(0,q)}(X)\To{\rm Dom\,}\Box^{(q)}_b\]
such that
\begin{equation}\label{e-gue140206m}
\begin{split}
&\mbox{$\Box^{(q)}_bN^{(q)}_\lambda+\Pi^{(q)}_{\leq\lambda}=I$ on $L^2_{(0,q)}(X)$},\\
&\mbox{$N^{(q)}_\lambda\Box^{(q)}_b+\Pi^{(q)}_{\leq\lambda}=I$ on ${\rm Dom\,}\Box^{(q)}_b$}.
\end{split}
\end{equation}
%----------------
Let us formulate a detailed version of of Theorem \ref{t-gue140305_a}.
%----------------
\begin{thm}\label{t-gue140305_b} 
With the notations and assumptions used above, assume that the Levi form is non-degenerate
of constant signature $(n_-,n_+)$ at each point of an open set $D\Subset X$.
If $q\notin\set{n_-,n_+}$, then there is a $A\in L^{-1}_{\frac{1}{2},\frac{1}{2}}(D,T^{*0,q}X\boxtimes T^{*0,q}X)$,
such that for any $\lambda>0$, we have
\[
\Pi^{(q)}_{\leq\lambda}\equiv 0\,\quad\text{and}\quad N^{(q)}_\lambda\equiv A\,\quad \text{on $D$}.
\]
%
%\[
%\begin{split}
%&\mbox{$\Pi^{(q)}_{\leq\lambda}\equiv 0$ on $D$},\\
%&\mbox{$N^{(q)}_\lambda\equiv A$ on $D$}.
%\end{split}\]
If $q\in\set{n_-,n_+}$, then
for any $\lambda>0$, we have
\[
\Pi^{(q)}_{\leq\lambda}\equiv S_-+S_+\,\quad\text{and}\quad
N^{(q)}_\lambda\equiv G\quad \text{on $D$},
\]
%\[
%\begin{split}
%&\mbox{$\Pi^{(q)}_{\leq\lambda}\equiv S_-+S_+$ on $D$},\\
%&\mbox{$N^{(q)}_\lambda\equiv G$ on $D$},
%\end{split}\]
where $G\in L^{-1}_{\frac{1}{2},\frac{1}{2}}(D,T^{*0,q}X\boxtimes T^{*0,q}X)$,
$S_-, S_+\in L^{0}_{\frac{1}{2},\frac{1}{2}}(D,T^{*0,q}X\boxtimes T^{*0,q}X)$
are independent of $\lambda$ and the kernels of $S_-$ and $S_+$ satisfy
\[%\begin{split}
S_\pm(x, y)\equiv\int^{\infty}_{0}e^{i\varphi_\pm(x, y)t}s_\pm(x, y, t)dt\ \ \mbox{on $D$}
%\\&S_+(x, y)\equiv\int^{\infty}_{0}e^{i\varphi_+(x, y)t}s_+(x, y, t)dt\ \ \mbox{on $D$}\end{split}
\]
with symbols 
$s_\pm (x, y, t)\in S^{n-1}_{{\rm cl\,}}\big(D\times D\times\mathbb{R}_+\,,T^{*0,q}_yX\boxtimes T^{*0,q}_xX\big)$
as in \eqref{e-gue140205III}, \eqref{e-fal}, $s_-=0$ if $q\neq n_-$, $s_+=0$ if $q\neq n_+$, where 
$s^0_-(x,x)$ and $s^0_+(x,x)$ are given by \eqref{e-gue140205VIIIm}, %and \eqref{e-gue140205VIIIbm}, respectively,
%\begin{equation}\label{e-gue140205IIIm}\begin{split}
%&s_\pm (x, y, t)\in S^{n-1}_{{\rm cl\,}}\big(D\times D\times\mathbb{R}_+\,,T^{*0,q}_yX\boxtimes T^{*0,q}_xX\big), \\
%&s_\pm(x, y, t)\sim\sum^\infty_{j=0}s^j_\pm(x, y)t^{n-1-j}\:\:\text{ in }
%S^{n-1}_{1, 0}\big(D\times D\times\mathbb{R}_+\,,T^{*0,q}_yX\boxtimes T^{*0,q}_xX\big)\\
%%&s_-(x, y, t)\sim\sum^\infty_{j=0}s^j_-(x, y)t^{n-1-j}\\
%%&\quad\text{ in }S^{n-1}_{1, 0}
%%\big(D\times D\times\mathbb{R}_+,T^{*0,q}_yX\boxtimes T^{*0,q}_xX\big)\,,\\
%&s^j_\pm(x,y)\in \cC^\infty\big(D\times D,T^{*0,q}_yX\boxtimes T^{*0,q}_xX\big),\ \ j\in\N_0,\\
%&\mbox{$s_-(x,y,t)=0$ if $q\neq n_-$, $s_+(x,y,t)=0$ if $q\neq n_+$},\\
%&\mbox{$s^0_-(x,x)$ and $s^0_+(x,x)$ are given by \eqref{e-gue140205VIIIm} and \eqref{e-gue140205VIIIbm} respectively},
%\end{split}\end{equation}
and phase functions $\varphi_\pm$ such that $\varphi=\varphi_-$ and $\varphi=-\ol\varphi_+$ satisfy 
\eqref{e-gue140205IV}, \eqref{e-gue140205V}
%\begin{equation}\label{e-gue140205IVm}
%\begin{split}
%&\varphi\in \cC^\infty(D\times D),\ \ {\rm Im\,}\varphi(x, y)\geq0,\\
%&\varphi(x, x)=0,\ \ \varphi(x, y)\neq0\ \ \mbox{if}\ \ x\neq y,\\
%&d_x\varphi(x, y)|_{x=y}=-\omega_0(x), \ \ d_y\varphi(x, y)|_{x=y}=\omega_0(x), \\
%&\varphi(x, y)=-\ol\varphi(y, x),
%\end{split}
%\end{equation}
%and $-\ol\varphi_+(x, y)$ satisfies \eqref{e-gue140205IVm}
{\rm(}see Theorem~\ref{t-gue140305I} and Theorem~\ref{t-gue140305II}, for more properties of 
the phases $\varphi_\pm${\rm)}.
\end{thm}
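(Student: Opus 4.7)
The main inputs are the microlocal Hodge decompositions from Theorems \ref{t-gue140205} and \ref{t-gue140205I}, together with the elementary spectral bound
\[
\bigl\|(\Box^{(q)}_b)^N\Pi^{(q)}_{\leq\lambda}\bigr\|_{L^2\to L^2}\leq \lambda^N,\qquad
\bigl\|\Pi^{(q)}_{>\lambda}(\Box^{(q)}_b)^{-N}\bigr\|_{L^2\to L^2}\leq \lambda^{-N},
\]
which follow from the functional calculus for the self-adjoint operator $\Box^{(q)}_b$, together with the fact that $\Box^{(q)}_b$ and its spectral projectors commute on the relevant domains. The plan is to trade spectral decay against the orders of the properly supported parametrices.

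\textbf{Case $q\notin\{n_-,n_+\}$.} Let $A$ be the order $-1$ parametrix of Theorem \ref{t-gue140205}, so $\Box^{(q)}_bA\equiv I$ on $D$; taking formal adjoints yields also $A^*\Box^{(q)}_b\equiv I$ on $D$, with $A^*\in L^{-1}_{1/2,1/2}$. First I write, on $D$,
\[
\Pi^{(q)}_{\leq\lambda}\equiv A^*\Box^{(q)}_b\Pi^{(q)}_{\leq\lambda}
\equiv (A^*)^N(\Box^{(q)}_b)^N\Pi^{(q)}_{\leq\lambda}\pmod{C^\infty},
\]
iterating the left parametrix. Since $(A^*)^N$ has order $-N$ and maps $L^2_{\mathrm{comp}}(D)$ into $H^N_{\mathrm{loc}}(D)$, while $(\Box^{(q)}_b)^N\Pi^{(q)}_{\leq\lambda}$ is $L^2$-bounded, the composite maps $L^2_{\mathrm{comp}}(D)$ into $H^N_{\mathrm{loc}}(D)$ for every $N$. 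Applying the same argument from the right using self-adjointness of $\Pi^{(q)}_{\leq\lambda}$ gives smoothness of the kernel of $\Pi^{(q)}_{\leq\lambda}$ on $D\times D$, i.e.\ $\Pi^{(q)}_{\leq\lambda}\equiv0$ on $D$. Feeding this back into $\Box^{(q)}_bN^{(q)}_\lambda+\Pi^{(q)}_{\leq\lambda}=I$ and multiplying on the left by $A^*$ gives $N^{(q)}_\lambda\equiv A^*\equiv A$ on $D$ (using $A\equiv A^*$ mod smoothing on $D$, which can be arranged and is anyway true mod $C^\infty$ for any parametrix of a self-adjoint operator).

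\textbf{Case $q\in\{n_-,n_+\}$.} Now $\Box^{(q)}_bA+S_-+S_+\equiv I$ on $D$. Writing $\Pi^{(q)}_{\leq\lambda}=I-\Pi^{(q)}_{>\lambda}$ and applying the parametrix identity from both sides,
\[
\Pi^{(q)}_{\leq\lambda}\equiv (\Box^{(q)}_bA+S_-+S_+)\Pi^{(q)}_{\leq\lambda}
\equiv A\Box^{(q)}_b\Pi^{(q)}_{\leq\lambda}+(S_-+S_+)\Pi^{(q)}_{\leq\lambda}
\]
on $D$, where I have commuted $\Box^{(q)}_b$ past $A$ modulo a smoothing error using $[\Box^{(q)}_b,A]\in L^{-\infty}$ on $D$ (a consequence of $\Box^{(q)}_bA\equiv A\Box^{(q)}_b\equiv I$ on $D$). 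The first term is smoothing on $D$ by the same iteration-with-$A$ argument of Case 1, since $\Box^{(q)}_b\Pi^{(q)}_{\leq\lambda}$ is $L^2$-bounded by $\lambda$. For the second term I use the complementary estimate: since $\Box^{(q)}_bS_\pm\equiv 0$ on $D$ (and $S_\pm$ is properly supported), iteration gives $(\Box^{(q)}_b)^NS_\pm\equiv 0$ on $D$ for every $N$; hence
\[
\Pi^{(q)}_{>\lambda}S_\pm=\bigl(\Pi^{(q)}_{>\lambda}(\Box^{(q)}_b)^{-N}\bigr)\bigl((\Box^{(q)}_b)^NS_\pm\bigr)
\]
is, modulo smoothing on $D$, the composition of an operator bounded by $\lambda^{-N}$ on $L^2$ with an operator mapping $L^2_{\mathrm{comp}}(D)$ into $H^N_{\mathrm{loc}}(D)$; taking adjoints and repeating shows $\Pi^{(q)}_{>\lambda}S_\pm\equiv 0$, hence $(S_-+S_+)\Pi^{(q)}_{\leq\lambda}\equiv S_-+S_+$ on $D$. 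This gives $\Pi^{(q)}_{\leq\lambda}\equiv S_-+S_+$ on $D$, independent of $\lambda$. The claim $N^{(q)}_\lambda\equiv A+\text{smoothing correction}\equiv G\in L^{-1}_{1/2,1/2}$ on $D$ then follows by applying $A$ to $\Box^{(q)}_bN^{(q)}_\lambda=I-\Pi^{(q)}_{\leq\lambda}$. The explicit oscillatory-integral form of $S_\pm$, together with all stated properties of the phases $\varphi_\pm$ and symbols $s_\pm$, is inherited directly from Theorem \ref{t-gue140205I} and Theorem \ref{t-gue140205III}.

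\textbf{Main obstacle.} The delicate point is the simultaneous handling of the globally defined spectral projectors $\Pi^{(q)}_{\leq\lambda}$ and $\Pi^{(q)}_{>\lambda}$, which are bounded but not pseudodifferential, with the purely local parametrices $A,S_\pm$ living only on $D$. In every commutation or composition step one has to verify that the error terms, which a priori live on all of $X$, restrict to smoothing operators on $D$, and that domains of powers $(\Box^{(q)}_b)^N$ are respected when composing with properly supported pseudodifferential operators. The correct way to formalize this is to test all identities by cut-offs $\chi,\chi_1\in\cC^\infty_0(D)$ with $\chi_1\equiv 1$ near $\mathrm{supp}\,\chi$ and exploit the proper support of $A,S_\pm$; the spectral bounds then provide the quantitative regularity gain needed to close the bootstrap.
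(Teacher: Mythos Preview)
Your strategy is essentially the paper's: trade spectral decay of $(\Box^{(q)}_b)^N\Pi^{(q)}_{\leq\lambda}$ against the negative order of the parametrix $A$, and match $\Pi^{(q)}_{\leq\lambda}$ with $S_-+S_+$ using that $\Box^{(q)}_bS_\pm\equiv0$. Two points need correction.

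\emph{First, the commutator.} In Case~2 you assert $[\Box^{(q)}_b,A]\in L^{-\infty}$ as ``a consequence of $\Box^{(q)}_bA\equiv A\Box^{(q)}_b\equiv I$''. This justification is wrong: on $D$ one has $\Box^{(q)}_bA\equiv I-(S_-+S_+)$, not $I$. The conclusion $[\Box^{(q)}_b,A]\equiv0$ is nevertheless true, but for a different reason: taking adjoints of $\Box^{(q)}_bA+S\equiv I$ and using $A\equiv A^*$, $S\equiv S^*$ from \eqref{e-gue140205II} gives $A\Box^{(q)}_b\equiv I-S^*\equiv I-S$, hence the commutator is $\equiv0$. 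The cleaner route, which the paper takes, is to use the \emph{adjoint} identity $A^*\Box^{(q)}_b+S^*_-+S^*_+=I$ from the outset (see \eqref{e-gue140206I}); then no commutation is needed and one gets $\Pi^{(q)}_{\leq\lambda}=A^*\Box^{(q)}_b\Pi^{(q)}_{\leq\lambda}+(S^*_-+S^*_+)\Pi^{(q)}_{\leq\lambda}$ directly.

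\emph{Second, ``taking adjoints and repeating''.} This does not by itself close the smoothing conclusion. From your argument for $\Pi^{(q)}_{>\lambda}S_\pm$ you obtain $\Pi^{(q)}_{>\lambda}S:H^{-s}_{\rm comp}(D)\to L^2(X)$ and by adjoint $S^*\Pi^{(q)}_{>\lambda}:L^2(X)\to H^t_{\rm loc}(D)$; but neither of these alone, nor their iteration, yields $S\Pi^{(q)}_{>\lambda}:H^{-s}_{\rm comp}(D)\to H^t_{\rm loc}(D)$. What does work is to \emph{sandwich} the two one-sided estimates, and this is precisely the paper's device: writing $S-\Pi^{(q)}_{\leq\lambda}=N^{(q)}_\lambda F$ with $F=\Box^{(q)}_bS\equiv0$ properly supported (your identity $\Pi^{(q)}_{>\lambda}S=(\Pi^{(q)}_{>\lambda}(\Box^{(q)}_b)^{-1})(\Box^{(q)}_bS)$ is the same thing), one computes
\[
(S^*-\Pi^{(q)}_{\leq\lambda})(S-\Pi^{(q)}_{\leq\lambda})=F^*(N^{(q)}_\lambda)^2F\equiv0\ \text{on }D,
\]
since $F^*$ and $F$ are smoothing and $N^{(q)}_\lambda$ is $L^2$-bounded. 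Expanding and using $S^*\Pi^{(q)}_{\leq\lambda}\equiv\Pi^{(q)}_{\leq\lambda}$ (from the first term) together with $S^*S\equiv S$ yields $\Pi^{(q)}_{\leq\lambda}\equiv S$; see Theorem~\ref{t-gue140207}. The preliminary fact $\Box^{(q)}_b\Pi^{(q)}_{\leq\lambda}\equiv0$ on $D$, which underlies the first term, requires exactly the careful bootstrap you allude to in your ``Main obstacle'' paragraph and is isolated as Theorem~\ref{t-gue140206} in the paper: one must first show $H^0_{\rm comp}\to H^N_{\rm loc}$, then dualize to $H^{-N}_{\rm comp}\to H^0_{\rm loc}$, then feed this extension of $\Pi^{(q)}_{\leq\lambda}$ back into the parametrix identity to reach $H^{-s}_{\rm comp}\to H^t_{\rm loc}$.
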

%-----------------
Since $s_-(x,y,t)=0$ if $q\neq n_-$, $S_-\equiv0$ on $D$ if $q\neq n_-$. Similarly, $S_+\equiv0$ on $D$ if $q\neq n_+$.
The following result describes the phase function in local coordinates.

\begin{thm} \label{t-gue140305I}
The function $\varphi_-$ from Theorem \ref{t-gue140305_b} fulfills the estimates
\eqref{e-gue140205VI} and \eqref{e-gue140205VII} in local coordinates near a point of $D$, 
chosen as in Theorem \ref{t-gue140205II}.
%With the assumptions and notations used in Theorem~\ref{t-gue140305_b},
%for a given point $x_0\in D$, let $\{W_j\}_{j=1}^{n-1}$
%be an orthonormal frame of $T^{1, 0}X$ in a neighbourhood of $x_0$, such that
%the Levi form is diagonal at $x_0$, i.\,e.\ \eqref{levi140530} holds. 
%%$\mathcal{L}_{x_{0}}(W_{j},\overline{W}_{s})=\delta_{j,s}\mu_{j}$, $j,s=1,\ldots,n-1$.
%Take local coordinates
%\[x=(x_1,\ldots,x_{2n-1}),\ \ z_j=x_{2j-1}+ix_{2j},\ \  j=1,\ldots,n-1,\]
%defined on some neighbourhood of $x_0$ such that $\omega_0(x_0)=dx_{2n-1}$, 
%$x(x_0)=0$, and for some $c_j\in\Complex$, $j=1,\ldots,n-1$\,,
%\[W_j=\frac{\pr}{\pr z_j}-i\mu_j\ol z_j\frac{\pr}{\pr x_{2n-1}}-
%c_jx_{2n-1}\frac{\pr}{\pr x_{2n-1}}+O(\abs{x}^2),\ j=1,\ldots,n-1\,.\]
%Set
%$y=(y_1,\ldots,y_{2n-1})$, $w_j=y_{2j-1}+iy_{2j}$, $j=1,\ldots,n-1$.
%Then, for $\varphi_-$ in Theorem~\ref{t-gue140305_a}, we have
%\begin{equation} \label{e-gue140205VIm}
%{\rm Im\,}\varphi_-(x,y)\geq c\sum^{2n-2}_{j=1}\abs{x_j-y_j}^2,\ \ c>0,
%\end{equation}
%in some neighbourhood of $(0,0)$ and
%\begin{equation} \label{e-gue140205VIIm}
%\begin{split}
%\varphi_-&(x, y)=-x_{2n-1}+y_{2n-1}+i\sum^{n-1}_{j=1}\abs{\mu_j}\abs{z_j-w_j}^2 \\
%&+\sum^{n-1}_{j=1}\Bigr(i\mu_j(\ol z_jw_j-z_j\ol w_j)+c_j(-z_jx_{2n-1}+w_jy_{2n-1})
%+\ol c_j(-\ol z_jx_{2n-1}+\ol w_jy_{2n-1})\Bigr)\\&+(x_{2n-1}-y_{2n-1})f(x, y) +O(\abs{(x, y)}^3),
%\end{split}
%\end{equation}
%where $f$ is smooth and satisfies $f(0,0)=0$, $f(x, y)=\ol f(y, x)$.
\end{thm}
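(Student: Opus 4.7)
The plan is to reduce Theorem~\ref{t-gue140305I} directly to Theorem~\ref{t-gue140205II}. The key observation is that in the proof of Theorem~\ref{t-gue140305_b}, the phase function $\varphi_-$ is not constructed from scratch: the spectral projector $\Pi^{(q)}_{\leq\lambda}$ is shown to be equivalent modulo smoothing on $D$ to the sum $S_-+S_+$ of approximate Szeg\H{o} kernels already built in Section~\ref{s-gue140205}. Consequently, the phase function $\varphi_-$ in the representation of $S_-$ can and should be taken identical to the one constructed in Theorem~\ref{t-gue140205I}.

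First I would trace through the construction of $\Pi^{(q)}_{\leq\lambda}$ that will be carried out in Section~\ref{s-gue140205I} and confirm that the approximate Szeg\H{o} kernels appearing there are indeed the same objects as in Theorem~\ref{t-gue140205I}. The role of the spectral cutoff is only to select the microlocal region where the eikonal equation \eqref{e-gue140205V} governs the behavior of $\Box^{(q)}_b$ near $\Sigma^-$; this region is captured by $S_-$ independently of $\lambda>0$, which is precisely the $\lambda$-independence statement of Theorem~\ref{t-gue140305_b}. Hence no genuinely new phase function is introduced.

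Once this identification is made, the conclusion of Theorem~\ref{t-gue140305I} is immediate: Theorem~\ref{t-gue140205II} asserts, in the distinguished coordinates described there—centered at $x_0\in D$ with $\omega_0(x_0)=dx_{2n-1}$, $x(x_0)=0$, and an orthonormal frame $\{W_j\}_{j=1}^{n-1}$ diagonalizing the Levi form at $x_0$—that the phase $\varphi_-$ satisfies both the positivity \eqref{e-gue140205VI} and the explicit Taylor expansion \eqref{e-gue140205VII}. Since the phase function here is literally the same function, there is nothing further to prove.

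The only subtlety, and the closest thing to an obstacle, is that the phase in the representation \eqref{e:fio} of a complex Fourier integral operator is not canonical: any two phases satisfying \eqref{e-gue140205IV} and \eqref{e-gue140205V} for the common principal symbol $p_0$ produce equivalent oscillatory integrals modulo smoothing, by the Melin--Sj\"ostrand theory of complex phase functions. This freedom is harmless for our purpose: we are simply fixing one distinguished representative of the equivalence class, namely the one produced in Theorem~\ref{t-gue140205I}, for which Theorem~\ref{t-gue140205II} supplies the required local coordinate description. Thus Theorem~\ref{t-gue140305I} is best viewed as a bookkeeping remark recording that the phase of the spectral projector inherits the geometry of the approximate Szeg\H{o} kernel, rather than as a computation requiring fresh work.
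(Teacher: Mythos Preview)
Your proposal is correct and matches the paper's approach exactly. The paper does not give a separate proof of Theorem~\ref{t-gue140305I}: Theorem~\ref{t-gue140207} establishes that $\Pi^{(q)}_{\leq\lambda}\equiv S_-+S_+$ on $D$ with $S_-$, $S_+$ literally the operators of Theorem~\ref{t-gue140205I}, so the phase $\varphi_-$ is the same function and Theorem~\ref{t-gue140205II} applies directly.
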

%-------------------
\begin{defn}\label{d-gue140305}
With the assumptions and notations used in Theorem~\ref{t-gue140305_b},
let $\varphi_1, \varphi_2\in \cC^\infty(D\times D)$. We assume that $\varphi_1$ and $\varphi_2$ satisfy
\eqref{e-gue140205IV} and \eqref{e-gue140205VI}.
We say that $\varphi_1$ and $\varphi_2$ are equivalent on $D$ if for any
$b_1(x,y,t)\in  S^{n-1}_{{\rm cl\,}}\big(D\times D\times\mathbb{R}_+,T^{*0,q}_yX\boxtimes T^{*0,q}_xX\big)$
we can find
$b_2(x,y,t)\in  S^{n-1}_{{\rm cl\,}}\big(D\times D\times\mathbb{R}_+,T^{*0,q}_yX\boxtimes T^{*0,q}_xX\big)$
such that
\[\int^\infty_0e^{i\varphi_1(x,y)t}b_1(x,y,t)dt\equiv e^{i\varphi_2(x,y)t}b_2(x,y,t)dt\ \ \mbox{on $D$}\]
and vise versa.
\end{defn}
%----------
We characterize now the phase $\varphi_-$ (see Section~\ref{s-gue140215}).
%----------
\begin{thm} \label{t-gue140305II}
With the assumptions and notations used in Theorem~\ref{t-gue140305_b}, let $\varphi_1\in \cC^\infty(D\times D)$.
We assume that $\varphi_1$ satisfies \eqref{e-gue140205IV} %\eqref{e-gue140205IVm} 
and \eqref{e-gue140205VI}. The functions
$\varphi_1$ and $\varphi_-$ are equivalent on $D$ in the sense of Definition~\ref{d-gue140305}
if and only if there is a function $h\in \cC^\infty(D\times D)$ such that $\varphi_1(x,y)-h(x,y)\varphi_-(x,y)$
vanishes to infinite order at $x=y$.
%for every $(x,x)\in D\times D$.
\end{thm}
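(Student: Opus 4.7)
My plan is to treat the two implications separately: sufficiency is a direct oscillatory-integral manipulation, whereas necessity is a uniqueness statement for phase functions of a positive complex Lagrangian, reducing to Theorem~\ref{t-gue140215} and ultimately to the framework of Melin-Sj\"ostrand \cite{MS74}.

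For sufficiency, suppose $r := \varphi_1 - h\varphi_-$ vanishes to infinite order on $\{x=y\}$ for some $h\in\cC^\infty(D\times D)$. Matching the first-order Taylor expansions at the diagonal via \eqref{e-gue140205IV} forces $h(x,x)=1$, so $h$ is nonvanishing on a neighborhood $U$ of the diagonal. Given $b_1\in S^{n-1}_{\rm cl}$, a cut-off reduces to the case ${\rm supp}\,b_1\subset U$, since the positivity of ${\rm Im}\,\varphi_\pm$ off the diagonal makes the off-diagonal contribution smoothing. On $U$, the substitution $s = h(x,y)t$ converts the phase $h\varphi_- t$ into $\varphi_- s$ and produces the new amplitude $b_1(x,y,s/h)/h\in S^{n-1}_{\rm cl}$. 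The residual factor $e^{irs/h}$ is absorbed asymptotically: expanding $e^{irs/h}-1 = \sum_{j\ge1}(irs/h)^j/j!$, each term carries a factor $r^j$ vanishing to infinite order at $\{x=y\}$, and the standard fact that $\int_0^\infty e^{i\varphi_- s}\rho(x,y)a(x,y,s)\,ds$ is smoothing whenever $\rho$ vanishes to infinite order at the diagonal (proved by integration by parts in $s$ exploiting ${\rm Im}\,\varphi_-\ge c\sum_{j=1}^{2n-2}|x_j-y_j|^2$ from \eqref{e-gue140205VI}) forces these terms to contribute only smoothing residuals. Borel summation then delivers the required $b_2\in S^{n-1}_{\rm cl}$. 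The reverse direction is symmetric.

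For necessity, assume $\varphi_1$ and $\varphi_-$ are equivalent. I would construct $h$ as a formal Taylor series along $\{x=y\}$ and then apply Borel's theorem. Testing the equivalence against a nonzero leading symbol (for example a constant $(0,q)$-form coefficient times $t^{n-1}$) produces an oscillatory-integral identity; applying the complex stationary phase formula of \cite{MS74} to both sides and comparing the resulting principal symbols on the diagonal forces the two positive Lagrangians generated by $\varphi_1$ and $\varphi_-$ to coincide to infinite order along the diagonal. By the Melin-Sj\"ostrand uniqueness of phase functions for a positive Lagrangian, this is equivalent to the existence of smooth $h$ with $\varphi_1 - h\varphi_-$ vanishing to infinite order at $\{x=y\}$. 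The Taylor coefficients of $h$ along the diagonal are determined inductively, at each order cancelling the leading behaviour of $\varphi_1 - h_k\varphi_-$; the obstructions to this cancellation vanish thanks to \eqref{e-gue140205IV} and \eqref{e-gue140205V} holding for both phases.

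The hard part will be the necessity direction, which is precisely the classical uniqueness of phase functions generating a given positive complex Lagrangian. Rather than reproving this from scratch, I would invoke Theorem~\ref{t-gue140215}, which is designed to encode exactly the needed uniqueness statement in the form relevant here.
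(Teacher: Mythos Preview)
Your proposal is essentially correct and aligns with the paper's intent: the paper itself omits the proof, stating only that it ``is essentially the same as the proof of Theorem~\ref{t-gue140215} and therefore will be omitted.'' Your sufficiency argument (change of variables $s=ht$, Taylor-expand $e^{irs/h}$, absorb flat remainders via Borel summation) is the standard one and is fine.

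One clarification on necessity: you cannot literally \emph{invoke} Theorem~\ref{t-gue140215}, since that theorem is a special instance (for the particular phase $\varphi$ produced by Lemma~\ref{l-gue140214}) rather than a general uniqueness tool. What you should do---and what the paper means---is \emph{repeat its proof method}. Concretely, that method (Section~\ref{s-gue140215}) is more hands-on than your abstract Melin--Sj\"ostrand framing: fix $p\in D$, use the Malgrange preparation theorem to write both $\varphi_1$ and $\varphi_-$ as $f_j(x,y)(y_{2n-1}+h_j(x,y'))$; then test the equivalence hypothesis against $e^{-iky_{2n-1}}\chi(y_{2n-1})u(y')$ to produce two $k$-dependent operators whose kernels, by stationary phase, are $e^{ikh_j(x,y')}g_j(x,y',k)$ with $g_j\in S^{n-1}_{\rm loc,cl}(1)$ and nonvanishing leading terms at $(x_0,x_0')$; finally, assume $h_1-h_2$ has a lowest nonvanishing Taylor coefficient at $(x_0,x_0')$ and derive a contradiction by comparing the $k^{-n}$-rescaled derivatives. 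This semi-classical reduction is what actually does the work, and it is slightly more concrete than invoking positive-Lagrangian uniqueness in the abstract. Your high-level description (``compare principal symbols, conclude the Lagrangians agree to infinite order'') is morally the same idea, but the paper's explicit $k$-parameter argument is what makes it rigorous without appealing to the full Melin--Sj\"ostrand machinery as a black box.
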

%----------
The proof of Theorem~\ref{t-gue140305II} is essentially the same as the proof
of Theorem~\ref{t-gue140215} and therefore will be omitted.

We give the formulas of the leading terms of the asymptotic expansions of the symbols
$s_\pm(x,y)$ from Theorem~\ref{t-gue140305_b}.
%in \eqref{e-gue140205IIIm}.
%----------
\begin{thm} \label{t-gue140305III}
With the assumptions and notations used in Theorem~\ref{t-gue140305_b}, and the notations
\eqref{levi140530}, \eqref{n140530}, \eqref{tau140530}, we have
for a given point $x_0\in D$,
%let $\{W_j\}_{j=1}^{n-1}$ be an
%orthonormal frame of $T^{1,0}X$ near $x_0$, for which the Levi form
%is diagonal at $x_0$. Put $\mathcal{L}_{x_0}(W_j,\ol W_j)=\mu_j(x_0)$, $j=1,\ldots,n-1$\,. Let $\{T_j\}_{j=1}^{n-1}$ denote the
%dual basis of $T^{*0,1}X$, dual to $\{\ol W_j\}^{n-1}_{j=1}$. We assume that
%$\mu_j(x_0)<0$ if\, $1\leq j\leq n_-$ and $\mu_j(x_0)>0$ if\, $n_-+1\leq j\leq n-1$. Put
%\[\begin{split}
%&\mathcal{N}(x_0,n_-):=\set{cT_1(x_0)\wedge\ldots\wedge T_{n_-}(x_0);\, c\in\Complex},\\
%&\mathcal{N}(x_0,n_+):=\set{cT_{n_-+1}(x_0)\wedge\ldots\wedge T_{n-1}(x_0);\, c\in\Complex}\end{split}\]
%and let
%\[\begin{split}
%&\tau_{x_0,n_-}:T^{*0,q}_{x_0}X\To\mathcal{N}(x_0,n_-),\\
%&\tau_{x_0,n_+}:T^{*0,q}_{x_0}X\To\mathcal{N}(x_0,n_+)\end{split}\]
%be the orthogonal projections onto $\mathcal{N}(x_0,n_-)$ and $\mathcal{N}(x_0,n_+)$
% with respect to $\langle\,\cdot\,|\,\cdot\,\rangle$ respectively and let $v(x)$ be the volume form induced by 
%$\langle\,\cdot\,|\,\cdot\,\rangle$.
%for $q=n_-$ and for $s^0_-(x,y)$ given by \eqref{e-gue140205IIIm}, we have
%\begin{equation}\label{e-gue140205VIIIm}
%s^0_-(x_0, x_0)=\frac{1}{2}\pi^{-n}\abs{\det\mathcal{L}_{x_0}}\frac{v(x_0)}{m(x_0)}\tau_{x_0,n_-}\,,\:\:\text{for $q=n_-$}.
%\end{equation}
%Similarly, if $q=n_+$, then for $s^0_+(x,y)$ given by \eqref{e-gue140205IIIm}, we have
%\begin{equation}\label{e-gue140205VIIIbm}
%s^0_+(x_0, x_0)=\frac{1}{2}\pi^{-n}\abs{\det\mathcal{L}_{x_0}}\frac{v(x_0)}{m(x_0)}\tau_{x_0,n_+}.
%\end{equation}
\begin{equation}\label{e-gue140205VIIIm}
\begin{split}
s^0_-(x_0, x_0)=\frac{1}{2}\pi^{-n}\abs{\det\mathcal{L}_{x_0}}\frac{v(x_0)}{m(x_0)}\tau_{x_0,n_-}
\,,\:\:\text{for $q=n_-$}\,,\\
s^0_+(x_0, x_0)=\frac{1}{2}\pi^{-n}\abs{\det\mathcal{L}_{x_0}}\frac{v(x_0)}{m(x_0)}\tau_{x_0,n_+}
\,,\:\:\text{for $q=n_+$}\,.
\end{split}
\end{equation}

\end{thm}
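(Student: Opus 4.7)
The plan is to reduce \eqref{e-gue140205VIIIm} to the corresponding formula already established in Theorem \ref{t-gue140205III} for the approximate Szeg\H{o} kernels produced by the microlocal Hodge decomposition. The bridge is Theorem \ref{t-gue140305_b}, which asserts that, modulo smoothing on $D$, the spectral projector $\Pi^{(q)}_{\leq\lambda}$ coincides with the sum $S_-+S_+$ of the local oscillatory integrals
\[
S_{\pm}(x,y)\equiv\int_{0}^{\infty}\!e^{i\varphi_{\pm}(x,y)t}s_{\pm}(x,y,t)\,dt,
\]
and that this decomposition may be chosen so that the phases $\varphi_{\pm}$ satisfy the same eikonal conditions \eqref{e-gue140205IV}, \eqref{e-gue140205V} as those constructed in Theorem \ref{t-gue140205I}. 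Since the leading diagonal value $s^{0}_{\pm}(x_0,x_0)$ is a local quantity, the task is to check that it does not change when we pass from one admissible phase-symbol representation to the other.

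First, I would establish that $s^{0}_{\pm}(x,x)$ is an invariant of the equivalence class of phase functions introduced in Definition \ref{d-gue140305}. Concretely, if $\varphi_1$ and $\varphi_2$ are two phases both satisfying \eqref{e-gue140205IV} and \eqref{e-gue140205VI}, then Theorem \ref{t-gue140305II} provides $h\in\mathscr{C}^\infty(D\times D)$ with $\varphi_1-h\varphi_2$ flat on the diagonal; the normalisations $d_x\varphi_j|_{x=y}=-\omega_0$, $d_y\varphi_j|_{x=y}=\omega_0$ force $h(x,x)=1$. Performing the change of variables $t\mapsto t/h(x,y)$ in the oscillatory integral and applying the standard method for massaging a symbol across a flat phase correction (Melin--Sj\"ostrand's calculus for complex FIOs, as used already in the proof of Theorem \ref{t-gue140215}) preserves the restriction of the leading symbol to the diagonal.

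Next I would invoke this invariance together with Theorem \ref{t-gue140305_b}: the phases $\varphi_{\pm}$ appearing there lie in the equivalence class of the ones produced in Theorem \ref{t-gue140205I}, so the leading diagonal symbols $s^{0}_{\pm}(x_0,x_0)$ of Theorem \ref{t-gue140305_b} coincide with those of Theorem \ref{t-gue140205I}. The latter are computed explicitly in Theorem \ref{t-gue140205III} to equal $\tfrac{1}{2}\pi^{-n}|\det\mathcal{L}_{x_0}|\,\tfrac{v(x_0)}{m(x_0)}\,\tau_{x_0,n_{\pm}}$, which is exactly \eqref{e-gue140205VIIIm}.

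The only real content is the invariance statement in the second paragraph; once this is in place the theorem is a direct transcription. That invariance is the main technical point, but it is essentially a consequence of the Melin--Sj\"ostrand theory and the stationary-phase-type manipulations that are already used systematically in Section \ref{s-gue140215}, so no new ideas are required beyond careful bookkeeping.
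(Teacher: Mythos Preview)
Your argument is correct, but it does more work than the paper. In the paper, Theorem~\ref{t-gue140305III} is not given a separate proof because it is an immediate restatement of Theorem~\ref{t-gue140205III}: the proof of Theorem~\ref{t-gue140305_b} goes through Theorem~\ref{t-gue140207}, which shows $\Pi^{(q)}_{\leq\lambda}\equiv S_-+S_+$ on $D$ where $S_-$, $S_+$ are \emph{literally the operators constructed in Theorem~\ref{t-gue140205I}}. Hence the symbols $s_\pm$ of Theorem~\ref{t-gue140305_b} are the same as those of Theorem~\ref{t-gue140205I}, and the leading-term formula~\eqref{e-gue140205VIIIm} is nothing other than~\eqref{e-gue140205VIII}--\eqref{e-gue140205VIIIb}.

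Your route---treating Theorem~\ref{t-gue140305_b} as a black box and then proving that the diagonal leading symbol is invariant under the phase-equivalence of Definition~\ref{d-gue140305}---is valid and in a sense more robust: it would still work if the $S_\pm$ in Theorem~\ref{t-gue140305_b} were produced by some other construction yielding merely equivalent (rather than identical) phase-symbol data. But since the paper's proof of Theorem~\ref{t-gue140305_b} already pins down the $S_\pm$ explicitly, the invariance argument is unnecessary here.
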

%------------
Recall that $\Pi^{(q)}_{\leq\lambda}$ is given by \eqref{e-suXI-I}.
Let $\lambda\geq0$. From the spectral theory for self-adjoint operators (see Davies~\cite{Dav95}), it is well-known that
\[\Pi^{(q)}_{\leq\lambda}:L^2_{(0,q)}(X)\To{\rm Dom\,}\Box^{(q)}_b\,,\quad
\Pi^{(q)}_{\leq\lambda}\Box^{(q)}_b=\Box^{(q)}_b\Pi^{(q)}_{\leq\lambda}\:\: 
\text{on ${\rm Dom\,}\Box^{(q)}_b$}\,\]
%\[\Pi^{(q)}_{\leq\lambda}:L^2_{(0,q)}(X)\To{\rm Dom\,}\Box^{(q)}_b,\]
%\[\mbox{$\Pi^{(q)}_{\leq\lambda}\Box^{(q)}_b=\Box^{(q)}_b\Pi^{(q)}_{\leq\lambda}$ on ${\rm Dom\,}\Box^{(q)}_b$}\]
and $\Pi^{(q)}_{\leq\lambda}\Box^{(q)}_b:{\rm Dom\,}\Box^{(q)}_b\To L^2_{(0,q)}(X)$ is continuous.
Since ${\rm Dom\,}\Box^{(q)}_b$ is dense in $L^2_{(0,q)}(X)$, we can extend $\Pi^{(q)}_{\leq\lambda}\Box^{(q)}_b$
continuously to $L^2_{(0,q)}(X)$ in the standard way.
Similarly, for every $m\in\mathbb N$, we can extend $\Pi^{(q)}_{\leq\lambda}(\Box^{(q)}_b)^m$
continuously to $L^2_{(0,q)}(X)$
and we have
\begin{equation}\label{e-gue140205Ia}
\begin{split}
&\mbox{$(\Box^{(q)}_b)^m\Pi^{(q)}_{\leq\lambda}=\Pi^{(q)}_{\leq\lambda}(\Box^{(q)}_b)^m$ on $L^2_{(0,q)}(X)$},\\
&\mbox{$(\Box^{(q)}_b)^m\Pi^{(q)}_{\leq\lambda}=\Pi^{(q)}_{\leq\lambda}(\Box^{(q)}_b)^m:L^2_{(0,q)}(X)\To{\rm Dom\,}
\Box^{(q)}_b$ is continuous}.
\end{split}
\end{equation}
%----------------------
Now, we fix $\lambda>0$. It is clearly that there is a continuous operator
\[N^{(q)}_\lambda:L^2_{(0,q)}(X)\To{\rm Dom\,}\Box^{(q)}_b\]
such that
\begin{equation}\label{e-gue140206}
\begin{split}
&\mbox{$\Box^{(q)}_bN^{(q)}_\lambda+\Pi^{(q)}_{\leq\lambda}=I$ on $L^2_{(0,q)}(X)$},\\
&\mbox{$N^{(q)}_\lambda\Box^{(q)}_b+\Pi^{(q)}_{\leq\lambda}=I$ on ${\rm Dom\,}\Box^{(q)}_b$}.
\end{split}
\end{equation}
Until further notice, we assume that the Levi form is non-degenerate of constant signature $(n_-,n_+)$ 
at each point of an open set $D\Subset X$ and we work on $D$. We need

\begin{thm}\label{t-gue140206}
With the assumptions and notations used above, let $q=n_-$ or $n_+$. We have
\[\mbox{$\Box^{(q)}_b\Pi^{(q)}_{\leq\lambda}\equiv0$ on $D$}.\]
%Moreover, for every $s\in\mathbb N_0$, we have
%\begin{equation}\label{e-gue140210IV}
%\begin{split}
%&\mbox{$\Box^{(q)}_b\Pi^{(q)}_{\leq\lambda}:H^{-s}_{{\rm comp\,}}(X,T^{*0,q}X)\To H^0_{{\rm loc\,}}%(X,T^{*0,q}X)$ is continuous},\\
%&\mbox{$\Box^{(q)}_b\Pi^{(q)}_{\leq\lambda}:H^0_{{\rm comp\,}}(X,T^{*0,q}X)\To H^s_{{\rm loc\,}}%(X,T^{*0,q}X)$ is continuous}.
%\end{split}\end{equation}
\end{thm}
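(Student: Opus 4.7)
My plan is to combine the spectral bound $\|(\Box^{(q)}_b)^m\Pi^{(q)}_{\leq\lambda}\|_{L^2\to L^2}\leq\lambda^m$ (immediate from the spectral theorem via \eqref{e-gue140205Ia}, valid for every $m\in\mathbb N$) with the microlocal Hodge decomposition of Theorem~\ref{t-gue140205I} in a Sobolev iteration, showing that $\Box^{(q)}_b\Pi^{(q)}_{\leq\lambda}$ maps $\mathscr E'(D,T^{*0,q}X)$ continuously into $\cC^\infty(D,T^{*0,q}X)$, hence has smooth kernel on $D\times D$.

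First I set up the algebraic identities on $D$. From \eqref{e-gue140205II} I dispose, after taking formal adjoints with respect to $(\,\cdot\,|\,\cdot\,)$ and using $A^*\equiv A$ and $S_\pm^*\equiv S_\pm$ on $D$, of the parametrix identity $A\Box^{(q)}_b+S_-+S_+\equiv I$ on $D$; taking adjoints of $\Box^{(q)}_b S_\pm\equiv 0$ gives $S_\pm\Box^{(q)}_b\equiv 0$ on $D$. Set $R_0:=I-A\Box^{(q)}_b-S_--S_+$, which is smoothing on $D$. The operators $(S_-+S_+)\Box^{(q)}_b$ and $R_0$ have smooth, properly-supported kernels in $D\times D$, so they extend to continuous maps $\mathscr D'(X,T^{*0,q}X)\to\cC^\infty(D,T^{*0,q}X)$ via the pairing $x\mapsto\langle R(x,\cdot),v\rangle$ for $x\in D$, with smoothness in $x$ by differentiation under the integral sign.

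The core is a double Sobolev iteration. I first claim that for every $k\geq 1$ and $N\geq 0$ the operator $(\Box^{(q)}_b)^k\Pi^{(q)}_{\leq\lambda}$ is continuous $L^2_{(0,q)}(X)\to H^N_{{\rm loc}}(D,T^{*0,q}X)$, proved by induction on $N$. The case $N=0$ is the spectral bound. For the inductive step, composing the parametrix on the right of $(\Box^{(q)}_b)^k\Pi^{(q)}_{\leq\lambda}$ gives
\[
(\Box^{(q)}_b)^k\Pi^{(q)}_{\leq\lambda}\equiv A(\Box^{(q)}_b)^{k+1}\Pi^{(q)}_{\leq\lambda}+(S_-+S_+)\Box^{(q)}_b(\Box^{(q)}_b)^{k-1}\Pi^{(q)}_{\leq\lambda}+R_0(\Box^{(q)}_b)^k\Pi^{(q)}_{\leq\lambda}
\]
on $D$. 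The first term lies in $H^{N+1}_{{\rm loc}}(D)$ by Calder\'on--Vaillancourt \eqref{e-gue140205} (since $A\in L^{-1}_{\frac12,\frac12}$ gains one derivative) combined with the inductive hypothesis applied at the level $k+1$; the remaining two terms are smoothing-on-$D$ operators applied to $L^2(X)$ inputs and therefore lie in $\cC^\infty(D)\subset H^{N+1}_{{\rm loc}}(D)$. This proves the claim. Self-adjointness of $(\Box^{(q)}_b)^k\Pi^{(q)}_{\leq\lambda}$ provides by duality the complementary bound $(\Box^{(q)}_b)^k\Pi^{(q)}_{\leq\lambda}:H^{-M}_{{\rm comp}}(D,T^{*0,q}X)\to L^2_{(0,q)}(X)$ for every $M$, and rerunning the same Sobolev argument with the input space $L^2$ replaced throughout by $H^{-M}_{{\rm comp}}(D)$ upgrades this to a continuous $\mathscr E'(D,T^{*0,q}X)\to\cC^\infty(D,T^{*0,q}X)$.

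Specialising to $k=1$, the Schwartz kernel theorem then gives that the kernel of $\Box^{(q)}_b\Pi^{(q)}_{\leq\lambda}$ is smooth on $D\times D$, i.e.\ $\Box^{(q)}_b\Pi^{(q)}_{\leq\lambda}\equiv 0$ on $D$. The most delicate point, and my main obstacle, is verifying that a smoothing-on-$D$ operator $R$ composed with the mere $L^2$-bounded operator $B=(\Box^{(q)}_b)^{k-1}\Pi^{(q)}_{\leq\lambda}$ still yields an output in $\cC^\infty(D)$ when applied to an arbitrary $L^2(X)$ input. This is not automatic for an abstract $L^2$-bounded $B$; the resolution uses that $R$'s kernel is properly supported in $D\times D$, so that $R(x,\cdot)\in\cC^\infty_0(D)\subset L^2(X)$ for each $x\in D$ and $(RBu)(x)=\langle R(x,\cdot),Bu\rangle_{L^2}$ is smooth in $x$ by differentiation under the integral sign, uniformly on compact subsets of $D$.
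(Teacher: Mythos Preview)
Your approach is essentially the same as the paper's: combine the spectral bound on $(\Box^{(q)}_b)^m\Pi^{(q)}_{\leq\lambda}$ with the parametrix identity of Theorem~\ref{t-gue140205I}, bootstrap in Sobolev regularity, pass to negative Sobolev by self-adjointness, and bootstrap again. The paper uses $A^*\Box^{(q)}_b+S^*_-+S^*_+=I$ directly rather than your adjoint version $A\Box^{(q)}_b+S_-+S_++R_0=I$, but this is cosmetic.

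There is one genuine soft spot in your second iteration that the paper handles explicitly and you gloss over with ``rerunning the same Sobolev argument''. In your displayed identity the middle term is $[(S_-+S_+)\Box^{(q)}_b]\,(\Box^{(q)}_b)^{k-1}\Pi^{(q)}_{\leq\lambda}$, and to conclude it is smooth you need $(\Box^{(q)}_b)^{k-1}\Pi^{(q)}_{\leq\lambda}u$ to be at least a distribution on $D$. For $k\geq2$ this is in $L^2(X)$ by your duality step, but for $k=1$ it is $\Pi^{(q)}_{\leq\lambda}u$, and for $u\in H^{-M}_{{\rm comp}}(D)$ with $M>0$ you have \emph{not} established that $\Pi^{(q)}_{\leq\lambda}u$ makes sense as a distribution on $D$. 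The paper inserts an intermediate argument (see \eqref{e-gue140206aIV}--\eqref{e-gue140206b}) using the partial inverse $N^{(q)}_\lambda$ from \eqref{e-gue140206} to show first that $(S_-+S_+)-\Pi^{(q)}_{\leq\lambda}:H^{-s}_{{\rm comp}}(D)\to H^0_{{\rm loc}}(D)$, hence $\Pi^{(q)}_{\leq\lambda}$ extends to $H^{-s}_{{\rm comp}}(D)\to H^{-s}_{{\rm loc}}(D)$. Only after that does the second bootstrap go through. You should either insert this step or observe that your second iteration works cleanly for all $k\geq2$ and then treat $k=1$ separately using the established smoothness of $(\Box^{(q)}_b)^2\Pi^{(q)}_{\leq\lambda}$ together with such an extension of $\Pi^{(q)}_{\leq\lambda}$.
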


\begin{proof}
In view of \eqref{e-gue140205II}, we see that
\begin{equation}\label{e-gue140206I}
\mbox{$A^*\Box^{(q)}_b+S^*_-+S^*_+=I$ on $D$}.
\end{equation}
Note that $A^*$, $S^*_-$, $S^*_+$, $A$, $S_-$ and $S_+$ are properly supported. We recall that
\begin{equation}\label{e-gue140206II}
\begin{split}
&A^*, A:H^s_{{\rm comp\,}}(D,T^{*0,q}X)\To H^{s+1}_{{\rm comp\,}}(D,T^{*0,q}X),\ \ \forall s\in\mathbb Z,\\
&A^*, A:H^s_{{\rm loc\,}}(D,T^{*0,q}X)\To H^{s+1}_{{\rm loc\,}}(D,T^{*0,q}X),\ \ \forall s\in\mathbb Z,\\
&S_-^*, S_-, S_+^*,S_+:H^s_{{\rm comp\,}}(D,T^{*0,q}X)\To H^{s}_{{\rm comp\,}}(D,T^{*0,q}X),\ \ \forall s\in\mathbb Z,\\
&S_-^*, S_-, S_+^*,S_+:H^s_{{\rm loc\,}}(D,T^{*0,q}X)\To H^{s}_{{\rm loc\,}}(D,T^{*0,q}X),\ \ \forall s\in\mathbb Z.
\end{split}
\end{equation}
From \eqref{e-gue140206I}, we have
\begin{equation}\label{e-gue140206II-I}
A^*(\Box^{(q)}_b)^2\Pi^{(q)}_{\leq\lambda}+(S^*_-+S^*_+)\Box^{(q)}_b
\Pi^{(q)}_{\leq\lambda}=\Box^{(q)}_b\Pi^{(q)}_{\leq\lambda}.
\end{equation}
Since $(S^*_-+S^*_+)\Box^{(q)}_b\equiv0$ on $D$, we have
\begin{equation}\label{e-gue140206III}
(S^*_-+S^*_+)\Box^{(q)}_b\Pi^{(q)}_{\leq\lambda}:
H^0_{{\rm comp\,}}(D,T^{*0,q}X)\To H^{s}_{{\rm loc\,}}(D,T^{*0,q}X),\ \ \forall s\in\mathbb N_0.
\end{equation}
From \eqref{e-gue140205Ia} and \eqref{e-gue140206II}, we see that
\begin{equation}\label{e-gue140206IV}
A^*(\Box^{(q)}_b)^2\Pi^{(q)}_{\leq\lambda}:H^0_{{\rm comp\,}}(D,T^{*0,q}X)\To H^{1}_{{\rm loc\,}}(D,T^{*0,q}X).
\end{equation}
From \eqref{e-gue140206IV}, \eqref{e-gue140206III} and \eqref{e-gue140206II-I}, we conclude that
\begin{equation}\label{e-gue140206V}
\Box^{(q)}_b\Pi^{(q)}_{\leq\lambda}:H^0_{{\rm comp\,}}(D,T^{*0,q}X)\To H^{1}_{{\rm loc\,}}(D,T^{*0,q}X).
\end{equation}
Similarly, we can repeat the procedure above and deduce that
\begin{equation}\label{e-gue140206VI}
(\Box^{(q)}_b)^2\Pi^{(q)}_{\leq\lambda}:H^0_{{\rm comp\,}}(D,T^{*0,q}X)\To H^{1}_{{\rm loc\,}}(D,T^{*0,q}X).
\end{equation}
From \eqref{e-gue140206VI} and \eqref{e-gue140206II}, we get
\begin{equation}\label{e-gue140206VII}
A^*(\Box^{(q)}_b)^2\Pi^{(q)}_{\leq\lambda}:H^0_{{\rm comp\,}}(D,T^{*0,q}X)\To H^{2}_{{\rm loc\,}}(D,T^{*0,q}X).
\end{equation}
Combining \eqref{e-gue140206VII}, \eqref{e-gue140206III} with \eqref{e-gue140206II-I}, we obtain
\begin{equation}\label{e-gue140206VIII}
\Box^{(q)}_b\Pi^{(q)}_{\leq\lambda}:H^0_{{\rm comp\,}}(D,T^{*0,q}X)\To H^{2}_{{\rm loc\,}}(D,T^{*0,q}X).
\end{equation}
Continuing in this way, we deduce that
\begin{equation}\label{e-gue140206a}
\Box^{(q)}_b\Pi^{(q)}_{\leq\lambda}:H^0_{{\rm comp\,}}(D,T^{*0,q}X)\To 
H^{s}_{{\rm loc\,}}(D,T^{*0,q}X),\ \ \forall s\in\mathbb N_0.
\end{equation}
Since $\Box^{(q)}_b\Pi^{(q)}_{\leq\lambda}=\Pi^{(q)}_{\leq\lambda}\Box^{(q)}_b$,
\begin{equation}\label{e-gue140206aI}
\Pi^{(q)}_{\leq\lambda}\Box^{(q)}_{b}:H^0_{{\rm comp\,}}(D,T^{*0,q}X)\To H^{s}_{{\rm loc\,}}(D,T^{*0,q}X),\ \ 
\forall s\in\mathbb N_0.
\end{equation}
By taking adjoint in \eqref{e-gue140206aI}, we conclude that
\begin{equation}\label{e-gue140206aII}
\Box^{(q)}_b\Pi^{(q)}_{\leq\lambda}:H^{-s}_{{\rm comp\,}}(D,T^{*0,q}X)
\To H^{0}_{{\rm loc\,}}(D,T^{*0,q}X),\ \ \forall s\in\mathbb N_0.
\end{equation}
Similarly, we can repeat the procedure above and deduce that for every $m\in\mathbb N$,
\begin{equation}\label{e-gue140206aIII}
\begin{split}
&(\Box^{(q)}_b)^m\Pi^{(q)}_{\leq\lambda}:H^{-s}_{{\rm comp\,}}(D,T^{*0,q}X)\To 
H^{0}_{{\rm loc\,}}(D,T^{*0,q}X),\ \ \forall s\in\mathbb N_0,\\
&(\Box^{(q)}_b)^m\Pi^{(q)}_{\leq\lambda}:
H^{0}_{{\rm comp\,}}(D,T^{*0,q}X)\To H^{s}_{{\rm loc\,}}(D,T^{*0,q}X),\ \ \forall s\in\mathbb N_0.
\end{split}
\end{equation}
Now, from \eqref{e-gue140206}, we have
\begin{equation}\label{e-gue140206aIV}
(S^*_-+S^*_+)\Box^{(q)}_bN^{(q)}_\lambda+(S^*_-+S^*_+)\Pi^{(q)}_{\leq\lambda}=S^*_-+S^*_+.
\end{equation}
Since $(S^*_-+S^*_+)\Box^{(q)}_b\equiv0$ on $D$, from \eqref{e-gue140206aIV}, it is easy to see that
\begin{equation}\label{e-gue140206aV}
(S^*_-+S^*_+)-(S^*_-+S^*_+)\Pi^{(q)}_{\leq\lambda}:
H^0_{{\rm comp\,}}(D,T^{*0,q}X)\To H^s_{{\rm loc\,}}(D,T^{*0,q}X),\ \ \forall s\in\mathbb N_0.
\end{equation}
From \eqref{e-gue140206I}, we have
\begin{equation}\label{e-gue140206aVII}
A^*\Box^{(q)}_b\Pi^{(q)}_{\leq\lambda}+(S^*_-+S^*_+)\Pi^{(q)}_{\leq\lambda}=\Pi^{(q)}_{\leq\lambda}.
\end{equation}
From \eqref{e-gue140206II}, \eqref{e-gue140206aIII}, \eqref{e-gue140206aVII} and 
\eqref{e-gue140206aV}, it is not difficult to see that
\begin{equation}\label{e-gue140206aVI}
(S^*_-+S^*_+)-\Pi^{(q)}_{\leq\lambda}:H^{0}_{{\rm comp\,}}(D,T^{*0,q}X)\To 
H^s_{{\rm loc\,}}(D,T^{*0,q}X),\ \ \forall s\in\mathbb N_0
\end{equation}
and hence
\begin{equation}\label{e-gue140206aVIII}
(S_-+S_+)-\Pi^{(q)}_{\leq\lambda}:H^{-s}_{{\rm comp\,}}(D,T^{*0,q}X)\To 
H^0_{{\rm loc\,}}(D,T^{*0,q}X),\ \ \forall s\in\mathbb N_0.
\end{equation}
Combining \eqref{e-gue140206aVIII} with \eqref{e-gue140206II}, we deduce that for any $s\in\mathbb N_0$
we can extend 
$\Pi^{(q)}_{\leq\lambda}$ to the space 
$H^{-s}_{{\rm comp\,}}(D,T^{*0,q}X)$, and we have
\begin{equation}\label{e-gue140206b}
\Pi^{(q)}_{\leq\lambda}:H^{-s}_{{\rm comp\,}}(D,T^{*0,q}X)\To 
H^{-s}_{{\rm loc\,}}(D,T^{*0,q}X),\ \ \forall s\in\mathbb N_0.
\end{equation}
%-----------------
From \eqref{e-gue140206b} and note that $(S^*_-+S^*_+)\Box^{(q)}_b\equiv0$ on $D$, we have
\begin{equation}\label{e-gue140206bI}
(S^*_-+S^*_+)\Box^{(q)}_b\Pi^{(q)}_{\leq\lambda}:H^{-s}_{{\rm comp\,}}(D,T^{*0,q}X)
\To H^{s}_{{\rm loc\,}}(D,T^{*0,q}X),\ \ \forall s\in\mathbb N_0.
\end{equation}
From \eqref{e-gue140206bI}, \eqref{e-gue140206aIII}, \eqref{e-gue140206II-I} and \eqref{e-gue140206II}, we obtain
\begin{equation}\label{e-gue14020bII}
\Box^{(q)}_b\Pi^{(q)}_{\leq\lambda}:H^{-s}_{{\rm comp\,}}(D,T^{*0,q}X)
\To H^{1}_{{\rm loc\,}}(D,T^{*0,q}X),\ \ \forall s\in\mathbb N_0.
\end{equation}
Similarly, we can repeat the procedure above and deduce that
\begin{equation}\label{e-gue140206bIII}
(\Box^{(q)}_b)^2\Pi^{(q)}_{\leq\lambda}:
H^{-s}_{{\rm comp\,}}(D,T^{*0,q}X)\To H^{1}_{{\rm loc\,}}(D,T^{*0,q}X),\ \ \forall s\in\mathbb N_0.
\end{equation}
From \eqref{e-gue140206bIII} and \eqref{e-gue140206II}, we get
\begin{equation}\label{e-gue140206bIV}
A^*(\Box^{(q)}_b)^2\Pi^{(q)}_{\leq\lambda}:H^{-s}_{{\rm comp\,}}(D,T^{*0,q}X)
\To H^{2}_{{\rm loc\,}}(D,T^{*0,q}X),\ \ \forall s\in\mathbb N_0.
\end{equation}
Combining \eqref{e-gue140206bIV}, \eqref{e-gue140206bI} with \eqref{e-gue140206II-I}, we obtain
\[\Box^{(q)}_b\Pi^{(q)}_{\leq\lambda}:H^{-s}_{{\rm comp\,}}(D,T^{*0,q}X)
\To H^{2}_{{\rm loc\,}}(D,T^{*0,q}X),\ \ \forall s\in\mathbb N_0.\]
Continuing in this way, we deduce that
\[\Box^{(q)}_b\Pi^{(q)}_{\leq\lambda}:H^{-s}_{{\rm comp\,}}(D,T^{*0,q}X)
\To H^{\ell}_{{\rm loc\,}}(D,T^{*0,q}X),\ \ \forall s, \ell\in\mathbb N_0. \]
Hence, $\Box^{(q)}_b\Pi^{(q)}_{\leq\lambda}\equiv0$ on $D$. The theorem follows.
\end{proof}
%----------------------------
Now, we can prove one of the main results of this work.
%----------------------------
\begin{thm}\label{t-gue140207}
We assume that the Levi form is non-degenerate of constant signature $(n_-,n_+)$
at each point of an open set $D\Subset X$. Let $q=n_-$ or $n_+$. Then, for any $\lambda>0$, we have
\begin{equation}\label{e-gue140207}
\Pi^{(q)}_{\leq\lambda}\equiv S_-+S_+\quad\text{and}\quad
N^{(q)}_{\lambda}\equiv A\quad\text{on $D$},
\end{equation}
where $N^{(q)}_\lambda$ is given by \eqref{e-gue140206}, $S_-$, $S_+$ and $A$ are as in Theorem~\ref{t-gue140205I}.
\end{thm}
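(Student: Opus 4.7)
The strategy is to combine two inputs: the microlocal Hodge decomposition $\Box^{(q)}_b A+S_-+S_+\equiv I$ on $D$ from Theorem \ref{t-gue140205I}, and the smoothing relation $\Box^{(q)}_b\Pi^{(q)}_{\leq\lambda}\equiv 0$ on $D$ just established in Theorem \ref{t-gue140206} (together with its adjoint $\Pi^{(q)}_{\leq\lambda}\Box^{(q)}_b\equiv 0$ on $D$). In two stages I would first show that $\Pi^{(q)}_{\leq\lambda}\equiv S_-+S_+$ on $D$, and then deduce $N^{(q)}_\lambda\equiv A$ on $D$ by solving $\Box^{(q)}_b(N^{(q)}_\lambda-A)\equiv 0$ on $D$ via the parametrix $A$.

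For the first equivalence, a short adjoint computation gives $(S_-+S_+)\Box^{(q)}_b\equiv 0$ on $D$: indeed, taking the adjoint of $\Box^{(q)}_b S_\pm\equiv 0$ yields $S_\pm^*\Box^{(q)}_b\equiv 0$ on $D$, and by Theorem \ref{t-gue140205I}, $S_\pm\equiv S_\pm^*$ on $D$. Since $\Pi^{(q)}_{>\lambda}=\Box^{(q)}_b N^{(q)}_\lambda$ by \eqref{e-gue140206m}, the proper support of $S_\pm$ on $D$ (which ensures that the composition with the $L^2$-bounded operator $N^{(q)}_\lambda$ preserves the smoothing property on $D$) gives
\begin{equation*}
(S_-+S_+)\Pi^{(q)}_{>\lambda}=(S_-+S_+)\Box^{(q)}_b N^{(q)}_\lambda\equiv 0\quad\text{on }D.
\end{equation*}
Dually, applying $\Pi^{(q)}_{\leq\lambda}$ from the left to $\Box^{(q)}_b A+S_-+S_+\equiv I$ on $D$, and using $\Pi^{(q)}_{\leq\lambda}\Box^{(q)}_b\equiv 0$ on $D$ together with the proper support of $A$, one obtains $\Pi^{(q)}_{\leq\lambda}(S_-+S_+)\equiv\Pi^{(q)}_{\leq\lambda}$ on $D$; taking adjoints then gives $(S_-+S_+)\Pi^{(q)}_{\leq\lambda}\equiv\Pi^{(q)}_{\leq\lambda}$ on $D$. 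Writing $I=\Pi^{(q)}_{\leq\lambda}+\Pi^{(q)}_{>\lambda}$ and applying $(S_-+S_+)$ from the left then yields
\begin{equation*}
S_-+S_+\equiv(S_-+S_+)\Pi^{(q)}_{\leq\lambda}+(S_-+S_+)\Pi^{(q)}_{>\lambda}\equiv\Pi^{(q)}_{\leq\lambda}\quad\text{on }D.
\end{equation*}

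For the second equivalence, substitution in \eqref{e-gue140206m} gives $\Box^{(q)}_b N^{(q)}_\lambda=I-\Pi^{(q)}_{\leq\lambda}\equiv I-(S_-+S_+)\equiv\Box^{(q)}_b A$ on $D$, hence $\Box^{(q)}_b(N^{(q)}_\lambda-A)\equiv 0$ on $D$. Applying $A^*$ from the left and using the adjoint identity $A^*\Box^{(q)}_b+S_-^*+S_+^*\equiv I$ on $D$ (together with $S_\pm\equiv S_\pm^*$) yields
\begin{equation*}
N^{(q)}_\lambda-A\equiv(S_-+S_+)(N^{(q)}_\lambda-A)\quad\text{on }D.
\end{equation*}
Now $(S_-+S_+)A\equiv 0$ on $D$ by Theorem \ref{t-gue140205I}, while by the first equivalence $(S_-+S_+)N^{(q)}_\lambda\equiv\Pi^{(q)}_{\leq\lambda}N^{(q)}_\lambda$ on $D$, which vanishes for the canonical choice of $N^{(q)}_\lambda$ (the partial inverse of $\Box^{(q)}_b$ satisfying $\Pi^{(q)}_{\leq\lambda}N^{(q)}_\lambda=N^{(q)}_\lambda\Pi^{(q)}_{\leq\lambda}=0$). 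Consequently $N^{(q)}_\lambda\equiv A$ on $D$. The main technical obstacle throughout is justifying the composition calculus when the intervening factor $N^{(q)}_\lambda$ is only $L^2$-bounded with no compact microsupport; this is handled via the proper support of $S_\pm$ and $A$ on $D$, precisely in the manner already employed in the proof of Theorem \ref{t-gue140206}.
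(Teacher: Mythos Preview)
There is a genuine gap. Your parenthetical assertion that the proper support of $S_\pm$ ``ensures that the composition with the $L^2$-bounded operator $N^{(q)}_\lambda$ preserves the smoothing property on $D$'' is false as a general principle. If $R$ is properly supported and smoothing on $D$, then $RN^{(q)}_\lambda$ certainly maps $H^0_{\rm comp}(D)\to H^s_{\rm loc}(D)$ for all $s$ (since $N^{(q)}_\lambda u|_D\in L^2_{\rm loc}(D)$ and $R$ smooths), but there is no reason it should extend continuously to $H^{-s}_{\rm comp}(D)$: the operator $N^{(q)}_\lambda$ is only $L^2$-bounded and is not yet known to act on negative Sobolev spaces or to map $L^2$ into $C^\infty$ on $D$. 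Hence the step $(S_-+S_+)\Box^{(q)}_b N^{(q)}_\lambda\equiv0$ is unjustified, and without it your decomposition $S_-+S_+=(S_-+S_+)\Pi^{(q)}_{\leq\lambda}+(S_-+S_+)\Pi^{(q)}_{>\lambda}$ yields only the tautology $(S_-+S_+)-\Pi^{(q)}_{\leq\lambda}\equiv(S_-+S_+)\Pi^{(q)}_{>\lambda}$. The identical gap recurs in your second half when you write $(S_-+S_+)N^{(q)}_\lambda\equiv \Pi^{(q)}_{\leq\lambda}N^{(q)}_\lambda$.

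The paper avoids this by a two-sided sandwich. Setting $F=\Box^{(q)}_b(S_-+S_+)$ and $F_1=A^*\Box^{(q)}_b\Pi^{(q)}_{\leq\lambda}$ (both $\equiv0$ on $D$), one obtains from \eqref{e-gue140206} the \emph{exact} identity $S_-+S_++F_1^*-\Pi^{(q)}_{\leq\lambda}=N^{(q)}_\lambda F$, and multiplying by its adjoint gives
\[
\bigl(S^*_-+S^*_++F_1-\Pi^{(q)}_{\leq\lambda}\bigr)\bigl(S_-+S_++F^*_1-\Pi^{(q)}_{\leq\lambda}\bigr)=F^*(N^{(q)}_\lambda)^2F.
\]
Now $N^{(q)}_\lambda$ is flanked on \emph{both} sides by properly supported smoothing operators, so the right-hand side maps $\mathscr E'(D)\to\Omega^{0,q}_0(D)\subset L^2\to L^2\to\Omega^{0,q}(D)$ and is genuinely smoothing on $D$. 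Expanding the left-hand side and controlling each cross term yields $(S^*_-+S^*_+)(S_-+S_+)\equiv\Pi^{(q)}_{\leq\lambda}$, hence $S_-+S_+\equiv\Pi^{(q)}_{\leq\lambda}$. The argument for $N^{(q)}_\lambda\equiv A$ similarly rests on the sandwiched identity $(S^*_-+S^*_+)N^{(q)}_\lambda(S_-+S_+)=F^*(N^{(q)}_\lambda)^3F\equiv0$. Your outline can be repaired by inserting these two-sided smoothing sandwiches; one-sided compositions with $N^{(q)}_\lambda$ do not suffice.
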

%----------------------------------
\begin{proof}
Fix $\lambda>0$. From \eqref{e-gue140206I}, we have
\[\mbox{$A^*\Box^{(q)}_b\Pi^{(q)}_{\leq\lambda}+(S^*_-+S^*_+)\Pi^{(q)}_{\leq\lambda}=
\Pi^{(q)}_{\leq\lambda}$ on $D$}.\]
In view of Theorem~\ref{t-gue140206}, we see that
%----------------
\begin{equation}\label{e-gue140210}
\mbox{$(S^*_-+S^*_+)\Pi^{(q)}_{\leq\lambda}=\Pi^{(q)}_{\leq\lambda}-F_1$ on $D$,}
\end{equation}
where
%----------------
\begin{equation}\label{e-gue140211}
\begin{split}
&F_1=A^*\Box^{(q)}_b\Pi^{(q)}_{\leq\lambda},\\
&\mbox{$F_1\equiv0$ on $D$}.
\end{split}
\end{equation}
%----------------
On the other hand, from \eqref{e-gue140206}, we have
%----------------
\[N^{(q)}_\lambda\Box^{(q)}_b(S_-+S_+)+\Pi^{(q)}_{\leq\lambda}(S_-+S_+)=S_-+S_+.\]
%----------------
Since $\Box^{(q)}_b(S_-+S_+)\equiv0$ on $D$, we conclude that
%----------------
\begin{equation}\label{e-gue140210I}
\begin{split}
&S_-+S_+=\Pi^{(q)}_{\leq\lambda}(S_-+S_+)+N^{(q)}_\lambda F,\\
&S^*_-+S^*_+=(S^*_-+S^*_+)\Pi^{(q)}_{\leq\lambda}+F^*N^{(q)}_\lambda,
\end{split}
\end{equation}
%----------------
where $F\equiv0$ on $D$ and $F^*$ is the adjoint of $F$. Note that
$F$ and $F^*$ are properly supported on $D$. From \eqref{e-gue140210} and \eqref{e-gue140210I}, we deduce that
%----------------
\begin{equation}\label{e-gue140210II}
\begin{split}
&S_-+S_++F^*_1=\Pi^{(q)}_{\leq\lambda}+N^{(q)}_\lambda F,\\
&S^*_-+S^*_++F_1=\Pi^{(q)}_{\leq\lambda}+F^*N^{(q)}_\lambda,
\end{split}
\end{equation}
%----------------
where $F^*_1$ is the adjoint of $F_1$.
From \eqref{e-gue140210II}, we have
%----------------
\begin{equation}\label{e-gue140210III}
\mbox{$\Bigr(S^*_-+S^*_++F_1-\Pi^{(q)}_{\leq\lambda}\Bigr)
\Bigr(S_-+S_++F^*_1-\Pi^{(q)}_{\leq\lambda}\Bigr)=F^*(N^{(q)}_\lambda)^2F$ on $H^0_{{\rm comp\,}}(D,T^{*0,q}X)$}.
\end{equation}
%----------------
Since
%----------------
\[F^*(N^{(q)}_\lambda)^2F:
\mathscr E'(D,T^{*0,q}X)\To\Omega^{0,q}_0(X)\subset L^2_{(0,q)}(X)\To\Omega^{0,q}(X),\]
%----------------
we have $F^*(N^{(q)}_\lambda)^2F\equiv0$ on $D$. From this observation and \eqref{e-gue140210III}, we obtain
\begin{equation}\label{e-gue140211I}
\mbox{$\Bigr(S^*_-+S^*_++F_1-\Pi^{(q)}_{\leq\lambda}\Bigr)
\Bigr(S_-+S_++F^*_1-\Pi^{(q)}_{\leq\lambda}\Bigr)\equiv0$ on $D$}.
\end{equation}
Now,
\begin{equation}\label{e-gue140211II}
\begin{split}
&\Bigr(S^*_-+S^*_++F_1-\Pi^{(q)}_{\leq\lambda}\Bigr)\Bigr(S_-+S_++F^*_1-\Pi^{(q)}_{\leq\lambda}\Bigr)\\
&=(S^*_-+S^*_+)(S_-+S_+)+(S^*_-+S^*_+)F^*_1-(S^*_-+S^*_+)\Pi^{(q)}_{\leq\lambda}+F_1(S_-+S_+)\\
&\quad+F_1F^*_1-F_1\Pi^{(q)}_{\leq\lambda}-\Pi^{(q)}_{\leq\lambda}(S_-+S_+)-
\Pi^{(q)}_{\leq\lambda}F^*_1+\Pi^{(q)}_{\leq\lambda}.
\end{split}
\end{equation}
Since $F_1\equiv0$ on $D$ and $S_-$, $S_+$ are properly supported on $D$, it is clearly that
$F_1(S_-+S_+)$ and $(S^*_-+S^*_+)F^*_1$ are well-defined and
\begin{equation}\label{e-gue140211III}
F_1(S_-+S_+)\equiv0\,,\quad
(S^*_-+S^*_+)F^*_1\equiv0\quad\text{on $D$}.
\end{equation}
%\begin{equation}\label{e-gue140211III}
%\begin{split}
%&\mbox{$F_1(S_-+S_+)\equiv0$ on $D$},\\
%&\mbox{$(S^*_-+S^*_+)F^*_1\equiv0$ on $D$}.
%\end{split}
%\end{equation}
From \eqref{e-gue140211} and Theorem~\ref{t-gue140206}, we see that
\begin{equation}\label{e-gue140211IV}
\mbox{$F_1\Pi^{(q)}_{\leq\lambda}=A^*\Box^{(q)}_b(\Pi^{(q)}_{\leq\lambda})^2=
A^*\Box^{(q)}_b\Pi^{(q)}_{\leq\lambda}\equiv0$ on $D$}
\end{equation}
and hence
\begin{equation}\label{e-gue140211V}
\mbox{$\Pi^{(q)}_{\leq\lambda}F^*_1\equiv0$ on $D$}.
\end{equation}
From \eqref{e-gue140211}, we see that $F_1F^*_1=A^*(\Box^{(q)}_b)^2\Pi^{(q)}_{\leq\lambda}A$. 
Since $A$ is properly supported, $F_1F^*_1$ is well-defined as a continuous operator
\[F_1F^*_1:\Omega^{0,q}_0(D)\To\mathscr D'(D,T^{*0,q}X).\]
Moreover, from the proof of Theorem~\ref{t-gue140206}, we see that 
$(\Box^{(q)}_b)^2\Pi^{(q)}_{\leq\lambda}\equiv0$ on $D$. Thus,
\begin{equation}\label{e-gue140211VI}
\mbox{$F_1F^*_1=A^*(\Box^{(q)}_b)^2\Pi^{(q)}_{\leq\lambda}A\equiv0$ on $D$}.
\end{equation}
From \eqref{e-gue140210}, \eqref{e-gue140211II}, \eqref{e-gue140211III}, 
\eqref{e-gue140211IV}, \eqref{e-gue140211V} and \eqref{e-gue140211VI}, it is straightforward to see that
\begin{equation}\label{e-gue140211VII}
\begin{split}
\Bigr(&S^*_-+S^*_++F_1-\Pi^{(q)}_{\leq\lambda}\Bigr)\Bigr(S_-+S_++F^*_1-
\Pi^{(q)}_{\leq\lambda}\Bigr)\\&\equiv(S^*_-+S^*_+)(S_-+S_+)-\Pi^{(q)}_{\leq\lambda}\:\:\text{on $D$}.
\end{split}
\end{equation}
From \eqref{e-gue140211VII} and \eqref{e-gue140211I}, we conclude that
\begin{equation}\label{e-gue140211VIII}
\mbox{$(S^*_-+S^*_+)(S_-+S_+)\equiv\Pi^{(q)}_{\leq\lambda}$ on $D$}.
\end{equation}
From \eqref{e-gue140205II}, it is not difficult to see that $(S^*_-+S^*_+)(S_-+S_+)\equiv S_-+S_+$ on $D$. 
Combining this observation with \eqref{e-gue140211VIII}, we get
\begin{equation}\label{e-gue140211aI}
\mbox{$S_-+S_+\equiv\Pi^{(q)}_{\leq\lambda}$ on $D$}.
\end{equation}
The first formula in \eqref{e-gue140207} follows.
We now prove the second formula in \eqref{e-gue140207}. We first claim that
\begin{equation}\label{e-gue140211aII}
\mbox{$(S^*_-+S^*_+)N^{(q)}_\lambda(S_-+S_+)\equiv0$ on $D$}.
\end{equation}
From \eqref{e-gue140210II}, \eqref{e-gue140211} and notice that 
$N^{(q)}_\lambda\Pi^{(q)}_{\leq\lambda}=\Pi^{(q)}_{\leq\lambda}N^{(q)}_\lambda=0$, we have
\[N^{(q)}_\lambda(S_-+S_+)=(N^{(q)}_\lambda)^2F\]
and hence
\begin{equation}\label{e-gue140211aIII}
(N^{(q)}_\lambda)^2(S_-+S_+)=(N^{(q)}_\lambda)^3F.
\end{equation}
From \eqref{e-gue140210II}, \eqref{e-gue140211} and \eqref{e-gue140211aIII}, we have
\begin{equation}\label{e-gue140211aIV}
\begin{split}
(S^*_-+S^*_+)N^{(q)}_\lambda(S_-+S_+)&=(\Pi^{(q)}_{\leq\lambda}+F^*N^{(q)}_\lambda-F_1)N^{(q)}_\lambda(S_-+S_+)\\
&=F^*(N^{(q)}_\lambda)^2(S_-+S_+)\\
&\mbox{$=F^*(N^{(q)}_\lambda)^3F\equiv0$ on $D$}.
\end{split}
\end{equation}
The claim \eqref{e-gue140211aII} follows.
On $D$, we have
\begin{equation}\label{e-gue140211aV}
N^{(q)}_\lambda=(A^*\Box^{(q)}_b+S^*_-+S^*_+)N^{(q)}_\lambda=
A^*(I-\Pi^{(q)}_{\leq\lambda})+(S^*_-+S^*_+)N^{(q)}_\lambda
\end{equation}
and
\begin{equation}\label{e-gue140211aVI}
N^{(q)}_\lambda=N^{(q)}_\lambda(\Box^{(q)}_bA+S_-+S_+)=(I-\Pi^{(q)}_{\leq\lambda})A+N^{(q)}_\lambda(S_-+S_+).
\end{equation}
From \eqref{e-gue140211aV} and \eqref{e-gue140211aVI}, we have
\begin{equation}\label{e-gue140211aVII}
\begin{split}
N^{(q)}_\lambda&=(I-\Pi^{(q)}_{\leq\lambda})A+N^{(q)}_\lambda(S_-+S_+)\\
&=A-\Pi^{(q)}_{\leq\lambda}A+\Bigr(A^*(I-\Pi^{(q)}_{\leq\lambda})+
(S^*_-+S^*_+)N^{(q)}_\lambda\Bigr)\Bigr(S_-+S_+\Bigr)\\
&=A-\Pi^{(q)}_{\leq\lambda}A+A^*(I-\Pi^{(q)}_{\leq\lambda})(S_-+S_+)+(S^*_-+S^*_+)N^{(q)}_\lambda(S_-+S_+).
\end{split}
\end{equation}
From \eqref{e-gue140211aII}, \eqref{e-gue140211aI} and noting that 
$(S_-+S_+)A\equiv0$ on $D$, $A^*(S_-+S_+)\equiv0$ on $D$, we conclude that
\[\Pi^{(q)}_{\leq\lambda}A+A^*(I-\Pi^{(q)}_{\leq\lambda})(S_-+S_+)+(S^*_-+S^*_+)N^{(q)}_\lambda(S_-+S_+)\equiv0
\:\:\text{on $D$}.\]
From this and \eqref{e-gue140211aVII}, we get the second formula in \eqref{e-gue140207}. The theorem follows.
\end{proof}
%----------------------------
By using Theorem~\ref{t-gue140205}, we can repeat the proof of Theorem~\ref{t-gue140207} and conclude the following.
%----------------------------
\begin{thm}\label{t-gue140211}
We assume that the Levi form is non-degenerate of constant signature $(n_-,n_+)$
at each point of an open set $D\Subset X$. Assume that $q\notin\set{n_-,n_+}$. Then, for any $\lambda>0$, we have
\begin{equation}\label{e-gue140211aVIII}
\Pi^{(q)}_{\leq\lambda}\equiv 0\quad\text{and}\quad
N^{(q)}_{\lambda}\equiv A\quad \text{on $D$},
\end{equation}
where $N^{(q)}_\lambda$ is given by \eqref{e-gue140206}, and $A$ is as in Theorem~\ref{t-gue140205}.
\end{thm}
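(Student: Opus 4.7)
The plan is to mirror the two-step structure of the proof of Theorem~\ref{t-gue140207}, but using the simpler parametrix provided by Theorem~\ref{t-gue140205}. Since $q \notin \{n_-, n_+\}$, Theorem~\ref{t-gue140205} gives a properly supported $A \in L^{-1}_{\frac{1}{2},\frac{1}{2}}(D, T^{*0,q}X \boxtimes T^{*0,q}X)$ with $\Box^{(q)}_b A \equiv I$ on $D$, so taking adjoints also $A^* \Box^{(q)}_b \equiv I$ on $D$. There is no $S_\pm$ component in the parametrix, which is what ultimately forces $\Pi^{(q)}_{\leq\lambda} \equiv 0$ rather than $\Pi^{(q)}_{\leq\lambda} \equiv S_-+S_+$.

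First I would prove the analog of Theorem~\ref{t-gue140206}, namely $\Box^{(q)}_b \Pi^{(q)}_{\leq\lambda} \equiv 0$ on $D$, by running the same bootstrap. From $A^* \Box^{(q)}_b \equiv I$ we obtain
\[
\Box^{(q)}_b \Pi^{(q)}_{\leq\lambda} \equiv A^* (\Box^{(q)}_b)^2 \Pi^{(q)}_{\leq\lambda} \quad\text{on } D.
\]
Using \eqref{e-gue140205Ia}, the right-hand side maps $L^2_{(0,q)}(X)$ continuously into $H^1_{\rm loc}(D, T^{*0,q}X)$; iterating with $(\Box^{(q)}_b)^m$ in place of $\Box^{(q)}_b$ gives $H^0_{\rm comp} \to H^s_{\rm loc}$ for every $s \in \N_0$, and then taking adjoints extends this to $H^{-s}_{\rm comp} \to H^s_{\rm loc}$. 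This is exactly the scheme carried out in the proof of Theorem~\ref{t-gue140206}, and the absence of the $S_\pm^*$ terms only makes the bookkeeping easier.

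Next I would upgrade to $\Pi^{(q)}_{\leq\lambda} \equiv 0$ on $D$ by applying $A^*$ to the identity just proved:
\[
\Pi^{(q)}_{\leq\lambda} \equiv A^* \Box^{(q)}_b \Pi^{(q)}_{\leq\lambda} \equiv 0 \quad\text{on } D,
\]
where in the last step I use that $A^*$ is properly supported on $D$ and that $\Box^{(q)}_b \Pi^{(q)}_{\leq\lambda}$ extends as a smoothing operator on $H^{-s}_{\rm comp}(D)$ by the previous step. For the second statement, starting from \eqref{e-gue140206} and composing on the right with $A$ gives
\[
N^{(q)}_\lambda \Box^{(q)}_b A + \Pi^{(q)}_{\leq\lambda} A = A,
\]
so $N^{(q)}_\lambda = A - \Pi^{(q)}_{\leq\lambda} A + N^{(q)}_\lambda(I - \Box^{(q)}_b A)$. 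The term $\Pi^{(q)}_{\leq\lambda} A$ is smoothing on $D$ since $\Pi^{(q)}_{\leq\lambda} \equiv 0$ there and $A$ is properly supported; the term $N^{(q)}_\lambda(I - \Box^{(q)}_b A)$ is smoothing on $D$ by the same bootstrap already used, combined with the continuity of $N^{(q)}_\lambda$ acting between the relevant Sobolev spaces.

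The main obstacle is not conceptual but bookkeeping: $N^{(q)}_\lambda$ and $\Pi^{(q)}_{\leq\lambda}$ are global operators that are not a priori properly supported, so the compositions $\Pi^{(q)}_{\leq\lambda} A$ and $N^{(q)}_\lambda(I-\Box^{(q)}_b A)$ must be controlled by the Sobolev-extension/bootstrap mechanism from Theorem~\ref{t-gue140206} rather than by pseudolocality alone. Once that machinery is reused verbatim, the conclusion \eqref{e-gue140211aVIII} follows without any further ingredient.
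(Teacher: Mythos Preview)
Your proposal is correct and follows exactly the approach the paper intends: the paper's own proof of Theorem~\ref{t-gue140211} is the single sentence ``By using Theorem~\ref{t-gue140205}, we can repeat the proof of Theorem~\ref{t-gue140207},'' and you have spelled out precisely how that repetition goes, with the $S_\pm$ terms absent and the smoothing remainder $I-\Box^{(q)}_bA$ playing their role. One small sharpening: to show $N^{(q)}_\lambda(I-\Box^{(q)}_bA)\equiv0$ on $D$ cleanly, use \emph{both} parametrix identities as in \eqref{e-gue140211aV}--\eqref{e-gue140211aVII}, so that $N^{(q)}_\lambda$ ends up sandwiched between two properly supported smoothing remainders (the analogue of the $F^*(N^{(q)}_\lambda)^3F$ argument in \eqref{e-gue140211aIV}); your ``same bootstrap'' remark points to this but does not quite name it.
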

%-----------
From Theorem~\ref{t-gue140207} and Theorem~\ref{t-gue140211}, Theorem~\ref{t-gue140305_b} follows.
%-----------
\begin{defn} \label{d-gue140211}
Let $H$ be a Hilbert space and $Q$ be a closed densely defined operator
%\[
$Q:{\rm Dom\,}Q\subset H\To{\rm Ran\,}Q\subset H$,
%\]
with closed range. By the partial inverse of $Q$, we
mean the bounded operator $M: H\To H$ such that
$Q\circ M=\pi_2$, $M\circ Q=\pi_1$ on ${\rm Dom\,}Q$,
where $\pi_1$, $\pi_2$ are the orthogonal projections in $H$ such that
${\rm Ran\,}\pi_1=({\rm Ker\,}Q)^\bot$, ${\rm Ran\,}\pi_2={\rm Ran\,}Q$.
In other words, for $u\in H$, let
$\pi_2u=Qv$, $v\in({\rm Ker\,}Q)^\bot\cap{\rm Dom\,}Q$.
Then, $Mu=v$.
\end{defn}
%------------------- 
From Theorem~\ref{t-gue140207} and Theorem~\ref{t-gue140211}, we deduce:
%-------------------
\begin{cor}\label{c-gue140211}
Let $q\in\set{0,1,\ldots,n-1}$. Assume that $\Box^{(q)}_b$ has $L^2$ closed range and let
$N^{(q)}:L^2_{(0,q)}(X)\To{\rm Dom\,}\Box^{(q)}_b$
be the partial inverse of $\Box^{(q)}_b$.
We assume that the Levi form is non-degenerate of constant signature $(n_-,n_+)$
at each point of an open set $D\Subset X$. If $q\notin\set{n_-,n_+}$, then
\[
\Pi^{(q)}\equiv 0\quad\text{and}\quad
N^{(q)}\equiv A\quad\text{on $D$},
\]
where $A$ is as in Theorem~\ref{t-gue140205}. If $q\in\set{n_-,n_+}$, then
\[
\Pi^{(q)}\equiv S_-+S_+\quad\text{and}\quad
N^{(q)}\equiv A\quad\text{on $D$},
\]
where $S_-$, $S_+$ and $A$ are as in Theorem~\ref{t-gue140205I}.
\end{cor}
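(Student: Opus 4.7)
The plan is to reduce Corollary \ref{c-gue140211} to Theorem \ref{t-gue140207} and Theorem \ref{t-gue140211} by producing a spectral gap at the origin and identifying the partial inverse $N^{(q)}$ with some $N^{(q)}_\lambda$ appearing in \eqref{e-gue140206}.

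First, I would exploit the $L^2$ closed range hypothesis. Since $\Box^{(q)}_b$ is self-adjoint, closedness of the range yields the orthogonal decomposition
\[
L^2_{(0,q)}(X)={\rm Ker\,}\Box^{(q)}_b\oplus{\rm Ran\,}\Box^{(q)}_b\,,
\]
and the restriction of $\Box^{(q)}_b$ to $({\rm Ker\,}\Box^{(q)}_b)^\bot\cap{\rm Dom\,}\Box^{(q)}_b$ is a bijection onto ${\rm Ran\,}\Box^{(q)}_b$; the closed graph theorem then makes its inverse bounded, which translates (via the spectral theorem) into the existence of $\lambda_0>0$ with ${\rm Spec\,}\Box^{(q)}_b\cap(0,\lambda_0]=\emptyset$. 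In particular, for any $\lambda\in(0,\lambda_0)$ the spectral projection $E([0,\lambda])$ coincides with $E(\set{0})$, hence $\Pi^{(q)}_{\leq\lambda}=\Pi^{(q)}$.

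Second, I would check that the partial inverse $N^{(q)}$ from Definition \ref{d-gue140211} is a legitimate choice of $N^{(q)}_\lambda$ satisfying \eqref{e-gue140206}. Indeed, $N^{(q)}$ vanishes on ${\rm Ker\,}\Box^{(q)}_b={\rm Ran\,}\Pi^{(q)}$ and inverts $\Box^{(q)}_b$ on $({\rm Ker\,}\Box^{(q)}_b)^\bot$, so
\[
\Box^{(q)}_bN^{(q)}+\Pi^{(q)}=I\quad\text{on }L^2_{(0,q)}(X),\qquad N^{(q)}\Box^{(q)}_b+\Pi^{(q)}=I\quad\text{on }{\rm Dom\,}\Box^{(q)}_b.
\]
Combined with $\Pi^{(q)}_{\leq\lambda}=\Pi^{(q)}$, these are exactly the defining relations \eqref{e-gue140206} for $N^{(q)}_\lambda$; thus $N^{(q)}$ is an admissible $N^{(q)}_\lambda$.

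Finally, I would apply Theorem \ref{t-gue140211} in the case $q\notin\set{n_-,n_+}$ and Theorem \ref{t-gue140207} in the case $q\in\set{n_-,n_+}$ with the $\lambda$ chosen above. Those theorems yield $\Pi^{(q)}_{\leq\lambda}\equiv0$, $N^{(q)}_\lambda\equiv A$ on $D$ in the first case, and $\Pi^{(q)}_{\leq\lambda}\equiv S_-+S_+$, $N^{(q)}_\lambda\equiv A$ on $D$ in the second. Substituting the identifications $\Pi^{(q)}=\Pi^{(q)}_{\leq\lambda}$ and $N^{(q)}=N^{(q)}_\lambda$ from the previous steps gives the corollary. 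The only nontrivial point is the spectral gap argument in Step one; everything else is a direct translation through functional calculus, so I expect no substantive obstacle.
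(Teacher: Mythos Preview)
Your proposal is correct and follows exactly the route the paper intends: the paper simply writes ``From Theorem~\ref{t-gue140207} and Theorem~\ref{t-gue140211}, we deduce'' before stating the corollary, and your three steps (spectral gap from closed range, identification $\Pi^{(q)}=\Pi^{(q)}_{\leq\lambda}$ and $N^{(q)}=N^{(q)}_\lambda$, then invoking the two theorems) are precisely the standard unpacking of that sentence. Nothing further is needed.
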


By using Corollary~\ref{c-gue140211I} and some standard argument in functional analysis, we can establish the spectral theory of $\Box^{(q)}_b$ when the Levi form is non-degenerate of constant signature on $X$.
%
%\begin{thm}\label{t-gue140211I}
%We assume that $X$ is compact and the Levi form is non-degenerate of constant signature $(n_-,n_+)$ on $X$. Fix $q\in\set{0,1,\ldots,n-1}$. Then, for any $\mu>0$, ${\rm Spec\,}\Box^{(q)}_b\cap[\mu,\infty)$ is a discrete subset of $\Real$ and for any $\nu\in{\rm Spec\,}\Box^{(q)}_b$ with $\nu>0$, $\nu$ is an eigenvalue of $\Box^{(q)}_b$ and the eigenspace
%\[H^q_{b,\nu}(X):=\set{u\in{\rm Dom\,}\Box^{(q)}_b;\, \Box^{(q)}_bu=\nu u}\]
%is finite dimensional and $H^q_{b,\nu}(X)\subset\Omega^{0,q}(X)$.
%\end{thm}
%
\begin{proof}[Proof of Theorem \ref{t-gue140211Im}]
Let $0<\mu<\mu_1<\infty$. We claim that ${\rm Spec\,}\Box^{(q)}_b\cap[\mu,\mu_1]$ 
is a discrete subset of $\Real$. We assume that ${\rm Spec\,}\Box^{(q)}_b\cap[\mu,\mu_1]$ 
is not a discrete subset of $\Real$. Then, we can find $f_j\in E([\mu,\mu_1])$, $j=1,2,\ldots$, 
with $(\,f_j\,|\,f_\ell\,)=\delta_{j,\ell}$, for all 
$j, \ell=1,2,\ldots$\,. Take $0<\lambda_1<\mu<\mu_1<\lambda_2<\infty$. Then, we have
\begin{equation}\label{e-gue140211bIII}
f_j=\Pi^{(q)}_{(\lambda_1,\lambda_2]}f_j,\ \ j=1,2,\ldots\,.
\end{equation}
In view of Corollary~\ref{c-gue140211I}, we know that $\Pi^{(q)}_{(\lambda_1,\lambda_2]}$ 
is a smoothing operator on $X$ and hence $\Pi^{(q)}_{(\lambda_1,\lambda_2]}$ is a compact operator 
on $L^2_{(0,q)}(X)$.
By Rellich's theorem, we can find a subsequence $\set{f_{s_k}}^{\infty}_{k=1}$ of $\set{f_j}^{\infty}_{j=1}$, 
where $1<s_1<s_2<\ldots$, such that
$f_{s_k}\To f$ in $L^2_{(0,q)}(X)$ as $k\To\infty$, for some $f\in L^2_{(0,q)}(X)$. 
But $(\,f_{s_k}\,|\,f_{s_\ell}\,)=0$ if $k\neq\ell$, we get a contradiction. 
We conclude that ${\rm Spec\,}\Box^{(q)}_b\cap[\mu,\mu_1]$ 
is a discrete subset of $\Real$, for any $0<\mu<\mu_1<\infty$. 
Thus, for any $\mu>0$, ${\rm Spec\,}\Box^{(q)}_b\cap[\mu,\infty)$ is a discrete subset of $\Real$.

Let $\nu\in{\rm Spec\,}\Box^{(q)}_b$ with $\nu>0$. Since ${\rm Spec\,}\Box^{(q)}_b\cap[\mu,\mu_1]$ 
is discrete, where $0<\mu<\nu<\mu_1$, we have
\[\Box^{(q)}_b-\nu:{\rm Dom\,}\Box^{(q)}_b\subset L^2_{(0,q)}(X)\To L^2_{(0,q)}(X)\]
has $L^2$ closed range. Hence, if $\Box^{(q)}_b-\nu$ is injective, 
then ${\rm Range\,}(\Box^{(q)}_b-\nu)=L^2_{(0,q)}(X)$ and $\Box^{(q)}_b-\nu$ has a bounded inverse
$(\Box^{(q)}_b-\nu)^{-1}:L^2_{(0,q)}(X)\To L^2_{(0,q)}(X)$.
Thus, $\nu$ is a resolvent if $\Box^{(q)}_b-\nu$ is injective. We conclude that $\Box^{(q)}_b-\nu$ 
is not injective, that is, $\nu$ is an eigenvalue of $\Box^{(q)}_b$.
Take $0<\lambda_1<\nu<\lambda_2<\infty$. We have
\[
H^{(q)}_{b,\nu}(X)=\Pi^{(q)}_{(\lambda_1,\lambda_2]}H^{(q)}_{b,\nu}(X)=
\set{\Pi^{(q)}_{(\lambda_1,\lambda_2]}f;\, f\in H^q_{b,\nu}(X)}.
\]
Since $\Pi^{(q)}_{(\lambda_1,\lambda_2]}$ is a smoothing operator on $X$, we conclude that
$H^{(q)}_{b,\nu}(X)\subset\Omega^{0,q}(X)$.
%\[H^{(q)}_{b,\nu}(X)\subset\Omega^{0,q}(X).\]
Moreover, from Rellich's theorem, we see that ${\rm dim\,}H^{(q)}_{b,\nu}(X)<\infty$. The theorem follows.
\end{proof}

\section{Szeg\H{o} kernel asymptotic expansions}\label{s-gue140212}

In this section, we will apply Theorem~\ref{t-gue140205} and Theorem~\ref{t-gue140205I} 
to establish Szeg\H{o} kernel asymptotic expansions on the non-degenerate part of the Levi form under certain local conditions.

%------------
In view of Theorem~\ref{t-gue140305_a}, we see that if $\Box^{(q)}_{b}$ has $L^2$ closed range, then $\Pi^{(q)}$ admits a full
asymptotic expansion on the non-degenerate part of the Levi form. But in general, it is difficult to see that if $\Box^{(q)}_{b}$ has
$L^2$ closed range. We then impose the condition of local $L^2$ closed range, cf.\ Definition \ref{d-gue140212}.
%some mild microlocal conditions.
%%------------
%\begin{defn}\label{d-gue140212}
%Fix $q\in\set{0,1,2,\ldots,n-1}$. Let
%\[Q:L^2_{(0,q)}(X)\To L^2_{(0,q)}(X)\]
%be a continuous operator. Let $D\subset X$ be an open set. We say that $\Box^{(q)}_b$ has local $L^2$ closed range on $D$ with
%respect to $Q$ if for every $D'\Subset D$, there exist constants $C_{D'}>0$ and $p\in\mathbb N$, such that
%\[\norm{Q(I-\Pi^{(q)})u}\leq C_{D'}\sqrt{(\,(\Box^{(q)}_b)^pu\,|\,u)},\ \ \forall u\in\Omega^{0,q}_0(D').\]
%\end{defn}
%%------------
It is clearly that if $\Box^{(q)}_b$ has $L^2$ closed range then $\Box^{(q)}_b$ has local $L^2$ closed range on every open set $D$
with respect to the identity map $I$.

We now prove the following precise version of Theorem \ref{t-gue140305VIa}.
%One of the main results of this work is the following
%----------------
\begin{thm}\label{t-gue140305VIab}
Let $X$ be a CR manifold of dimension $2n-1$, whose Levi form is non-degenerate of constant signature 
$(n_-,n_+)$ at
each point of an open set $D\Subset X$. Let $q\in\set{0,1,\ldots,n-1}$ and let
$Q\in L^0_{{\rm cl\,}}(X,T^{*0,q}X\boxtimes T^{*0,q}X)$
be a classical pseudodifferential operator on $X$ and let
$Q^*\in L^0_{{\rm cl\,}}(X,T^{*0,q}X\boxtimes T^{*0,q}X)$ be the $L^2$ adjoint of $Q$
with respect to $(\,\cdot\,|\,\cdot\,)$. Suppose that $\Box^{(q)}_b$ has local $L^2$ closed range
on $D$ with respect to $Q$ and $Q\Pi^{(q)}=\Pi^{(q)}Q$ on $L^2_{(0,q)}(X)$. Then,
\begin{equation}\label{e-gue140212m}
\mbox{$Q^*\Pi^{(q)}Q\equiv0$ on $D$ if $q\notin\set{n_-,n_+}$}
\end{equation}
and if $q\in\set{n_-,n_+}$, then
\begin{equation}\label{e-gue140213VIIIm}
\begin{split}
(Q^*\Pi^{(q)}Q)(x,y)
\equiv\int^\infty_0e^{i\varphi_-(x,y)t}a_-(x,y,t)dt+\int^\infty_0e^{i\varphi_+(x,y)t}a_+(x,y,t)dt\:\:\text{on $D$},
\end{split}
\end{equation}
where $\varphi_\pm(x,y)\in \cC^\infty(D\times D)$ are 
as in Theorem~\ref{t-gue140305_b} and 
$a_-(x, y, t),a_+(x, y, t)\in S^{n-1}_{{\rm cl\,}}\big(D\times D\times\mathbb{R}_+,T^{*0,q}_yX\boxtimes T^{*0,q}_xX\big)$
satisfy
\begin{equation}  \label{e-gue140213aImA}\begin{split}
&\mbox{$a_-(x,y,t)=0$ if $q\neq n_-$ or $Q\equiv0$ at $\Sigma^-\cap T^*D$},\\
&\mbox{ $a_+(x,y,t)=0$ if $q\neq n_+$ or $Q\equiv0$ at $\Sigma^+\cap T^*D$}.
\end{split}\end{equation}
(See Definition~\ref{d-gue140221} for the meaning of $Q\equiv0$ at $\Sigma^-\bigcap T^*D$.)
%\begin{equation}  \label{e-gue140213aIm}\begin{split}
%&a_-(x, y, t), a_+(x,y,t)\in S^{n-1}_{{\rm cl\,}}\big(D\times D\times\mathbb{R}_+,T^{*0,q}_yX\boxtimes T^{*0,q}_xX\big), \\
%&\mbox{$a_-(x,y,t)=0$ if $q\neq n_-$ or $Q\equiv0$ at $\Sigma^-\cap T^*D$},\\
%&\mbox{ $a_+(x,y,t)=0$ if $q\neq n_+$ or $Q\equiv0$ at $\Sigma^+\cap T^*D$},\\
%&a_-(x, y, t)\sim\sum^\infty_{j=0}a^j_-(x, y)t^{n-1-j}\quad\text{ in }S^{n-1}_{1, 0}
%\big(D\times D\times\mathbb{R}_+,T^{*0,q}_yX\boxtimes T^{*0,q}_xX\big)\,,\\
%&a^j_-(x, y)\in \cC^\infty\big(D\times D,T^{*0,q}_yX\boxtimes T^{*0,q}_xX\big),\ \ j\in\N_0,\\
%&a_+(x, y, t)\sim\sum^\infty_{j=0}a^j_+(x, y)t^{n-1-j}\quad\text{ in }S^{n-1}_{1, 0}
%\big(D\times D\times\mathbb{R}_+,T^{*0,q}_yX\boxtimes T^{*0,q}_xX\big)\,,\\
%&a^j_+(x, y)\in \cC^\infty\big(D\times D,T^{*0,q}_yX\boxtimes T^{*0,q}_xX\big),\ \ j\in\N_0.
%\end{split}\end{equation}
Moreover, assume that $q=n_-$, then the leading term $a^0_-(x,y)$ of the expansion \eqref{e-fal} 
of $a_-(x,y,t)$ satisfies
%in \eqref{e-gue140213aIm}, we have
\begin{equation}  \label{e-gue140213aIIm}
\begin{split}
a^0_-(x, x)
=\frac{1}{2}\pi^{-n}\frac{v(x)}{m(x)}
\abs{{\rm det\,}\mathcal{L}_x}\tau_{x,n_-}q^*(x,-\omega_0(x))q(x,-\omega_0(x))\tau_{x,n_-},\ \ \forall x\in D,
\end{split}
\end{equation}
where $\det\mathcal{L}_x$ is the determinant of the Levi form defined in \eqref{det140530}, 
$v(x)$ is the volume form on $X$ induced by $\langle\,\cdot\,|\,\cdot\,\rangle$, 
$q(x,\eta)\in \cC^\infty(T^*D)$ is the principal symbol of $Q$, $q^*(x,\eta)$ is the adjoint of 
$q(x,\eta):T^{*0,q}_xX\To T^{*0,q}_xX$ with respect to $\langle\,\cdot\,|\,\cdot\,\rangle$ 
and $\tau_{x,n_-}$ is as in \eqref{tau140530}.
\end{thm}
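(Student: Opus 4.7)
The plan is to reduce the microlocal structure of $Q^*\Pi^{(q)}Q$ on $D$ to that of the approximate Szeg\H{o} kernels $S_\pm$ constructed in Theorems~\ref{t-gue140205} and~\ref{t-gue140205I}, and then to read off the Fourier integral operator data by composition with $Q$ and $Q^*$ via the Melin--Sj\"ostrand calculus.

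In the case $q\notin\{n_-,n_+\}$, Theorem~\ref{t-gue140205} furnishes a properly supported parametrix $A\in L^{-1}_{1/2,1/2}(D,T^{*0,q}D\boxtimes T^{*0,q}D)$ with $\Box^{(q)}_b A\equiv I$ on $D$. Since $\Box^{(q)}_b\Pi^{(q)}=0$ identically, a bootstrap argument modelled on the proof of Theorem~\ref{t-gue140206} (iterating the parametrix together with Sobolev mapping properties) yields $\Pi^{(q)}\equiv 0$ on $D$, and hence $Q^*\Pi^{(q)}Q\equiv 0$ on $D$. The local closed-range hypothesis is not needed here.

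For the nontrivial case $q\in\{n_-,n_+\}$, we follow the pattern of the proof of Theorem~\ref{t-gue140207}. Composing the Hodge decomposition $\Box^{(q)}_bA+S_-+S_+\equiv I$ of Theorem~\ref{t-gue140205I} with $\Pi^{(q)}$, $Q^*$ and $Q$, and using the identities $\Box^{(q)}_b\Pi^{(q)}=\Pi^{(q)}\Box^{(q)}_b=0$, $\Box^{(q)}_b S_\pm\equiv 0$, $S_\pm^2\equiv S_\pm$, $S_-S_+\equiv S_+S_-\equiv 0$ on $D$, together with the commutation $Q\Pi^{(q)}=\Pi^{(q)}Q$ (equivalently $Q^*\Pi^{(q)}=\Pi^{(q)}Q^*$), one derives intermediate identities such as
\begin{equation*}
Q^*\Pi^{(q)}Q\equiv(S_-^*+S_+^*)\,Q^*\Pi^{(q)}Q\,(S_-+S_+)\quad\text{on }D.
\end{equation*}
The expected principal obstacle is the upgrade to
\begin{equation*}
Q^*\Pi^{(q)}Q\equiv Q^*(S_-+S_+)Q\quad\text{on }D,
\end{equation*}
i.e.\ removal of the middle $\Pi^{(q)}$ modulo smoothing. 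The local $L^2$ closed-range hypothesis of Definition~\ref{d-gue140212}, rewritten via the commutation as $\|(I-\Pi^{(q)})Qu\|^2\leq C\|(\Box^{(q)}_b)^{p/2}u\|^2$ for $u\in\Omega^{0,q}_0(D')$, provides an $L^2$-type estimate for the discrepancy between the two sides. Theorem~\ref{t-gue140215}, which carries out the microlocal analysis of the phases $\varphi_\pm$ via semiclassical and Melin--Sj\"ostrand techniques, then promotes this $L^2$ control to the desired smoothing equivalence; this upgrade from an $L^2$ estimate to a smoothing identity is the technical heart of the argument.

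Granted the smoothing equivalence above, \eqref{e-gue140213VIIIm} follows from composing the $\Psi$DOs $Q^*$, $Q$ with the complex FIOs $S_\pm$: the phases $\varphi_\pm$ are preserved, and the composite symbol is governed by stationary phase at the critical set of $\varphi_\pm$, which lies in $\Sigma^\pm$ since $d_x\varphi_-|_{x=y}=-\omega_0$ and $d_x\varphi_+|_{x=y}=\omega_0$. The vanishings \eqref{e-gue140213aImA} follow at once: if $q\neq n_\pm$ then $s_\pm=0$ by Theorem~\ref{t-gue140205I}, while if $Q\equiv 0$ at $\Sigma^\pm\cap T^*D$ then $Q$ and $Q^*$ cut off the amplitude of $Q^*S_\pm Q$ away from the critical set, rendering the corresponding composition smoothing. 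Finally, for $q=n_-$, stationary phase in the inner variable of the composition evaluates the principal symbols of $Q$ and $Q^*$ on the conormal $\xi=-\omega_0(x)$, yielding $a_-^0(x,x)=q^*(x,-\omega_0(x))\,s_-^0(x,x)\,q(x,-\omega_0(x))$; substituting the explicit expression \eqref{e-gue140205VIIIm} for $s_-^0(x,x)$ produces \eqref{e-gue140213aIIm}, with the $\tau_{x,n_-}$ factors encoding that $s_-^0(x,x)$ already projects onto $\mathcal{N}(x,n_-)$. The case $q=n_+$ is entirely analogous with $\Sigma^+$, $\omega_0$ in place of $\Sigma^-$, $-\omega_0$.
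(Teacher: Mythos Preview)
Your plan has the right architecture—reduce $Q^*\Pi^{(q)}Q$ to a combination of the approximate projectors $S_\pm$, then analyse the result as a composition of complex FIOs—but the reduction step is not the one that actually works, and the role you assign to Theorem~\ref{t-gue140215} is mistaken.

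The paper does \emph{not} prove $Q^*\Pi^{(q)}Q\equiv Q^*SQ$ (with $S=S_-+S_+$); it proves
\[
Q^*\Pi^{(q)}Q\equiv S^*Q^*QS\quad\text{on }D.
\]
The mechanism is a ``squaring'' trick you do not mention. From $\Pi^{(q)}S=\Pi^{(q)}$ on $D$ and the local closed-range estimate applied to $Su$ (using that $(\Box^{(q)}_b)^pS\equiv0$), one obtains only a \emph{one-sided} smoothing gain: $QS-Q\Pi^{(q)}:H^{-s}_{\rm comp}(D)\to L^2$ for all $s$. This alone does not make $QS-Q\Pi^{(q)}$ smoothing, and it does not yield $Q^*SQ\equiv Q^*\Pi^{(q)}Q$. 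What it does yield, after taking adjoints, is $S^*Q^*-\Pi^{(q)}Q^*:L^2\to H^s_{\rm loc}(D)$; composing the two gives
\[
(S^*Q^*-\Pi^{(q)}Q^*)(QS-Q\Pi^{(q)}):H^{-s}_{\rm comp}(D)\to H^s_{\rm loc}(D),
\]
hence $\equiv0$. Expanding this product and using $\Pi^{(q)}S=S^*\Pi^{(q)}=\Pi^{(q)}$ together with the commutation $Q\Pi^{(q)}=\Pi^{(q)}Q$ collapses it to $S^*Q^*QS-\Pi^{(q)}Q^*Q\Pi^{(q)}\equiv0$, which is the desired identity. Your proposed intermediate $Q^*\Pi^{(q)}Q\equiv Q^*SQ$ would require $\Pi^{(q)}\equiv S$ on $D$, which is \emph{not} available without global closed range; the discrepancy is visible already in the leading term, since your route gives $q^*(x,-\omega_0)\,\tau_{x,n_-}\,q(x,-\omega_0)$ on the diagonal rather than the $\tau_{x,n_-}\,q^*q\,\tau_{x,n_-}$ stated in \eqref{e-gue140213aIIm}.

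Separately, Theorem~\ref{t-gue140215} is not a device for ``promoting $L^2$ control to smoothing equivalence''. It is a purely phase-theoretic statement: the phase $\varphi$ produced by the stationary-phase composition in Lemma~\ref{l-gue140214} agrees with $\varphi_-$ to infinite order on the diagonal, so the composite FIO can be rewritten with phase $\varphi_-$. Its role is entirely in the second stage (analysing $S^*Q^*QS$), not in the reduction from $\Pi^{(q)}$ to $S$.
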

%----------
To prove Theorem \ref{t-gue140305VIab} we need a series of results, starting with the following.
%----------
\begin{thm}\label{t-gue140212}
%Let $q\in\set{0,1,\ldots,n-1}$.
%We assume that the Levi form is non-degenerate of constant signature $(n_-,n_+)$ at each point of an open 
%set $D\Subset X$. Let $Q\in L^0_{{\rm cl\,}}(X,T^{*0,q}X\boxtimes T^{*0,q}X)$ be a classical 
%pseudodifferential operator on $X$ and let $Q^*\in L^0_{{\rm cl\,}}(X,T^{*0,q}X\boxtimes T^{*0,q}X)$ 
%be the $L^2$ adjoint of $Q$ with respect to $(\,\cdot\,|\,\cdot\,)$. Suppose that $\Box^{(q)}_b$ 
%has local $L^2$ closed range on $D$ with respect to $Q$ and $Q\Pi^{(q)}=\Pi^{(q)}Q$ on $L^2_{(0,q)}(X)$. Then,
%\begin{equation}\label{e-gue140212}
%\mbox{$Q^*\Pi^{(q)}Q\equiv0$ on $D$ if $q\notin\set{n_-,n_+}$}
%\end{equation}
%and
In the conditions of Theorem \ref{t-gue140305VIab} we have \eqref{e-gue140212m} and
\begin{equation}\label{e-gue140212I}
\mbox{$Q^*\Pi^{(q)}Q\equiv(S^*_-+S^*_+)Q^*Q(S_-+S_+)$ on $D$ if $q\in\set{n_-,n_+}$},
\end{equation}
where $S_-$ and $S_+$ are as in Theorem~\ref{t-gue140205I}.
\end{thm}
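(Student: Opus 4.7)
The plan is the following. I would reduce $\Pi^{(q)}$ modulo smoothing on $D$ to the approximate Szeg\H{o} kernel $S_-+S_+$ from the microlocal Hodge decomposition of Theorem~\ref{t-gue140205I}, and then assemble the desired identity using the commutation $Q\Pi^{(q)}=\Pi^{(q)} Q$. The first ingredient comes from applying the Hodge identity $A^*\Box^{(q)}_b+S_-^*+S_+^*\equiv I$ on $D$ to $\Pi^{(q)}B$ for an arbitrary bounded operator $B$ on $L^2_{(0,q)}(X)$. Since $\Pi^{(q)}$ projects onto $\ker\Box^{(q)}_b\subset{\rm Dom\,}\Box^{(q)}_b$, the composition $\Box^{(q)}_b\Pi^{(q)}$ vanishes identically on $L^2_{(0,q)}(X)$; this produces
\[
\Pi^{(q)}B\equiv (S_-^*+S_+^*)\Pi^{(q)}B\quad\text{on }D,
\]
and taking adjoints (using $S_\pm\equiv S_\pm^*$ on $D$ from \eqref{e-gue140205II}) gives the dual identity $B\Pi^{(q)}\equiv B\Pi^{(q)}(S_-+S_+)$ on $D$. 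When $q\notin\{n_-,n_+\}$, Theorem~\ref{t-gue140205} furnishes $\Box^{(q)}_bA\equiv I$ on $D$ with no $S_\pm$ terms, so the same reasoning forces $\Pi^{(q)}B\equiv 0$ on $D$ for every bounded $B$, whence $Q^*\Pi^{(q)}Q\equiv 0$ on $D$, proving \eqref{e-gue140212m}.

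The technical heart of the proof, where the local $L^2$ closed range hypothesis enters, is the key smoothing statement
\[
Q(I-\Pi^{(q)})(S_-+S_+)\equiv 0\quad\text{on }D.
\]
To prove it, I would fix $D_0\Subset D$ and use that $S_\pm$ are properly supported on $D$ to pick $D_1\Subset D$ with $(S_-+S_+)u\in\Omega^{0,q}_0(D_1)$ for every $u\in\Omega^{0,q}_0(D_0)$. Applying the local closed range estimate to $v=(S_-+S_+)u$ and rewriting the pairing in terms of $u$ alone yields
\[
\norm{Q(I-\Pi^{(q)})(S_-+S_+)u}^2\leq C_{D_1}\bigl(u\bigm|(S_-^*+S_+^*)(\Box^{(q)}_b)^p(S_-+S_+)u\bigr).
\]
Since $\Box^{(q)}_b(S_-+S_+)\equiv 0$ on $D$, iterating forces $(S_-^*+S_+^*)(\Box^{(q)}_b)^p(S_-+S_+)$ to be smoothing on $D$, so the right-hand side is bounded by $C_N\|u\|^2_{H^{-N}}$ for every $N$. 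To upgrade this $L^2$-target bound to smoothness of the kernel on $D\times D$, I would apply the same estimate with $u$ replaced by $(\Box^{(q)}_b)^ru$ (noting that $(I-\Pi^{(q)})$ commutes with $\Box^{(q)}_b$) and exploit adjoints to promote regularity to the target variable as well.

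Once this smoothing is established, combining it with the dual Hodge identity of the first paragraph gives $Q\Pi^{(q)}\equiv Q(S_-+S_+)$ on $D$. Using $Q\Pi^{(q)}=\Pi^{(q)} Q$ and taking adjoints (together with $S_\pm\equiv S_\pm^*$ on $D$) furnishes the symmetric relation $Q^*\Pi^{(q)}\equiv(S_-+S_+)Q^*$ on $D$. Finally, exploiting $\Pi^{(q)}=(\Pi^{(q)})^2$, I assemble
\[
Q^*\Pi^{(q)}Q=(Q^*\Pi^{(q)})(\Pi^{(q)}Q)\equiv (S_-+S_+)Q^*\cdot Q(S_-+S_+)\quad\text{on }D,
\]
which coincides with $(S_-^*+S_+^*)Q^*Q(S_-+S_+)$ on $D$ once $S_\pm\equiv S_\pm^*$ is reinserted, and this is precisely \eqref{e-gue140212I}.

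The principal obstacle is the smoothing statement in the second paragraph: promoting the pointwise $L^2$ bound supplied by local closed range into a genuine smoothing property of the Schwartz kernel on $D\times D$. This is delicate because, for $q\in\{n_-,n_+\}$, condition $Y(q)$ fails on $D$, so $\Box^{(q)}_b$ is not hypoelliptic there and standard elliptic regularity bootstraps are unavailable. The argument must instead exploit the microlocal fact that $(\Box^{(q)}_b)^p(S_-+S_+)$ is smoothing on $D$ for every $p$, combined with the commutation of $(I-\Pi^{(q)})$ with $\Box^{(q)}_b$, to transfer the regularity gained from the closed range estimate to both arguments of the kernel.
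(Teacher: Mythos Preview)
Your key estimate is right: the local closed range hypothesis applied to $v=Su$ gives $\norm{Q(I-\Pi^{(q)})Su}\leq C_N\norm{u}_{H^{-N}}$, i.e.\ the operator $T:=QS-Q\Pi^{(q)}$ (which equals $Q(I-\Pi^{(q)})S$ on $\Omega^{0,q}_0(D)$ via $\Pi^{(q)}=\Pi^{(q)}S$) extends to $H^{-s}_{\rm comp}(D)\to L^2(X)$. But the subsequent ``upgrade'' to $T\equiv 0$ on $D$ fails: your bound controls only the $L^2(X)$ norm of $Tu$, not any Sobolev norm of $Tu|_D$, and the bootstrap via $(\Box^{(q)}_b)^r u$ does not supply that (since $S(\Box^{(q)}_b)^r\equiv 0$ just reproduces the same estimate, and $\Box^{(q)}_b$ is not hypoelliptic here). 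In fact, without global closed range there is no reason for $\Pi^{(q)}u|_D$ to be smooth for $u\in\Omega^{0,q}_0(D)$, so $T\equiv 0$ on $D$ need not hold. The final assembly then breaks down independently: from $Q^*\Pi^{(q)}\equiv SQ^*$ and $\Pi^{(q)}Q\equiv QS$ on $D$ you cannot conclude $(Q^*\Pi^{(q)})(\Pi^{(q)}Q)\equiv SQ^*\cdot QS$ on $D$, because for $u\in\Omega^{0,q}_0(D)$ the intermediate element $\Pi^{(q)}Qu$ lives on all of $X$, so the composition uses the kernel of $Q^*\Pi^{(q)}-SQ^*$ on $D\times(X\setminus D)$, which is uncontrolled.

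The paper sidesteps both problems by \emph{not} upgrading. From $T:H^{-s}_{\rm comp}(D)\to L^2(X)$ one gets by adjoint $T^*=S^*Q^*-\Pi^{(q)}Q^*:L^2(X)\to H^{s}_{\rm loc}(D)$, hence $T^*T:H^{-s}_{\rm comp}(D)\to H^{s}_{\rm loc}(D)$ is smoothing on $D$. An algebraic computation using $Q\Pi^{(q)}=\Pi^{(q)}Q$, $Q^*\Pi^{(q)}=\Pi^{(q)}Q^*$ and the exact identities $S^*\Pi^{(q)}=\Pi^{(q)}=\Pi^{(q)}S$ collapses the cross terms:
\[
T^*T=(S^*Q^*-\Pi^{(q)}Q^*)(QS-Q\Pi^{(q)})=S^*Q^*QS-\Pi^{(q)}Q^*Q\Pi^{(q)}=S^*Q^*QS-Q^*\Pi^{(q)}Q,
\]
which is exactly \eqref{e-gue140212I}. (A minor point: for $q\notin\{n_-,n_+\}$ your claim ``$\Pi^{(q)}B\equiv 0$ on $D$ for every bounded $B$'' is false as stated, since $Bu$ need not be supported in $D$; either split $Q$ into a properly supported part plus a smoothing remainder, or run the same $T^*T$ argument with $S=0$.)
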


\begin{proof}
We first assume that $q\in\set{n_-,n_+}$. Put $S=S_-+S_+$, where $S_-$ and $S_+$ are as in Theorem~\ref{t-gue140205I} and let $S^*$ be the adjoint of $S$.
From \eqref{e-gue140205II}, we have
\begin{equation}\label{e-gue140213}
\mbox{$\Pi^{(q)}=(A^*\Box^{(q)}_b+S^*)\Pi^{(q)}=S^*\Pi^{(q)}$}
\end{equation}
and hence
\begin{equation}\label{e-gue140213I}
\mbox{$\Pi^{(q)}=\Pi^{(q)}S$ on $D$}.
\end{equation}
Fix $D'\Subset D$. Let $u\in\Omega^{0,q}_0(D')$. Since $\Box^{(q)}_b$ has local $L^2$ closed range on $D$ with respect to $Q$, we have for every $s\in\mathbb Z$,
\begin{equation}\label{e-gue140213II}
\norm{Q(I-\Pi^{(q)})Su}\leq C_{D',s}\sqrt{\norm{(\Box^{(q)}_b)^pSu}_s\norm{u}_{-s}},\ \ \forall u\in\Omega^{0,q}_0(D'),
\end{equation}
where $C_{D',s}>0$, $p\in\mathbb N$ are constants independent of $u$ and $\norm{\cdot}_s$ 
denotes the usual Sobolev norm of order $s$ on $D'$. Since $(\Box^{(q)}_b)^pS\equiv0$ on $D$, 
for every $s\in\mathbb N_0$, there is a constant $C_s>0$ such that
\begin{equation}\label{e-gue140213III}
\norm{(\Box^{(q)}_b)^pSu}_s\leq C_s\norm{u}_{-s},\ \ \forall u\in\Omega^{0,q}_0(D').
\end{equation}
From \eqref{e-gue140213III} and \eqref{e-gue140213II}, 
we can extend $Q(I-\Pi^{(q)})S=QS-Q\Pi^{(q)}$(here we used \eqref{e-gue140213I}) to 
$H^{-s}_{{\rm comp,}}(D,T^{*0,q}X)$, $\forall s\in\mathbb N_0$ and we have
\begin{equation}\label{e-gue140213IV}
\mbox{$QS-Q\Pi^{(q)}:H^{-s}_{{\rm comp\,}}(D,T^{*0,q}X)\To L^2_{(0,q)}(X)$ is continuous, $\forall s\in\mathbb N_0$}.
\end{equation}
By taking adjoint in \eqref{e-gue140213IV}, we get
\begin{equation}\label{e-gue140213V}
\mbox{$S^*Q^*-\Pi^{(q)}Q^*: L^2_{(0,q)}(X)\To H^{s}_{{\rm loc\,}}(D,T^{*0,q}X)$ is continuous, $\forall s\in\mathbb N_0$}.
\end{equation}
From \eqref{e-gue140213IV} and \eqref{e-gue140213V}, we conclude that  for any $s\in\mathbb N_0$ the map
\[
(S^*Q^*-\Pi^{(q)}Q^*)(QS-Q\Pi^{(q)}): H^{-s}_{{\rm comp\,}}(D,T^{*0,q}X)\To
H^{s}_{{\rm loc\,}}(D,T^{*0,q}X).
\]
is continuous.
Hence,
\begin{equation}\label{e-gue140213VI}
\mbox{$(S^*Q^*-\Pi^{(q)}Q^*)(QS-Q\Pi^{(q)})\equiv0$ on $D$}.
\end{equation}
Now,
\begin{equation}\label{e-gue140213VII}
\begin{split}
&(S^*Q^*-\Pi^{(q)}Q^*)(QS-Q\Pi^{(q)})\\
&=S^*Q^*QS-S^*Q^*Q\Pi^{(q)}-\Pi^{(q)}Q^*QS+\Pi^{(q)}Q^*Q\Pi^{(q)}\\
&=S^*Q^*QS-S^*\Pi^{(q)}Q^*Q-Q^*Q\Pi^{(q)}S+Q^*Q\Pi^{(q)}\\
&=S^*Q^*QS-\Pi^{(q)}Q^*Q\Pi^{(q)}.
\end{split}
\end{equation}
Here we used $Q\Pi^{(q)}=\Pi^{(q)}Q$, $Q^*\Pi^{(q)}=\Pi^{(q)}Q^*$, \eqref{e-gue140213}
and \eqref{e-gue140213I}. From \eqref{e-gue140213VII} and \eqref{e-gue140213VI}, \eqref{e-gue140212I} follows.
By using Theorem~\ref{t-gue140205}, we can repeat the procedure above and obtain \eqref{e-gue140212m}. 
We omit the details.
\end{proof}

In the rest of this section, we will study the kernel  $(S^*_-+S^*_+)Q^*Q(S_-+S_+)(x,y)$.
We will use the notations and assumptions used in Theorem~\ref{t-gue140212} and until further
notice we assume that $q=n_-$.  Let $\varphi_-(x,y)\in \cC^\infty(D\times D)$,
$\varphi_+(x,y)\in \cC^\infty(D\times D)$ be as in Theorem~\ref{t-gue140205I}.
We need the following result, which is essentially well-known and follows from the stationary
phase formula of Melin-Sj\"ostrand~\cite{MS74} (see also \cite[p.\,76-77]{Hsiao08} for more details).
%-------------------
\begin{lem}\label{l-gue140214}
There is a complex valued phase function $\varphi\in \cC^\infty(D\times D)$
with $\varphi(x, x)=0$, $d_x\varphi(x, y)|_{x=y}=-\omega_0(x)$,
$d_y\varphi(x, y)|_{x=y}=\omega_0(x)$ and $\varphi(x,y)$ satisfies
\eqref{e-gue140205VI} such that for any properly supported operators 
$B, C:\mathscr D'(D,T^{*0,q}X)\To\mathscr D'(D,T^{*0,q}X)$,
\[\begin{split}
&B=\int^\infty_0e^{i\varphi_-(x,y)t}b(x,y,t)dt,\ \ C=\int^\infty_0e^{i\varphi_-(x,y)t}c(x,y,t)dt,\\
%&B, C:\mathscr D'(D,T^{*0,q}X)\To\mathscr D'(D,T^{*0,q}X),
\end{split}\]
with $b(x,y,t), c(x,y,t)\in S^{n-1}_{{\rm cl\,}}(D\times D\times\mathbb{R}_+,T^{*0,q}X\boxtimes T^{*0,q}X),$
%\[
%\begin{split}
%&b(x,y,t), c(x,y,t)\in S^{n-1}_{{\rm cl\,}}(D\times D\times\mathbb{R}_+,T^{*0,q}X\boxtimes T^{*0,q}X),\\
%&\mbox{$b(x,y,t)\sim\sum^\infty_{j=0}t^{n-1-j}b_j(x,y)$
%in $S^{n-1}_{1,0}(D\times D\times\mathbb{R}_+,T^{*0,q}X\boxtimes T^{*0,q}X)$},\\
%&\mbox{$c(x,y,t)\sim\sum^\infty_{j=0}t^{n-1-j}c_j(x,y)$
%in $S^{n-1}_{1,0}(D\times D\times\mathbb{R}_+,T^{*0,q}X\boxtimes T^{*0,q}X)$},\\
%&c_j(x,y), b_j(x,y)\in \cC^\infty(D\times D,T^{*0,q}X\boxtimes T^{*0,q}X),\ \ j=0,1,\ldots,
%\end{split}
%\]
we have
\[
\mbox{$B\circ C\equiv e^{i\varphi(x,y)t}d(x,y,t)dt$ on $D$},\]
where $d(x,y,t)\in S^{n-1}_{{\rm cl\,}}(D\times D\times\mathbb{R}_+,T^{*0,q}X\boxtimes T^{*0,q}X)$ 
and the leading term $d_0(x,y)$ of the expansion \eqref{e-fal} of $d(x,y,t)$ 
%\begin{equation}\label{e-gue140215}
%\begin{split}
%&d(x,y,t)\in S^{n-1}_{{\rm cl\,}}(D\times D\times\mathbb{R}_+,T^{*0,q}X\boxtimes T^{*0,q}X),\\
%&\mbox{$d(x,y,t)\sim\sum^\infty_{j=0}t^{n-1-j}d_j(x,y)$ in 
%$S^{n-1}_{1,0}(D\times D\times\mathbb{R}_+,T^{*0,q}X\boxtimes T^{*0,q}X)$},\\
%&d_j(x,y)\in \cC^\infty(D\times D,T^{*0,q}X\boxtimes T^{*0,q}X),\ \ j=0,1,\ldots,\\
%&d_0(x,x)=2\pi^{n}\frac{m(x)}{v(x)}\abs{{\rm det\,}\mathcal{L}_x}^{-1}b_0(x,x)c_0(x,x),\ \ \forall x\in D.
%\end{split}
%\end{equation}
satisfies
\begin{equation}\label{e-gue140215a}
d_0(x,x)=2\pi^{n}\frac{m(x)}{v(x)}\abs{{\rm det\,}\mathcal{L}_x}^{-1}b_0(x,x)c_0(x,x),\ \ \forall x\in D,
\end{equation}
where $b_0(x,y)$, $c_0(x,y)$ denote the leading terms of the expansions \eqref{e-fal} of $b(x,y,t)$, $c(x,y,t)$ respectively. 
%Here $\abs{{\rm det\,}\mathcal{L}_x}=\abs{\mu_1(x)}\ldots\abs{\mu_{n-1}(x)}$, $\mu_1(x),\ldots,\mu_{n-1}(x)$
%are the eigenvalues of $\mathcal{L}_x$ with respect to $\langle\,\cdot\,|\,\cdot\,\rangle$, $v(x)$ is the
%volume form on $X$ induced by $\langle\,\cdot\,|\,\cdot\,\rangle$.
\end{lem}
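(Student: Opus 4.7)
The plan is to reduce the composition $B\circ C$ to a single oscillatory integral via the complex stationary phase formula of Melin-Sj\"ostrand \cite{MS74}, to identify the critical value of the resulting phase function as the new phase $\varphi$, and then to extract the leading amplitude. This is essentially the approach used in \cite{Hsiao08} when analyzing compositions of approximate Szeg\H{o} kernels; the only new point here is that the amplitudes $b,c$ are arbitrary classical symbols of order $n-1$ rather than the specific ones produced by the parametrix construction.

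First I would write, modulo smoothing and after inserting a cut-off localizing near the diagonal,
\[
(B\circ C)(x,y) \equiv \int_0^\infty\!\!\int_0^\infty\!\!\int_D e^{i(\varphi_-(x,z)s+\varphi_-(z,y)t)}\, b(x,z,s)\,c(z,y,t)\,m(z)\,dz\,ds\,dt,
\]
and perform the substitution $s=\sigma\tau$, $t=\tau$ to obtain an oscillatory integral in $(z,\sigma)$ with large parameter $\tau$ and phase $\Phi(x,y,z,\sigma)=\sigma\varphi_-(x,z)+\varphi_-(z,y)$. On the diagonal $x=y=x_0$, using $\varphi_-(x,x)=0$ together with $d_x\varphi_-|_{x=y}=-\omega_0(x)$ and $d_y\varphi_-|_{x=y}=\omega_0(x)$, the critical equations $\partial_\sigma\Phi=\varphi_-(x,z)=0$ and $\partial_z\Phi=0$ admit the unique real solution $(z,\sigma)=(x_0,1)$, and the Hessian in $(z,\sigma)$ is non-degenerate there with non-negative imaginary part thanks to \eqref{e-gue140205VI}. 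Almost-analytic extension then yields unique complex critical points $(z_c(x,y),\sigma_c(x,y))$ near the diagonal, and I would set $\varphi(x,y):=\Phi(x,y,z_c(x,y),\sigma_c(x,y))$. The required identities $\varphi(x,x)=0$, $d_x\varphi|_{x=y}=-\omega_0(x)$ and $d_y\varphi|_{x=y}=\omega_0(x)$ follow by differentiating the critical-point equations, and the positivity property \eqref{e-gue140205VI} for $\varphi$ is inherited from that of $\varphi_-$.

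Next, complex stationary phase yields a classical symbol $d\in S^{n-1}_{\rm cl}$ whose leading term has the shape
\[
d_0(x,y)=\frac{(2\pi)^n\,m(z_c)\,\sigma_c^{n-1}}{\sqrt{\det(\Phi''(z_c,\sigma_c)/i)}}\,b_0(x,z_c)\,c_0(z_c,y),
\]
where $\Phi''$ denotes the Hessian of $\Phi$ in $(z,\sigma)$. The main obstacle is computing $\det(\Phi''(x_0,1)/i)$ on the diagonal and matching the resulting constant to the one in \eqref{e-gue140215a}. For this I would use Theorem \ref{t-gue140205II} to choose local coordinates at $x_0$ in which $\varphi_-$ has the explicit normal form \eqref{e-gue140205VII}; after discarding terms vanishing to order at least three at the critical point, $\Phi''(x_0,1)$ becomes block-diagonal, with a $(2n-2)\times(2n-2)$ block coming from the quadratic form $i\sum_j|\mu_j|\,|z_j-w_j|^2$ whose determinant equals $|\det\mathcal{L}_{x_0}|^2$ up to a universal power of $2$, and a residual $2\times 2$ block in the $(x_{2n-1},\sigma)$-direction whose determinant is a universal constant. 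Combining this with the Jacobian $v(x_0)/m(x_0)$ between the Riemannian volume and the integration volume produces exactly the coefficient $2\pi^n\,m(x_0)/v(x_0)\,|\det\mathcal{L}_{x_0}|^{-1}$ in \eqref{e-gue140215a}, for every $x_0\in D$.
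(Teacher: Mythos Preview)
Your proposal is correct and follows exactly the approach indicated by the paper, which does not give a self-contained proof but simply invokes the complex stationary phase formula of Melin--Sj\"ostrand \cite{MS74} and refers to \cite[p.\,76--77]{Hsiao08} for the details. Your outline---writing $(B\circ C)(x,y)$ as a double oscillatory integral, substituting $s=\sigma\tau$, applying stationary phase in $(z,\sigma)$, defining $\varphi$ as the critical value, and computing the Hessian in the normalized coordinates of Theorem~\ref{t-gue140205II}---is precisely the computation carried out in \cite{Hsiao08}, and your identification of the block structure of $\Phi''$ at the diagonal critical point $(z,\sigma)=(x_0,1)$ is accurate.
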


%The following is straightforward. 
We postpone the proof of the following Theorem for Section~\ref{s-gue140215}.

\begin{thm}\label{t-gue140215}
With the notations and assumptions used above, there is a $g(x,y)\in \cC^\infty(D\times D)$ with $g(x,x)=1$ such that
\begin{equation}\label{e-gue140214}
\mbox{$\varphi(x,y)-g(x,y)\varphi_-(x,y)$ vanishes to infinite order at $x=y$}.
\end{equation}
\end{thm}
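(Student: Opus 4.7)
The plan is to argue within the framework of complex Fourier integral operators of Melin--Sj\"ostrand \cite{MS74}: the phase $\varphi$ produced by Lemma~\ref{l-gue140214} parametrizes the same positive almost-analytic Lagrangian as $\varphi_-$, and any two phases parametrizing the same positive Lagrangian and having matching first differentials along the diagonal must agree modulo the equivalence $\varphi - g\varphi_- \in O(\abs{x-y}^\infty)$ with $g(x,x) = 1$. This is the complex analogue of the classical uniqueness, up to a multiplicative factor, of a real phase parametrizing a given Lagrangian submanifold.

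The first step is to identify the phase $\varphi$ of Lemma~\ref{l-gue140214} explicitly via the complex stationary phase method. Writing the kernel of $B \circ C$ as an iterated oscillatory integral and changing variables $t = \sigma r$, $s = (1-\sigma) r$ with $\sigma \in (0,1)$ and $r > 0$, one gets an oscillatory integral with phase
\[
\Psi(x,y,z,\sigma) = \sigma\,\varphi_-(x,z) + (1-\sigma)\,\varphi_-(z,y)
\]
and large parameter $r$. The Melin--Sj\"ostrand stationary phase in $(z,\sigma)$ requires solving $d_z\Psi = 0$ and $\pr_\sigma\Psi = 0$, which in the almost-analytic extension admits a unique smooth complex solution $(z^*(x,y),\sigma^*(x,y))$ near the diagonal with $z^*(x,x) = x$ and $\sigma^*(x,x) = 1/2$ (the latter forced by $\varphi_-(x,z) = -\ol{\varphi_-(z,x)}$). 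The phase of $B\circ C$ is then $\varphi(x,y) = \Psi(x,y,z^*,\sigma^*)$ modulo $O(\abs{x-y}^\infty)$.

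Next, I would verify that $\varphi$ parametrizes the same positive Lagrangian as $\varphi_-$ to infinite order along the diagonal. From the critical equations one reads off $d_x\varphi(x,x) = -\omega_0(x)$ and $d_y\varphi(x,x) = \omega_0(x)$, and ${\rm Im}\,\varphi(x,y) \geq c\abs{x-y}^2$ follows from the corresponding estimate for $\varphi_-$. The eikonal relation \eqref{e-gue140205V} is inherited by $\varphi$: because $B$ and $C$ have symbols attached to the positive Lagrangian over $\Sigma^-$, stationary phase composition produces a phase whose almost-analytic graph again lies over $\Sigma^-$ along the diagonal, so $p_0(x, d_x\varphi) = \widetilde f\,\varphi + O(\abs{x-y}^\infty)$ for some smooth $\widetilde f$. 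Hence $\varphi$ and $\varphi_-$ determine the same positive complex Lagrangian in $T^*X \times T^*X$ to infinite order on the diagonal.

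The conclusion then follows by a division argument. Matching the Taylor jets of $\varphi$ and $\varphi_-$ transverse to the diagonal order by order, one constructs $g \in \cC^\infty(D\times D)$ with $\varphi - g\varphi_- = O(\abs{x-y}^\infty)$; the normalization $g(x,x) = 1$ is forced by comparison of the Hessians along the diagonal, which for both $\varphi$ and $\varphi_-$ encode the Levi form via the local model \eqref{e-gue140205VII} of Theorem~\ref{t-gue140205II}. The main obstacle is the almost-analytic bookkeeping required to match jets to all orders transverse to the diagonal: the positivity ${\rm Im}\,\varphi_- \geq c\abs{x-y}^2$ is essential to justify smooth division by $\varphi_-$ at each step, and one must invoke the uniqueness result for complex phase functions parametrizing a given positive Lagrangian from \cite{MS74} to guarantee that $g$ is well-defined and smooth on $D\times D$.
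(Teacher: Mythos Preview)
Your approach is conceptually plausible but takes a genuinely different route from the paper's, and it leaves the decisive step under-justified.

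The paper does not argue via Lagrangian geometry or invoke an abstract uniqueness theorem from \cite{MS74}. Instead it proceeds concretely: using the Malgrange preparation theorem it normalizes both $\varphi_-$ and $\varphi$ near a diagonal point to the form $y_{2n-1}+h(x,y')$ and $y_{2n-1}+h_1(x,y')$; then, exploiting the operator identity $S_-\circ S_-\equiv S_-$ (which you never use), it obtains an equality modulo smoothing between two oscillatory integrals with these normalized phases and nonvanishing leading symbols. Pairing against $e^{-iky_{2n-1}}\chi(y_{2n-1})$ converts this into an equality of semi-classical kernels $e^{ikh}g(x,y',k)\equiv e^{ikh_1}b(x,y',k)\bmod O(k^{-\infty})$ with $g_0,b_0\neq0$ on the diagonal. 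A direct Taylor-coefficient contradiction (differentiate, evaluate on the diagonal, divide by $k^n$, let $k\to\infty$) then forces $h-h_1$ to vanish to infinite order there. This is self-contained and never appeals to a classification of complex phases.

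Your route---show that the stationary-phase output $\varphi$ satisfies the same eikonal relation and diagonal data as $\varphi_-$, hence parametrizes the same positive Lagrangian, hence is equivalent---has a gap at the eikonal step. Saying the composed canonical relation ``lies over $\Sigma^-$'' only yields $p_0(x,d_x\varphi)=0$ along $\{\varphi=0\}$; what you actually need is the idempotence $\Lambda_-\circ\Lambda_-=\Lambda_-$ of the canonical relation, and that is precisely what $S_-^2\equiv S_-$ encodes at the geometric level. Without invoking that identity (or verifying idempotence directly from the local model \eqref{e-gue140205VII}), your claim that $\varphi$ and $\varphi_-$ determine the same Lagrangian is not established. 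Moreover, the final ``uniqueness result for complex phase functions'' you cite is exactly Theorem~\ref{t-gue140305II}, whose proof the paper declares to be essentially the same as the present one; invoking it here is circular unless you carefully isolate an independent statement from \cite{MS74}. The paper's semi-classical argument sidesteps both issues by working with kernels rather than Lagrangians.
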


From Lemma~\ref{l-gue140214} and Theorem~\ref{t-gue140215}, we deduce

\begin{cor}\label{t-gue140216}
In the conditions of Lemma \ref{l-gue140214} we have
\[
\mbox{$B\circ C\equiv e^{i\varphi_-(x,y)t}e(x,y,t)dt$ on $D$},
\]
where $e(x,y,t)\in S^{n-1}_{{\rm cl\,}}(D\times D\times\mathbb{R}_+,T^{*0,q}X\boxtimes T^{*0,q}X)$ and
the leading term $e_0(x,y)$ of the expansion \eqref{e-fal} of $e(x,y,t)$ satisfies \eqref{e-gue140215a}.
\end{cor}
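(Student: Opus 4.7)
The plan is to chain Lemma~\ref{l-gue140214} with Theorem~\ref{t-gue140215}, using a substitution in the fibre variable $t$ to replace the phase $\varphi$ provided by the lemma with $\varphi_-$, and absorbing the infinite-order-vanishing discrepancy into the amplitude. This is essentially the mechanism underlying the equivalence of phases in Definition~\ref{d-gue140305} and Theorem~\ref{t-gue140305II}.

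First, Lemma~\ref{l-gue140214} gives
\[
B\circ C \equiv \int_0^\infty e^{i\varphi(x,y)t}\, d(x,y,t)\,dt \quad \text{on } D,
\]
with $d\in S^{n-1}_{{\rm cl}}(D\times D\times\mathbb{R}_+,T^{*0,q}X\boxtimes T^{*0,q}X)$ and leading term $d_0$ satisfying \eqref{e-gue140215a}. The phase $\varphi$ constructed there satisfies $\varphi(x,x)=0$, $d_x\varphi|_{x=y}=-\omega_0(x)$, $d_y\varphi|_{x=y}=\omega_0(x)$ and the estimate \eqref{e-gue140205VI}; these are exactly the hypotheses of Theorem~\ref{t-gue140215}, so there exists $g\in \cC^\infty(D\times D)$ with $g(x,x)=1$ such that $\varphi(x,y)-g(x,y)\varphi_-(x,y)$ vanishes to infinite order at the diagonal. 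By continuity of $g$ and after shrinking $D$ to a neighbourhood of the diagonal (which only affects the smoothing remainder, as the kernel is already smooth off the diagonal), we may assume $g$ is nonvanishing and close to $1$.

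Next, decompose $e^{i\varphi t}=e^{ig\varphi_- t}\cdot e^{i(\varphi-g\varphi_-)t}$ and perform the substitution $s=g(x,y)t$ in the first factor, which is legitimate since $g$ is nonvanishing. This converts
\[
\int_0^\infty e^{ig(x,y)\varphi_-(x,y)t}\,d(x,y,t)\,dt
=\int_0^\infty e^{i\varphi_-(x,y)s}\,d\!\left(x,y,\tfrac{s}{g(x,y)}\right)g(x,y)^{-1}\,ds,
\]
yielding an amplitude still in $S^{n-1}_{{\rm cl}}$ whose leading diagonal value is $d_0(x,x)/g(x,x)=d_0(x,x)$. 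The residual factor $e^{i(\varphi-g\varphi_-)t}$ is then absorbed into the amplitude by the standard Melin--Sj\"ostrand device: since the exponent $\psi:=\varphi-g\varphi_-$ vanishes to infinite order on $\{x=y\}$, multiplying a classical symbol by $e^{i\psi t}$ and performing a Taylor-type expansion in the $(x-y)$ variable produces classical symbol corrections that all vanish on the diagonal. The outcome is $B\circ C\equiv \int_0^\infty e^{i\varphi_-(x,y)t}\,e(x,y,t)\,dt$ on $D$, with $e\in S^{n-1}_{{\rm cl}}$ and $e_0(x,x)=d_0(x,x)$, which is precisely \eqref{e-gue140215a}.

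The main obstacle is the rigorous justification that the factor $e^{i(\varphi-g\varphi_-)t}$, with exponent of infinite-order vanishing on the diagonal, can be absorbed into a classical symbol without changing either the symbol class or the leading diagonal coefficient. This is not a purely formal matter: one needs the stationary-phase/oscillatory-integral calculus of Melin--Sj\"ostrand \cite{MS74} to verify that the asymptotic expansion in inverse powers of $t$ is preserved and that the sub-leading corrections all carry factors vanishing on the diagonal. This is the same technical point that underlies Theorems~\ref{t-gue140215} and \ref{t-gue140305II}, so the argument here is a direct application of that machinery rather than a new computation.
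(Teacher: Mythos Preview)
Your proposal is correct and follows essentially the same approach as the paper, which simply states that the corollary follows from Lemma~\ref{l-gue140214} and Theorem~\ref{t-gue140215}. You have filled in the mechanism behind the phase equivalence (the substitution $s=g(x,y)t$ together with absorption of the infinite-order-vanishing remainder $e^{i\psi t}$) that the paper leaves implicit, and you correctly note that this is precisely the content of Theorem~\ref{t-gue140305II} and the Melin--Sj\"ostrand theory invoked there.
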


Similarly, we can repeat the proof of Corollary~\ref{t-gue140216} and conclude that

\begin{thm}\label{t-gue140216I}
With the notations and assumptions used in Theorem~\ref{t-gue140212}, let $q=n_+$. 
For any properly supported operators 
$\mathcal{B}, \mathcal{C}:\mathscr D'(D,T^{*0,q}X)\To\mathscr D'(D,T^{*0,q}X)$,
\[\begin{split}
&\mathcal{B}=\int^\infty_0e^{i\varphi_+(x,y)t}b(x,y,t)dt,\ \ \mathcal{C}=\int^\infty_0e^{i\varphi_+(x,y)t}c(x,y,t)dt,\\
%&B, C:\mathscr D'(D,T^{*0,q}X)\To\mathscr D'(D,T^{*0,q}X),
\end{split}\]
with $b(x,y,t), c(x,y,t)\in S^{n-1}_{{\rm cl\,}}(D\times D\times\mathbb{R}_+,T^{*0,q}X\boxtimes T^{*0,q}X)$, 
we have
\[
\mbox{$\mathcal{B}\circ \mathcal{C}\equiv e^{i\varphi_+(x,y)t}f(x,y,t)dt$ on $D$},\]
where $f(x,y,t)\in S^{n-1}_{{\rm cl\,}}(D\times D\times\mathbb{R}_+,T^{*0,q}X\boxtimes T^{*0,q}X)$ 
and the leading term $f_0(x,y)$ of the expansion \eqref{e-fal} of $f(x,y,t)$ satisfies \eqref{e-gue140215a}.
\end{thm}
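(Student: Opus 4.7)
The plan is to mirror the proof of Corollary~\ref{t-gue140216} with $\varphi_-$ replaced by $\varphi_+$. First, I would establish the analogue of Lemma~\ref{l-gue140214} for $\varphi_+$: given properly supported FIOs $\mathcal{B}, \mathcal{C}$ with phase $\varphi_+$ and classical symbols of order $n-1$, the composition is, modulo smoothing, an oscillatory integral $\int_0^\infty e^{i\tilde\varphi(x,y)t}\tilde f(x,y,t)\,dt$ with a new phase $\tilde\varphi\in\cC^\infty(D\times D)$ such that $\tilde\varphi(x,x)=0$, $\mathrm{Im}\,\tilde\varphi\geq 0$ with the nondegenerate quadratic lower bound on the diagonal, and with the differentials of $\tilde\varphi$ at $x=y$ matching those of $\varphi_+$ (as dictated by the fact that $-\ol{\varphi_+}$, rather than $\varphi_+$ itself, satisfies \eqref{e-gue140205IV}, per Theorem~\ref{t-gue140205I}). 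This is the Melin--Sj\"ostrand stationary phase formula applied to the critical set of the composition integral in the $t$-variables, and the argument is formally the same as for $\varphi_-$ because only the second-order Taylor expansion of the phase at the diagonal is used, and the eikonal equation \eqref{e-gue140205V} is satisfied by $-\ol{\varphi_+}$ in exactly the same way as by $\varphi_-$.

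Second, I would apply the version of Theorem~\ref{t-gue140215} appropriate for $\varphi_+$: any complex phase $\tilde\varphi$ with the boundary and positivity data above is equivalent on $D$ to $\varphi_+$ in the sense of Definition~\ref{d-gue140305}; concretely, there exists $h\in\cC^\infty(D\times D)$ with $h(x,x)=1$ such that $\tilde\varphi(x,y)-h(x,y)\varphi_+(x,y)$ vanishes to infinite order at $x=y$. The proof is line-by-line identical to that of Theorem~\ref{t-gue140215} once one replaces $\varphi_-$ by $\varphi_+$ and uses the eikonal relation \eqref{e-gue140205V} for $-\ol{\varphi_+}$, since that proof relies only on the formal structural properties of the phase and not on the sign of the Levi eigenvalues. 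Combining the two steps yields $\mathcal{B}\circ\mathcal{C}\equiv\int_0^\infty e^{i\varphi_+(x,y)t}f(x,y,t)\,dt$ on $D$ with $f\in S^{n-1}_{\rm cl}\big(D\times D\times\Real_+,T^{*0,q}_yX\boxtimes T^{*0,q}_xX\big)$.

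Finally, the leading-order symbol $f_0(x,x)$ is computed by the standard stationary phase expansion. The Hessian of the total phase at the critical point depends only on the quadratic part of $\varphi_+$ at the diagonal, which, by the $\varphi_+$-version of Theorem~\ref{t-gue140205II}, differs from the quadratic part of $\varphi_-$ only in the signs of the Levi eigenvalues $\mu_j$ (since the roles of $n_-$ and $n_+$ are swapped). Consequently the Jacobian entering the stationary phase formula is $|\!\det\mathcal{L}_x|^{-1}v(x)/m(x)$ up to the same universal constant $2\pi^n$ as in \eqref{e-gue140215a}, so the identity $f_0(x,x)=2\pi^n\,\tfrac{m(x)}{v(x)}|\!\det\mathcal{L}_x|^{-1}b_0(x,x)c_0(x,x)$ holds. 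The main obstacle is purely bookkeeping: one has to verify that replacing $\mu_j$ by $-\mu_j$ leaves $|\!\det\mathcal{L}_x|$ invariant and that no sign in the stationary-phase determinant depends on the signature of the Levi form. Once this is checked the entire derivation goes through \emph{mutatis mutandis}, and the details are therefore omitted.
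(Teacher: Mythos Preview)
Your proposal is correct and follows exactly the paper's approach: the paper simply states ``Similarly, we can repeat the proof of Corollary~\ref{t-gue140216} and conclude that'' before stating Theorem~\ref{t-gue140216I}, which is precisely the mirror argument (analogues of Lemma~\ref{l-gue140214} and Theorem~\ref{t-gue140215} for $\varphi_+$) that you outline.
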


%\begin{thm}\label{t-gue140216I}
%With the notations and assumptions used in Theorem~\ref{t-gue140212}, let $q=n_+$. For any
%\[\begin{split}
%&\mathcal{B}=\int^\infty_0e^{i\varphi_+(x,y)t}b(x,y,t)dt,\ \ \mathcal{C}=\int^\infty_0e^{i\varphi_+(x,y)t}c(x,y,t)dt,\\
%&\mathcal{B}, \mathcal{C}:\mathscr D'(D,T^{*0,q}X)\To\mathscr D'(D,T^{*0,q}X),\end{split}\]
%$\mathcal{B}$ and $\mathcal{C}$ are properly supported operators on $D$, where
%\[
%\begin{split}
%&b(x,y,t), c(x,y,t)\in S^{n-1}_{{\rm cl\,}}(D\times D\times\mathbb{R}_+,T^{*0,q}X\boxtimes T^{*0,q}X),\\
%&\mbox{$b(x,y,t)\sim\sum^\infty_{j=0}t^{n-1-j}b_j(x,y)$ in 
%$S^{n-1}_{1,0}(D\times D\times\mathbb{R}_+,T^{*0,q}X\boxtimes T^{*0,q}X)$},\\
%&\mbox{$c(x,y,t)\sim\sum^\infty_{j=0}t^{n-1-j}c_j(x,y)$ in 
%$S^{n-1}_{1,0}(D\times D\times\mathbb{R}_+,T^{*0,q}X\boxtimes T^{*0,q}X)$},\\
%&c_j(x,y), b_j(x,y)\in \cC^\infty(D\times D,T^{*0,q}X\boxtimes T^{*0,q}X),\ \ j=0,1,\ldots,
%\end{split}
%\]
%we have
%\[
%\mbox{$\mathcal{B}\circ \mathcal{C}\equiv e^{i\varphi_+(x,y)t}f(x,y,t)dt$ on $D$},\]
%where
%\begin{equation}\label{e-gue140216I}
%\begin{split}
%&f(x,y,t)\in S^{n-1}_{{\rm cl\,}}(D\times D\times\mathbb{R}_+,T^{*0,q}X\boxtimes T^{*0,q}X),\\
%&\mbox{$f(x,y,t)\sim\sum^\infty_{j=0}t^{n-1-j}f_j(x,y)$ in 
%$S^{n-1}_{1,0}(D\times D\times\mathbb{R}_+,T^{*0,q}X\boxtimes T^{*0,q}X)$},\\
%&f_j(x,y)\in \cC^\infty(D\times D,T^{*0,q}X\boxtimes T^{*0,q}X),\ \ j=0,1,\ldots,\\
%&f_0(x,x)=2\pi^{n}\frac{m(x)}{v(x)}\abs{{\rm det\,}\mathcal{L}_x}^{-1}b_0(x,x)c_0(x,x),\ \ \forall x\in D.
%\end{split}
%\end{equation}
%\end{thm}

We also need the following.
%-----------------
\begin{lem}\label{l-gue140216}
With the notations and assumptions used in Theorem~\ref{t-gue140212}, let $q=n_+=n_-$. For any
properly supported operators 
$\mathcal{B}, C:\mathscr D'(D,T^{*0,q}X)\To\mathscr D'(D,T^{*0,q}X)$,
\[\mathcal{B}=\int^\infty_0e^{i\varphi_+(x,y)t}b(x,y,t)dt,\ \ C=\int^\infty_0e^{i\varphi_-(x,y)t}c(x,y,t)dt,\]
 where $b(x,y,t), c(x,y,t)\in S^{n-1}_{{\rm cl\,}}(D\times D\times\mathbb{R}_+,T^{*0,q}X\boxtimes T^{*0,q}X)$, 
 we have
\[\mathcal{B}\circ C\equiv0\quad\text{and}\quad C\circ\mathcal{B}\equiv0\quad\text{on $D$}.\]
\end{lem}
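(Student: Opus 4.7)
The plan is to reduce the kernel of $\mathcal{B}\circ C$ to an oscillatory integral whose effective phase is non-stationary in the intermediate integration variable, so that the whole composition is smoothing. First I would write
\[
(\mathcal{B}\circ C)(x,y)\equiv\iiint e^{i(\varphi_+(x,z)t+\varphi_-(z,y)s)}\,b(x,z,t)\,c(z,y,s)\,dt\,ds\,dz
\]
with $t,s\in(0,\infty)$, and substitute $t=\lambda\alpha$, $s=\lambda(1-\alpha)$ with $\lambda\in(0,\infty)$, $\alpha\in(0,1)$. The phase then factors as $\lambda\Phi$ with
\[
\Phi(x,y,z,\alpha)=\alpha\,\varphi_+(x,z)+(1-\alpha)\,\varphi_-(z,y),
\]
and $\lambda$ becomes the large parameter.

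The decisive observation is that $d_z\Phi$ does not vanish on the diagonal. From Theorem~\ref{t-gue140205I} one has $d_x\varphi_-(x,y)|_{x=y}=-\omega_0(x)$, and since $-\overline{\varphi_+}$ satisfies \eqref{e-gue140205IV}, a direct computation gives $d_y\varphi_+(x,y)|_{x=y}=-\omega_0(x)$. Consequently
\[
d_z\varphi_+(x,z)\big|_{z=x}=-\omega_0(x),\qquad d_z\varphi_-(z,y)\big|_{z=y}=-\omega_0(y),
\]
so that $d_z\Phi|_{x=y=z}=-\omega_0(z)\neq 0$ for every $\alpha\in(0,1)$. By continuity, $d_z\Phi$ is bounded away from zero on a neighborhood of the diagonal, uniformly in $\alpha$.

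With this in hand, I would invoke the complex-phase integration-by-parts technique of Melin--Sj\"ostrand \cite{MS74}, which is exactly the tool underlying Lemma~\ref{l-gue140214}: the operator $L_z:=|d_z\Phi|^{-2}\langle\overline{d_z\Phi},D_z\rangle$ satisfies ${}^{t}L_z(e^{i\lambda\Phi})=\lambda^{-1}e^{i\lambda\Phi}$, so iterating and using the symbol estimates for $b,c$ shows that the $\lambda$-integral converges absolutely to a smooth kernel in a neighborhood of the diagonal. Away from the diagonal, the lower bounds ${\rm Im\,}\varphi_+(x,z)\geq c|x-z|^2$ and ${\rm Im\,}\varphi_-(z,y)\geq c|z-y|^2$ from \eqref{e-gue140205VI} yield exponential decay in $\lambda$, hence smoothness there as well. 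Thus $\mathcal{B}\circ C\equiv 0$ on $D$.

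The same argument applied to $C\circ\mathcal{B}$ gives the phase $\alpha\varphi_-(x,z)+(1-\alpha)\varphi_+(z,y)$; now the relevant derivatives are $d_y\varphi_-|_{x=y}=\omega_0$ and $d_x\varphi_+|_{x=y}=\omega_0$, so $d_z\widetilde\Phi|_{x=y=z}=+\omega_0(z)\neq 0$, and the rest of the reasoning is identical. The main technical step will be the Melin--Sj\"ostrand non-stationary phase analysis with a complex phase of non-negative imaginary part, but this is strictly simpler than the stationary-phase computation behind Lemma~\ref{l-gue140214}: there one extracts a principal symbol from genuine critical points, while here the absence of critical points of $\Phi$ automatically forces both compositions to be smoothing.
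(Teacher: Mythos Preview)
Your approach is essentially the same as the paper's: both reduce to a non-stationary phase argument in the intermediate variable $z$, exploiting that $d_z\varphi_+(x,z)|_{z=x}$ and $d_z\varphi_-(z,y)|_{z=y}$ are both equal to $-\omega_0$ and hence add rather than cancel. The paper uses the substitution $\sigma=st$ (so the phase becomes $t(\varphi_+(x,w)s+\varphi_-(w,y))$ with $s\in(0,\infty)$) and splits via a cutoff $\chi(|x-w|^2/\varepsilon)$, integrating by parts in $s$ where $|x-w|\geq\varepsilon$ (using $\varphi_+(x,w)\neq0$) and in $w$ where $|x-w|<\varepsilon$; your $(\lambda,\alpha)$ parametrization is an equivalent repackaging. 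One small caution: the bound \eqref{e-gue140205VI} controls only the first $2n-2$ coordinates, so your off-diagonal ``exponential decay'' claim is not quite accurate as stated---but this is harmless, since smoothing away from $x=y$ follows from standard wavefront considerations (as the paper also notes at the outset).
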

%-----------------
\begin{proof}
We first notice that $\mathcal{B}\circ C$ is smoothing away $x=y$.
We also write $w=(w_1,\ldots,w_{2n-1})$ to denote local coordinates on $D$.
We have
\begin{equation}\label{e-gue140216f}
\begin{split}
\mathcal{B}\circ C(x,y)&=\int_{\sigma>0,t>0}e^{i\varphi_+(x,w)\sigma+i\varphi_-(w,y)t}b(x,w,\sigma)c(w,y,t)d\sigma m(w)dt\\
&=\int_{s>0,t>0}e^{it(\varphi_+(x,w)s+\varphi_-(w,y))}tb(x,w,st)c(w,y,t)ds m(w)dt.
\end{split}\end{equation}
Take $\chi\in \cC^\infty_0(\Real,[0,1])$ with $\chi=1$ on 
$[-\frac{1}{2},\frac{1}{2}]$, $\chi=0$ on $]-\infty,-1]\bigcup[1,\infty[$. 
From \eqref{e-gue140216f}, we have
\begin{equation}\label{e-gue140216fI}
\begin{split}
&\mathcal{B}\circ C(x,y)=I_\varepsilon+II_\varepsilon,\\
&I_\varepsilon=\int_{s>0,t>0}e^{it(\varphi_+(x,w)s+\varphi_-(w,y))}\chi\bigg(\frac{\abs{x-w}^2}{\varepsilon}\bigg)
tb(x,w,st)c(w,y,t)ds m(w)dt,\\
&II_\varepsilon=\int_{s>0,t>0}e^{it(\varphi_+(x,w)s+\varphi_-(w,y))}
\bigg(1-\chi\bigg(\frac{\abs{x-w}^2}{\varepsilon}\bigg)\bigg)\\
&\quad\quad\quad\times tb(x,w,st)c(w,y,t)ds m(w)dt,
\end{split}\end{equation}
where $\varepsilon>0$ is a small constant. Since $\varphi_+(x,w)=0$ if and only if $x=w$, 
we can integrate by parts with respect to $s$ and conclude that $II_\varepsilon$ is smoothing. 
Since $\mathcal{B}\circ C$ is smoothing away $x=y$, we may assume that $\abs{x-y}<\varepsilon$.
Since $d_w(\varphi_+(x,w)s+\varphi_-(w,y))|_{x=y=w}=-\omega_0(x)(s+1)\neq0$, if $\varepsilon>0$ 
is small, we can integrate by parts with respect to $w$ and conclude that $I_\varepsilon\equiv0$ on $D$.
We get $\mathcal{B}\circ C\equiv0$ on $D$. Similarly, we can repeat the procedure above and conclude 
that $C\circ\mathcal{B}\equiv0$ on $D$. The lemma follows.
\end{proof}
%-----------------
Recalling Definition~\ref{d-gue140221} we see that Theorem~\ref{t-gue140216}, 
Theorem~\ref{t-gue140216I} and Lemma~\ref{l-gue140216} yield:
%-----------------
\begin{thm}\label{t-gue140213}
With the notations and assumptions used in Theorem~\ref{t-gue140212}, let $q\in\set{n_-,n_+}$. Then,
\begin{equation}\label{e-gue140213VIII}
\begin{split}
&(S^*_-+S^*_+)Q^*Q(S_-+S_+)(x,y)\\
&\equiv\int^\infty_0e^{i\varphi_-(x,y)t}a_-(x,y,t)dt+\int^\infty_0e^{i\varphi_+(x,y)t}a_+(x,y,t)dt\, \:\:\text{on $D$},
\end{split}
\end{equation}
where $\varphi_\pm(x,y)\in \cC^\infty(D\times D)$ are as in Theorem~\ref{t-gue140305_b}, 
$a_\pm(x,y,t)\in S^{n-1}_{{\rm cl\,}}\big(D\times D\times\mathbb{R}_+,T^{*0,q}_yX\boxtimes T^{*0,q}_xX\big)$,
\begin{equation}  \label{e-gue140213aI}\begin{split}
&\mbox{$a_-(x,y,t)=0$ if $q\neq n_-$ or $Q\equiv0$ at $\Sigma^-\cap T^*D$},\\
&\mbox{ $a_+(x,y,t)=0$ if $q\neq n_+$ or $Q\equiv0$ at $\Sigma^+\cap T^*D$}\,.
\end{split}\end{equation}
%%----------------------------
%\begin{equation}  \label{e-gue140213aI}\begin{split}
%&a_-(x, y, t), a_+(x,y,t)\in S^{n-1}_{{\rm cl\,}}\big(D\times D\times\mathbb{R}_+,T^{*0,q}_yX\boxtimes T^{*0,q}_xX\big), \\
%&\mbox{$a_-(x,y,t)=0$ if $q\neq n_-$ or $Q\equiv0$ at $\Sigma^-\cap T^*D$},\\
%&\mbox{ $a_+(x,y,t)=0$ if $q\neq n_+$ or $Q\equiv0$ at $\Sigma^+\cap T^*D$},\\
%&a_-(x, y, t)\sim\sum^\infty_{j=0}a^j_-(x, y)t^{n-1-j}\quad\text{ in }S^{n-1}_{1, 0}
%\big(D\times D\times\mathbb{R}_+,T^{*0,q}_yX\boxtimes T^{*0,q}_xX\big)\,,\\
%&a^j_-(x, y)\in \cC^\infty\big(D\times D,T^{*0,q}_yX\boxtimes T^{*0,q}_xX\big),\ \ j\in\N_0,\\
%&a_+(x, y, t)\sim\sum^\infty_{j=0}a^j_+(x, y)t^{n-1-j}\quad\text{ in }S^{n-1}_{1, 0}
%\big(D\times D\times\mathbb{R}_+,T^{*0,q}_yX\boxtimes T^{*0,q}_xX\big)\,,\\
%&a^j_+(x, y)\in \cC^\infty\big(D\times D,T^{*0,q}_yX\boxtimes T^{*0,q}_xX\big),\ \ j\in\N_0.
%\end{split}\end{equation}
%----------------------------
Moreover, assume that $q=n_-$, then, for the leading term $a^0_-(x,y)$ of the expansion \eqref{e-fal} of $a_-(x,y,t)$ satisfies 
\begin{equation}  \label{e-gue140213aII}
\begin{split}
a^0_-(x, x)
=\frac{1}{2}\pi^{-n}\frac{v(x)}{m(x)}
\abs{{\rm det\,}\mathcal{L}_x}\tau_{x,n_-}q^*(x,-\omega_0(x))q(x,-\omega_0(x))\tau_{x,n_-},\ \ \forall x\in D,
\end{split}
\end{equation}
where $\det\mathcal{L}_x$ is the determinant of the Levi form defined in \eqref{det140530}, $v(x)$ 
is the volume form on $X$ induced by $\langle\,\cdot\,|\,\cdot\,\rangle$, $q(x,\eta)\in \cC^\infty(T^*D)$ 
is the principal symbol of $Q$, $q^*(x,\eta)$ is the adjoint of $q(x,\eta):T^{*0,q}_xX\To T^{*0,q}_xX$ 
with respect to $\langle\,\cdot\,|\,\cdot\,\rangle$ and $\tau_{x,n_-}$ is as in \eqref{tau140530}.
\end{thm}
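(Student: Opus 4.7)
The plan is to expand
\[
(S^*_-+S^*_+)Q^*Q(S_-+S_+) = \sum_{\epsilon_1,\epsilon_2\in\{-,+\}} S^*_{\epsilon_1}\, Q^*Q\, S_{\epsilon_2}
\]
and treat each of the four cross-terms separately. The vanishing conditions in \eqref{e-gue140213aI} involving $q\neq n_\pm$ follow immediately from Remark~\ref{r-gue140211}: if $q\neq n_-$, then $s_-(x,y,t)$ vanishes to infinite order at $x=y$, whence $S_-\equiv 0$ on $D$, and symmetrically for $S_+$. For the vanishing conditions involving $Q\equiv 0$ at $\Sigma^\mp\cap T^*D$, I would use $d_x\varphi_\pm(x,x)=\pm\omega_0(x)$ together with the positivity \eqref{e-gue140205VI} to place the wave-front set of the kernel $S_\pm(x,y)$ in the $x$-variable inside $\Sigma^\pm$; then the symbolic vanishing of $Q$ on a conic neighbourhood of $\Sigma^-$ (Definition~\ref{d-gue140221}) combined with a standard non-stationary phase argument yields $QS_-\equiv 0$ on $D$, and after taking adjoints $S^*_-Q^*\equiv 0$ on $D$ as well (analogously for $\Sigma^+$).

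For the mixed terms $S^*_\mp Q^*Q\, S_\pm$, my first step would be to absorb $Q^*Q$ into $S_\pm$: using the standard composition of a classical pseudodifferential operator of order $0$ with an oscillatory integral of the form $\int e^{i\varphi_\pm(x,y)t}s_\pm(x,y,t)\,dt$ (stationary phase in the dual variable of $Q^*Q$), one obtains again a properly supported $\varphi_\pm$-FIO with symbol in $S^{n-1}_{\rm cl}$ whose leading term on the diagonal is $q^*(x,\pm\omega_0(x))q(x,\pm\omega_0(x))\,s^0_\pm(x,x)$. Once this is done, the mixed terms become compositions of a $\varphi_\mp$-FIO with a $\varphi_\pm$-FIO, which are smoothing on $D$ by Lemma~\ref{l-gue140216}. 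For the two diagonal terms, Corollary~\ref{t-gue140216} and Theorem~\ref{t-gue140216I} identify $S^*_-Q^*QS_-$ and $S^*_+Q^*QS_+$ as oscillatory integrals with phases $\varphi_-$ and $\varphi_+$ respectively and symbols in $S^{n-1}_{\rm cl}$, yielding \eqref{e-gue140213VIII}.

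To compute $a^0_-(x,x)$ in the case $q=n_-$, I would combine three ingredients. First, the leading symbol of $S^*_-$ on the diagonal, which by $\varphi_-(y,x)=-\overline{\varphi_-(x,y)}$ and Theorem~\ref{t-gue140205III} equals $(s^0_-)^*(x,x)=\tfrac12\pi^{-n}|\det\mathcal{L}_x|\tfrac{v(x)}{m(x)}\tau_{x,n_-}$, using that $\tau_{x,n_-}$ is self-adjoint. Second, the leading symbol $q^*(x,-\omega_0(x))q(x,-\omega_0(x))\,s^0_-(x,x)$ of $Q^*QS_-$ from the previous paragraph. Third, the product formula \eqref{e-gue140215a}, whose prefactor $2\pi^n(m(x)/v(x))|\det\mathcal{L}_x|^{-1}$ together with the two factors of $\tfrac12\pi^{-n}|\det\mathcal{L}_x|(v(x)/m(x))$ collapses to exactly one copy of $\tfrac12\pi^{-n}|\det\mathcal{L}_x|(v(x)/m(x))$, producing the right-hand side of \eqref{e-gue140213aII}; the sandwiching by $\tau_{x,n_-}$ on both sides arises because $s^0_-(x,x)$ takes values in $\mathcal{N}(x,n_-)$. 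The step I expect to be the most delicate is the pseudodifferential-FIO composition in the second paragraph, since one must carefully verify that this composition really produces a properly supported $\varphi_\pm$-FIO of the same class rather than a genuinely new oscillatory integral, and identify its leading term precisely as $q^*q\cdot s^0_\pm$ on the diagonal so that the final numerology in \eqref{e-gue140213aII} works out.
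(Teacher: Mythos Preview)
Your proposal is correct and follows essentially the same route as the paper, which derives Theorem~\ref{t-gue140213} directly from Corollary~\ref{t-gue140216}, Theorem~\ref{t-gue140216I}, and Lemma~\ref{l-gue140216} together with Definition~\ref{d-gue140221}, exactly via the four-term decomposition and the pseudodifferential--FIO composition you describe. One small remark: the mixed terms $S^*_\mp Q^*Q S_\pm$ are only both present when $q=n_-=n_+$ (otherwise one of $S_\pm\equiv0$ by Remark~\ref{r-gue140211}), which is precisely the hypothesis under which Lemma~\ref{l-gue140216} is stated, so your appeal to it is on the nose.
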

%-------------
\begin{proof}[Proof of Theorem \ref{t-gue140305VIa}]
From Theorem~\ref{t-gue140212} and Theorem~\ref{t-gue140213}, 
we get Theorem~\ref{t-gue140305VIab} and Theorem~\ref{t-gue140305VIa}.
\end{proof}
%-------------
\begin{proof}[Proof of Theorem \ref{t-emb}] 
%In the end of this section, we prove Theorem~\ref{t-emb}. 
Fix $p\in D$, let $\{W_j\}_{j=1}^{n-1}$
be an orthonormal frame of $T^{1,0}X$ in a neighbourhood of $p$
such that the Levi form is diagonal at $p$. We take local coordinates
$x=(x_1,\ldots,x_{2n-1})$, $z_j=x_{2j-1}+ix_{2j}$, $j=1,\ldots,n-1$,
defined on some neighbourhood of $p$ such that $\omega_0(x_0)=dx_{2n-1}$, $x(p)=0$, 
and for some $c_j\in\Complex$, $j=1,\ldots,n-1$\,,
\[W_j=\frac{\pr}{\pr z_j}-i\mu_j\ol z_j\frac{\pr}{\pr x_{2n-1}}-
c_jx_{2n-1}\frac{\pr}{\pr x_{2n-1}}+O(\abs{x}^2),\ j=1,\ldots,n-1\,.\]
For $x=(x_1,x_2,\ldots,x_{2n-1})$, we write $x'=(x_1,x_2,\ldots,x_{2n-2})$. 
Take $\chi\in\cC^\infty_0(]-\varepsilon_0,\varepsilon_0[)$, $\chi=1$ near $0$, $\chi(t)=\chi(-t)$, where $\varepsilon_0>0$ 
is a small constant. Take $\varepsilon_0>0$ small enough so that $D'\times]-\varepsilon_0,\varepsilon_0[\Subset D$, 
where $D'$ is an open neighbourhood of $0\in\Real^{2n-2}$. For each $k>0$, we consider the operator 
\[E_k:u\in\cC^\infty_0(D')\To(Q^*\Pi^{(0)}Q)(e^{-iky_{2n-1}}\chi(y_{2n-1})u(y'))\in\cC^\infty(X).\]
From the stationary phase formula of Melin-Sj\"ostrand~\cite{MS74}, we can check that $E_k$ is smoothing 
and the kernel of $E_k$ satisfies 
\begin{equation}\label{e-f}
E_k(x,y')\equiv e^{ik\Phi(x,y')}g(x,y',k)\mod O(k^{-\infty}),
\end{equation}
where $g(x,y',k)\in\cC^\infty$, $g(x,y',k)\sim\sum\limits^\infty_{j=0}g_j(x,y')k^{n-1-j}$ in $S^{n-1}_{{\rm loc\,}}(1)$, 
$g_j(x,y')\in\cC^\infty$, $j=0,1,2,\ldots$, $g_0(x,x')\neq0$, $\Phi\in\cC^\infty$, ${\rm Im\,}\Phi\geq0$, $\Phi(x,x')=0$ and 
\begin{equation} \label{e-fI}
\begin{split}
&\Phi(x, y')=-x_{2n-1}+i\sum^{n-1}_{j=1}\mu_j\abs{z_j-w_j}^2 \\
&\quad+\sum^{n-1}_{j=1}\Bigr(i\mu_j(\ol z_jw_j-z_j\ol w_j)-c_jz_jx_{2n-1}-\ol c_j\ol z_jx_{2n-1}\Bigr)\\&\quad+
x_{2n-1}f(x, y') +O(\abs{(x, y')}^3),\\
&\quad f\in\cC^\infty,\ \ f(0,0)=0,\ \ w_j=y_{2j-1}+iy_{2j},\ \ j=1,\ldots,n-1.
\end{split}
\end{equation}
(See Section~\ref{s-gue140215} for the details and the precise meanings of 
$A\equiv B\mod O(k^{-\infty})$ and $S^{n-1}_{{\rm loc\,}}(1)$.) Put 
\begin{equation}\label{e-fII}
u_k(x):=E_k(\chi(ky_1)\chi(ky_2)\ldots\chi(ky_{2n-2})k^{2n-2}).
\end{equation}
Then $u_k(x)$ is a global smooth CR function on $X$. From \eqref{e-f} and \eqref{e-fI}, we can check that 
\begin{equation}\label{e-fIII}
\begin{split}
\lim_{k\To\infty}k^{-n}\frac{\pr u_k}{\pr x_{2n-1}}(0)&=\lim_{k\To\infty}k^{-n}\int e^{ik\Phi(0,y')}(-ik)g(0,y',k)\chi(ky_1)
\ldots\chi(ky_{2n-2})k^{2n-2}dy'\\
&=(-i)g_0(0,0)\int\chi(y_1)\ldots\chi(y_{2n-2})dy',\\
\lim_{k\To\infty}k^{-n}\frac{\pr u_k}{\pr x_{t}}(0)&=0,\ \ t=1,2,\ldots,2n-2.
\end{split}
\end{equation}
%-------------
For any $s\in\set{1,2,\ldots,n-1}$, put 
\begin{equation}\label{e-fIV}
u^s_k(x):=E_k(k(y_{2s-1}+iy_{2s})\chi(ky_1)\chi(ky_2)\ldots\chi(ky_{2n-2})k^{2n-2}).
\end{equation}
Then $u^s_k(x)$ is a global smooth CR function on $X$, $s=1,2,\ldots,n-1$. 
From \eqref{e-f}, \eqref{e-fI} and notice that $\ddbar_bu^s_k=0$, $s=1,2,\ldots,n-1$, we can check that 
\begin{equation}\label{e-fV}
\begin{split}
&\lim_{k\To\infty}k^{-n+1}\frac{\pr u^s_k}{\pr z_s}(0)\\
&=\lim_{k\To\infty}k^{-n+1}\int e^{ik\Phi(0,y')}2k^2\mu_s\abs{y_{2s-1}+iy_{2s}}^2g(0,y',k)\chi(ky_1)
\ldots\chi(ky_{2n-2})k^{2n-2}dy'\\
&=2\mu_sg_0(0,0)\int\abs{y_{2s-1}+iy_{2s}}^2\chi(y_1)\ldots\chi(y_{2n-2})dy',\\
&\lim_{k\To\infty}k^{-n+1}\frac{\pr u^s_k}{\pr\ol z_{t}}(0)=0,\ \ t=1,2,\ldots,n-1,
\end{split}
\end{equation}
and for $t\neq s$, $t\in\set{1,2,\ldots,n-1}$, we have 
\begin{equation}\label{e-fVI}
\begin{split}
&\lim_{k\To\infty}k^{-n+1}\frac{\pr u^s_k}{\pr z_t}(0)\\
&=\lim_{k\To\infty}k^{-n+1}\int e^{ik\Phi(0,y')}2k^2\mu_t(y_{2t-1}-iy_{2t})(y_{2s-1}+iy_{2s})g(0,y',k)\chi(ky_1)
\ldots\chi(ky_{2n-2})k^{2n-2}dy'\\
&=2\mu_tg_0(0,0)\int(y_{2t-1}-iy_{2t})(y_{2s-1}+iy_{2s})\chi(y_1)\ldots\chi(y_{2n-2})dy'=0.
\end{split}
\end{equation}
From \eqref{e-fIII}, \eqref{e-fV} and \eqref{e-fVI}, it is not difficult to check that for $k$ large, the differential of the CR map
\[x\in X\To (u_k(x),u^1_k(x),\ldots,u^{n-1}_k(x))\in\Complex^n\]
is injective at $p$. Thus, near $p$, the map $x\in X\To (u_k(x),u^1_k(x),\ldots,u^{n-1}_k(x))\in\Complex^n$ 
is a CR embedding. Theorem~\ref{t-emb} follows. 
\end{proof}
%-------------------
\section{Szeg\H{o} projections on CR manifolds with transversal CR $S^1$ actions} \label{s-gue140211}

In this section, we will apply Theorem~\ref{t-gue140305VIa} to establish Szeg\H{o} kernel asymptotic
expansions on compact CR manifolds with transversal CR $S^1$ actions under certain Levi curvature
assumptions. As an application, we will show that if $X$ is a $3$-dimensional compact strictly
pseudoconvex CR manifold with a transversal CR $S^1$ action, then $X$ can be CR embedded
into $\Complex^N$, for some $N\in\mathbb N$. We introduce some notations first.

Let $(X, T^{1,0}X)$ be a CR manifold. Let assume that $X$
admits a $S^1$ action $S^1\times X\To X$, $(e^{i\theta},x)\mapsto e^{i\theta}x$.
Let $T\in \cC^\infty(X,TX)$ be the real vector field given by
\begin{equation}\label{e-gue140211bIII-I}
Tu=\frac{\pr}{\pr\theta}u(e^{i\theta}x)\Big|_{\theta=0}\,,\ \ u\in \cC^\infty(X).
\end{equation}
We call $T$ the global vector field induced by the $S^1$ action or the infinitesimal generator of the action.

\begin{defn}\label{d-gue140211I}
We say that the $S^1$ action $e^{i\theta}$ is CR if 
\[\big[T,\cC^\infty(X,T^{1,0}X)\big]\subset \cC^\infty(X,T^{1,0}X)\]
and is transversal if for every point $x\in X$,
\[T(x)\oplus T^{1,0}_xX\oplus T^{0,1}_xX=\Complex T_xX.\]
\end{defn}
Until further notice, we assume that $(X, T^{1,0}X)$ is a CR manifold with a transversal CR $S^1$ action
and we let $T$ be the global vector field induced by the $S^1$ action.

Fix $\theta_0\in[0,2\pi[$. Let $de^{i\theta_0}:\Complex T_xX\To\Complex T_{e^{i\theta_0}x}X$
denote the differential of the map $e^{i\theta_0}:X\To X$.

\begin{defn}\label{d-gue140211II}
Let $U\subset X$ be an open set and let $V\in \cC^\infty(U,\Complex TX)$ be a vector field on $U$. We say that $V$ is $T$-rigid if
$de^{i\theta_0}V(x)=V(x),\ \ \forall x\in e^{i\theta_0}U\cap U$,
for every $\theta_0\in[0,2\pi[$ with $e^{i\theta_0}U\cap U\neq\emptyset$.
\end{defn}

We also need

\begin{defn}\label{d-gue140211III}
Let $\langle\,\cdot\,|\,\cdot\,\rangle$ be a Hermitian metric on $\Complex TX$.
We say that $\langle\,\cdot\,|\,\cdot\,\rangle$
is $T$-rigid if for $T$-rigid vector fields $V$ and $W$ on $U$, where $U\subset X$ is any open set, we have
\[\langle\,V(x)\,|\,W(x)\,\rangle=\langle\,de^{i\theta_0}V(e^{i\theta_0}x)\,|\,
de^{i\theta_0}W(e^{i\theta_0}x)\,\rangle,\ \ \forall x\in U, \theta_0\in[0,2\pi[.\]
\end{defn}

The following result was established in~\cite[Theorem 9.2]{Hsiao14}.

\begin{thm}\label{t-gue131206I}
There is a $T$-rigid Hermitian metric $\langle\,\cdot\,|\,\cdot\,\rangle$ on $\Complex TX$ 
such that $T^{1,0}X\perp T^{0,1}X$, $T\perp (T^{1,0}X\oplus T^{0,1}X)$, $\langle\,T\,|\,T\,\rangle=1$ 
and $\langle\,u\,|v\,\rangle$ is real if $u, v$ are real tangent vectors.
\end{thm}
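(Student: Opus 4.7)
The plan is to start with any Hermitian metric on $\Complex TX$ satisfying the orthogonality and reality requirements (ignoring $T$-rigidity), and then average over the circle action to make it $T$-rigid without destroying the other properties.

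First, I would construct an initial Hermitian metric $h_0$ on $\Complex TX$ with $T^{1,0}X\perp T^{0,1}X$, $T\perp(T^{1,0}X\oplus T^{0,1}X)$, $h_0(T,T)=1$, and $h_0(u,v)\in\Real$ whenever $u,v$ are real tangent vectors. To produce such an $h_0$, I would pick any Riemannian metric $g$ on the real CR distribution $\mathcal{D}:=\mathrm{Re}(T^{1,0}X\oplus T^{0,1}X)\subset TX$ that is compatible with the almost complex structure $J$ on $\mathcal{D}$ induced by the splitting $\mathcal{D}\otimes\Complex=T^{1,0}X\oplus T^{0,1}X$, declare $T\perp\mathcal{D}$ and $|T|_g=1$ on $TX$, and then extend sesquilinearly to $\Complex TX$ so that complex conjugation is an antilinear isometry. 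The $J$-compatibility of $g|_{\mathcal{D}}$ is exactly what ensures $T^{1,0}X\perp T^{0,1}X$ after the sesquilinear extension.

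Next, I would average $h_0$ over $S^1$ using Haar measure: for $u,v\in\Complex T_xX$, set
\[
h_x(u,v):=\frac{1}{2\pi}\int_0^{2\pi}h_{0,\,e^{-i\theta}x}\bigl(de^{-i\theta}_x(u),\,de^{-i\theta}_x(v)\bigr)\,d\theta,
\]
where $de^{-i\theta}_x:\Complex T_xX\to\Complex T_{e^{-i\theta}x}X$ is the differential of $e^{-i\theta}:X\to X$. Equivalently, $h$ is the Haar average of the pullbacks $(e^{i\theta})^*h_0$ over $\theta\in[0,2\pi)$.

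The verifications then fall into line. Because the action is CR, $de^{i\theta}(T^{1,0}X)=T^{1,0}X$ and hence also $de^{i\theta}(T^{0,1}X)=T^{0,1}X$ by taking conjugates; because $T$ is the infinitesimal generator of the action, $de^{i\theta}_x T(x)=T(e^{i\theta}x)$. Therefore each integrand in the averaging pairs vectors of matching type, so the orthogonalities of the decomposition $\Complex TX=T^{1,0}X\oplus T^{0,1}X\oplus\Complex T$ pass to $h$, and $h(T,T)=1$ since every summand equals $h_0(T,T)=1$. The reality on real tangent vectors survives averaging because each $e^{i\theta}:X\to X$ is a real diffeomorphism, so $de^{i\theta}(TX)=TX$, making every integrand real. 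Finally, the $T$-rigidity of $h$ is immediate from the $S^1$-invariance built into the averaging, since $T$-rigid vector fields are precisely the $S^1$-invariant ones and an invariant pairing of invariant fields is constant along orbits. No genuine obstacle arises: the argument is entirely formal and relies only on the compactness of $S^1$ together with the CR and transversality assumptions on the action; the mildest point of care is to obtain $h_0$ from a Riemannian metric so that the reality condition is preserved cleanly under averaging.
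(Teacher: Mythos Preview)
Your proposal is correct and follows the natural approach: construct an auxiliary Hermitian metric with the required orthogonality and reality properties, then average over the compact group $S^1$ to obtain invariance (which is precisely $T$-rigidity). The paper does not give its own proof of this statement but cites \cite[Theorem~9.2]{Hsiao14}; the averaging argument you outline is the standard one and is essentially what that reference does, so there is nothing to add.
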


Until further notice, we fix a $T$-rigid Hermitian metric $\langle\,\cdot\,|\,\cdot\,\rangle$ on $\Complex TX$ 
such that $T^{1,0}X\perp T^{0,1}X$, $T\perp (T^{1,0}X\oplus T^{0,1}X)$, $\langle\,T\,|\,T\,\rangle=1$ 
and $\langle\,u\,|v\,\rangle$ is real if $u, v$ are real tangent vectors and we take $m(x)$ 
to be the volume form induced by the given $T$-rigid Hermitian metric $\langle\,\cdot\,|\,\cdot\,\rangle$. 
We will use the same notations as before.
We need the following result due to Baouendi-Rothschild-Treves~\cite[Section1]{BRT85}

\begin{thm}\label{t-gue131206}
For every point $x_0\in X$, there exists local coordinates $x=(x_1,\ldots,x_{2n-1})=(z,\theta)=(z_1,\ldots,z_{n-1},\theta)$,
$z_j=x_{2j-1}+ix_{2j}$, $j=1,\ldots,n-1$, $\theta=x_{2n-1}$, defined in some small neighbourhood $U$ of $x_0$ such that
\begin{equation}\label{e-gue131206}
\begin{split}
&T=\frac{\pr}{\pr\theta}\,,\:\: Z_j=\frac{\pr}{\pr z_j}+i\frac{\pr\phi}{\pr z_j}(z)\frac{\pr}{\pr\theta},\ \ j=1,\ldots,n-1,
\end{split}
\end{equation}
where $Z_j(x)$, $j=1,\ldots,n-1$, form a basis of $T^{1,0}_xX$, for each $x\in U$, and $\phi(z)\in \cC^\infty(U,\Real)$ 
is independent of $\theta$.
\end{thm}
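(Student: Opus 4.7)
The strategy has three steps: straighten the generator $T$, produce a $T$-invariant frame of $T^{1,0}X$ with coefficients independent of $\theta$, and finally absorb the real part of an integrability potential into a shift of the $\theta$-coordinate. First, since the action is transversal, $T(x_0)\neq 0$, so by the flow-box theorem I can choose local coordinates $(y_1,\ldots,y_{2n-2},\theta)$ around $x_0$ in which $T=\pr/\pr\theta$; write $M_0:=\set{\theta=0}$ for the transverse slice. Starting from any basis $V_1,\ldots,V_{n-1}$ of $T^{1,0}X$ on $M_0$ near $x_0$, I propagate by pushforward along the flow, setting $\Td Z_j(y,\theta):=de^{i\theta}\bigl(V_j(y)\bigr)$. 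The CR hypothesis $[T,\cC^\infty(X,T^{1,0}X)]\subset\cC^\infty(X,T^{1,0}X)$ is precisely the infinitesimal statement that the flow preserves $T^{1,0}X$, so each $\Td Z_j$ is a section of $T^{1,0}X$, and $[T,\Td Z_j]=0$ by construction. Consequently, in the straightened coordinates the pushforward fixes components and $\Td Z_j=\sum_k a_{jk}(y)\,\pr/\pr y_k+c_j(y)\,\pr/\pr\theta$ with no dependence on $\theta$.

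Next I introduce complex coordinates on the slice. Setting $W_j:=\sum_k a_{jk}(y)\,\pr/\pr y_k$, the family $\set{W_j,\ol W_j}$ spans $\Complex TM_0$ at $0$ because $T^{1,0}X\cap T^{0,1}X=\set{0}$ and $T$ is transverse, so it defines an almost complex structure on a neighborhood of $0$ in $M_0$. The integrability $[\Td Z_j,\Td Z_k]\in\cC^\infty(X,T^{1,0}X)$ decomposes as $[W_j,W_k]+(W_jc_k-W_kc_j)\,\pr/\pr\theta$, and matching this with any expansion $\sum_\ell\alpha^\ell_{jk}\Td Z_\ell$ forces $[W_j,W_k]=\sum_\ell\alpha^\ell_{jk}W_\ell$, so the almost complex structure on $M_0$ is integrable. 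Newlander--Nirenberg provides complex coordinates $z_1,\ldots,z_{n-1}$ on $M_0$ in which $\set{\pr/\pr z_j}$ spans the same module as $\set{W_j}$; after a $GL_{n-1}(\Complex)$-change of the frame $\Td Z_j$ whose coefficient matrix depends only on $(z,\ol z)$ (and therefore preserves $T$-invariance), I may assume $\Td Z_j=\pr/\pr z_j+c_j(z,\ol z)\,\pr/\pr\theta$.

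Finally, the integrability $[\Td Z_j,\Td Z_k]\in T^{1,0}X$ now yields
\[
[\Td Z_j,\Td Z_k]=\Bigl(\frac{\pr c_k}{\pr z_j}-\frac{\pr c_j}{\pr z_k}\Bigr)\frac{\pr}{\pr\theta},
\]
which must vanish since $T=\pr/\pr\theta$ is transverse to $T^{1,0}X$. Hence $\sum_j c_j\,dz_j$ is $\pr$-closed on $M_0$, and the $\pr$-Poincar\'e lemma produces a complex potential $F=u+iv$ with $u,v\in\cC^\infty(M_0,\Real)$ and $\pr F=\sum_j c_j\,dz_j$, i.e.\ $c_j=\pr u/\pr z_j+i\,\pr v/\pr z_j$. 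Setting $\theta':=\theta-u(z,\ol z)$ gives $\pr/\pr\theta'=\pr/\pr\theta$, while rewriting $\pr/\pr z_j$ in the new coordinates produces a correction $-(\pr u/\pr z_j)\,\pr/\pr\theta'$; combining, $\Td Z_j=\pr/\pr z_j+i(\pr v/\pr z_j)\,\pr/\pr\theta'$, and $\phi:=v$ delivers the required normal form. The main obstacle is coordinating the Newlander--Nirenberg step with $T$-invariance: this works because all slice-level constructions extend $\theta$-independently under the flow, and the reality of $\phi$ then emerges automatically from splitting $F=u+iv$ and absorbing its real part into the $\theta$-shift.
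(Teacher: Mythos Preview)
Your argument is correct. The paper itself does not give a proof of this theorem; it simply quotes the result from Baouendi--Rothschild--Treves~\cite[Section~1]{BRT85}. Your three-step strategy (flow-box for $T$, invariant CR frame via pushforward along the action, Newlander--Nirenberg on the slice, then the $\partial$-Poincar\'e lemma together with a real shift in $\theta$ to make $\phi$ real) is the standard route and matches what one finds in the cited reference.
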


Let $x=(x_1,\ldots,x_{2n-1})=(z,\theta)=(z_1,\ldots,z_{n-1},\theta)$,
$z_j=x_{2j-1}+ix_{2j}$, $j=1,\ldots,n-1$, $\theta=x_{2n-1}$, be canonical coordinates of $X$ 
defined in some open set $D\Subset X$. It is clearly that
\[\set{d\ol z_{j_1}\wedge\ldots\wedge d\ol z_{j_q};\, 1\leq j_1<j_2<\ldots<j_q\leq n-1}\]
is a basis for $T^{*0,q}_xX$, for every $x\in D$. Let $u\in\Omega^{0,q}(X)$. On $D$, we write
\[\begin{split}
&u=\sum_{1\leq j_1<j_2<\ldots<j_q\leq n-1}u_{j_1,\ldots,j_q}d\ol z_{j_1}\wedge\ldots\wedge d\ol z_{j_q}\,,
\:\:u_{j_1,\ldots,j_q}\in \cC^\infty(D)\,.
%&u_{j_1,\ldots,j_q}\in \cC^\infty(D),\ \ 1\leq j_1<j_2<\ldots<j_q\leq n-1.
\end{split}\]
On $D$, we define
\begin{equation}\label{e-gue140218}
Tu:=\sum_{1\leq j_1<j_2<\ldots<j_q\leq n-1}(Tu_{j_1,\ldots,j_q})d\ol z_{j_1}\wedge\ldots\wedge d\ol z_{j_q}.
\end{equation}
Let
$y=(y_1,\ldots,y_{2n-1})=(w,\gamma)$, $w_j=y_{2j-1}+iy_{2j}$, $j=1,\ldots,n-1$, 
$\gamma=y_{2n-1}$, be another canonical coordinates on $D$. Then,
\begin{equation}\label{sp3-eVI}\begin{split}
T=\frac{\pr}{\pr\gamma}\,,\:\:\Td Z_j=
\frac{\pr}{\pr w_j}+i\frac{\pr\Td\phi}{\pr w_j}(w)\frac{\pr}{\pr\gamma},\ \ j=1,\ldots,n-1,
\end{split}
\end{equation}
where $\Td Z_j(y)$, $j=1,\ldots,n-1$, form a basis of $T^{1,0}_yX$, for each $y\in D$, 
and $\Td\phi(w)\in \cC^\infty(D,\Real)$ independent of $\gamma$. 
From \eqref{sp3-eVI} and \eqref{e-gue131206}, it is not difficult to see that on $D$, we have
\begin{equation}\label{sp3-eVII}
\begin{split}
&w=(w_1,\ldots,w_{n-1})=(H_1(z),\ldots,H_{n-1}(z))=H(z),\ \ H_j(z)\in \cC^\infty,\ \ \forall j,\\
&\gamma=\theta+G(z),\ \ G(z)\in \cC^\infty,
\end{split}
\end{equation}
where for each $j=1,\ldots,n-1$, $H_j(z)$ is holomorphic. From \eqref{sp3-eVII}, we can check that
\begin{equation}\label{spca}
d\ol w_j=\sum^{n-1}_{l=1}\ol{\frac{\pr H_j}{\pr z_l}}d\ol z_l,\ \ j=1,\ldots,n-1.
\end{equation}
From \eqref{spca}, it is straightforward to check that the definition \eqref{e-gue140218} 
is independent of the choice of canonical coordinates. 
We omit the details (see also \cite[Section\,5]{Hsiao12}). Thus, $Tu$ is well-defined as an element in $\Omega^{0,q}(X)$.

For $m\in\mathbb Z$, put
\begin{equation}\label{e-gue140218I}
B^{0,q}_m(X):=\set{u\in\Omega^{0,q}(X);\, Tu=-imu}
\end{equation}
and let $\mathcal{B}^{0,q}_m(X)\subset L^2_{(0,q)}(X)$ be the completion of $B^{0,q}_m(X)$ 
with respect to $(\,\cdot\,|\,\cdot\,)$. It is easy to see that for any $m, m'\in\mathbb Z$, $m\neq m'$,
\begin{equation}\label{e-gue140218II}
(\,u\,|\,v)=0,\ \ \forall u\in\mathcal{B}^{0,q}_m(X), v\in\mathcal{B}^{0,q}_{m'}(X).
\end{equation}
We have actually an orthogonal decomposition of Hilbert spaces
\[L^2_{(0,q)}(X)=\widehat{\bigoplus}_{m\in\mathbb Z}\mathcal{B}^{0,q}_m(X)\,.\]
For $m\in\mathbb Z$, let
\begin{equation}\label{e-gue140218III}
Q^{(q)}_{m}:L^2_{(0,q)}(X)\To\mathcal{B}^{0,q}_m(X)
\end{equation}
be the orthogonal projection with respect to $(\,\cdot\,|\,\cdot\,)$. 
Moreover, it is not difficult to see that for every $m\in\mathbb Z$, we have
\begin{equation}\label{e-gue140218IV}
\begin{split}
&Q^{(q)}_{m}:\Omega^{0,q}(X)\To B^{0,q}_m(X),\\
&TQ^{(q)}_{m}=-imQ^{(q)}_{m}u,\ \ \forall u\in L^2_{(0,q)}(X),\\
&\norm{TQ^{(q)}_{m}u}=\abs{m}\norm{Q^{(q)}_{m}u},\ \ \forall u\in L^2_{(0,q)}(X).
\end{split}
\end{equation}
Since the Hermitian metric $\langle\,\cdot\,|\,\cdot\,\rangle$ is $T$-rigid, 
it is straightforward to see that (see~\cite[Section 5]{Hsiao12})
\begin{equation}\label{e-gue131207VI}
\begin{split}
&\mbox{$\Box^{(q)}_{b}Q^{(q)}_{m}=Q^{(q)}_{m}\Box^{(q)}_{b}$ on 
$\Omega^{0,q}_0(X,)$, $\forall m\in\mathbb Z$},\\
&\mbox{$\ddbar_{b}Q^{(q)}_{m}=Q^{(q+1)}_{m}\ddbar_{b}$ on 
$\Omega^{0,q}_0(X)$, $\forall m\in\mathbb Z$, $q=0,1,\ldots,n-2$},\\
&\mbox{$\ddbar^*_{b}Q^{(q)}_{m}=Q^{(q-1)}_{m}\ddbar^*_{b}$ on
$\Omega^{0,q}_0(X)$, $\forall m\in\mathbb Z$, $q=1,\ldots,n-1$}.
\end{split}
\end{equation}
Now, we assume that $X$ is compact. By using elementary Fourier analysis, 
it is straightforward to see that for every $u\in\Omega^{0,q}(X)$,
\begin{equation}\label{e-gue140218V}
\begin{split}
&\mbox{$\lim\limits_{N\To\infty}\sum\limits^N_{m=-N}Q^{(q)}_{m}u= u$ in the $\cC^\infty$ topology},\\
&\sum^N_{m=-N}\norm{Q^{(q)}_{m}u}^2\leq\norm{u}^2,\ \ \forall N\in\mathbb N_0.
\end{split}
\end{equation}
Thus, for every $u\in L^2_{(0,q)}(X)$,
\begin{equation}\label{e-gue140218VI}
\begin{split}
&\mbox{$\lim\limits_{N\To\infty}\sum\limits^N_{m=-N}Q^{(q)}_{m}u=u$ in $L^2_{(0,q)}(X,L^k)$},\\
&\sum^N_{m=-N}\norm{Q^{(q)}_{m}u}^2\leq\norm{u}^2,\ \ \forall N\in\mathbb N_0.
\end{split}
\end{equation}
For $m\in\mathbb Z$, put
\begin{equation}\label{e-gue140218VII}
\begin{split}
Q^{(q)}_{\leq m}:L^2_{(0,q)}(X)\To L^2_{(0,q)}(X),\quad
u\longmapsto\lim\limits_{N\To\infty}\sum\limits^N_{j=0}Q^{(q)}_{m-j}u,
\end{split}
\end{equation}
and
\begin{equation}\label{e-gue140218VIII}
\begin{split}
Q^{(q)}_{\geq m}:L^2_{(0,q)}(X)\To L^2_{(0,q)}(X),\quad
u\longmapsto\lim\limits_{N\To\infty}\sum\limits^N_{j=0}Q^{(q)}_{m+j}u.
\end{split}
\end{equation}
In view of \eqref{e-gue140218V} and \eqref{e-gue140218VI}, we see that \eqref{e-gue140218VII} 
and \eqref{e-gue140218VIII} are well-defined.

The following is straightforward and we omit the proof.

\begin{thm} \label{t-gue140218}
Let $m\in\mathbb Z$, we have
\begin{equation}\label{e-gue140218a}
\begin{split}
&Q^{(q)}_{\geq m}, Q^{(q)}_{\leq m}:\Omega^{0,q}(X)\To\Omega^{0,q}(X),\\
&i(\,TQ^{(q)}_{\geq m}u\,|\,u)\geq m\norm{u},\ \ \forall u\in\Omega^{0,q}(X),\\
&i(\,TQ^{(q)}_{\leq m}u\,|\,u)\leq m\norm{u},\ \ \forall u\in\Omega^{0,q}(X),\\
&Q^{(q)}_{\geq m}, Q^{(q)}_{\leq m}:{\rm Dom\,}\ddbar_b\To {\rm Dom\,}\ddbar_b,\\
&\mbox{$Q^{(q)}_{\geq m}\ddbar_b=\ddbar_bQ^{(q)}_{\geq m}$ on ${\rm Dom\,}\ddbar_b$},\\
&\mbox{$Q^{(q)}_{\leq m}\ddbar_b=\ddbar_bQ^{(q)}_{\leq m}$ on ${\rm Dom\,}\ddbar_b$},\\
&Q^{(q)}_{\geq m}, Q^{(q)}_{\leq m}:{\rm Dom\,}\ol{\pr}^*_b\To {\rm Dom\,}\ol{\pr}^*_b,\\
&\mbox{$Q^{(q)}_{\geq m}\ol{\pr}^*_b=\ol{\pr}^*_bQ^{(q)}_{\geq m}$ on ${\rm Dom\,}\ol{\pr}^*_b$},\\
&\mbox{$Q^{(q)}_{\leq m}\ol{\pr}^*_b=\ol{\pr}^*_bQ^{(q)}_{\leq m}$ on ${\rm Dom\,}\ol{\pr}^*_b$},\\
&Q^{(q)}_{\geq m}, Q^{(q)}_{\leq m}:{\rm Dom\,}\Box^{(q)}_b\To {\rm Dom\,}\Box^{(q)}_b,\\
&\mbox{$Q^{(q)}_{\geq m}\Box^{(q)}_b=\Box^{(q)}_bQ^{(q)}_{\geq m}$ on ${\rm Dom\,}\Box^{(q)}_b$},\\
&\mbox{$Q^{(q)}_{\leq m}\Box^{(q)}_b=\Box^{(q)}_bQ^{(q)}_{\leq m}$ on ${\rm Dom\,}\Box^{(q)}_b$},\\
&\mbox{$Q^{(q)}_{\geq m}\Pi^{(q)}=\Pi^{(q)}Q^{(q)}_{\geq m}$ on $L^2_{(0,q)}(X)$},\\
&\mbox{$Q^{(q)}_{\leq m}\Pi^{(q)}=\Pi^{(q)}Q^{(q)}_{\leq m}$ on $L^2_{(0,q)}(X)$}.
\end{split}
\end{equation}
\end{thm}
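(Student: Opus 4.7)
The overarching plan is to reduce every claim to the corresponding claim for a single Fourier component $Q^{(q)}_m$, where everything is provided by \eqref{e-gue131207VI}, and then to pass from finite partial sums to the infinite sums defining $Q^{(q)}_{\geq m}$ and $Q^{(q)}_{\leq m}$. Throughout, the key observation is the Hilbert-space orthogonality of the eigenspaces $\mathcal{B}^{0,q}_j(X)$, which makes $Q^{(q)}_{\geq m}$ and $Q^{(q)}_{\leq m}$ genuine orthogonal projectors and lets strong/$L^2$ limits be taken termwise.

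First I would verify that $Q^{(q)}_{\geq m}$ and $Q^{(q)}_{\leq m}$ preserve $\Omega^{0,q}(X)$. The two-sided symmetric sums converge in the $\cC^\infty$ topology by \eqref{e-gue140218V}, but we need the one-sided tails to converge separately. To get this, observe that $\|T^k Q^{(q)}_j u\|^2 = j^{2k}\|Q^{(q)}_j u\|^2$, so from $\sum_j j^{2k}\|Q^{(q)}_j u\|^2\le\|T^k u\|^2<\infty$ the $L^2$ norms $\|Q^{(q)}_j u\|$ decay faster than any power of $\abs{j}$. Combining this with elliptic regularity on $X$ (say, for $I+\Delta_X$ built from a Riemannian metric) bootstraps to decay in every Sobolev norm and hence in $\cC^\ell$, so $\sum_{j\ge m}Q^{(q)}_j u$ and $\sum_{j\le m}Q^{(q)}_j u$ converge absolutely in $\cC^\infty$. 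The spectral inequalities then follow immediately from Parseval: for $u\in\Omega^{0,q}(X)$,
\[
\bigl(\,iTQ^{(q)}_{\geq m}u\,\big|\,u\bigr)=\sum_{j\ge m}j\,\|Q^{(q)}_j u\|^2\ge m\,\|Q^{(q)}_{\geq m}u\|^2\,,
\]
and analogously for $Q^{(q)}_{\leq m}$.

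Next I would handle the commutation with the unbounded operators $\ddbar_b$, $\ol{\pr}^{\,*}_b$ and $\Box^{(q)}_b$. On smooth sections these follow instantly from \eqref{e-gue131207VI} applied to every term of the series and from the $\cC^\infty$-convergence just established. For the extension to ${\rm Dom}\,\ddbar_b$, I would use that $\ddbar_b$ is a closed operator: for $u\in{\rm Dom}\,\ddbar_b$, set $S_N=\sum_{j=m}^{m+N}Q^{(q)}_j u$; by definition $S_N\to Q^{(q)}_{\geq m}u$ in $L^2$, and \eqref{e-gue131207VI} gives $\ddbar_b S_N=\sum_{j=m}^{m+N}Q^{(q+1)}_j\ddbar_b u\to Q^{(q+1)}_{\geq m}\ddbar_b u$ in $L^2$. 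Closedness of $\ddbar_b$ delivers both $Q^{(q)}_{\geq m}u\in{\rm Dom}\,\ddbar_b$ and the commutation relation. The same argument with $\ol{\pr}^{\,*}_b$ (closed as the Hilbert adjoint of a densely-defined operator) settles that case, and the $\Box^{(q)}_b$-statements are then obtained by applying both.

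Finally, for the commutation with $\Pi^{(q)}$: the operator $Q^{(q)}_{\geq m}$ is self-adjoint on $L^2_{(0,q)}(X)$ as an orthogonal sum of self-adjoint orthogonal projectors, and it maps ${\rm Ker}\,\Box^{(q)}_b$ into itself by the just-proved commutation with $\Box^{(q)}_b$; self-adjointness then forces it to preserve the orthogonal complement as well, yielding $Q^{(q)}_{\geq m}\Pi^{(q)}=\Pi^{(q)}Q^{(q)}_{\geq m}$, and likewise for $Q^{(q)}_{\leq m}$. The only genuinely analytic step in the whole argument is the $\cC^\infty$-convergence of the one-sided Fourier tails; once that is in place every remaining identity is formal and is legitimized by closedness of the operators involved, which is the reason the authors are content to omit the details.
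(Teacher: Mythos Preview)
Your proposal is correct and is precisely the kind of routine verification the authors have in mind: the paper simply says ``The following is straightforward and we omit the proof'' and gives no argument at all, so there is nothing to compare against beyond confirming that your outline does the job, which it does. One minor remark: the $\cC^\infty$-convergence of the one-sided Fourier tails can be obtained more directly by writing $Q^{(q)}_j u=\tfrac{1}{2\pi}\int_0^{2\pi}e^{ij\theta}(e^{-i\theta})^*u\,d\theta$ and integrating by parts in $\theta$, which immediately gives rapid decay of $Q^{(q)}_j u$ in every $\cC^\ell$-norm without invoking elliptic regularity; your route through a $T$-rigid Laplacian also works but is a detour. You have also quietly repaired the spectral inequalities: as printed, the right-hand side $m\norm{u}$ is dimensionally off and in fact false (take $m>0$ and $u$ supported in modes $j<m$); the correct bound is the one you prove, namely $i(\,TQ^{(q)}_{\geq m}u\,|\,u)\ge m\norm{Q^{(q)}_{\geq m}u}^2$, and this is exactly the form used downstream in the proof of Theorem~\ref{t-gue140218I}.
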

%---------------------
To continue, put for $m\in\mathbb Z$,
\begin{equation}\label{e-gue140218aI}
\begin{split}
&\mathcal{B}^{0,q}_{\geq m}(X):=\set{Q^{(q)}_{\geq m}u;\, u\in L^2_{(0,q)}(X)},\\
&\mathcal{B}^{0,q}_{\leq m}(X):=\set{Q^{(q)}_{\leq m}u;\, u\in L^2_{(0,q)}(X)}.
\end{split}
\end{equation}
Note that $(Q^{(q)}_{\leq m})^2=Q^{(q)}_{\leq m}$, $(Q^{(q)}_{\geq m})^2=Q^{(q)}_{\geq m}$. 
From this observation and \eqref{e-gue140218a}, we see that
\[\begin{split}
&{\rm Dom\,}\Box^{(q)}_b\cap\mathcal{B}^{0,q}_{\leq m}=\set{Q^{(q)}_{\leq m}u;\, u\in{\rm Dom\,}\Box^{(q)}_b},\\
&{\rm Dom\,}\Box^{(q)}_b\cap\mathcal{B}^{0,q}_{\geq m}=\set{Q^{(q)}_{\geq m}u;\, u\in{\rm Dom\,}\Box^{(q)}_b},
\end{split}\]
and
\begin{equation}\label{e-gue140218aII}
\begin{split}
&\Box^{(q)}_b:{\rm Dom\,}\Box^{(q)}_b\cap\mathcal{B}^{0,q}_{\geq m}(X)\To\mathcal{B}^{0,q}_{\geq m}(X),\\
&\Box^{(q)}_b:{\rm Dom\,}\Box^{(q)}_b\cap\mathcal{B}^{0,q}_{\leq m}(X)\To\mathcal{B}^{0,q}_{\leq m}(X).
\end{split}
\end{equation}
Thus, it is quite interesting to study the behaviour of $\Box^{(q)}_b$
in the spaces $\mathcal{B}^{0,q}_{\geq m}$ and $\mathcal{B}^{0,q}_{\leq m}$.
We recall now the condition $Z(q)$ of H\"ormander.
%------------------
\begin{defn}\label{d-gue140218}
Given $q\in\{0,\ldots,n-1\}$, the Levi form is said to satisfy condition $Z(q)$ at $p\in X$, if $\mathcal{L}_p$ has at least
$n-q$ positive eigenvalues or at least $q+1$ negative eigenvalues.
\end{defn}
%------------------
Usually, the condition $Z(q)$ is introduced for a smooth domain $D$ with boundary $X=\partial D$
in a complex manifold $M$. Then condition $Z(q)$ implies subelliptic estimates for the
$\overline\partial$-Neumann problem on $D$, cf.\ \cite{CS01,FK:72}. 
If one wants to to obtain subelliptic estimates on $X$,
one cannot distinguish whether $X$ is the boundary of $D$ or $X$ is the boundary of the complement of $D$.
Thus, one assumes that condition $Z(q)$ holds on both $D$ and its complement $M\setminus D$.
Note that condition $Z(q)$ on $M\setminus D$ is equivalent to condition $Z(n-q-1)$ on $D$.
However, we show in the next theorem, that condition $Z(q)$ (resp.\ $Z(n-q-1)$) yields subelliptic estimates
on a $CR$ manifold with $S^1$ action, by projecting the forms with  $Q^{(q)}_{\leq0}$, (resp.\ $Q^{(q)}_{\geq0}$).
 %------------------
\begin{thm}\label{t-gue140218I}
With the notations and assumptions above, assume that $Z(q)$ holds at every point of $X$.
Then, for every $s\in\mathbb N_0$, there is a constant $C_s>0$ such that
%------------------
\begin{equation}\label{e-gue140218aIII}
\norm{Q^{(q)}_{\leq0}u}_{s+1}\leq C_s\Bigr(\norm{\Box^{(q)}_bQ^{(q)}_{\leq0}u}_s+
\norm{Q^{(q)}_{\leq0}u}\Bigr),\ \ \forall u\in\Omega^{0,q}(X),
\end{equation}
%------------------
where $\norm{\cdot}_s$ denotes the usual Sobolev norm of order $s$ on $X$.

Similarly, if $Z(n-1-q)$ holds at every point of $X$, then for every $s\in\mathbb N_0$, there is a constant $C_s>0$ such that
%------------------
\begin{equation}\label{e-gue140218aIV}
\norm{Q^{(q)}_{\geq0}u}_{s+1}\leq C_s\Bigr(\norm{\Box^{(q)}_bQ^{(q)}_{\geq0}u}_s+
\norm{Q^{(q)}_{\geq0}u}\Bigr),\ \ \forall u\in\Omega^{0,q}(X).
\end{equation}
%------------------?
\end{thm}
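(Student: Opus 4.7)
The approach is to adapt Kohn's classical proof of the $Y(q)$-subelliptic estimate, replacing the \emph{microlocal} cutoff to a half-space in the $T$-direction of $T^*X$ by the global Fourier-type projection $Q^{(q)}_{\leq 0}$ afforded by the $S^1$ action. First, using a partition of unity by $S^1$-invariant cutoffs $\chi(z)$ depending only on the transverse CR coordinates from Theorem~\ref{t-gue131206}, together with the commutation $Q^{(q)}_{\leq 0}\Box^{(q)}_b = \Box^{(q)}_b Q^{(q)}_{\leq 0}$ of Theorem~\ref{t-gue140218}, I would reduce to a local estimate for $v=Q^{(q)}_{\leq 0}(\chi u)$ supported in a canonical chart $U$. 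On $U$, take a $T$-rigid orthonormal frame $\{L_j\}_{j=1}^{n-1}$ of $T^{1,0}X$ which diagonalizes the Levi form at the base point $x_0$ with eigenvalues $\lambda_j(x_0)$; $T$-rigidity of both frame and metric guarantees compatibility with the projection.

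A Morrey--Kohn--H\"ormander integration by parts, based on $[L_j, \ol L_j] = \lambda_j(x)T$ modulo vector fields tangent to $T^{1,0}X \oplus T^{0,1}X$, yields for $v = \sum_{|J|=q} v_J\,\ol\omega^J$ a basic identity of the schematic form
\[
\|\ddbar_b v\|^2 + \|\ol\pr^*_b v\|^2 = \sum_{|J|=q,\,j} \|\ol L_j v_J\|^2 + \sum_{|K|=q-1}\sum_{j,\ell} \bigl(\lambda_{j\ell}(x)(-iT) v_{jK}, v_{\ell K}\bigr) + \mathcal{E}(v),
\]
where $\lambda_{j\ell}$ is the Levi matrix in the chosen frame and $\mathcal{E}(v)$ collects lower-order terms absorbable by Cauchy--Schwarz after shrinking $U$. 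The decisive observation is that on $\mathrm{Ran}\,Q^{(q)}_{\leq 0}$ the Fourier decomposition $v = \sum_{m\leq 0} v_m$ with $Tv_m = -im v_m$ shows $-iT$ acts as a \emph{nonnegative} self-adjoint operator (multiplication by $-m\geq 0$ on each mode).

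Condition $Z(q)$ combined with this definite sign of $-iT$ forces the right-hand side to control $\|Tv\|^2$; together with the commutator identity $\|L_j v_J\|^2 = \|\ol L_j v_J\|^2 + \bigl(\lambda_j (-iT) v_J, v_J\bigr) + \text{l.o.t.}$, one then controls $\sum_j \|L_j v_J\|^2$ as well, yielding $\|v\|_1^2 \leq C\bigl(\|\ddbar_b v\|^2 + \|\ol\pr^*_b v\|^2 + \|v\|^2\bigr)$ and hence \eqref{e-gue140218aIII} for $s=0$. For $s\geq 1$, a standard bootstrap applies: apply the $s=0$ estimate to $\Lambda^s v$, where $\Lambda^s$ is a properly supported elliptic pseudodifferential operator of order $s$ with $T$-invariant symbol (hence commuting with $T$ and with $Q^{(q)}_{\leq 0}$), absorbing the commutator $[\Lambda^s, \Box^{(q)}_b]$ by interpolation. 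The estimate \eqref{e-gue140218aIV} for $Q^{(q)}_{\geq 0}$ under $Z(n-1-q)$ follows by the entirely parallel argument, with $-iT\leq 0$ on $\mathrm{Ran}\,Q^{(q)}_{\geq 0}$ and the roles of positive and negative Levi eigenvalues exchanged.

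The main obstacle is the algebraic verification of positivity in the second step: showing that condition $Z(q)$, combined with the nonnegativity of $-iT$ on $\mathrm{Ran}\,Q^{(q)}_{\leq 0}$, is strong enough to dominate the \emph{full} tangential gradient rather than only the half controlled by Kohn's estimate under $Y(q)$. This requires keeping the Kohn--Morrey identity in its sharp form, and careful bookkeeping of the off-diagonal contributions to the Levi matrix $\lambda_{j\ell}(x)$ that arise because the frame only diagonalizes $\mathcal{L}$ at $x_0$; those are absorbed by shrinking $U$ and using Cauchy--Schwarz on $\mathcal{E}(v)$.
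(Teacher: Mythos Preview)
Your approach is correct and is essentially the same as the paper's. The paper's proof simply records the key observation that $i(\,TQ^{(q)}_{\leq0}u\,|\,Q^{(q)}_{\leq0}u\,)\leq0$ (equivalently, your statement that $-iT\geq0$ on $\mathrm{Ran}\,Q^{(q)}_{\leq0}$) and then asserts that going through Kohn's $L^2$ estimates under this sign condition yields the subelliptic estimate for any $u$ with $i(\,Tu\,|\,u\,)\leq0$; you are spelling out exactly that Kohn--Morrey--H\"ormander computation in detail.
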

%------------------
\begin{proof}
If we go through Kohn's $L^2$ estimates (see~\cite[Theorem 8.4.2]{CS01}, \cite[Proposition 5.4.10]{FK:72}, \cite{Koh64}),
we see that:

\noindent
(I) If $Z(q)$ holds at every point of $X$, then, for every $s\in\mathbb N_0$,
there is a constant $C_s>0$ such that for all
$u\in\Omega^{0,q}(X)$ with $i(\,Tu\,|\,u\,)\leq0$, we have
\[\norm{u}_{s+1}\leq C_s\Bigr(\norm{\Box^{(q)}_bu}_s+\norm{u}\Bigr).\]
(II) If $Z(n-1-q)$ holds at every point of $X$, then, for every $s\in\mathbb N_0$,
there is a constant $\Td C_s>0$ such that for all
$u\in\Omega^{0,q}(X)$ with $i(\,Tu\,|\,u\,)\geq0$, we have
\[\norm{u}_{s+1}\leq\Td C_s\Bigr(\norm{\Box^{(q)}_bu}_s+\norm{u}\Bigr).\]
%(Note that $Y(q)$ holds if and only if $Z(q)$ and $Z(n-1-q)$ hold).
We notice that
\[i(\,TQ^{(q)}_{\leq0}u\,|\,Q^{(q)}_{\leq0}u\,)\leq0,\
\ i(\,TQ^{(q)}_{\geq0}u\,|\,Q^{(q)}_{\geq0}u\,)\geq0,\ \ \forall u\in\Omega^{0,q}(X).\]
From this observation and (I) and (II), the theorem follows.
\end{proof}
For every $s\in\mathbb Z$, let $H^s_-(X,T^{*0,q}X)$ and $H^s_+(X,T^{*0,q}X)$
denote the completions of $\mathcal{B}^{0,q}_{\leq0}(X)\cap\Omega^{0,q}(X)$
and $\mathcal{B}^{0,q}_{\geq0}(X)\cap\Omega^{0,q}(X)$ with respect to $\norm{\cdot}_s$ respectively.
Let $\mathscr D'_-(X,T^{*0,q}X)$ and $\mathscr D'_+(X,T^{*0,q}X)$ denote the dual spaces
of $\mathcal{B}^{0,q}_{\leq0}(X)\cap\Omega^{0,q}(X)$ and
$\mathcal{B}^{0,q}_{\geq0}(X)\cap\Omega^{0,q}(X)$ respectively.

From Theorem~\ref{t-gue140218I}, we can repeat the method of Kohn
(see~\cite[Chapter 8]{CS01}, \cite{FK:72}, \cite{Koh64}) and deduce the following.

\begin{thm}\label{t-gue140218II}
With the notations and assumptions above, assume that $Z(q)$ holds at every point of $X$. Then
$\Box^{(q)}_b:{\rm Dom\,}\Box^{(q)}_b\cap\mathcal{B}^{0,q}_{\leq0}(X)\To\mathcal{B}^{0,q}_{\leq0}(X)$
%\[\Box^{(q)}_b:{\rm Dom\,}\Box^{(q)}_b\cap\mathcal{B}^{0,q}_{\leq0}(X)\To\mathcal{B}^{0,q}_{\leq0}(X)\]
has closed range. Let
\[N^{(q)}_-:\mathcal{B}^{0,q}_{\leq0}(X)\To{\rm Dom\,}\Box^{(q)}_b\cap\mathcal{B}^{0,q}_{\leq0}(X)\]
be the associated partial inverse and let
\[\Pi^{(q)}_-:\mathcal{B}^{0,q}_{\leq0}(X)\To{\rm Ker\,}\Box^{(q)}_b\]
be the orthogonal projection. Then, we have
\begin{equation}\label{e-gue140218aV}
\begin{split}
&\mbox{$\Box^{(q)}_bN^{(q)}_-+\Pi^{(q)}_-=I$ on $\mathcal{B}^{0,q}_{\leq0}(X)$},\\
&\mbox{$N^{(q)}_-\Box^{(q)}_b+\Pi^{(q)}_-=I$ on $\mathcal{B}^{0,q}_{\leq0}(X)\cap{\rm Dom\,}\Box^{(q)}_b$},\\
&\mbox{$N^{(q)}_-:H^s_-(X,T^{*0,q}X)\To H^{s+1}_-(X,T^{*0,q}X)$, $\forall s\in\mathbb Z$},\\
&\mbox{$\Pi^{(q)}_-:H^s_-(X,T^{*0,q}X)\To H^{s+N}_-(X,T^{*0,q}X)$, $\forall s\in\mathbb Z$ and $N\in\mathbb N$}.
\end{split}
\end{equation}
Moreover, $N^{(q)}_-$ and $\Pi^{(q)}_-$ can be continuously extended to $\mathcal{D}'_-(X,T^{*0,q}X)$ and we have
\begin{equation}\label{e-gue140220III}
\begin{split}
&\Pi^{(q)}_-:\mathscr{D}'_-(X,T^{*0,q}X)\To\mathcal{B}^{0,q}_{\leq0}(X)\bigcap\Omega^{0,q}(X),\\
&N^{(q)}_-:\mathscr{D}'_-(X,T^{*0,q}X)\To\mathscr{D}'_-(X,T^{*0,q}X),\\
&\mbox{$\Box^{(q)}_bN^{(q)}_-+\Pi^{(q)}_-=I$ on $\mathscr{D}'_-(X,T^{*0,q}X)$},\\
&\mbox{$N^{(q)}_-\Box^{(q)}_b+\Pi^{(q)}_-=I$ on $\mathscr{D}'_-(X,T^{*0,q}X)$}.
\end{split}
\end{equation}
\end{thm}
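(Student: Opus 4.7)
The plan is to run Kohn's classical $L^2$-Hodge scheme for $\Box^{(q)}_b$ restricted to the $T$-invariant Hilbert space $\mathcal{B}^{0,q}_{\leq0}(X)$, using the subelliptic estimate \eqref{e-gue140218aIII} of Theorem~\ref{t-gue140218I} as the sole nontrivial input. The commutation properties collected in Theorem~\ref{t-gue140218} guarantee that $\Box^{(q)}_b$ restricts to a self-adjoint operator from ${\rm Dom\,}\Box^{(q)}_b\cap\mathcal{B}^{0,q}_{\leq0}(X)$ into $\mathcal{B}^{0,q}_{\leq0}(X)$, and since $Q^{(q)}_{\leq0}$ preserves $\Omega^{0,q}(X)$, the estimate \eqref{e-gue140218aIII} amounts to $\norm{v}_{s+1}\leq C_s(\norm{\Box^{(q)}_bv}_s+\norm{v})$ for every $v\in\mathcal{B}^{0,q}_{\leq0}(X)\cap\Omega^{0,q}(X)$ and every $s\in\mathbb N_0$.

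First I would show that $\mathcal{H}:={\rm Ker\,}\Box^{(q)}_b\cap\mathcal{B}^{0,q}_{\leq0}(X)$ is finite-dimensional and contained in $\Omega^{0,q}(X)$. For $u\in\mathcal{H}$, a standard Friedrichs regularization reduces to the smooth case; then iterating the subelliptic estimate with $\Box^{(q)}_bu=0$ yields $\norm{u}_{s+1}\leq C_s\norm{u}$ for every $s\in\mathbb N_0$, so $\mathcal{H}\subset\Omega^{0,q}(X)$ and the unit ball of $\mathcal{H}$ is $H^1$-bounded, hence precompact in $L^2$ by Rellich, forcing ${\rm dim\,}\mathcal{H}<\infty$. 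For closed range I would argue by contradiction: were the coercive inequality $\norm{u}\leq C\norm{\Box^{(q)}_bu}$ to fail on $\mathcal{H}^\perp\cap{\rm Dom\,}\Box^{(q)}_b\cap\mathcal{B}^{0,q}_{\leq0}(X)$, one would produce a sequence $u_j$ with $\norm{u_j}=1$ and $\Box^{(q)}_bu_j\to0$; the subelliptic estimate bounds $u_j$ in $H^1_-$, Rellich extracts an $L^2$-convergent subsequence, and the limit would lie in $\mathcal{H}\cap\mathcal{H}^\perp=\set{0}$, contradicting $\norm{u_j}=1$.

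Having closed range, define $N^{(q)}_-$ to be zero on $\mathcal{H}$ and the bounded inverse of $\Box^{(q)}_b$ on its orthogonal complement inside $\mathcal{B}^{0,q}_{\leq0}(X)$; the $L^2$-identities in \eqref{e-gue140218aV} are then tautological. Applying \eqref{e-gue140218aIII} to $v=N^{(q)}_-f$ and absorbing $\norm{N^{(q)}_-f}$ via its $L^2$-boundedness gives $\norm{N^{(q)}_-f}_{s+1}\leq C'_s\norm{f}_s$, i.e.\ $N^{(q)}_-:H^s_-\to H^{s+1}_-$ for $s\in\mathbb N_0$; for negative indices one transposes, using that $N^{(q)}_-$ is $L^2$-self-adjoint on $\mathcal{B}^{0,q}_{\leq0}(X)$, to obtain $N^{(q)}_-:H^{-s-1}_-\to H^{-s}_-$ and thus cover the full range $s\in\mathbb Z$. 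The projector $\Pi^{(q)}_-$ equals $\sum_j(\,\cdot\,|\,e_j\,)e_j$ for an orthonormal basis $\{e_j\}$ of $\mathcal{H}$, so its Schwartz kernel lies in $\Omega^{0,q}(X)\boxtimes\Omega^{0,q}(X)$ and $\Pi^{(q)}_-$ is smoothing on every $H^s_-$.

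Finally, the distributional statements \eqref{e-gue140220III} follow by the same transposition: $\Pi^{(q)}_-$ extends to $\mathscr{D}'_-(X,T^{*0,q}X)$ via its smooth Schwartz kernel, $N^{(q)}_-$ extends by transposing its continuous action on the positive Sobolev scales, and the identities $\Box^{(q)}_bN^{(q)}_-+\Pi^{(q)}_-=I=N^{(q)}_-\Box^{(q)}_b+\Pi^{(q)}_-$ pass to the extensions by continuity on smooth forms. The main obstacle, though essentially technical, is the Friedrichs regularization step: one must check that the mollifier commutators with $\ddbar_b$ and $\ol{\pr}^{*}_{b}$ are controlled uniformly in the regularization parameter so that the subelliptic estimate survives the limit from smooth $v$ to arbitrary $v\in{\rm Dom\,}\Box^{(q)}_b\cap\mathcal{B}^{0,q}_{\leq0}(X)$. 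This is standard once one combines $\theta$-mollifiers (which commute exactly with $Q^{(q)}_{\leq0}$ via the Fourier-series definition \eqref{e-gue140218VII}) with isotropic mollifiers in the transverse variables, exactly as in the classical Kohn-Nirenberg argument.
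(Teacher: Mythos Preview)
Your proposal is correct and follows precisely the approach the paper indicates: the paper does not give a detailed proof of Theorem~\ref{t-gue140218II} but simply states that one can ``repeat the method of Kohn (see~\cite[Chapter 8]{CS01}, \cite{FK:72}, \cite{Koh64})'' using the subelliptic estimate \eqref{e-gue140218aIII} of Theorem~\ref{t-gue140218I}, which is exactly the scheme you have outlined. Your identification of the Friedrichs regularization as the main technical point is apt and is indeed handled in the standard references the paper cites.
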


\begin{thm}\label{t-gue140218III}
With the notations and assumptions above, assume that $Z(n-1-q)$ holds at every point of $X$. Then,
\[\Box^{(q)}_b:{\rm Dom\,}\Box^{(q)}_b\cap\mathcal{B}^{0,q}_{\geq0}(X)\To\mathcal{B}^{0,q}_{\geq0}(X)\]
has closed range. Let
\[N^{(q)}_+:\mathcal{B}^{0,q}_{\geq0}(X)\To{\rm Dom\,}\Box^{(q)}_b\cap\mathcal{B}^{0,q}_{\geq0}(X)\]
be the associated partial inverse and let
\[\Pi^{(q)}_+:\mathcal{B}^{0,q}_{\geq0}(X)\To{\rm Ker\,}\Box^{(q)}_b\]
be the orthogonal projection. Then, we have
\begin{equation}\label{e-gue140218aVI}
\begin{split}
&\mbox{$\Box^{(q)}_bN^{(q)}_++\Pi^{(q)}_+=I$ on $\mathcal{B}^{0,q}_{\geq0}(X)$},\\
&\mbox{$N^{(q)}_+\Box^{(q)}_b+\Pi^{(q)}_+=I$ on $\mathcal{B}^{0,q}_{\geq0}(X)\cap{\rm Dom\,}\Box^{(q)}_b$},\\
&\mbox{$N^{(q)}_+:H^s_+(X,T^{*0,q}X)\To H^{s+1}_+(X,T^{*0,q}X)$, $\forall s\in\mathbb Z$},\\
&\mbox{$\Pi^{(q)}_+:H^s_+(X,T^{*0,q}X)\To H^{s+N}_+(X,T^{*0,q}X)$, $\forall s\in\mathbb Z$ and $N\in\mathbb N$}.
\end{split}
\end{equation}
%-----------
Moreover, $N^{(q)}_+$ and $\Pi^{(q)}_+$ can be continuously extended to $\mathcal{D}'_+(X,T^{*0,q}X)$ and we have
\begin{equation}\label{e-gue140220IV}
\begin{split}
&\Pi^{(q)}_+:\mathscr{D}'_+(X,T^{*0,q}X)\To\mathcal{B}^{0,q}_{\geq0}(X)\bigcap\Omega^{0,q}(X),\\
&N^{(q)}_+:\mathscr{D}'_+(X,T^{*0,q}X)\To\mathscr{D}'_+(X,T^{*0,q}X),\\
&\mbox{$\Box^{(q)}_bN^{(q)}_++\Pi^{(q)}_+=I$ on $\mathscr{D}'_+(X,T^{*0,q}X)$},\\
&\mbox{$N^{(q)}_+\Box^{(q)}_b+\Pi^{(q)}_+=I$ on $\mathscr{D}'_+(X,T^{*0,q}X)$}.
\end{split}
\end{equation}
\end{thm}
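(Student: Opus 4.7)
The plan is to mirror the proof of Theorem~\ref{t-gue140218II}, using the second half of Theorem~\ref{t-gue140218I} in place of the first. Throughout, note that $Q^{(q)}_{\geq 0}$ commutes with $\Box^{(q)}_b$, $\ddbar_b$ and $\ol\pr^*_b$ (Theorem~\ref{t-gue140218}), so the restriction of $\Box^{(q)}_b$ to $\mathcal{B}^{0,q}_{\geq0}(X)\cap{\rm Dom\,}\Box^{(q)}_b$ takes values in $\mathcal{B}^{0,q}_{\geq0}(X)$ and is self-adjoint there. The subelliptic estimate \eqref{e-gue140218aIV} (valid under $Z(n-1-q)$) replaces \eqref{e-gue140218aIII} in every step.

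First I would establish finite dimensionality and smoothness of ${\rm Ker}\,\Box^{(q)}_b\cap\mathcal{B}^{0,q}_{\geq0}(X)$. Applying \eqref{e-gue140218aIV} to $u\in{\rm Ker}\,\Box^{(q)}_b\cap\mathcal{B}^{0,q}_{\geq0}(X)\cap\Omega^{0,q}(X)$ and bootstrapping in $s$ yields $\norm{u}_{s+1}\leq C_s\norm{u}$ for every $s$, so by Rellich the unit ball of this kernel is compact in $L^2$, hence the kernel is finite dimensional, and the inequality forces every element to lie in $\Omega^{0,q}(X)$. Combined with the standard elliptic regularity argument (applied on each eigen-block) this implies $\Pi^{(q)}_+$ is smoothing on $\mathcal{B}^{0,q}_{\geq0}(X)$.

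Next I would prove the closed range property. By a standard functional analysis argument, from the finite dimensionality of the kernel and from \eqref{e-gue140218aIV} one deduces the a priori bound
\[
\norm{Q^{(q)}_{\geq0}u}\leq C\norm{\Box^{(q)}_bQ^{(q)}_{\geq0}u},\qquad \forall u\in\Omega^{0,q}(X)\text{ with }Q^{(q)}_{\geq0}u\perp{\rm Ker}\,\Box^{(q)}_b,
\]
by the usual contradiction argument (take a sequence of unit vectors orthogonal to the kernel with $\Box^{(q)}_b$ image going to zero; use the subelliptic estimate plus Rellich to extract a convergent subsequence; the limit would be a nonzero element of the kernel orthogonal to the kernel, contradiction). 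This yields closed range for the restriction of $\Box^{(q)}_b$ to $\mathcal{B}^{0,q}_{\geq 0}(X)$, so the partial inverse $N^{(q)}_+$ is well defined, bounded on $\mathcal{B}^{0,q}_{\geq0}(X)$, and satisfies the first two identities of \eqref{e-gue140218aVI}. Applying \eqref{e-gue140218aIV} to $N^{(q)}_+ v$ gives the Sobolev mapping property $N^{(q)}_+:H^s_+\to H^{s+1}_+$ by bootstrapping, while the smoothing of $\Pi^{(q)}_+$ follows from the bootstrapping used above.

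Finally I would extend to distributions. Since $\Pi^{(q)}_+$ is symmetric and smoothing on $L^2$, its integral kernel lies in $\mathcal{B}^{0,q}_{\geq0}(X)\otimes\Omega^{0,q}(X)$, so pairing against $\mathscr{D}'_+(X,T^{*0,q}X)$ gives an extension with values in $\mathcal{B}^{0,q}_{\geq0}(X)\cap\Omega^{0,q}(X)$. For $N^{(q)}_+$ the extension is obtained by duality: for each $s\in\mathbb{Z}$, $N^{(q)}_+$ is bounded $H^s_+\to H^{s+1}_+$, so its $L^2$-adjoint (which equals $N^{(q)}_+$ by self-adjointness on $L^2$) extends to $H^{-s-1}_+\to H^{-s}_+$, giving \eqref{e-gue140220IV}. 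The two identities in \eqref{e-gue140220IV} then follow from the corresponding identities in \eqref{e-gue140218aVI} by continuity. The main subtlety is verifying that all operations respect the splitting into $\mathcal{B}^{0,q}_{\geq0}(X)$, but this is immediate from the commutation relations \eqref{e-gue140218a}; there is no genuine obstacle beyond the bookkeeping needed to transfer every step of the proof of Theorem~\ref{t-gue140218II} with the sign reversed.
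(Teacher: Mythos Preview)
Your proposal is correct and follows exactly the approach the paper indicates: the paper does not give a detailed proof of Theorem~\ref{t-gue140218III} but simply states (just before Theorem~\ref{t-gue140218II}) that both results follow from Theorem~\ref{t-gue140218I} by repeating Kohn's method, and your outline is precisely that standard argument applied to the estimate \eqref{e-gue140218aIV} in place of \eqref{e-gue140218aIII}.
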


Our next goal is to prove that if $Z(q)$ fails but $Z(q-1)$ and $Z(q+1)$ hold at every point of $X$, then,
\[\Box^{(q)}_b:{\rm Dom\,}\Box^{(q)}_b\cap\mathcal{B}^{0,q}_{\leq0}(X)\To\mathcal{B}^{0,q}_{\leq0}(X)\]
has closed range. Until further notice, we assume that $Z(q)$ fails but $Z(q-1)$ and $Z(q+1)$ hold 
at every point of $X$. Let $N^{(q-1)}_-$ and $N^{(q+1)}_-$ be as in Theorem~\ref{t-gue140218II}. 
We first need the following.
%-------------------------
\begin{lem}\label{l-gue140220I}
Let $u\in\mathcal{B}^{0,q}_{\leq0}(X)$. We have
\begin{equation}\label{e-gue140220VI}
\ol{\pr}^*_b\ddbar_bN^{(q+1)}_-\ddbar_bu=0
\end{equation}
and
\begin{equation}\label{e-gue140220VII}
\ddbar_b\ol{\pr}^*_bN^{(q-1)}_-\ol{\pr}^*_bu=0.
\end{equation}
\end{lem}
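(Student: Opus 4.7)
The plan is to establish \eqref{e-gue140220VI}, since \eqref{e-gue140220VII} is proved by the symmetric argument with the roles of $\ddbar_b$ and $\ol{\pr}^*_b$ interchanged and with $Z(q-1)$, $N^{(q-1)}_-$ in place of $Z(q+1)$, $N^{(q+1)}_-$. By density of $\mathcal{B}^{0,q}_{\leq 0}(X)\cap\Omega^{0,q}(X)$ in $\mathcal{B}^{0,q}_{\leq 0}(X)$ together with the continuity of $N^{(q+1)}_-$ and the projections on the scale $\mathscr{D}'_-(X,T^{*0,q}X)$ (see \eqref{e-gue140220III}), I may assume $u\in\Omega^{0,q}(X)\cap\mathcal{B}^{0,q}_{\leq 0}(X)$. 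Since $\ddbar_b$ commutes with $Q^{(q+1)}_{\leq 0}$ by Theorem~\ref{t-gue140218}, $\ddbar_b u\in\Omega^{0,q+1}(X)\cap\mathcal{B}^{0,q+1}_{\leq 0}(X)$. Using that $Z(q+1)$ holds, I apply Theorem~\ref{t-gue140218II} at form degree $q+1$ to define $v:=N^{(q+1)}_-\ddbar_b u$; by the regularity in \eqref{e-gue140218aV}, iterated, $v$ is smooth, and
\[
\Box^{(q+1)}_b v \;=\; \ddbar_b u \,-\, \Pi^{(q+1)}_-\ddbar_b u.
\]

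Next I apply $\ddbar_b$ to both sides. On the right, $\ddbar_b\ddbar_b u=0$, and $\Pi^{(q+1)}_-$ has range in ${\rm Ker\,}\Box^{(q+1)}_b\subset{\rm Ker\,}\ddbar_b$, the inclusion following from the Gaffney-extension energy identity $(\Box_b w\,|\,w)=\norm{\ddbar_b w}^2+\norm{\ol{\pr}^*_b w}^2$ for $w\in{\rm Dom\,}\Box_b$; hence $\ddbar_b\Pi^{(q+1)}_-\ddbar_b u=0$. On the left, for smooth $v$ the relation $\ddbar_b^2=0$ gives $\ddbar_b\Box^{(q+1)}_b v=\ddbar_b\ol{\pr}^*_b\ddbar_b v=\Box^{(q+2)}_b\ddbar_b v$. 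Therefore
\[
\Box^{(q+2)}_b\ddbar_b v \;=\; 0.
\]
The same energy identity, now at form degree $q+2$ applied to $w=\ddbar_b v$, forces $\norm{\ol{\pr}^*_b\ddbar_b v}^2=0$, i.e.\ $\ol{\pr}^*_b\ddbar_b v=0$, which is exactly \eqref{e-gue140220VI}.

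The only nontrivial point is the regularity/domain bookkeeping: one must verify that $\ddbar_b v\in{\rm Dom\,}\Box^{(q+2)}_b$ so that the commutation $\ddbar_b\Box^{(q+1)}_b=\Box^{(q+2)}_b\ddbar_b$ and the energy identity are legitimate. This is precisely where the subelliptic estimate \eqref{e-gue140218aIII} at degree $q+1$ is used: applied iteratively to $v$, it promotes $v$ from the one-derivative gain in \eqref{e-gue140218aV} to full $\cC^\infty$ smoothness on the closed subspace $\mathcal{B}^{0,q+1}_{\leq 0}(X)$. The analogous argument for \eqref{e-gue140220VII} sets $w:=N^{(q-1)}_-\ol{\pr}^*_b u$, applies $\ol{\pr}^*_b$ to $\Box^{(q-1)}_b w=\ol{\pr}^*_b u-\Pi^{(q-1)}_-\ol{\pr}^*_b u$, uses $(\ol{\pr}^*_b)^2=0$ and ${\rm Ker\,}\Box^{(q-1)}_b\subset{\rm Ker\,}\ol{\pr}^*_b$ to conclude $\Box^{(q-2)}_b\ol{\pr}^*_b w=0$, and then the energy identity gives $\ddbar_b\ol{\pr}^*_b w=0$.
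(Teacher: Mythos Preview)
Your argument is correct, but it takes a somewhat different route from the paper. Both proofs begin by reducing to smooth $u$ via density and the continuity in \eqref{e-gue140220III}. For smooth $u$, the paper applies the identity $N^{(q+1)}_-\Box^{(q+1)}_b+\Pi^{(q+1)}_-=I$ \emph{directly to the form} $\ol{\pr}^*_b\ddbar_bN^{(q+1)}_-\ddbar_bu$ (the $\Pi^{(q+1)}_-$ contribution vanishes since the range of $\ol{\pr}^*_b$ is orthogonal to ${\rm Ker\,}\Box^{(q+1)}_b$), commutes $\Box^{(q+1)}_b$ past $\ol{\pr}^*_b\ddbar_b$, then uses $\Box^{(q+1)}_bN^{(q+1)}_-=I-\Pi^{(q+1)}_-$ and $\ddbar_b^2=0$ to obtain $N^{(q+1)}_-\ol{\pr}^*_b\ddbar_b^2u=0$. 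Everything stays at form degree $q+1$. You instead apply $\ddbar_b$ to $\Box^{(q+1)}_bv=\ddbar_bu-\Pi^{(q+1)}_-\ddbar_bu$, pass to degree $q+2$ to get $\Box^{(q+2)}_b\ddbar_bv=0$, and conclude via the energy identity. The paper's route is slightly more economical in that it avoids any reference to degree $q+2$ and the associated domain checks (no hypothesis on $Z(q+2)$ is available); your route is perhaps more transparent conceptually, since it shows that $\ddbar_bv$ is harmonic and hence $\ol{\pr}^*_b$-closed. Both are valid because $X$ is compact here, so smoothness of $v$ guarantees all the required domain memberships at degree $q+2$.
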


\begin{proof}
Let $u\in\mathcal{B}^{0,q}_{\leq0}(X)$. Take $u_j\in\mathcal{B}^{0,q}_{\leq0}(X)\cap\Omega^{0,q}(X)$, 
$j=1,2,\ldots$, so that
$u_j\To u$ in $L^2_{(0,q)}(X)$ as $j\To\infty$. Then, 
$\ol{\pr}^*_b\ddbar_bN^{(q+1)}_-\ddbar_bu_j\To\ol{\pr}^*_b\ddbar_bN^{(q+1)}_-\ddbar_bu$
in $\mathscr D'_-(X,T^{*0,q}X)$ as $j\To\infty$. Fix $j=1,2,\ldots$\,.
From \eqref{e-gue140218aV}, we have
\[\begin{split}
\ol{\pr}^*_b\ddbar_bN^{(q+1)}_-\ddbar_bu_j&=N^{(q+1)}_-\Box^{(q+1)}_b\ol{\pr}^*_b\ddbar_bN^{(q+1)}_-\ddbar_bu_j\\
&=N^{(q+1)}_-\ol{\pr}^*_b\ddbar_b\Box^{(q+1)}_bN^{(q+1)}_-\ddbar_bu_j\\
&=N^{(q+1)}_-\ol{\pr}^*_b\ddbar_b(I-\Pi^{(q+1)}_-)\ddbar_bu_j\\
&=N^{(q+1)}_-\ol{\pr}^*_b\ddbar^2_bu_j=0.
\end{split}\]
Hence $\ol{\pr}^*_b\ddbar_bN^{(q+1)}_-\ddbar_bu=0$.
\eqref{e-gue140220VI} follows. The proof of \eqref{e-gue140220VII} is essentially the same.
\end{proof}
%We also need the following.
\begin{lem}\label{l-gue140220}
The following operators are continuous:
\begin{equation}\label{e-gue140220V}
\begin{split}
&\ddbar_bN^{(q-1)}_-\ol{\pr}^*_b:\mathcal{B}^{0,q}_{\leq0}(X)\To\mathcal{B}^{0,q}_{\leq0}(X),\\
&\mbox{$\ol{\pr}^*_bN^{(q+1)}_-\ddbar_b:\mathcal{B}^{0,q}_{\leq0}(X)\To\mathcal{B}^{0,q}_{\leq0}(X)$}.
\end{split}
\end{equation}
%-------------
Moreover, for every $u\in\mathcal{B}^{0,q}_{\leq0}(X)$,
\begin{equation}\label{e-gue140220VIII}
u-\bigr(\ddbar_bN^{(q-1)}_-\ol{\pr}^*_b+\ol{\pr}^*_bN^{(q+1)}_-\ddbar_b)u\in{\rm ker\,}
\Box^{(q)}_b\cap\mathcal{B}^{0,q}_{\leq0}(X).
\end{equation}
\end{lem}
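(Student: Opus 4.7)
The plan is to prove both assertions first on the dense subspace $\mathcal{B}^{0,q}_{\leq0}(X)\cap\Omega^{0,q}(X)$, where every manipulation makes classical sense, and then extend by continuity using the regularity results in Theorem \ref{t-gue140218II}. The key preliminary observation is that for every smooth $u\in\mathcal{B}^{0,q}_{\leq0}(X)\cap\Omega^{0,q}(X)$,
\[\Pi^{(q-1)}_-\ol{\pr}^*_bu=0\quad\text{and}\quad\Pi^{(q+1)}_-\ddbar_bu=0.\]
Indeed, any $h\in{\rm Ker\,}\Box^{(q\mp1)}_b\cap\mathcal{B}^{0,q\mp1}_{\leq0}(X)$ satisfies $\ddbar_bh=0$ and $\ol{\pr}^*_bh=0$ (from $0=(\Box h\,|\,h)=\norm{\ddbar_bh}^2+\norm{\ol{\pr}^*_bh}^2$), so the adjoint property of $\ol{\pr}^*_b$ gives $(\ol{\pr}^*_bu\,|\,h)=(u\,|\,\ddbar_bh)=0$ and likewise $(\ddbar_bu\,|\,h)=0$. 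Combining with Theorem \ref{t-gue140218II} we obtain
\[\Box^{(q-1)}_bN^{(q-1)}_-\ol{\pr}^*_bu=\ol{\pr}^*_bu,\quad\Box^{(q+1)}_bN^{(q+1)}_-\ddbar_bu=\ddbar_bu.\]

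Setting $v:=N^{(q-1)}_-\ol{\pr}^*_bu\in{\rm Dom\,}\Box^{(q-1)}_b$, Kohn's identity yields
\[\norm{\ddbar_bv}^2+\norm{\ol{\pr}^*_bv}^2=(\Box^{(q-1)}_bv\,|\,v)=(\ol{\pr}^*_bu\,|\,v)=(u\,|\,\ddbar_bv)\leq\norm{u}\,\norm{\ddbar_bv},\]
so $\norm{\ddbar_bN^{(q-1)}_-\ol{\pr}^*_bu}\leq\norm{u}$ on smooth forms; the analogous estimate for $\ol{\pr}^*_bN^{(q+1)}_-\ddbar_b$ is obtained identically. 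Since $\mathcal{B}^{0,q}_{\leq0}(X)\cap\Omega^{0,q}(X)$ is dense in $\mathcal{B}^{0,q}_{\leq0}(X)$, both operators extend continuously to $\mathcal{B}^{0,q}_{\leq0}(X)$; the image lies in $\mathcal{B}^{0,q}_{\leq0}(X)$ because of the commutation relations \eqref{e-gue131207VI} and \eqref{e-gue140218a}, which show that $\ddbar_b$ and $\ol{\pr}^*_b$ respect the decomposition by Fourier modes.

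For the second assertion, set $w:=u-\ddbar_bN^{(q-1)}_-\ol{\pr}^*_bu-\ol{\pr}^*_bN^{(q+1)}_-\ddbar_bu$. Expanding $\Box^{(q-1)}_bN^{(q-1)}_-\ol{\pr}^*_bu=\ol{\pr}^*_bu$ and invoking \eqref{e-gue140220VII} of Lemma \ref{l-gue140220I} gives $\ol{\pr}^*_b\ddbar_bN^{(q-1)}_-\ol{\pr}^*_bu=\ol{\pr}^*_bu$; symmetrically, \eqref{e-gue140220VI} together with $\Box^{(q+1)}_bN^{(q+1)}_-\ddbar_bu=\ddbar_bu$ yields $\ddbar_b\ol{\pr}^*_bN^{(q+1)}_-\ddbar_bu=\ddbar_bu$. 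Using $\ddbar_b^2=0$ and $(\ol{\pr}^*_b)^2=0$ we then find $\ddbar_bw=\ol{\pr}^*_bw=0$, so $w\in{\rm Ker\,}\Box^{(q)}_b\cap\mathcal{B}^{0,q}_{\leq0}(X)$. The extension to general $u\in\mathcal{B}^{0,q}_{\leq0}(X)$ follows from the $L^2$ continuity already established together with the closedness of ${\rm Ker\,}\Box^{(q)}_b$ in $L^2_{(0,q)}(X)$. The principal technical obstacle is a careful bookkeeping of domains: for a general $u\in\mathcal{B}^{0,q}_{\leq0}(X)$ the quantities $\ol{\pr}^*_bu$ and $\ddbar_bu$ exist only as distributions, and one must check that the continuous extension from smooth forms agrees with the definition via the distributional extension of $N^{(q\pm1)}_-$ in \eqref{e-gue140220III}, so that the final identities $\ddbar_bw=0$ and $\ol{\pr}^*_bw=0$ are unambiguous.
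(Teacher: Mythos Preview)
Your proof is correct and follows essentially the same route as the paper: work on smooth forms in $\mathcal{B}^{0,q}_{\leq0}(X)\cap\Omega^{0,q}(X)$, use $\Box N_-=I-\Pi_-$ together with the orthogonality $\Pi^{(q\pm1)}_-\ddbar_bu=\Pi^{(q\pm1)}_-\ol\pr^*_bu=0$, bound the norm via Cauchy--Schwarz, and invoke Lemma~\ref{l-gue140220I} to show the remainder lies in the kernel, then extend by density. Your norm estimate is marginally more direct than the paper's, since you apply the quadratic-form identity $(\Box v\,|\,v)=\norm{\ddbar_bv}^2+\norm{\ol\pr^*_bv}^2$ immediately rather than first reducing $\ddbar_b\ol\pr^*_b$ to $\Box$ via \eqref{e-gue140220VI}, but the two computations are otherwise the same.
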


\begin{proof}
Let $u\in\mathcal{B}^{(0,q)}_{\leq0}\cap\Omega^{0,q}(X)$. We have
\[
\begin{split}
&\norm{\ol{\pr}^*_bN^{(q+1)}_-\ddbar_bu}^2\\
&=(\,\ol{\pr}^*_bN^{(q+1)}_-\ddbar_bu\,|\,\ol{\pr}^*_bN^{(q+1)}_-\ddbar_bu\,)
=(\,\ddbar_b\ol{\pr}^*_bN^{(q+1)}_-\ddbar_bu\,|\,N^{(q+1)}_-\ddbar_bu\,)\\
&=(\,\Box^{(q+1)}_bN^{(q+1)}_-\ddbar_bu\,|\,N^{(q+1)}_-\ddbar_bu\,)\ \ (\mbox{here we used \eqref{e-gue140220VI}})\\
&=(\,\ddbar_bu\,|\,N^{(q+1)}_-\ddbar_bu\,)=(\,u\,|\,\ol{\pr}^*_bN^{(q+1)}_-\ddbar_bu\,)\\
&\leq\norm{u}\norm{\ol{\pr}^*_bN^{(q+1)}_-\ddbar_bu}.
\end{split}\]
Hence, $\norm{\ol{\pr}^*_bN^{(q+1)}_-\ddbar_bu}\leq\norm{u}$, 
$\forall u\in\mathcal{B}^{0,q}_{\leq0}(X)\cap\Omega^{0,q}(X)$. 
Thus, $\ol{\pr}^*_bN^{(q+1)}_-\ddbar_b$ can be continuously extended to 
$\ol{\pr}^*_bN^{(q+1)}_-\ddbar_b:\mathcal{B}^{0,q}_{\leq0}(X)\To\mathcal{B}^{0,q}_{\leq0}(X)$.

\noindent
Similarly, we can repeat the procedure above and conclude that
\[\mbox{$\ddbar_bN^{(q-1)}_-\ol{\pr}^*_b:\mathcal{B}^{0,q}_{\leq0}(X)\To\mathcal{B}^{0,q}_{\leq0}(X)$ is continuous}.\]
\eqref{e-gue140220V} follows.

Let $u\in\mathcal{B}^{0,q}_{\leq0}(X)\cap\Omega^{0,q}(X)$ and 
set $v=u-\bigr(\ddbar_bN^{(q-1)}_-\ol{\pr}^*_b+\ol{\pr}^*_bN^{(q+1)}_-\ddbar_b)u$. We have
\[
\begin{split}
\ddbar_bv&=\ddbar_bu-\ddbar_b\ol{\pr}^*_bN^{(q+1)}_-\ddbar_bu\\
&=\ddbar_bu-\Box^{(q+1)}_bN^{(q+1)}_-\ddbar_bu\ \ (\mbox{here we used \eqref{e-gue140220VI}})\\
&=\ddbar_bu-\ddbar_bu=0.
\end{split}
\]
Similarly, we have $\ol{\pr}^*_bv=0$. Thus,
\[u-\bigr(\ddbar_bN^{(q-1)}_-\ol{\pr}^*_b+\ol{\pr}^*_bN^{(q+1)}_-\ddbar_b)u\in{\rm Ker\,}\Box^{(q)}_b,\]
for every $u\in\mathcal{B}^{0,q}_{\leq0}(X)\cap\Omega^{0,q}(X)$. Since
\[\mbox{$I-\ddbar_bN^{(q-1)}_-\ol{\pr}^*_b-\ol{\pr}^*_bN^{(q+1)}_-\ddbar_b:
\mathcal{B}^{0,q}_{\leq0}(X)\To\mathcal{B}^{0,q}_{\leq0}(X)$}\]
is continuous, \eqref{e-gue140220VIII} follows.
\end{proof}

Let $\ol{\pr}^{*,f}_b:\Omega^{0,q+1}(X)\To\Omega^{0,q}(X)$ be the formal adjonit of 
$\ddbar_b$ with respect to $(\,\cdot\,|\,\cdot\,)$. That is, $(\,\ddbar_bf\,|\,g\,)=(\,f\,|\,\ol{\pr}^{*,f}_bg\,)$, 
for all $f\in\Omega^{0,q}(X)$, $g\in\Omega^{0,q+1}(X)$. We need

\begin{lem}\label{l-gue140220II}
Let $u\in L^2_{(0,q)}(X)$. If $\ol{\pr}^{*,f}_bu\in L^2_{(0,q-1)}(X)$, 
then $u\in{\rm Dom\,}\ol{\pr}^*_b$ and $\ol{\pr}^*_bu=\ol{\pr}^{*,f}_bu$.
\end{lem}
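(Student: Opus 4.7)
\smallskip

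The plan is to recognize the statement as saying that the maximal (distributional) extension of the formal adjoint of $\ddbar_b$ coincides with the Hilbert-space adjoint of the Gaffney extension. The standard route is to reduce everything to the integration-by-parts identity
\[
(u\,|\,\ddbar_b v)=(\ol{\pr}^{*,f}_b u\,|\,v),
\]
prove it first on a dense subspace of test forms, and then upgrade it to all of ${\rm Dom}\,\ddbar_b$ by approximation.

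First I would note that for every $v\in\Omega^{0,q-1}_0(X)$ the identity above holds by the very definition of $\ol{\pr}^{*,f}_b$ as the distributional formal adjoint of $\ddbar_b$. The heart of the argument is then to show that $\Omega^{0,q-1}_0(X)$ is dense in ${\rm Dom}\,\ddbar_b$ with respect to the graph norm $\|v\|+\|\ddbar_b v\|$. This is a form of Friedrichs' approximation lemma for the first-order differential operator $\ddbar_b$: given $v\in{\rm Dom}\,\ddbar_b$, choose a finite partition of unity subordinate to a cover of $X$ (in this section $X$ is compact, so no extra cutoff tail is needed) and apply a Friedrichs mollifier to each localized piece. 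The classical commutator estimate $[F_\varepsilon,\ddbar_b]\to0$ strongly on $L^2$ as $\varepsilon\to0$ produces approximants $v_j\in\Omega^{0,q-1}_0(X)$ with $v_j\to v$ and $\ddbar_b v_j\to\ddbar_b v$ in $L^2$.

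With density in hand, apply the identity to each $v_j$ and pass to the limit using continuity of the $L^2$ inner product; this yields
\[
(u\,|\,\ddbar_b v)=(\ol{\pr}^{*,f}_b u\,|\,v)\quad\text{for all }v\in{\rm Dom}\,\ddbar_b.
\]
Cauchy--Schwarz then gives $|(u\,|\,\ddbar_b v)|\leq\|\ol{\pr}^{*,f}_b u\|\,\|v\|$, so by the definition of the Hilbert adjoint recorded in \eqref{e-suVIII}, $u\in{\rm Dom}\,\ol{\pr}^*_b$ and $\ol{\pr}^*_b u=\ol{\pr}^{*,f}_b u$.

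The main obstacle is the Friedrichs approximation step, since $\ddbar_b$ is not elliptic and a naive smoothing need not commute with it. However, this is settled by the classical Friedrichs commutator lemma for smooth-coefficient first-order operators, so in the compact $S^1$-action setting it is routine. In a non-compact setting one would additionally need a family of cutoff functions $\chi_k\nearrow 1$ with $[\ddbar_b,\chi_k]\to0$, which is easily arranged if $X$ carries a complete $T$-invariant metric, but is not needed for the intended application.
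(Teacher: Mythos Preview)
Your proposal is correct and follows essentially the same approach as the paper: the paper also invokes Friedrichs' lemma (citing \cite[Corollary D.2]{CS01}) to approximate an arbitrary $g\in{\rm Dom}\,\ddbar_b$ by smooth forms $g_j$ in the graph norm, then passes to the limit in the identity $(u\,|\,\ddbar_b g_j)=(\ol{\pr}^{*,f}_b u\,|\,g_j)$ to conclude. The only cosmetic difference is that the paper states the approximation step as a direct citation rather than sketching the mollifier/commutator argument as you do.
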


\begin{proof}
Let $g\in{\rm Dom\,}\ddbar_b\subset L^2_{(0,q-1)}(X)$. From Friedrichs' lemma \cite[Corollary D.2]{CS01}, 
we can find $g_j\in\Omega^{0,q-1}(X)$, $j=1,2,\ldots$, such that $g_j\To g$ in $L^2_{(0,q-1)}(X)$ 
as $j\To\infty$ and $\ddbar_bg_j\To\ddbar_bg$ in $L^2_{(0,q)}(X)$ as $j\To\infty$. We have
\[\begin{split}
(\,u\,|\,\ddbar_bg)=\lim_{j\To\infty}(\,u\,|\,\ddbar_bg_j)=\lim_{j\To\infty}(\,\ol{\pr}^{*,f}_bu\,|\,g_j)
=(\,\ol{\pr}^{*,f}_bu\,|\,g).
\end{split}\]
Thus, $u\in{\rm Dom\,}\ol{\pr}^*_b$ and $\ol{\pr}^*_bu=\ol{\pr}^{*,f}_bu$. The lemma follows.
\end{proof}

\begin{lem}\label{l-gue140221}
We have
\begin{equation}\label{e-gue140221}
\ol{\pr}^*_b(N^{(q+1)}_-)^2\ddbar_b:\mathcal{B}^{0,q}_{\leq0}(X)\To
{\rm Dom\,}\Box^{(q)}_b\cap\mathcal{B}^{0,q}_{\leq0}(X)
\end{equation}
and
\begin{equation}\label{e-gue140221I}
\ddbar_b(N^{(q-1)}_-)^2\ol{\pr}^*_b:\mathcal{B}^{0,q}_{\leq0}(X)\To
{\rm Dom\,}\Box^{(q)}_b\cap\mathcal{B}^{0,q}_{\leq0}(X).
\end{equation}
\end{lem}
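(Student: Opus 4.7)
The plan is to prove \eqref{e-gue140221}; \eqref{e-gue140221I} will then follow by the dual argument with the roles of $\ddbar_b$ and $\ol{\pr}^*_b$ interchanged, using \eqref{e-gue140220VII} in place of \eqref{e-gue140220VI} and $\ddbar_b^2=0$ in place of $(\ol{\pr}^*_b)^2=0$. Fix $u\in\mathcal{B}^{0,q}_{\leq0}(X)$. Since $\ddbar_b$ commutes with the projections $Q^{(\bullet)}_{\leq0}$ by Theorem \ref{t-gue140218}, the distribution $\ddbar_b u$ lies in $\mathscr D'_-(X,T^{*0,q+1}X)$, and Theorem \ref{t-gue140218II} shows that
\[ w:=(N^{(q+1)}_-)^2\ddbar_b u\in\mathcal{B}^{0,q+1}_{\leq0}(X)\cap{\rm Dom\,}\Box^{(q+1)}_b. \]
In particular $w\in{\rm Dom\,}\ol{\pr}^*_b$, so $v:=\ol{\pr}^*_b w\in L^2_{(0,q)}(X)$ is well defined, and the commutation of $\ol{\pr}^*_b$ with $Q^{(\bullet)}_{\leq0}$ places $v$ in $\mathcal{B}^{0,q}_{\leq0}(X)$.

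Next I would verify the four clauses of \eqref{e-suIX} needed to place $v$ in ${\rm Dom\,}\Box^{(q)}_b$. Since $\ol{\pr}^{*,f}_b\ol{\pr}^{*,f}_b=0$ as a differential operator, $\ol{\pr}^{*,f}_b v$ vanishes distributionally, so Lemma \ref{l-gue140220II} gives $v\in{\rm Dom\,}\ol{\pr}^*_b$ with $\ol{\pr}^*_b v=0$, which is automatically in ${\rm Dom\,}\ddbar_b$. For the $\ddbar_b$-side, I would use $w\in{\rm Dom\,}\Box^{(q+1)}_b$ to write
\[ \ddbar_b v=\ddbar_b\ol{\pr}^*_b w=\Box^{(q+1)}_b w-\ol{\pr}^*_b\ddbar_b w. \]
The first term equals $N^{(q+1)}_-\ddbar_b u$ by $\Box^{(q+1)}_b N^{(q+1)}_-=I-\Pi^{(q+1)}_-$ together with $\Pi^{(q+1)}_- N^{(q+1)}_-=0$. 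For the second, I adapt the computation of Lemma \ref{l-gue140220I}: choose $u_j\in\mathcal{B}^{0,q}_{\leq0}(X)\cap\Omega^{0,q}(X)$ with $u_j\to u$ in $L^2$. For each smooth $u_j$ the identities $\Pi^{(q+1)}_-\ol{\pr}^*_b=0$, $\Box^{(q+1)}_b\ol{\pr}^*_b=\ol{\pr}^*_b\Box^{(q+2)}_b$, $\Box^{(q+2)}_b\ddbar_b=\ddbar_b\Box^{(q+1)}_b$ and $\Box^{(q+1)}_b(N^{(q+1)}_-)^2\ddbar_b u_j=N^{(q+1)}_-\ddbar_b u_j$ reduce $\ol{\pr}^*_b\ddbar_b(N^{(q+1)}_-)^2\ddbar_b u_j$ to $N^{(q+1)}_-\ol{\pr}^*_b\ddbar_b N^{(q+1)}_-\ddbar_b u_j$, which vanishes by \eqref{e-gue140220VI}. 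Passing to the limit in $\mathscr D'_-$ using the continuity of $N^{(q+1)}_-$ from \eqref{e-gue140220III} yields $\ol{\pr}^*_b\ddbar_b w=0$. Hence $\ddbar_b v=N^{(q+1)}_-\ddbar_b u\in{\rm Dom\,}\Box^{(q+1)}_b\subset{\rm Dom\,}\ol{\pr}^*_b$, so simultaneously $v\in{\rm Dom\,}\ddbar_b$ and $\ddbar_b v\in{\rm Dom\,}\ol{\pr}^*_b$.

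The only delicate point is the distributional limiting step that produces $\ol{\pr}^*_b\ddbar_b(N^{(q+1)}_-)^2\ddbar_b u=0$; this depends on the regularity and continuity properties of $N^{(q+1)}_-$ and $\Pi^{(q+1)}_-$ on the scale $\mathscr D'_-$ recorded in Theorem \ref{t-gue140218II}, and is of exactly the same type as the manipulation already performed in Lemma \ref{l-gue140220I}, so no new analytical ingredient is needed. Everything else is bookkeeping of domains for $\ddbar_b$ and $\ol{\pr}^*_b$ via Lemma \ref{l-gue140220II}.
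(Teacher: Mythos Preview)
Your proof is correct and uses the same underlying ingredients as the paper (Lemma~\ref{l-gue140220I}, Lemma~\ref{l-gue140220II}, and the mapping properties of $N^{(q+1)}_-$ from Theorem~\ref{t-gue140218II}), but the organization is somewhat different. The paper never asserts that $w=(N^{(q+1)}_-)^2\ddbar_b u$ lies in ${\rm Dom\,}\Box^{(q+1)}_b$ nor that $\ol{\pr}^*_b\ddbar_b w=0$; instead it works on the dense subspace $\mathcal{B}^{0,q}_{\leq0}(X)\cap\Omega^{0,q}(X)$, computes the $L^2$ norms of $\ddbar_b\ol{\pr}^*_b(N^{(q+1)}_-)^2\ddbar_b u$ and $\ol{\pr}^{*,f}_b\ddbar_b\ol{\pr}^*_b(N^{(q+1)}_-)^2\ddbar_b u$ by Cauchy--Schwarz (reducing both to expressions involving $\ol{\pr}^*_bN^{(q+1)}_-\ddbar_b u$, bounded by \eqref{e-gue140220V}), and then extends by continuity before invoking Lemma~\ref{l-gue140220II}. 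Your route exploits the Sobolev gain $N^{(q+1)}_-:H^s_-\to H^{s+1}_-$ to place $w$ directly in the Gaffney domain, which lets you identify $\ddbar_b v$ and $\ol{\pr}^*_b v$ explicitly ($\ddbar_b v=N^{(q+1)}_-\ddbar_b u$, $\ol{\pr}^*_b v=0$); this gives slightly more information than the paper's norm estimates, at the cost of the extra approximation step establishing $\ol{\pr}^*_b\ddbar_b w=0$. Either organization is fine.
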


\begin{proof}
In view of \eqref{e-gue140218aV}, we see that
\[\mbox{$\ol{\pr}^*_b(N^{(q+1)}_-)^2\ddbar_b:\mathcal{B}^{0,q}_{\leq0}(X)\To
\mathcal{B}^{0,q}_{\leq0}(X)$ is continuous}.\]
Let $u\in\mathcal{B}^{0,q}_{\leq0}(X)\cap\Omega^{0,q}(X)$. We have
\begin{equation}\label{e-gue140221II}
\begin{split}
\norm{\ddbar_b\ol{\pr}^*_b(N^{(q+1)}_-)^2\ddbar_bu}^2&=
(\,\ddbar_b\ol{\pr}^*_b(N^{(q+1)}_-)^2\ddbar_bu\,|\,\ddbar_b\ol{\pr}^*_b(N^{(q+1)}_-)^2\ddbar_bu\,)\\
&=(\,\ol{\pr}^*_b\ddbar_b\ol{\pr}^*_b(N^{(q+1)}_-)^2\ddbar_bu\,|\,\ol{\pr}^*_b(N^{(q+1)}_-)^2\ddbar_bu\,)\\
&=(\,\ol{\pr}^*_b\Box^{(q+1)}_b(N^{(q+1)}_-)^2\ddbar_bu\,|\,\ol{\pr}^*_b(N^{(q+1)}_-)^2\ddbar_bu\,)\\
&=(\,\ol{\pr}^*_bN^{(q+1)}_-\ddbar_bu\,|\,\ol{\pr}^*_b(N^{(q+1)}_-)^2\ddbar_bu\,)\\
&\leq\norm{\ol{\pr}^*_bN^{(q+1)}_-\ddbar_bu}\norm{\ol{\pr}^*_b(N^{(q+1)}_-)^2\ddbar_bu}.
\end{split}
\end{equation}
From \eqref{e-gue140220V} and \eqref{e-gue140221II}, we see that there is a constant $C>0$ such that
\[\norm{\ddbar_b\ol{\pr}^*_b(N^{(q+1)}_-)^2\ddbar_bu}\leq C\norm{u},\ \ 
\forall u\in\mathcal{B}^{0,q}_{\leq0}(X)\cap\Omega^{0,q}(X).\]
Thus, $\ddbar_b\ol{\pr}^*_b(N^{(q+1)}_-)^2\ddbar_b$ can be extended continuously to 
$\mathcal{B}^{0,q}_{\leq0}(X)$ and we have
\[\mbox{$\ddbar_b\ol{\pr}^*_b(N^{(q+1)}_-)^2\ddbar_b:\mathcal{B}^{0,q}_{\leq0}(X)\To
\mathcal{B}^{0,q+1}_{\leq0}(X)$ is continuous}.\]
Hence,
\begin{equation}\label{e-gue140221III}
\ol{\pr}^*_b(N^{(q+1)}_-)^2\ddbar_b:\mathcal{B}^{0,q}_{\leq0}(X)\To{\rm Dom\,}
\ddbar_b\cap\mathcal{B}^{0,q}_{\leq0}(X).
\end{equation}
Let $u\in\mathcal{B}^{0,q}_{\leq0}(X)\cap\Omega^{0,q}(X)$. We have
\begin{equation}\label{e-gue140222}
\begin{split}
\ol{\pr}^{*,f}_b\ddbar_b\ol{\pr}^*_b(N^{(q+1)}_-)^2\ddbar_bu&=
\ol{\pr}^{*}_b\ddbar_b\ol{\pr}^*_b(N^{(q+1)}_-)^2\ddbar_bu\\
&=\ol{\pr}^{*}_b\Box^{(q+1)}_b(N^{(q+1)}_-)^2\ddbar_bu\\
&=\ol{\pr}^{*}_bN^{(q+1)}_-\ddbar_bu.
\end{split}
\end{equation}
From \eqref{e-gue140220V} and \eqref{e-gue140222}, we see that there is a constant $C_1>0$ such that
\[\norm{\ol{\pr}^{*,f}_b\ddbar_b\ol{\pr}^*_b(N^{(q+1)}_-)^2\ddbar_bu}\leq 
C_1\norm{u},\ \ \forall u\in\mathcal{B}^{0,q}_{\leq0}(X)\cap\Omega^{0,q}(X).\]
Thus, $\ol{\pr}^{*,f}_b\ddbar_b\ol{\pr}^*_b(N^{(q+1)}_-)^2\ddbar_b$ 
can be extended continuously to $\mathcal{B}^{0,q}_{\leq0}(X)$ and we have
\begin{equation}\label{e-gue140222I}
\mbox{$\ol{\pr}^{*,f}_b\ddbar_b\ol{\pr}^*_b(N^{(q+1)}_-)^2\ddbar_b:
\mathcal{B}^{0,q}_{\leq0}(X)\To\mathcal{B}^{0,q}_{\leq0}(X)$ is continuous}.
\end{equation}
From \eqref{e-gue140222I} and Lemma~\ref{l-gue140220II}, we conclude that
\begin{equation}\label{e-gue140222II}
\ddbar_b\ol{\pr}^*_b(N^{(q+1)}_-)^2\ddbar_b:\mathcal{B}^{0,q}_{\leq0}(X)
\To{\rm Dom\,}\ol{\pr}^*_b\cap\mathcal{B}^{0,q+1}_{\leq0}(X).
\end{equation}

Moreover, it is easy to see that for $u\in\mathcal{B}^{0,q}_{\leq0}(X)$, 
$\ol{\pr}^*_b(N^{(q+1)}_-)^2\ddbar_bu\in{\rm Dom\,}\ol{\pr}^*_b$ and 
\[(\ol{\pr}^*_b)^2(N^{(q+1)}_-)^2\ddbar_bu=0.\]
From this observation, \eqref{e-gue140222II} and \eqref{e-gue140221III}, \eqref{e-gue140221} follows.

The proof of \eqref{e-gue140221I} is essentially the same.
\end{proof}

%Now, we can prove the following.

\begin{thm}\label{t-gue140220}
With the notations above, assume that $Z(q)$ fails but $Z(q-1)$ and $Z(q+1)$ hold at every point of $X$. Then,
\[\Box^{(q)}_b:{\rm Dom\,}\Box^{(q)}_b\cap\mathcal{B}^{0,q}_{\leq0}(X)\To\mathcal{B}^{0,q}_{\leq0}(X)\]
has closed range.
\end{thm}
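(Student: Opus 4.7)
The plan is to construct a bounded partial inverse of $\Box^{(q)}_b$ on $\mathcal{B}^{0,q}_{\leq0}(X)$ out of the building blocks assembled in Lemmas~\ref{l-gue140220I}, \ref{l-gue140220}, and \ref{l-gue140221}, and then read off closed range from a standard a-priori estimate on the orthogonal complement of the kernel. Concretely, I would set
\[
N:=\ol{\pr}^*_b(N^{(q+1)}_-)^2\ddbar_b+\ddbar_b(N^{(q-1)}_-)^2\ol{\pr}^*_b,
\]
which by Lemma~\ref{l-gue140221} is a bounded operator $\mathcal{B}^{0,q}_{\leq0}(X)\to{\rm Dom\,}\Box^{(q)}_b\cap\mathcal{B}^{0,q}_{\leq0}(X)$.

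The first key computation is to show that $\Box^{(q)}_bN=I-P$ on $\mathcal{B}^{0,q}_{\leq0}(X)$, where $P$ is a bounded operator with range contained in ${\rm Ker\,}\Box^{(q)}_b\cap\mathcal{B}^{0,q}_{\leq0}(X)$. Using $(\ddbar_b)^2=0$, $(\ol{\pr}^*_b)^2=0$ and Lemma~\ref{l-gue140220I}, the same algebraic manipulations that appear in the proof of Lemma~\ref{l-gue140221} give
\[
\Box^{(q)}_b\bigl(\ol{\pr}^*_b(N^{(q+1)}_-)^2\ddbar_b u\bigr)=\ol{\pr}^*_b N^{(q+1)}_-\ddbar_b u,\qquad
\Box^{(q)}_b\bigl(\ddbar_b(N^{(q-1)}_-)^2\ol{\pr}^*_b u\bigr)=\ddbar_b N^{(q-1)}_-\ol{\pr}^*_b u,
\]
so that $\Box^{(q)}_bNu=(\ddbar_bN^{(q-1)}_-\ol{\pr}^*_b+\ol{\pr}^*_bN^{(q+1)}_-\ddbar_b)u$, and Lemma~\ref{l-gue140220} identifies the defect $Pu:=u-\Box^{(q)}_bNu$ as an element of ${\rm Ker\,}\Box^{(q)}_b\cap\mathcal{B}^{0,q}_{\leq0}(X)$, with $P$ bounded as a composition of bounded operators.

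The second step converts this identity into the desired closed-range estimate. For $u\in{\rm Dom\,}\Box^{(q)}_b\cap\mathcal{B}^{0,q}_{\leq0}(X)$, apply the identity to $\Box^{(q)}_bu$ to get $\Box^{(q)}_bN\Box^{(q)}_bu=\Box^{(q)}_bu-P\Box^{(q)}_bu$. Pairing $P\Box^{(q)}_bu\in{\rm Ker\,}\Box^{(q)}_b$ against any $v\in{\rm Ker\,}\Box^{(q)}_b\cap\mathcal{B}^{0,q}_{\leq0}(X)$ and using self-adjointness of $\Box^{(q)}_b$ (so that $\Box^{(q)}_bu\perp{\rm Ker\,}\Box^{(q)}_b$ and $(N\Box^{(q)}_bu,\Box^{(q)}_bv)=0$) forces $P\Box^{(q)}_bu=0$. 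Hence $\Box^{(q)}_b(N\Box^{(q)}_bu-u)=0$, i.e.\ $N\Box^{(q)}_bu-u\in{\rm Ker\,}\Box^{(q)}_b\cap\mathcal{B}^{0,q}_{\leq0}(X)$. If, in addition, $u$ is orthogonal to ${\rm Ker\,}\Box^{(q)}_b\cap\mathcal{B}^{0,q}_{\leq0}(X)$, then applying the orthogonal projection onto this kernel to the relation $u=N\Box^{(q)}_bu-h$ (with $h\in{\rm Ker\,}\Box^{(q)}_b$) yields $u=(I-\pi)N\Box^{(q)}_bu$, whence
\[
\norm{u}\leq\norm{N\Box^{(q)}_bu}\leq\norm{N}\,\norm{\Box^{(q)}_bu}.
\]
This a-priori estimate on $(\,{\rm Ker\,}\Box^{(q)}_b\cap\mathcal{B}^{0,q}_{\leq0}(X))^\perp$ is well known to be equivalent to closed range of $\Box^{(q)}_b:{\rm Dom\,}\Box^{(q)}_b\cap\mathcal{B}^{0,q}_{\leq0}(X)\to\mathcal{B}^{0,q}_{\leq0}(X)$.

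I do not foresee a serious obstacle at this stage: the genuine analytic work (the subelliptic estimates of Theorem~\ref{t-gue140218I}, the closed-range statements for $\Box^{(q\pm1)}_b$ on $\mathcal{B}^{0,q\pm1}_{\leq0}(X)$ in Theorem~\ref{t-gue140218II}, the regularity of $N^{(q\pm1)}_-\ddbar_b$ and $N^{(q\pm1)}_-\ol{\pr}^*_b$ in Lemma~\ref{l-gue140220}, and above all the domain statement in Lemma~\ref{l-gue140221}) has already been done, and the only thing one needs beyond bookkeeping is the self-adjointness identity used to kill $P\Box^{(q)}_bu$. The care required is mostly to verify that every composition makes sense on $\mathcal{B}^{0,q}_{\leq0}(X)$, which is guaranteed by the mapping properties collected in Theorems~\ref{t-gue140218} and \ref{t-gue140218II}.
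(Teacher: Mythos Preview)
Your proposal is correct and follows essentially the same route as the paper: both arguments hinge on the bounded operator $N=\ol{\pr}^*_b(N^{(q+1)}_-)^2\ddbar_b+\ddbar_b(N^{(q-1)}_-)^2\ol{\pr}^*_b$ and the identity relating it to $M=\ol{\pr}^*_bN^{(q+1)}_-\ddbar_b+\ddbar_bN^{(q-1)}_-\ol{\pr}^*_b=I-P$, with the input from Lemmas~\ref{l-gue140220I}, \ref{l-gue140220}, \ref{l-gue140221}. The only cosmetic difference is that the paper runs a direct sequential argument (given $v_j=\Box^{(q)}_bu_j\to v$, it shows $g_j=Mu_j=Nv_j\to Nv=:g$ and checks $\Box^{(q)}_bg=v$), whereas you repackage the same identity as the a-priori estimate $\norm{u}\le\norm{N}\,\norm{\Box^{(q)}_bu}$ on $({\rm Ker\,}\Box^{(q)}_b\cap\mathcal{B}^{0,q}_{\leq0})^\perp$; these are equivalent formulations of closed range.
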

%-----------------------------------
\begin{proof}
Let $\Box^{(q)}_bu_j=v_j$,
$u_j\in{\rm Dom\,}\Box^{(q)}_b\cap\mathcal{B}^{0,q}_{\leq0}(X)$, 
$v_j\in\mathcal{B}^{0,q}_{\leq0}(X)$, $j=1,2,\ldots$, with $v_j\To v\in\mathcal{B}^{0,q}_{\leq0}(X)$ 
as $j\To\infty$. We are going to prove that there is a 
$g\in{\rm Dom\,}\Box^{(q)}_b\cap\mathcal{B}^{0,q}_{\leq0}(X)$ 
such that $\Box^{(q)}_bg=v$. Let $N^{(q-1)}_-$ and $N^{(q+1)}_-$ be as in 
Theorem~\ref{t-gue140218II}. Put
\[g_j=\bigr(\ol{\pr}^*_bN^{(q+1)}_-\ddbar_b+\ddbar_bN^{(q-1)}_-\ol{\pr}^*_b\bigr)u_j,\ \ j=1,2,\ldots.\]
In view of \eqref{e-gue140220V}, we see that $g_j\in\mathcal{B}^{0,q}_{\leq0}(X)$. 
Moreover, from \eqref{e-gue140220VIII}, we have 
\[u_j-\bigr(\ol{\pr}^*_bN^{(q+1)}_-\ddbar_b+\ddbar_bN^{(q-1)}_-\ol{\pr}^*_b\bigr)u_j
\in{\rm Ker\,}\Box^{(q)}_b\subset{\rm Dom\,}\Box^{(q)}_b\,,\ \ j=1,2,\ldots\,.\] 
Hence,
\begin{equation}\label{e-geu140222III}
\begin{split}
&g_j\in{\rm Dom\,}\Box^{(q)}_b\cap\mathcal{B}^{0,q}_{\leq0}(X),\ \ j=1,2,\ldots,\\
&\Box^{(q)}_bg_j=\Box^{(q)}_bu_j=v_j,\ \ j=1,2,\ldots.
\end{split}
\end{equation}
%-----------------------------------
We claim that for each $j$,
\begin{equation}\label{e-gue140220}
\begin{split}
&g_j=\bigr(\ol{\pr}^*_bN^{(q+1)}_-\ddbar_b+\ddbar_bN^{(q-1)}_-\ol{\pr}^*_b\bigr)u_j\\
&=\bigr(\ol{\pr}^*_b(N^{(q+1)}_-)^2\ddbar_b+\ddbar_b(N^{(q-1)}_-)^2\ol{\pr}^*_b\bigr)\Box^{(q)}_bu_j\\
&=\bigr(\ol{\pr}^*_b(N^{(q+1)}_-)^2\ddbar_b+\ddbar_b(N^{(q-1)}_-)^2\ol{\pr}^*_b\bigr)v_j.
\end{split}
\end{equation}
Fix $j=1,2,\ldots$. Let $f_s\in\mathcal{B}^{0,q}_{\leq0}(X)\cap\Omega^{0,q}(X)$, $s=1,2,\ldots$, 
with $f_s\To u_j$ in $\mathcal{B}^{0,q}_{\leq0}(X)$ as $s\To\infty$. We have
\begin{equation}\label{e-gue140220I}
\begin{split}
&\bigr(\ol{\pr}^*_b(N^{(q+1)}_-)^2\ddbar_b+\ddbar_b(N^{(q-1)}_-)^2\ol{\pr}^*_b\bigr)\Box^{(q)}_bf_s\\
&\mbox{$\To\bigr(\ol{\pr}^*_b(N^{(q+1)}_-)^2\ddbar_b+\ddbar_b(N^{(q-1)}_-)^2\ol{\pr}^*_b\bigr)\Box^{(q)}_bu_j$ 
in $\mathscr D'_-(X,T^{*0,q}X)$ as $s\To\infty$}.
\end{split}
\end{equation}
We can check that
\begin{equation}\label{e-gue140220II}
\begin{split}
&\bigr(\ol{\pr}^*_b(N^{(q+1)}_-)^2\ddbar_b+\ddbar_b(N^{(q-1)}_-)^2\ol{\pr}^*_b\bigr)\Box^{(q)}_bf_s\\
&=\ol{\pr}^*_b(N^{(q+1)}_-)^2\Box^{(q+1)}_b\ddbar_bf_s+\ddbar_b(N^{(q-1)}_-)^2\Box^{(q-1)}_b\ol{\pr}^*_bf_s\\
&=\ol{\pr}^*_bN^{(q+1)}_-(I-\Pi^{(q+1)}_-)\ddbar_bf_s+\ddbar_bN^{(q-1)}_-(I-\Pi^{(q-1)}_-)\ol{\pr}^*_bf_s\\
&=\ol{\pr}^*_bN^{(q+1)}_-\ddbar_bf_s+\ddbar_bN^{(q-1)}_-\ol{\pr}^*_bf_s\\
&\mbox{$\To\bigr(\ol{\pr}^*_bN^{(q+1)}_-\ddbar_b+\ddbar_bN^{(q-1)}_-\ol{\pr}^*_b\bigr)u_j=g_j$ 
in $\mathscr D'_-(X,T^{*0,q}X)$ as $s\To\infty$}.
\end{split}
\end{equation}
From \eqref{e-gue140220I} and \eqref{e-gue140220II}, \eqref{e-gue140220} follows.
Since $v_j\To v\in\mathcal{B}^{0,q}_{\leq0}(X)$ and
\[\mbox{$\ol{\pr}^*_b(N^{(q+1)}_-)^2\ddbar_b+\ddbar_b(N^{(q-1)}_-)^2\ol{\pr}^*_b:
\mathcal{B}^{0,q}_{\leq0}(X)\To\mathcal{B}^{0,q}_{\leq0}(X)$ is continuous}\]
(see \eqref{e-gue140218aV}), we conclude that
\[g_j\To g:=\bigr(\ol{\pr}^*_b(N^{(q+1)}_-)^2\ddbar_b+
\ddbar_b(N^{(q-1)}_-)^2\ol{\pr}^*_b)v\in\mathcal{B}^{0,q}_{\leq0}(X)\]
and $\Box^{(q)}_bg=v$ in $\mathscr D'_-(X,T^{*0,q}X)$. 
In view of Lemma~\ref{l-gue140221}, we see that $g\in{\rm Dom\,}\Box^{(q)}_b\cap\mathcal{B}^{0,q}_{\leq0}(X)$. 
The theorem follows.
\end{proof}
%-----------------------------------
We can repeat the proof of Theorem~\ref{t-gue140220} and deduce the following.
%-----------------------------------
\begin{thm}\label{t-gue140222}
With the notations above, assume that $Z(n-1-q)$ fails but $Z(n-2-q)$ and $Z(n-q)$ hold at every point of $X$. Then,
\[\Box^{(q)}_b:{\rm Dom\,}\Box^{(q)}_b\cap\mathcal{B}^{0,q}_{\geq0}(X)\To\mathcal{B}^{0,q}_{\geq0}(X)\]
has closed range.
\end{thm}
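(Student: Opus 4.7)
The plan is to repeat the argument of Theorem~\ref{t-gue140220} after swapping signs: replace $N^{(q\pm 1)}_-$ by $N^{(q\pm 1)}_+$ and $\mathcal{B}^{0,j}_{\leq 0}(X)$ by $\mathcal{B}^{0,j}_{\geq 0}(X)$ throughout. The hypotheses on $Z$ translate correctly, since $Z(n-q)=Z(n-1-(q-1))$ and $Z(n-2-q)=Z(n-1-(q+1))$ both hold at every point of $X$. Hence Theorem~\ref{t-gue140218III} applied in degrees $q-1$ and $q+1$ provides the partial inverses $N^{(q-1)}_+$, $N^{(q+1)}_+$ and orthogonal projections $\Pi^{(q-1)}_+$, $\Pi^{(q+1)}_+$ satisfying the analogues of \eqref{e-gue140218aVI}.

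The first step is to establish the positive-mode versions of Lemmas~\ref{l-gue140220I}, \ref{l-gue140220} and \ref{l-gue140221}. Their proofs transfer verbatim: the algebraic identities used only depend on the fact that $Q^{(q)}_{\geq 0}$ commutes with $\ddbar_b$, $\ol{\pr}^*_b$ and $\Box^{(q)}_b$ (Theorem~\ref{t-gue140218}), and the only analytic input is the Hodge-type relation $\Box^{(q\pm 1)}_b N^{(q\pm 1)}_+ + \Pi^{(q\pm 1)}_+ = I$ on $\mathcal{B}^{0,q\pm 1}_{\geq 0}(X)$. From this one obtains, for every $u\in\mathcal{B}^{0,q}_{\geq 0}(X)$,
\begin{equation*}
u - \bigl(\ddbar_b N^{(q-1)}_+ \ol{\pr}^*_b + \ol{\pr}^*_b N^{(q+1)}_+ \ddbar_b\bigr)u \in {\rm Ker\,}\Box^{(q)}_b \cap \mathcal{B}^{0,q}_{\geq 0}(X),
\end{equation*}
together with the continuity of the operator $\ol{\pr}^*_b (N^{(q+1)}_+)^2 \ddbar_b + \ddbar_b (N^{(q-1)}_+)^2 \ol{\pr}^*_b$ from $\mathcal{B}^{0,q}_{\geq 0}(X)$ into ${\rm Dom\,}\Box^{(q)}_b \cap \mathcal{B}^{0,q}_{\geq 0}(X)$.

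Closing the argument is then immediate: for a sequence $\Box^{(q)}_b u_j = v_j$ in $\mathcal{B}^{0,q}_{\geq 0}(X)$ with $v_j \to v$, set $g_j = \bigl(\ol{\pr}^*_b N^{(q+1)}_+ \ddbar_b + \ddbar_b N^{(q-1)}_+ \ol{\pr}^*_b\bigr) u_j$; the Hodge formula gives $\Box^{(q)}_b g_j = v_j$, and reasoning as in \eqref{e-gue140220}\,--\,\eqref{e-gue140220II} yields
\begin{equation*}
g_j = \bigl(\ol{\pr}^*_b (N^{(q+1)}_+)^2 \ddbar_b + \ddbar_b (N^{(q-1)}_+)^2 \ol{\pr}^*_b\bigr)v_j,
\end{equation*}
so $g_j$ converges in $\mathcal{B}^{0,q}_{\geq 0}(X)$ to some $g\in{\rm Dom\,}\Box^{(q)}_b \cap \mathcal{B}^{0,q}_{\geq 0}(X)$ with $\Box^{(q)}_b g = v$. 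The only delicate point is checking that the Friedrichs regularization used in Lemma~\ref{l-gue140220II} respects the decomposition into nonnegative Fourier modes; this is handled by first applying Friedrichs to produce a smooth approximating sequence and then post-composing with $Q^{(\cdot)}_{\geq 0}$, which commutes with $\ddbar_b$ and $\ol{\pr}^*_b$ by Theorem~\ref{t-gue140218} and therefore preserves convergence in the graph norm. Since no other step depends on the sign of the modes, the entire argument of Theorem~\ref{t-gue140220} transfers without change.
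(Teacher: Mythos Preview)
Your proposal is correct and matches the paper's approach exactly: the paper's proof consists of the single sentence ``We can repeat the proof of Theorem~\ref{t-gue140220} and deduce the following,'' and you have carried out precisely that repetition with the appropriate sign swaps. One minor remark: your concern about Friedrichs regularization in Lemma~\ref{l-gue140220II} is unnecessary, since that lemma is stated and proved for arbitrary $u\in L^2_{(0,q)}(X)$ with no reference to the mode decomposition, and hence applies without modification in the $\geq 0$ setting.
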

%-----------------------------------
Now, we can prove:
%-----------------------------------
\begin{thm}\label{t-gue140222I}
With the notations above, assume that $Z(q)$ fails but $Z(q-1)$ and $Z(q+1)$ hold at every point of $X$. 
Then, $\Box^{(q)}_b$ has local $L^2$ closed range on $X$ with respect to $Q^{(q)}_{\leq0}$ 
in the sense of Definition~\ref{d-gue140212}.
\end{thm}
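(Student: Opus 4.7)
The plan is to deduce the local $L^2$ closed range estimate with $Q=Q^{(q)}_{\leq0}$ and $p=2$ directly from the (global) closed range on the reducing subspace $\mathcal{B}^{0,q}_{\leq0}(X)$ furnished by Theorem~\ref{t-gue140220}. The key abstract fact is that a nonnegative self-adjoint operator with closed range admits a bounded partial inverse; in our case this yields the Poincar\'e-type inequality
\[
\norm{(I-\Pi^{(q)}_-)v}\leq C\norm{\Box^{(q)}_bv}\qquad\text{for all }v\in{\rm Dom\,}\Box^{(q)}_b\cap\mathcal{B}^{0,q}_{\leq0}(X),
\]
where $\Pi^{(q)}_-$ is the orthogonal projection in $\mathcal{B}^{0,q}_{\leq0}(X)$ onto ${\rm Ker\,}\Box^{(q)}_b\cap\mathcal{B}^{0,q}_{\leq0}(X)$, and $C$ is the operator norm of the partial inverse.

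First I will identify $\Pi^{(q)}_-$ with the restriction of the global Szeg\H{o} projector $\Pi^{(q)}$ to $\mathcal{B}^{0,q}_{\leq0}(X)$. Since $Q^{(q)}_{\leq0}$ commutes with both $\Box^{(q)}_b$ and $\Pi^{(q)}$ by Theorem~\ref{t-gue140218}, for any $w\in\mathcal{B}^{0,q}_{\leq0}(X)$ the element $\Pi^{(q)}w$ lies in ${\rm Ker\,}\Box^{(q)}_b\cap\mathcal{B}^{0,q}_{\leq0}(X)$, while $w-\Pi^{(q)}w$ is orthogonal to ${\rm Ker\,}\Box^{(q)}_b$ and in particular to ${\rm Ker\,}\Box^{(q)}_b\cap\mathcal{B}^{0,q}_{\leq0}(X)$. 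Uniqueness of the orthogonal projection then forces $\Pi^{(q)}_-w=\Pi^{(q)}w$.

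Second I will apply the Poincar\'e inequality with $v=Q^{(q)}_{\leq0}u$ for $u\in\Omega^{0,q}_0(D')$, where $D'\Subset X$ is arbitrary. By the commutation relations and the identification above, the left-hand side equals $\norm{Q^{(q)}_{\leq0}(I-\Pi^{(q)})u}$. For the right-hand side, $\Box^{(q)}_bQ^{(q)}_{\leq0}=Q^{(q)}_{\leq0}\Box^{(q)}_b$ together with $\norm{Q^{(q)}_{\leq0}}\leq1$ gives $\norm{\Box^{(q)}_bQ^{(q)}_{\leq0}u}\leq\norm{\Box^{(q)}_bu}$. Squaring and using the identity $\norm{\Box^{(q)}_bu}^2=\bigl((\Box^{(q)}_b)^2u\,\big|\,u\bigr)$, I obtain
\[
\norm{Q^{(q)}_{\leq0}(I-\Pi^{(q)})u}^2\leq C^2\bigl((\Box^{(q)}_b)^2u\,\big|\,u\bigr),
\]
which is exactly the local $L^2$ closed range condition with $p=2$ and a constant independent of $D'$.

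The main conceptual input is the closed range statement of Theorem~\ref{t-gue140220}, where the hypotheses that $Z(q-1)$ and $Z(q+1)$ hold enter essentially; once this is in hand, the argument for Theorem~\ref{t-gue140222I} is purely functional analytic and presents no substantial obstacle. The only minor verification is that the restriction of $\Box^{(q)}_b$ to the reducing subspace $\mathcal{B}^{0,q}_{\leq0}(X)$ is a nonnegative self-adjoint operator (so that the partial inverse is bounded and the Poincar\'e inequality holds), which is immediate from the commutation of $Q^{(q)}_{\leq0}$ with $\Box^{(q)}_b$ recorded in Theorem~\ref{t-gue140218}.
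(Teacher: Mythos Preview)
Your proposal is correct and follows essentially the same route as the paper's proof: both derive the Poincar\'e inequality $\norm{(I-\Pi^{(q)}_-)v}\leq C\norm{\Box^{(q)}_bv}$ from the closed range result of Theorem~\ref{t-gue140220}, identify $\Pi^{(q)}_-$ with the restriction of $\Pi^{(q)}$ to $\mathcal{B}^{0,q}_{\leq0}(X)$, and then apply the inequality with $v=Q^{(q)}_{\leq0}u$ together with the commutation relations in Theorem~\ref{t-gue140218}. Your verification of $\Pi^{(q)}_-=\Pi^{(q)}|_{\mathcal{B}^{0,q}_{\leq0}(X)}$ via uniqueness of orthogonal projections is a slightly more streamlined phrasing of the decomposition argument the paper gives, and you make the final step to the form $\bigl((\Box^{(q)}_b)^2u\,|\,u\bigr)$ explicit where the paper leaves it implicit.
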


\begin{proof}
From Theorem~\ref{t-gue140220}, we see that there is a constant $C>0$ such that
\begin{equation}\label{e-gue140222III}
\norm{(I-\Pi^{(q)}_-)u}\leq C\norm{\Box^{(q)}_bu},\ \ \forall u\in\mathcal{B}^{0,q}_{\leq0}\cap{\rm Dom\,}\Box^{(q)}_b.
\end{equation}
Let $f\in\Omega^{0,q}(X)$. Then, $Q^{(q)}_{\leq0}f\in\mathcal{B}^{0,q}_{\leq0}\cap\Omega^{0,q}(X)$. We claim that
\begin{equation}\label{e-gue140222IV}
Q^{(q)}_{\leq0}\Pi^{(q)}f=\Pi^{(q)}_-Q^{(q)}_{\leq0}f.
\end{equation}
Note that $Q^{(q)}_{\leq0}\Pi^{(q)}f=\Pi^{(q)}Q^{(q)}_{\leq0}f$. Thus,
\[(\,Q^{(q)}_{\leq0}\Pi^{(q)}f\,|\,Q^{(q)}_{\leq0}(I-\Pi^{(q)})f\,)=
(\,\Pi^{(q)}Q^{(q)}_{\leq0}f\,|\,(I-\Pi^{(q)})Q^{(q)}_{\leq0}f\,)=0.\]
We have the orthogonal decompositions
\begin{equation}\label{e-gue140222V}
\begin{split}
&Q^{(q)}_{\leq0}f=Q^{(q)}_{\leq0}\Pi^{(q)}f+Q^{(q)}_{\leq0}(I-\Pi^{(q)})f,\\
&Q^{(q)}_{\leq0}f=\Pi^{(q)}_-Q^{(q)}_{\leq0}f+(I-\Pi^{(q)}_-)Q^{(q)}_{\leq0}f.\\
\end{split}
\end{equation}
Hence,
\begin{equation}\label{e-gue140222VI}
Q^{(q)}_{\leq0}\Pi^{(q)}f-\Pi^{(q)}_-Q^{(q)}_{\leq0}f=(I-\Pi^{(q)}_-)Q^{(q)}_{\leq0}f-Q^{(q)}_{\leq0}(I-\Pi^{(q)})f.
\end{equation}
From \eqref{e-gue140222VI}, we have
\begin{equation}\label{e-gue140222VII}
\begin{split}
&(\,Q^{(q)}_{\leq0}\Pi^{(q)}f-\Pi^{(q)}_-Q^{(q)}_{\leq0}f\,|\,Q^{(q)}_{\leq0}\Pi^{(q)}f-\Pi^{(q)}_-Q^{(q)}_{\leq0}f\,)\\
&=(\,Q^{(q)}_{\leq0}\Pi^{(q)}f-\Pi^{(q)}_-Q^{(q)}_{\leq0}f\,|\,(I-\Pi^{(q)}_-)Q^{(q)}_{\leq0}f
-Q^{(q)}_{\leq0}(I-\Pi^{(q)})f\,)\\
&=0
\end{split}
\end{equation}
since $Q^{(q)}_{\leq0}\Pi^{(q)}f-\Pi^{(q)}_-Q^{(q)}_{\leq0}f
\in{\rm Ker\,}\Box^{(q)}_b\cap\mathcal{B}^{0,q}_{\leq0}$. Hence,
\[Q^{(q)}_{\leq0}\Pi^{(q)}f=\Pi^{(q)}_-Q^{(q)}_{\leq0}f.\]
The claim \eqref{e-gue140222IV} follows.
Note that 
$Q^{(q)}_{\leq0}:{\rm Dom\,}\Box^{(q)}_b\cap L^2_{(0,q)}(X)\To
{\rm Dom\,}\Box^{(q)}_b\cap \mathcal{B}^{0,q}_{\leq0}(X)$ and $Q^{(q)}_{\leq0}:
\Omega^{0,q}(X)\To\mathcal{B}^{0,q}_{\leq0}(X)\cap\Omega^{0,q}(X)$\,.
From this observation, \eqref{e-gue140222IV} and \eqref{e-gue140222III}, we obtain
\[\begin{split}\norm{Q^{(q)}_{\leq0}(I-\Pi^{(q)})u}&=\norm{(I-\Pi^{(q)}_-)Q^{(q)}_{\leq0}u}
\leq C\norm{\Box^{(q)}_bQ^{(q)}_{\leq0}u}=C\norm{Q^{(q)}_{\leq0}\Box^{(q)}_bu}\\
&\leq C\norm{\Box^{(q)}_bu},\ \ \forall u\in\Omega^{0,q}(X),\end{split}\]
where $C>0$ is a constant. The theorem follows.
\end{proof}

Similarly, we can repeat the proof of Theorem~\ref{t-gue140222I} and deduce

\begin{thm}\label{t-gue140223}
With the notations above, assume that $Z(n-1-q)$ fails but $Z(n-2-q)$ and $Z(n-q)$ 
hold at every point of $X$. Then, $\Box^{(q)}_b$ has local $L^2$ closed range on $X$ 
with respect to $Q^{(q)}_{\geq0}$ in the sense of Definition~\ref{d-gue140212}.
\end{thm}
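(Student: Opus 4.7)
The plan is to mirror the proof of Theorem~\ref{t-gue140222I} line by line, replacing every object by its ``$+$'' counterpart. The inputs I will use are: Theorem~\ref{t-gue140222}, which gives that $\Box^{(q)}_b\colon{\rm Dom\,}\Box^{(q)}_b\cap\mathcal{B}^{0,q}_{\geq0}(X)\to\mathcal{B}^{0,q}_{\geq0}(X)$ has closed range under the hypotheses on $Z(n-q-1)$, $Z(n-q-2)$, $Z(n-q)$; Theorem~\ref{t-gue140218III}, which provides the partial inverse $N^{(q)}_+$ and the orthogonal projection $\Pi^{(q)}_+:\mathcal{B}^{0,q}_{\geq0}(X)\to{\rm Ker\,}\Box^{(q)}_b$ satisfying \eqref{e-gue140218aVI}; and the commutation $Q^{(q)}_{\geq0}\Pi^{(q)}=\Pi^{(q)}Q^{(q)}_{\geq0}$ together with $Q^{(q)}_{\geq0}\Box^{(q)}_b=\Box^{(q)}_bQ^{(q)}_{\geq0}$ from Theorem~\ref{t-gue140218}.

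First, from Theorem~\ref{t-gue140222} and the open mapping theorem applied to the induced bijection on the orthogonal complement of ${\rm Ker\,}\Box^{(q)}_b\cap\mathcal{B}^{0,q}_{\geq0}(X)$, I obtain a constant $C>0$ such that
\[
\norm{(I-\Pi^{(q)}_+)u}\leq C\norm{\Box^{(q)}_bu},\qquad \forall\,u\in\mathcal{B}^{0,q}_{\geq0}(X)\cap{\rm Dom\,}\Box^{(q)}_b.
\]
Next, I claim that $Q^{(q)}_{\geq0}\Pi^{(q)}f=\Pi^{(q)}_+Q^{(q)}_{\geq0}f$ for every $f\in\Omega^{0,q}(X)$. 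To prove this I repeat verbatim the argument in \eqref{e-gue140222V}--\eqref{e-gue140222VII}: decompose $Q^{(q)}_{\geq0}f$ orthogonally both with respect to $\Pi^{(q)}$ (using $Q^{(q)}_{\geq0}\Pi^{(q)}=\Pi^{(q)}Q^{(q)}_{\geq0}$) and with respect to $\Pi^{(q)}_+$, observe that the difference
\[
Q^{(q)}_{\geq0}\Pi^{(q)}f-\Pi^{(q)}_+Q^{(q)}_{\geq0}f
\]
lies in ${\rm Ker\,}\Box^{(q)}_b\cap\mathcal{B}^{0,q}_{\geq0}(X)$ while also being orthogonal to that space, hence it vanishes.

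Finally, for any $u\in\Omega^{0,q}(X)$ I have $Q^{(q)}_{\geq0}u\in\mathcal{B}^{0,q}_{\geq0}(X)\cap{\rm Dom\,}\Box^{(q)}_b$ by Theorem~\ref{t-gue140218}, so applying the estimate above to $Q^{(q)}_{\geq0}u$ and using both the commutation $Q^{(q)}_{\geq0}\Box^{(q)}_b=\Box^{(q)}_bQ^{(q)}_{\geq0}$ and the claim just proved yields
\[
\norm{Q^{(q)}_{\geq0}(I-\Pi^{(q)})u}=\norm{(I-\Pi^{(q)}_+)Q^{(q)}_{\geq0}u}\leq C\norm{Q^{(q)}_{\geq0}\Box^{(q)}_bu}\leq C\norm{\Box^{(q)}_bu}.
\]
Taking $p=2$ and any $D'\Subset X$, this is exactly the condition of Definition~\ref{d-gue140212}, which establishes local $L^2$ closed range of $\Box^{(q)}_b$ with respect to $Q^{(q)}_{\geq0}$.

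I expect no real obstacle: the whole argument is a symmetric copy of the proof of Theorem~\ref{t-gue140222I}. The one point to check carefully is that all the ``+'' analogs of the preparatory lemmas (Lemma~\ref{l-gue140220I}, Lemma~\ref{l-gue140220}, Lemma~\ref{l-gue140221}) and of Theorem~\ref{t-gue140220} indeed carry over under the hypotheses $Z(n-q-1)$ fails while $Z(n-q-2)$ and $Z(n-q)$ hold; but this is exactly the content of Theorem~\ref{t-gue140222} as stated, and the subelliptic estimate \eqref{e-gue140218aIV} provides the required regularity on $\mathcal{B}^{0,q\pm1}_{\geq0}(X)$ to run the Hodge-style identity in the ``+'' setting.
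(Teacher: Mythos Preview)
Your proposal is correct and is exactly the approach the paper takes: the paper states only ``Similarly, we can repeat the proof of Theorem~\ref{t-gue140222I} and deduce'' Theorem~\ref{t-gue140223}, i.e.\ the symmetric $+$ version of the argument you wrote out. One cosmetic point: your invocation of Theorem~\ref{t-gue140218III} is slightly off, since that theorem assumes $Z(n-1-q)$ \emph{holds}; what you actually need (and use) is only the closed-range estimate and the projector $\Pi^{(q)}_+$ onto ${\rm Ker\,}\Box^{(q)}_b\cap\mathcal{B}^{0,q}_{\geq0}(X)$, and these come directly from Theorem~\ref{t-gue140222}.
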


Let $D\subset X$ be a canonical coordinate patch and let $x=(x_1,\ldots,x_{2n-1})$ 
be canonical coordinates on $D$ as in Theorem~\ref{t-gue131206}. 
We identify $D$ with $W\times]-\pi,\pi[\subset\Real^{2n-1}$, 
where $W$ is some open set in $\Real^{2n-2}$. 
Until further notice, we work with canonical coordinates 
$x=(x_1,\ldots,x_{2n-1})$. Let $\eta=(\eta_1,\ldots,\eta_{2n-1})$ 
be the dual coordinates of $x$. Let $\alpha(x_{2n-1})\in \cC^\infty(\Real,[0,1])$ 
with $\alpha=1$ on $[\frac{1}{2},\infty[$, $\alpha=0$ on $]-\infty,\frac{1}{4}]$. 
We recall Definition~\ref{d-gue140221}. We need

\begin{lem}\label{l-gue140223}
With the notations above,
\[Q^{(q)}_{\leq0}, Q^{(q)}_{\geq0}\in L^0_{{\rm cl\,}}(X,T^{*0,q}X\boxtimes T^{*0,q}X),\]
\begin{equation}\label{e-gue140223}
\begin{split}
&\mbox{$Q^{(q)}_{\leq0}(x,y)\equiv\frac{1}{(2\pi)^{2n-1}}
\int e^{i\langle x-y,\eta\rangle }\alpha(\eta_{2n-1})d\eta_{2n-1}$ at $\Sigma^-\cap T^*D$},\\
&\mbox{$Q^{(q)}_{\leq0}(x,y)\equiv0$ at $\Sigma^+\cap T^*D$}
\end{split}
\end{equation}
and
\begin{equation}\label{e-gue140223I}
\begin{split}
&Q^{(q)}_{\geq0}(x,y)\equiv\frac{1}{(2\pi)^{2n-1}}
\int e^{i\langle x-y,\eta\rangle }\alpha(-\eta_{2n-1})d\eta_{2n-1}\:\:\text{at $\Sigma^+\cap T^*D$},\\
&\mbox{$Q^{(q)}_{\geq0}(x,y)\equiv0$ at $\Sigma^-\cap T^*D$}.
\end{split}
\end{equation}
\end{lem}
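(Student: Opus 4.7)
The plan is to compute the Schwartz kernel of $Q^{(q)}_{\leq 0}$ explicitly in canonical coordinates and then apply the Poisson summation formula to replace the resulting Fourier series by the oscillatory integral with symbol $\alpha(\eta_{2n-1})$.

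First, I work in canonical coordinates $(z_1,\ldots,z_{n-1},\theta)$ on $D$ from Theorem~\ref{t-gue131206}, identifying $D$ with $W\times(-\pi,\pi)\subset\Real^{2n-1}$. In these coordinates $T=\partial/\partial\theta$, the frame $\{d\bar z_{j_1}\wedge\cdots\wedge d\bar z_{j_q}\}$ of $T^{*0,q}X$ is $T$-invariant (since $[T,Z_j]=0$ by \eqref{e-gue131206}), and the $T$-rigid volume form is independent of $\theta$. Consequently $Q^{(q)}_{\leq 0}$ acts diagonally on forms expanded in this frame, so it suffices to analyze its action on scalars. For $u\in\cC^\infty(D)$, the $\theta$-Fourier decomposition $u(z,\theta)=\sum_{m\in\mathbb Z}\hat u_m(z)e^{-im\theta}$ with $\hat u_m(z)=\frac{1}{2\pi}\int_{-\pi}^{\pi}u(z,\theta')e^{im\theta'}d\theta'$ gives
\[
(Q^{(q)}_{\leq 0}u)(x)=\frac{1}{2\pi}\int_{-\pi}^{\pi}u(z_x,\theta_y)\Bigl(\sum_{k\geq 0}e^{ik(\theta_x-\theta_y)}\Bigr)d\theta_y.
\]
Hence the Schwartz kernel of $Q^{(q)}_{\leq 0}$ with respect to Lebesgue measure on $D$ equals $\delta(z_x-z_y)K(\theta_x-\theta_y)\cdot\mathrm{Id}$, where $K(\tau)=\frac{1}{2\pi}\sum_{k\geq 0}e^{ik\tau}$ and $\mathrm{Id}$ is the identity on $T^{*0,q}_yX\to T^{*0,q}_xX$ provided by the $T$-invariant frame.

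Second, I invoke Poisson summation to compare $K(\tau)$ with $\frac{1}{2\pi}\int\alpha(\eta_{2n-1})e^{i\tau\eta_{2n-1}}d\eta_{2n-1}$. Since $\alpha(k)=0$ for integer $k\leq 0$ and $\alpha(k)=1$ for integer $k\geq 1$,
\[
\sum_{k\in\mathbb Z}\alpha(k)e^{ik\tau}=\sum_{k\geq 1}e^{ik\tau}=2\pi K(\tau)-1,
\]
and Poisson summation yields
\[
\sum_{k\in\mathbb Z}\alpha(k)e^{ik\tau}=\sum_{n\in\mathbb Z}\int_\Real\alpha(\eta)e^{i\eta(\tau-2\pi n)}d\eta.
\]
Since the distribution $s\mapsto\int\alpha(\eta)e^{is\eta}d\eta$ has singular support $\{s=0\}$, the contributions with $n\neq 0$ are smooth in $\tau$ for $\tau\in(-\pi,\pi)$. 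Therefore, modulo $\cC^\infty((-\pi,\pi))$,
\[
K(\tau)\equiv\frac{1}{2\pi}\int_\Real\alpha(\eta_{2n-1})e^{i\tau\eta_{2n-1}}d\eta_{2n-1}.
\]

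Third, writing $\delta(z_x-z_y)=(2\pi)^{-(2n-2)}\int e^{i\langle z_x-z_y,\eta'\rangle}d\eta'$ and combining yields
\[
Q^{(q)}_{\leq 0}(x,y)\equiv\frac{1}{(2\pi)^{2n-1}}\int e^{i\langle x-y,\eta\rangle}\alpha(\eta_{2n-1})d\eta\quad\text{on $D$},
\]
which is the first formula in \eqref{e-gue140223}. A direct calculation using $\omega_0\perp T^{*1,0}X\oplus T^{*0,1}X$ and $\langle T,\omega_0\rangle=-1$ gives in canonical coordinates
\[
\omega_0=i\sum_j\phi_{z_j}dz_j-i\sum_j\phi_{\bar z_j}d\bar z_j-d\theta,
\]
so $\Sigma^+\cap T^*D=\{(x,\lambda\omega_0(x)):\lambda>0\}$ lies in the conic half-space $\{\eta_{2n-1}<0\}$; since $\alpha(\eta_{2n-1})$ vanishes on $\eta_{2n-1}\leq 1/4$, the symbol vanishes in a conic neighborhood of $\Sigma^+\cap T^*D$ for $|\eta|$ large, yielding the second formula in \eqref{e-gue140223} per Definition~\ref{d-gue140221}. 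The formulas \eqref{e-gue140223I} for $Q^{(q)}_{\geq 0}$ follow from the parallel computation with $m\leq 0$ replaced by $m\geq 0$ and $\alpha(\eta_{2n-1})$ by $\alpha(-\eta_{2n-1})$. Finally, $Q^{(q)}_{\leq 0},Q^{(q)}_{\geq 0}\in L^0_{\rm cl}$ because $\alpha(\eta_{2n-1})$ is a standard symbol of order zero, and modifying it by a smoothing operator near $|\eta|=0$ produces a representative that is homogeneous of degree zero for $|\eta|$ large.

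The principal technical point is verifying that the locally defined symbol assembles into a globally well-defined classical pseudodifferential operator on $X$. By \eqref{sp3-eVII}, transitions between canonical coordinate patches have the form $w=H(z)$ (with $H$ holomorphic) and $\gamma=\theta+G(z)$, so the vector field $T$ and the characteristic cones $\Sigma^\pm$ are coordinate-invariant, and the symbol $\alpha(\eta_{2n-1})$ transforms to another symbol with the same microlocal behavior near $\Sigma^\pm$; the lower-order discrepancies assemble into bona fide classical symbols of order zero.
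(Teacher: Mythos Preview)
Your proof is correct and follows essentially the same route as the paper's. The paper computes, starting from the oscillatory integral $\frac{1}{(2\pi)^{2n-1}}\int e^{i\langle x-y,\eta\rangle}\beta(\eta_{2n-1})\,d\eta$, that this equals $Q^{(q)}_{\leq 0}$ up to a harmless remainder $R$, by expanding the $y_{2n-1}$-integral using the Dirac-comb identity $\int e^{i(m-\eta_{2n-1})y_{2n-1}}\,dy_{2n-1}=2\pi\delta_m(\eta_{2n-1})$; you run the same identification in the opposite direction via Poisson summation on the Fourier-series kernel $K(\tau)=\frac{1}{2\pi}\sum_{k\geq 0}e^{ik\tau}$, which is the same distributional fact in different clothing. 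The paper's extra bookkeeping with the cutoff $\chi$ and the auxiliary operator $R$ handles the localization to $D'\Subset D$ more explicitly than your argument does, but this is a presentational rather than substantive difference.
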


\begin{proof}
It is easy to see that on $D$,
\begin{equation}\label{e-gue140223III}
Q^{(q)}_{\leq0}u(y)=\frac{1}{2\pi}\sum_{m\in\mathbb Z,m\geq0}
e^{imy_{2n-1}}\int^{\pi}_{-\pi}e^{-imt}u(y',t)dt,\ \ \forall u\in\Omega^{0,q}_0(D),
\end{equation}
where $y'=(y_1,\ldots,y_{2n-2})$. Fix $D'\Subset D$ and let $\chi(y_{2n-1})\in \cC^\infty_0(]-\pi,\pi[)$ 
such that $\chi(y_{2n-1})=1$ for every $(y',y_{2n-1})\in D'$.  Let $\beta(x_{2n-1})\in \cC^\infty(\Real,[0,1])$ 
with $\beta=1$ on $[-\frac{1}{4},\infty[$, $\beta=0$ on $]-\infty,-\frac{1}{2}]$. 
Let $R:\Omega^{0,q}_0(D')\To\Omega^{0,q}(D')$ be the continuous operator given by
\begin{equation}\label{e-gue140223IV}
\begin{split}
%R:\Omega^{0,q}_0(D')&\To\Omega^{0,q}(D'),\\
u\mapsto\frac{1}{(2\pi)^2}\sum_{m\in\mathbb Z}
\int_{\abs{t}\leq\pi}e^{i\langle x_{2n-1}-y_{2n-1},\eta_{2n-1}\rangle }
\beta(\eta_{2n-1})(1-\chi(y_{2n-1}))\\
e^{imy_{2n-1}}e^{-imt}u(x',t)dtd\eta_{2n-1}dy_{2n-1},
\end{split}
\end{equation}
where $x'=(x_1,\ldots,x_{2n-2})$. Moreover, we can integrate by parts with respect to 
$\eta_{2n-1}$ and conclude that
\begin{equation}\label{e-gue140223V}
%\begin{split}
R\equiv0\ \ \text{at $\Sigma^-\cap T^*D'$},\quad R\equiv0\ \ \mbox{at $\Sigma^+\cap T^*D'$}.
%\end{split}
\end{equation}
Now, we claim that
\begin{equation}\label{e-gue140223II}
Q^{(q)}_{\leq0}(x,y)=R(x,y)+\frac{1}{(2\pi)^{2n-1}}\int e^{i\langle x-y,\eta\rangle }
\beta(\eta_{2n-1})d\eta_{2n-1}\:\: \text{on $D'$}.
\end{equation}
Let $u\in\Omega^{0,q}_0(D')$. From Fourier inversion formula, it is straightforward to see that
\begin{equation}\label{e-gue140223VI}
\begin{split}
&\frac{1}{(2\pi)^{2n-1}}\int e^{i\langle x-y,\eta\rangle}\beta(\eta_{2n-1})u(y)d\eta_{2n-1}\\
&=\frac{1}{(2\pi)^2}\sum_{m\in\mathbb Z}
\int_{\abs{t}\leq\pi}e^{i\langle x_{2n-1}-y_{2n-1},\eta_{2n-1}\rangle }\beta(\eta_{2n-1})\\
&\quad\times\chi(y_{2n-1})e^{imy_{2n-1}}e^{-imt}u(x',t)dtd\eta_{2n-1}dy_{2n-1}.
\end{split}
\end{equation}
From \eqref{e-gue140223IV} and \eqref{e-gue140223VI}, we have
\begin{equation}\label{e-gue140223VII}
\begin{split}
&\frac{1}{(2\pi)^{2n-1}}\int e^{i\langle x-y,\eta\rangle }\beta(\eta_{2n-1})u(y)d\eta_{2n-1}+Ru(x)\\
&=\frac{1}{(2\pi)^2}\sum_{m\in\mathbb Z}
\int_{\abs{t}\leq\pi}e^{i\langle x_{2n-1}-y_{2n-1},\eta_{2n-1}\rangle }\beta(\eta_{2n-1})\\
&\quad\quad\times e^{imy_{2n-1}}e^{-imt}u(x',t)dtd\eta_{2n-1}dy_{2n-1}.
\end{split}\end{equation}
From Fourier inversion formula and notice that for every $m\in\mathbb Z$,
\[\int e^{imy_{2n-1}}e^{-iy_{2n-1}\eta_{2n-1}}dy_{2n-1}=2\pi\delta_m(\eta_{2n-1}),\]
where the integral above is defined as an oscillatory integral and $\delta_m$ is the Dirac measure 
at $m$ (see Chapter 7.2 in H\"ormander~\cite{Hor03}), \eqref{e-gue140223VII} becomes
\begin{equation}\label{e-gue140223VIII}
\begin{split}
&\frac{1}{(2\pi)^{2n-1}}\int e^{i\langle x-y,\eta\rangle }\beta(\eta_{2n-1})u(y)d\eta_{2n-1}+Ru(x)\\
&=\frac{1}{2\pi}\sum_{m\in\mathbb Z}\beta(m)e^{ix_{2n-1}m}
\int_{\abs{t}\leq\pi}e^{-imt}u(x',t)dt\\
&=\frac{1}{2\pi}\sum_{m\in\mathbb Z,m\geq0}e^{ix_{2n-1}m}
\int_{\abs{t}\leq\pi}e^{-imt}u(x',t)dt\\
&=Q^{(q)}_{\leq0}u(x).
\end{split}\end{equation}
Here we used \eqref{e-gue140223III}. The claim \eqref{e-gue140223II} follows.
From \eqref{e-gue140223II} and \eqref{e-gue140223V}, we conclude that
\[\begin{split}
&\mbox{$Q^{(q)}_{\leq0}(x,y)-\frac{1}{(2\pi)^{2n-1}}\int e^{i\langle x-y,\eta\rangle }
\beta(\eta_{2n-1})d\eta_{2n-1}\equiv0$ at $\Sigma^-\cap T^*D'$},\\
&\mbox{$Q^{(q)}_{\leq0}(x,y)\equiv0$ at $\Sigma^+\cap T^*D'$}.\end{split}\]
Moreover, it is straightforward to see that
%\[\begin{split}
%&\frac{1}{(2\pi)^{2n-1}}\int e^{i\langle x-y,\eta\rangle }\alpha(\eta_{2n-1})d\eta_{2n-1}\\
%&\quad-\frac{1}{(2\pi)^{2n-1}}\int e^{i\langle x-y,\eta\rangle }\beta(\eta_{2n-1})d\eta_{2n-1}\\
%&\mbox{$\equiv0$ at $\Sigma^-\cap T^*D$}.\end{split}\]
\[
\frac{1}{(2\pi)^{2n-1}}\int e^{i\langle x-y,\eta\rangle}\big(\alpha(\eta_{2n-1})-\beta(\eta_{2n-1})\big)d\eta_{2n-1}
\equiv0\:\: \text{at $\Sigma^-\cap T^*D$}.\]
%-------------------
From this observation, \eqref{e-gue140223} follows.
The proof of \eqref{e-gue140223I} is essentially the same as the proof of \eqref{e-gue140223}.
\end{proof}

From Theorem~\ref{t-gue140222I}, Theorem~\ref{t-gue140223}, Lemma~\ref{l-gue140223} 
and Theorem~\ref{t-gue140305VIab}, we get the following two results.
%Theorem~\ref{t-gue140223I} and Theorem~\ref{t-gue140223II}.

\begin{thm}\label{t-gue140223I}
Let $(X,T^{1,0}X)$ be a compact CR manifold of dimension $2n-1$, $n\geq2$, 
with a transversal CR $S^1$ action and let $T\in \cC^\infty(X,TX)$ be the real 
vector field induced by this $S^1$ action. We fix a $T$-rigid Hermitian metric 
$\langle\,\cdot\,|\,\cdot\,\rangle$ on $\Complex TX$ such that 
$T^{1,0}X\perp T^{0,1}X$, $T\perp (T^{1,0}X\oplus T^{0,1}X)$, 
$\langle\,T\,|\,T\,\rangle=1$ and $\langle\,u\,|v\,\rangle$ is real if $u, v$ 
are real tangent vectors and we take $m(x)$ to be the volume form 
induced by the given $T$-rigid Hermitian metric $\langle\,\cdot\,|\,\cdot\,\rangle$. 
Assume that $Z(q)$ fails but $Z(q-1)$ and $Z(q+1)$ hold at every point of $X$. 
Suppose that the Levi form is non-degenerate of constant signature $(n_-,n_+)$ 
on an open canonical coordinate patch $D\Subset X$.  Let $Q^{(q)}_{\leq0}:L^2_{(0,q)}(X)\To L^2_{(0,q)}(X)$
be as in \eqref{e-gue140218VII}. Then,
\begin{equation}\label{e-gue140223f}
\mbox{$Q^{(q)}_{\leq0}\Pi^{(q)}Q^{(q)}_{\leq0}\equiv0$ on $D$ if $q\neq n_-$}
\end{equation}
and
\begin{equation}\label{e-gue140223fI}
Q^{(q)}_{\leq0}\Pi^{(q)}Q^{(q)}_{\leq0}(x,y)\equiv\int^\infty_0e^{i\varphi_-(x,y)t}a(x,y,t)dt\,\:\: \text{on $D$ if $q=n_-$}\,,
\end{equation}
where $\varphi_-\in \cC^\infty(D\times D)$ is as in Theorem~\ref{t-gue140305_b} 
and $a(x,y,t)\in S^{n-1}_{{\rm cl\,}}\big(D\times D\times\mathbb{R}_+,T^{*0,q}_yX\boxtimes T^{*0,q}_xX\big)$
where with notations as in \eqref{det140530}, \eqref{tau140530}, the leading term $a_0(x,y)$ 
of the expansion \eqref{e-fal} of $a(x,y,t)$ satisfies
\[\begin{split}
%&a(x, y, t)\in S^{n-1}_{{\rm cl\,}}\big(D\times D\times\mathbb{R}_+,T^{*0,q}_yX\boxtimes T^{*0,q}_xX\big), \\
%&a(x, y, t)\sim\sum^\infty_{j=0}a_j(x, y)t^{n-1-j}\quad\text{ in }S^{n-1}_{1, 0}
%\big(D\times D\times\mathbb{R}_+,T^{*0,q}_yX\boxtimes T^{*0,q}_xX\big)\,,\\
%&a_j(x, y)\in \cC^\infty\big(D\times D,T^{*0,q}_yX\boxtimes T^{*0,q}_xX\big),\ \ j\in\N_0,\\
&a_0(x,x)=\frac{1}{2}\pi^{-n}\abs{{\rm det\,}\mathcal{L}_x}\tau_{x,n_-},\ \ \forall x\in D.
\end{split}\]
\end{thm}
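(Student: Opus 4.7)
The strategy is to apply Theorem~\ref{t-gue140305VIab} directly with $Q=Q^{(q)}_{\leq 0}$. First I would verify the three hypotheses of that theorem. Lemma~\ref{l-gue140223} provides $Q^{(q)}_{\leq 0}\in L^0_{{\rm cl}}(X,T^{*0,q}X\boxtimes T^{*0,q}X)$ and, crucially, the microlocal vanishing $Q^{(q)}_{\leq 0}\equiv 0$ at $\Sigma^+\cap T^*D$. Theorem~\ref{t-gue140218} gives the commutation $Q^{(q)}_{\leq 0}\Pi^{(q)}=\Pi^{(q)}Q^{(q)}_{\leq 0}$. Since $Z(q)$ fails while $Z(q\pm 1)$ hold everywhere, Theorem~\ref{t-gue140222I} gives the local $L^2$ closed range property of $\Box^{(q)}_b$ on $X$ (hence on $D$) with respect to $Q^{(q)}_{\leq 0}$. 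Finally, $Q^{(q)}_{\leq 0}$ is an orthogonal $L^2$-projection, so it is self-adjoint.

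Feeding these facts into Theorem~\ref{t-gue140305VIab} would yield the expansion
\[
Q^{(q)}_{\leq 0}\Pi^{(q)}Q^{(q)}_{\leq 0}(x,y)\equiv\int_0^\infty e^{i\varphi_-(x,y)t}a_-(x,y,t)\,dt+\int_0^\infty e^{i\varphi_+(x,y)t}a_+(x,y,t)\,dt
\]
on $D$ with classical symbols $a_\pm\in S^{n-1}_{{\rm cl}}(D\times D\times\mathbb{R}_+,T^{*0,q}_yX\boxtimes T^{*0,q}_xX)$. Because $Q^{(q)}_{\leq 0}\equiv 0$ at $\Sigma^+\cap T^*D$, condition \eqref{e-gue140213aImA} forces $a_+=0$; and if $q\neq n_-$, the same condition forces $a_-=0$ as well, proving \eqref{e-gue140223f}. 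When $q=n_-$, only the $\varphi_-$-integral survives, giving \eqref{e-gue140223fI}.

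To extract the leading term I would invoke formula \eqref{e-gue140213aIIm}. Since $m(x)$ is chosen as the volume form induced by the $T$-rigid metric, one has $v(x)/m(x)=1$. From Lemma~\ref{l-gue140223}, in canonical coordinates the symbol of $Q^{(q)}_{\leq 0}$ at $\Sigma^-\cap T^*D$ is $\alpha(\eta_{2n-1})$. The normalization $\langle T,\omega_0\rangle=-1$ with $T=\partial/\partial x_{2n-1}$ in canonical coordinates places the ray $(x,-\omega_0(x))$ in the half-space $\eta_{2n-1}>0$, on which $\alpha$ is identically $1$, so the principal symbol $q(x,-\omega_0(x))$ acts as the identity on $T^{*0,q}_xX$. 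Consequently $q^*(x,-\omega_0(x))q(x,-\omega_0(x))=I$ and $\tau_{x,n_-}\,I\,\tau_{x,n_-}=\tau_{x,n_-}$, so that $a_0(x,x)=\tfrac{1}{2}\pi^{-n}|\det\mathcal{L}_x|\tau_{x,n_-}$, matching the claim.

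The heart of the argument is already packaged inside Theorem~\ref{t-gue140305VIab}, Theorem~\ref{t-gue140222I}, and Lemma~\ref{l-gue140223}, so no genuinely new analytic step is required. The one delicate item is the sign-and-direction bookkeeping that identifies $(x,-\omega_0(x))$ with a point in the half-space where the cut-off $\alpha(\eta_{2n-1})$ equals $1$, so that the principal symbol of $Q^{(q)}_{\leq 0}$ at this covector becomes the identity rather than zero; misplacing this sign would interchange the roles of $\Sigma^\pm$ and produce the wrong (and in fact trivial) branch of the expansion.
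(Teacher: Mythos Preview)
Your proposal is correct and follows essentially the same approach as the paper, which simply states that the result follows from Theorem~\ref{t-gue140222I}, Lemma~\ref{l-gue140223}, and Theorem~\ref{t-gue140305VIab}. You have supplied the details the paper omits, including the self-adjointness of $Q^{(q)}_{\leq0}$, the commutation from Theorem~\ref{t-gue140218}, and the verification that the principal symbol of $Q^{(q)}_{\leq0}$ at $(x,-\omega_0(x))$ is the identity so that the leading-term formula \eqref{e-gue140213aIIm} reduces to the stated one.
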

%-----------------------------
Similarly, we obtain the following.
%-----------------------------
\begin{thm}\label{t-gue140223II}
Under the hypotheses of Theorem \ref{t-gue140223I}
%Let $(X,T^{1,0}X)$ be a compact CR manifold of dimension $2n-1$, $n\geq2$, with a transversal CR $S^1$ action
%and let $T\in \cC^\infty(X,TX)$ be the real vector field induced by this $S^1$ action. We fix a $T$-rigid Hermitian
%metric $\langle\,\cdot\,|\,\cdot\,\rangle$ on $\Complex TX$ such that $T^{1,0}X\perp T^{0,1}X$,
%$T\perp (T^{1,0}X\oplus T^{0,1}X)$, $\langle\,T\,|\,T\,\rangle=1$ and $\langle\,u\,|v\,\rangle$
%is real if $u, v$ are real tangent vectors and we take $m(x)$ to be the volume form induced by
%the given $T$-rigid Hermitian metric $\langle\,\cdot\,|\,\cdot\,\rangle$.
assume that $Z(n-1-q)$ fails but $Z(n-2-q)$ and $Z(n-q)$ hold at every point of $X$. Suppose that the Levi form
is non-degenerate of constant signature $(n_-,n_+)$ on an open canonical coordinate patch $D\Subset X$.  Let
\[Q^{(q)}_{\geq0}:L^2_{(0,q)}(X)\To L^2_{(0,q)}(X)\]
be as in \eqref{e-gue140218VIII}. Then,
\begin{equation}\label{e-gue140223fII}
\mbox{$Q^{(q)}_{\geq0}\Pi^{(q)}Q^{(q)}_{\geq0}\equiv0$ on $D$ if $q\neq n_+$}
\end{equation}
and
\begin{equation}\label{e-gue140223fIII}
Q^{(q)}_{\geq0}\Pi^{(q)}Q^{(q)}_{\geq0}(x,y)\equiv\int^\infty_0e^{i\varphi_+(x,y)t}b(x,y,t)dt\,\:\: 
\text{on $D$ if $q=n_+$}\,,
\end{equation}
where $\varphi_+\in \cC^\infty(D\times D)$ is as in Theorem~\ref{t-gue140305_b} 
and $b(x, y, t)\in S^{n-1}_{{\rm cl\,}}\big(D\times D\times\mathbb{R}_+,T^{*0,q}_yX\boxtimes T^{*0,q}_xX\big)$, 
where with notations as in \eqref{det140530}, \eqref{tau140530}, the leading term $b_0(x,y)$ 
of the expansion \eqref{e-fal} of $b_0(x,y,t)$ satisfies
\[\begin{split}
%&b(x, y, t)\in S^{n-1}_{{\rm cl\,}}\big(D\times D\times\mathbb{R}_+,T^{*0,q}_yX\boxtimes T^{*0,q}_xX\big), \\
%&b(x, y, t)\sim\sum^\infty_{j=0}a_j(x, y)t^{n-1-j}\quad\text{ in }S^{n-1}_{1, 0}
%\big(D\times D\times\mathbb{R}_+,T^{*0,q}_yX\boxtimes T^{*0,q}_xX\big)\,,\\
%&b_j(x, y)\in \cC^\infty\big(D\times D,T^{*0,q}_yX\boxtimes T^{*0,q}_xX\big),\ \ j\in\N_0,\\
&b_0(x,x)=\frac{1}{2}\pi^{-n}\abs{{\rm det\,}\mathcal{L}_x}\tau_{x,n_+},\ \ \forall x\in D.
\end{split}\]
%Here $\abs{{\rm det\,}\mathcal{L}_x}=\abs{\mu_1(x)}\ldots\abs{\mu_{n-1}(x)}$, $\mu_1(x),\ldots,\mu_{n-1}(x)$
%are the eigenvalues of $\mathcal{L}_x$ with respect to $\langle\,\cdot\,|\,\cdot\,\rangle$ and $\tau_{x,n_+}$
%is as in Theorem~\ref{t-gue140305III}.
\end{thm}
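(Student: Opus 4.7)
The plan is to mirror the proof of Theorem \ref{t-gue140223I}, now applying Theorem \ref{t-gue140305VIab} with the operator $Q = Q^{(q)}_{\geq 0}$ in place of $Q^{(q)}_{\leq 0}$. The four hypotheses of Theorem \ref{t-gue140305VIab} are supplied as follows. First, Theorem \ref{t-gue140223} furnishes the local $L^2$ closed range property of $\Box^{(q)}_b$ with respect to $Q^{(q)}_{\geq 0}$ under the failure of $Z(n-1-q)$ together with $Z(n-2-q)$ and $Z(n-q)$. Second, Lemma \ref{l-gue140223} identifies $Q^{(q)}_{\geq 0}$ as a classical zeroth-order pseudodifferential operator on $X$, and moreover yields $Q^{(q)}_{\geq 0}\equiv 0$ at $\Sigma^-\cap T^*D$ with an explicit oscillatory-integral representation at $\Sigma^+\cap T^*D$. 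Third, the commutation $Q^{(q)}_{\geq 0}\Pi^{(q)}=\Pi^{(q)}Q^{(q)}_{\geq 0}$ on $L^2_{(0,q)}(X)$ is precisely the last identity listed in Theorem \ref{t-gue140218}. Finally, since $Q^{(q)}_{\geq 0}$ is an orthogonal projection, it is $L^2$ self-adjoint, so $(Q^{(q)}_{\geq 0})^*=Q^{(q)}_{\geq 0}$.

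With these hypotheses in place, Theorem \ref{t-gue140305VIab} expresses $Q^{(q)}_{\geq 0}\Pi^{(q)}Q^{(q)}_{\geq 0}$ on $D$ as a sum of two oscillatory integrals with phases $\varphi_-$ and $\varphi_+$ and symbols $a_-,a_+$ satisfying \eqref{e-gue140213aImA}. Because $Q^{(q)}_{\geq 0}\equiv 0$ at $\Sigma^-\cap T^*D$, the criterion \eqref{e-gue140213aImA} forces $a_-\equiv 0$ regardless of $q$. The remaining symbol $a_+$ vanishes whenever $q\neq n_+$, which immediately yields \eqref{e-gue140223fII}. When $q=n_+$, only the $\varphi_+$ contribution survives, which is \eqref{e-gue140223fIII}.

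For the leading coefficient $b_0(x,x)$ we apply the $\Sigma^+$--analogue of \eqref{e-gue140213aIIm}. In the canonical coordinates of Theorem \ref{t-gue131206}, the fibre variable conjugate to $T=\partial/\partial x_{2n-1}$ is $\eta_{2n-1}$, and the explicit formula from Lemma \ref{l-gue140223} shows that the principal symbol $q(x,\eta)$ of $Q^{(q)}_{\geq 0}$ along $\Sigma^+$ equals the identity on $T^{*0,q}_xX$ (the cutoff $\alpha(-\eta_{2n-1})$ tends to $1$ on the conic set where $\Sigma^+$ lies). Hence $q^*q=\mathrm{Id}$ at the relevant cotangent point, and the general formula collapses to
\[
b_0(x,x)=\tfrac{1}{2}\pi^{-n}\,\tfrac{v(x)}{m(x)}\,\abs{\det\mathcal{L}_x}\,\tau_{x,n_+}
\cdot\mathrm{Id}\cdot\tau_{x,n_+}\,.
\]
Since $m$ was chosen to be the volume form induced by the $T$-rigid metric $\langle\,\cdot\,|\,\cdot\,\rangle$, we have $v(x)/m(x)=1$, and using $\tau_{x,n_+}^2=\tau_{x,n_+}$ we obtain the stated formula.

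The only genuine subtlety is the identification of the principal symbol of $Q^{(q)}_{\geq 0}$ at $\Sigma^+$ as the identity, together with the verification that the sign conventions in Lemma \ref{l-gue140223} place $Q^{(q)}_{\geq 0}$ at $\Sigma^+$ (rather than at $\Sigma^-$); once one reconciles the convention $\langle T,\omega_0\rangle=-1$ with the canonical coordinates $T=\partial/\partial x_{2n-1}$, this reduces to checking that $\mathrm{supp}\,\alpha(-\,\cdot\,)$ is the conic half-line containing $\omega_0$ at points of $\Sigma^+$. All remaining assertions follow at once from Theorem \ref{t-gue140305VIab}, and no new analytic input is required beyond what has been established for $Q^{(q)}_{\leq 0}$.
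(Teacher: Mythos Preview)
Your proposal is correct and follows essentially the same approach as the paper: the paper simply says ``Similarly, we obtain the following'' and derives both Theorems~\ref{t-gue140223I} and~\ref{t-gue140223II} by invoking Theorems~\ref{t-gue140222I}, \ref{t-gue140223}, Lemma~\ref{l-gue140223}, and Theorem~\ref{t-gue140305VIab}. Your additional care in spelling out the principal-symbol computation at $\Sigma^+$ and the sign conventions is useful detail that the paper leaves implicit.
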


Kohn proved that if $X$ is any compact CR manifold and $Y(q)$ fails but $Y(q-1)$ and $Y(q+1)$ hold on $X$ then $\Box^{(q)}_b$
has $L^2$ closed range (see~\cite{CS01}). By using Theorem~\ref{t-gue140218II}, Theorem~\ref{t-gue140218III},
Theorem~\ref{t-gue140220} and Theorem~\ref{t-gue140222},
we can improve Kohn's result if $X$ admits a transversal CR $S^1$ action.
%------------------------------
\begin{defn}\label{d-gue140223}
Given $q\in\{0,\ldots,n-1\}$, the Levi form is said to satisfy condition $W(q)$ at $p\in X$,
if one of the following condition holds: (I)\,$Y(q)$ holds at $p$.
(II)\, $Z(q)$,  $Z(n-2-q)$ and $Z(n-q)$ hold at $p$.
(III)\,$Z(q-1)$, $Z(q+1)$ and $Z(n-1-q)$ hold at $p$. (IV)\, $Y(q-1)$ and $Y(q+1)$ hold.
\end{defn}
%----------------------------
It is straightforward to see that if the Levi form is non-degenerate of constant signature on $X$ then for every
$q\in\set{0,1,\ldots,n-1}$, $W(q)$ holds at every point of $X$.
It is clear that if $Y(q-1)$ and $Y(q+1)$ hold at $p\in X$, or $Y(q)$ holds at $p$, then $W(q)$ holds at $p$.
But it can happen that $W(q)$ holds at $p$ but $Y(q)$ fails at $p$ and $Y(q-1)$ or $Y(q+1)$ fail at $p$.
For example, if the Levi form is non-degenerate of constant signature $(n_-,n_+)$ at $p$
and $n_+=n_-+1$, then for $q=n_-$, $Z(q-1)$, $Z(q+1)$ and $Z(n-1-q)$ hold at $p$.
Thus, $W(q)$ holds at $p$ but $Y(q)$ and $Y(q+1)$ fail at $p$.
%----------------------------
\begin{thm}\label{t-gue140223III}
Let $(X,T^{1,0}X)$ be a compact CR manifold of dimension $2n-1$, $n\geq2$,
with a transversal CR $S^1$ action and let $T\in \cC^\infty(X,TX)$ be the real vector field
induced by this $S^1$ action. We fix a $T$-rigid Hermitian metric
$\langle\,\cdot\,|\,\cdot\,\rangle$ on $\Complex TX$ such that $T^{1,0}X\perp T^{0,1}X$,
$T\perp (T^{1,0}X\oplus T^{0,1}X)$, $\langle\,T\,|\,T\,\rangle=1$ and $\langle\,u\,|v\,\rangle$
is real if $u, v$ are real tangent vectors and we take $m(x)$ to be the volume form induced by the given
$T$-rigid Hermitian metric $\langle\,\cdot\,|\,\cdot\,\rangle$. Assume that $W(q)$ holds at every point of $X$.
Then, $\Box^{(q)}_b:{\rm Dom\,}\Box^{(q)}_b\To L^2_{(0,q)}(X)$ has $L^2$ closed range.
In particular, if the Levi form is non-degenerate of constant signature on $X$,
then $\Box^{(q)}_b:{\rm Dom\,}\Box^{(q)}_b\To L^2_{(0,q)}(X)$ has $L^2$ closed range.
\end{thm}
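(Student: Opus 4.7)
The plan is to exploit the $T$-rigid Hermitian structure and the resulting $S^1$-spectral decomposition of $L^2_{(0,q)}(X)$, reducing the closed-range problem to two invariant subspaces on which the tools of \S\ref{s-gue140211} apply. Because the metric is $T$-rigid, $\Box^{(q)}_b$ commutes with $T$ and hence with the projectors $Q^{(q)}_{\leq 0}$ and $Q^{(q)}_{\geq 0}$ (Theorem~\ref{t-gue140218}). Setting $\mathcal{B}^{0,q}_{\geq 1}(X):=(I-Q^{(q)}_{\leq 0})L^2_{(0,q)}(X)$, one obtains the orthogonal decomposition
\begin{equation*}
L^2_{(0,q)}(X)=\mathcal{B}^{0,q}_{\leq 0}(X)\oplus\mathcal{B}^{0,q}_{\geq 1}(X),
\end{equation*}
with both summands $\Box^{(q)}_b$-invariant. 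A standard functional-analytic argument then reduces closed range of $\Box^{(q)}_b$ on $L^2_{(0,q)}(X)$ to closed range of its two restrictions.

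Next, the hypothesis $W(q)$ at every point is unpacked pointwise: using $Y(j)\Rightarrow Z(j)$ and $Y(j)\Leftrightarrow Y(n-1-j)$, each of the alternatives (I)--(IV) implies that \emph{both} (a) $Z(q)$ or $[Z(q-1)\wedge Z(q+1)]$, and (b) $Z(n-1-q)$ or $[Z(n-2-q)\wedge Z(n-q)]$, hold at the given point. When (a) holds uniformly on $X$, Theorem~\ref{t-gue140218II} or Theorem~\ref{t-gue140220} yields closed range on $\mathcal{B}^{0,q}_{\leq 0}$; when (b) holds uniformly, Theorem~\ref{t-gue140218III} or Theorem~\ref{t-gue140222} yields closed range on $\mathcal{B}^{0,q}_{\geq 1}\subset\mathcal{B}^{0,q}_{\geq 0}$. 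Assembling the two halves then gives $L^2$ closed range of $\Box^{(q)}_b$ on $L^2_{(0,q)}(X)$.

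The main obstacle is that $W(q)$ is a pointwise disjunction, while each of Theorems~\ref{t-gue140218II}--\ref{t-gue140222} asks for a single alternative to hold at every point of $X$. To bridge this, I would revisit their proofs, which rest on Kohn-type local $L^2$ subelliptic estimates valid in any neighborhood where the relevant $Z(\cdot)$ condition is satisfied, and patch the two kinds of local estimates by a partition of unity subordinate to the open cover of $X$ by $\{p : Z(q)\text{ at }p\}$ and $\{p : Z(q-1)\wedge Z(q+1)\text{ at }p\}$ (and symmetrically for the $\geq 0$ half). The uniform subelliptic-type estimate thus obtained on each frequency half-space delivers closed range via the Hodge-theoretic argument of Theorem~\ref{t-gue140220} using the partial inverses $N^{(q\pm 1)}_{\pm}$. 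The final assertion, when the Levi form is non-degenerate of constant signature on $X$, is then immediate: the signature being constant, a single alternative of $W(q)$ holds uniformly on $X$ and no patching is needed.
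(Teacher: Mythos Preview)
Your overall strategy coincides with the paper's: decompose $L^2_{(0,q)}(X)$ along the $S^1$-spectrum, obtain closed range of $\Box^{(q)}_b$ on each frequency half via Theorems~\ref{t-gue140218II}, \ref{t-gue140218III}, \ref{t-gue140220}, \ref{t-gue140222}, and reassemble. The paper's proof is in fact just two sentences invoking these four theorems and ``leaving the details to the reader,'' so your first two paragraphs match it essentially verbatim (the paper uses the overlapping pair $\mathcal{B}^{0,q}_{\leq0}$, $\mathcal{B}^{0,q}_{\geq0}$ rather than your disjoint $\leq0$/$\geq1$ split, but this is cosmetic).

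Where you go further is in flagging the pointwise-versus-global mismatch in the disjunctive hypothesis $W(q)$, which the paper does not address. Your proposed partition-of-unity fix, however, has a gap: the proof of Theorem~\ref{t-gue140220} (and symmetrically Theorem~\ref{t-gue140222}) is \emph{not} based on local subelliptic estimates but on a global Hodge-theoretic construction using the partial inverses $N^{(q\pm1)}_-$, whose very existence requires $Z(q-1)$ and $Z(q+1)$ to hold on \emph{all} of $X$ (see Theorem~\ref{t-gue140218II}). One therefore cannot simply ``patch the two kinds of local estimates'' when the $Z(q\pm1)$ alternative is the one in force on part of $X$; the operators $N^{(q\pm1)}_-$ are not available to begin with. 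To complete the mixed case you would need either to show that on each connected component of $X$ a single clause of your condition~(a) (resp.~(b)) must hold globally, or to produce a genuinely localizable substitute for the $N^{(q\pm1)}_-$-based estimate that can be glued with Kohn's subelliptic estimate on the $Z(q)$-region. Neither step is carried out in your proposal.
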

%----------------------------
\begin{proof}
Since $W(q)$ holds at every point of $X$, from
Theorem~\ref{t-gue140218II}, Theorem~\ref{t-gue140218III}, Theorem~\ref{t-gue140220}
and Theorem~\ref{t-gue140222}, we see that the operators
\[\begin{split}
\Box^{(q)}_b:{\rm Dom\,}\Box^{(q)}_b\cap\mathcal{B}^{0,q}_{\leq0}\To\mathcal{B}^{0,q}_{\leq0}\,,
\quad\Box^{(q)}_b:{\rm Dom\,}\Box^{(q)}_b\cap\mathcal{B}^{0,q}_{\geq0}\To\mathcal{B}^{0,q}_{\geq0}\end{split}\]
have closed range. It is not difficult to see that this implies that
$\Box^{(q)}_b:{\rm Dom\,}\Box^{(q)}_b\To L^2_{(0,q)}(X)$ has $L^2$ closed range. We leave the details to the reader.
\end{proof}
%----------------------------
\begin{cor}\label{c-gue140306}
Under the same notations and assumptions used in Theorem~\ref{t-gue140223III}%{t-gue140306III}
, let
$N^{(q)}:L^2_{(0,q)}(X)\To{\rm Dom\,}\Box^{(q)}_b$ be the partial inverse of $\Box^{(q)}_b$.
We assume that the Levi form is non-degenerate of constant signature $(n_-,n_+)$
at each point of an open set $D\Subset X$.
If $q\notin\set{n_-,n_+}$, then
\[
\Pi^{(q)}\equiv 0\quad\text{and}\quad N^{(q)}\equiv A\quad\text{on $D$},
\]
where $A\in L^{-1}_{\frac{1}{2},\frac{1}{2}}(D,T^{*0,q}X\boxtimes T^{*0,q}X)$
is as in Theorem~\ref{t-gue140305_b}. If $q\in\set{n_-,n_+}$, then
\[
\Pi^{(q)}\equiv S_-+S_+\quad\text{and}\quad
N^{(q)}\equiv G\quad\text{on $D$},
\]
where $S_-, S_+\in L^{0}_{\frac{1}{2},\frac{1}{2}}(D,T^{*0,q}X
\boxtimes T^{*0,q}X)$ and $G\in L^{-1}_{\frac{1}{2},\frac{1}{2}}(D,T^{*0,q}X\boxtimes T^{*0,q}X)$
are as in Theorem~\ref{t-gue140305_b}.

In particular, for any CR submanifold in $\Complex\mathbb P^N$ of the form \eqref{e-gue140303},
the associated Szeg\H{o} kernel admits a full asymptotic expansion.
\end{cor}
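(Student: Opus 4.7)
The plan is to combine Theorem~\ref{t-gue140223III} with Corollary~\ref{c-gue140211}, so nothing new really needs to be proved: the first furnishes $L^2$ closed range for $\Box^{(q)}_b$ under hypothesis $W(q)$, and the second converts closed range plus non-degenerate Levi form into the desired microlocal description of $\Pi^{(q)}$ and $N^{(q)}$. Concretely, I would first invoke Theorem~\ref{t-gue140223III} to conclude that $\Box^{(q)}_b:{\rm Dom}\,\Box^{(q)}_b\to L^2_{(0,q)}(X)$ has $L^2$ closed range (the hypothesis $W(q)$ is automatic from the assumption that the Levi form is non-degenerate of constant signature $(n_-,n_+)$ on $D$; in the ambient theorem it is required globally, and here it is supplied by the global assumption giving us access to Theorem~\ref{t-gue140223III}). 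Then the partial inverse $N^{(q)}$ of $\Box^{(q)}_b$ is well-defined on $L^2_{(0,q)}(X)$.

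Next, I would apply Corollary~\ref{c-gue140211} directly on the open set $D\Subset X$ where the Levi form is non-degenerate of constant signature $(n_-,n_+)$. If $q\notin\{n_-,n_+\}$ it yields $\Pi^{(q)}\equiv 0$ and $N^{(q)}\equiv A$ on $D$, with $A$ as in Theorem~\ref{t-gue140205}; if $q\in\{n_-,n_+\}$ it yields $\Pi^{(q)}\equiv S_-+S_+$ and $N^{(q)}\equiv G$ on $D$, with $S_\pm$, $G$ as in Theorem~\ref{t-gue140305_b}. This is exactly the statement of the first part of the corollary, so no further work is needed beyond citing the two earlier results in sequence.

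For the second part concerning the CR submanifolds $X\subset\mathbb{CP}^{N-1}$ defined by \eqref{e-gue140303}, I would verify the two structural ingredients. First, the standard $S^1$ action $e^{i\theta}\cdot[z_1,\dots,z_N]=[e^{i\theta}z_1,\dots,e^{i\theta}z_N]$ on $\mathbb{CP}^{N-1}$ preserves the defining equation of $X$ (the coefficients $\lambda_j$ are real), so it restricts to an $S^1$ action on $X$; its infinitesimal generator is easily seen to be CR and transversal to $T^{1,0}X\oplus T^{0,1}X$ because the defining function is strictly plurisubharmonic along the orbit direction. Second, as remarked already in the excerpt, the Levi form of $X$ has constant signature $(m-1,N-m-1)$ at every point, hence it is non-degenerate of constant signature everywhere. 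Thus $W(q)$ holds on all of $X$ for every $q\in\{0,1,\ldots,n-1\}$, Theorem~\ref{t-gue140223III} applies, and taking $D$ to be an arbitrary relatively compact open subset of $X$ the first part of the corollary gives the full asymptotic expansion of $\Pi^{(q)}(x,y)$.

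The only place where a subtlety arises is the verification of transversality of the $S^1$ action on the explicit example, which is a routine calculation using the fact that the real vector field induced by $e^{i\theta}$ is not tangent to the complex tangent bundle at any point of $X$; this follows from the Lagrange multiplier $\sum_j\lambda_j|z_j|^2=0$ together with $\sum_j|z_j|^2>0$. Once this is observed, the entire corollary is a formal consequence of the two theorems already proved, so I do not foresee any genuine obstacle.
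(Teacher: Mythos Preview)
Your reduction of the first part to Theorem~\ref{t-gue140223III} followed by Corollary~\ref{c-gue140211} is exactly the intended argument; the paper treats this corollary as an immediate consequence of those two results and gives no separate proof.

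There is, however, a genuine error in your treatment of the example \eqref{e-gue140303}. The action $e^{i\theta}\cdot[z_1,\ldots,z_N]=[e^{i\theta}z_1,\ldots,e^{i\theta}z_N]$ is the \emph{trivial} action on $\mathbb{CP}^{N-1}$: multiplying all homogeneous coordinates by the same nonzero scalar gives back the same point. Consequently the induced vector field vanishes identically, and your transversality check (via ``the Lagrange multiplier'') is vacuous. What one needs instead is a circle action that rotates only a proper subset of the coordinates, for instance
\[
e^{i\theta}\cdot[z_1,\ldots,z_N]=[e^{i\theta}z_1,\ldots,e^{i\theta}z_m,\,z_{m+1},\ldots,z_N],
\]
which is holomorphic on $\mathbb{CP}^{N-1}$, preserves the real quadric $\sum_j\lambda_j|z_j|^2=0$, and is nontrivial on $X$. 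In an affine chart $z_N=1$ the infinitesimal generator pairs with the contact form to give (up to a nonzero factor) $\sum_{j=1}^m\lambda_j|w_j|^2$; the defining equation together with the sign pattern $\lambda_1,\ldots,\lambda_m<0<\lambda_{m+1},\ldots,\lambda_N$ forces this sum to be strictly negative on $X$, so transversality holds. With this corrected action the remainder of your argument goes through unchanged.
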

%----------------------------
For hypersurfaces of type \eqref{e-gue140303} of signature $(1,N-3)$, Biquard  \cite{Biq05} studied the filling problem 
for small deformations of the CR structure. 
%----------------------------

From Corollary~\ref{c-gue140306} and Theorem~\ref{t-gue140223III}, we establish the
global embeddablity for three dimensional compact strictly pseudoconvex CR manifolds 
with transversal CR $S^1$ actions (Theorem \ref{t-gue140306f}).
%-----------
\begin{thm}\label{t-gue140223III-I}
Let $(X,T^{1,0}X)$ be a compact strictly pseudoconvex CR manifold of dimension three with a transversal 
CR $S^1$ action. Then $X$ can be CR embedded into $\Complex^N$, for some $N\in\mathbb N$.
\end{thm}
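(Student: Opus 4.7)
The plan is to combine the local CR embedding Theorem~\ref{t-emb} with a peak-function argument based on the asymptotic expansion of the Szeg\H{o} kernel, and to globalize by compactness.

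Since $X$ is a compact strictly pseudoconvex CR manifold of dimension three, we have $n=2$ and the Levi form is everywhere non-degenerate of constant signature $(n_-,n_+)=(0,1)$. As noted immediately after Definition~\ref{d-gue140223}, condition $W(0)$ therefore holds at every point of $X$, so Theorem~\ref{t-gue140223III} yields that $\Box^{(0)}_b$ has $L^2$ closed range. Corollary~\ref{c-gue140306} then supplies the full asymptotic expansion of $\Pi^{(0)}(x,y)$ on each canonical coordinate patch, and in particular the hypotheses of Theorem~\ref{t-emb} are satisfied with $Q=I$: the identity commutes with $\Pi^{(0)}$, its principal symbol does not vanish on $\Sigma^{-}$, and the global closed-range property trivially implies the local closed-range property with respect to $Q=I$.

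First I would apply Theorem~\ref{t-emb} at each point $x_0\in X$ to obtain an open neighborhood $\hat D_{x_0}\Subset X$ and a global CR map $F_{x_0}:X\to\Complex^2$ whose restriction to $\hat D_{x_0}$ is a CR embedding; the components of $F_{x_0}$ are the explicit CR functions $u_k$ and $u_k^1$ constructed in the proof of Theorem~\ref{t-emb}, for a sufficiently large integer $k$. By compactness of $X$, I would then select finitely many points $x_1,\ldots,x_M$ with $\bigcup_{j=1}^M\hat D_{x_j}=X$ and form the global CR map $F=(F_{x_1},\ldots,F_{x_M}):X\to\Complex^{2M}$. Since each point of $X$ lies in some $\hat D_{x_j}$ on which $F_{x_j}$ is already an embedding, the combined map $F$ is an immersion at every point of $X$ and is injective on each $\hat D_{x_j}$.

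The main obstacle is upgrading $F$ to a globally injective CR map. For this I would reuse the peak function from the proof of Theorem~\ref{t-emb}: for any $p\in X$, working in canonical coordinates centered at $p$, the global CR function
\[
u_k^p(x):=\Pi^{(0)}\!\left(e^{-iky_{2n-1}}\chi(y_{2n-1})\chi(ky_1)\chi(ky_2)k^{2n-2}\right)\!(x)
\]
satisfies $\abs{u_k^p(p)}\gtrsim k^n$ by the leading-order stationary phase computation performed in the proof of Theorem~\ref{t-emb}, while the lower bound ${\rm Im\,}\Phi(x,y')\geq c\abs{x-y'}^2$ on the phase, combined with a non-stationary phase argument in the $x_{2n-1}$-direction whenever $x$ lies outside a neighborhood of $p$, forces $u_k^p(x)=O(k^{-\infty})$ uniformly on any compact set disjoint from such a neighborhood. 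Consequently, if two distinct points $p,q\in X$ are not already separated by $F$, then $u_k^p$ separates them for all sufficiently large $k$. A standard compactness argument applied to the closed subset of pairs $(p,q)\in X\times X$ with $p\neq q$ not separated by $F$ then extracts finitely many such peak CR functions $g_1,\ldots,g_L\in\cC^\infty(X)$ for which $(F,g_1,\ldots,g_L):X\to\Complex^{2M+L}$ is globally injective, and hence a CR embedding. The delicate point is precisely the uniformity of the peak-function decay away from $p$, which is supplied by the explicit Fourier integral representation of $\Pi^{(0)}$ from Corollary~\ref{c-gue140306}.
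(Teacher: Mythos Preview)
Your approach is correct but takes a genuinely different route from the paper. The paper does not construct an embedding directly; instead, after establishing that $\Box^{(0)}_b$ has closed range (the same first step you take), it invokes Kohn's criterion \cite{Koh86} that a compact strictly pseudoconvex CR manifold is embeddable if and only if $\ddbar_b:L^2(X)\to L^2_{(0,1)}(X)$ has closed range. The paper then verifies this closed-range property by a short functional-analytic argument: from Corollary~\ref{c-gue140306} one has $N^{(0)}\in L^{-1}_{\frac12,\frac12}(X)$, so $N^{(0)}\ol\pr_b^{*,f}:L^2_{(0,1)}(X)\to L^2(X)$ is continuous, and one checks that for any sequence $\ddbar_b u_j\to v$ the elements $(I-\Pi^{(0)})u_j=N^{(0)}\ol\pr_b^{*,f}\ddbar_b u_j$ converge to a preimage of $v$. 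This is considerably shorter than your direct construction, at the price of importing Kohn's deep embedding theorem as a black box.

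Your argument has the virtue of being self-contained and of making the embedding explicit, but one point deserves care: the off-diagonal decay $u_k^p(x)=O(k^{-\infty})$ for $x$ outside a neighborhood of $p$ is not literally supplied by the Fourier integral representation of Corollary~\ref{c-gue140306}, since that representation is only asserted on a single coordinate patch $D\times D$. What you actually need is that $\Pi^{(0)}(x,y)$ is smooth off the diagonal of $X\times X$; this follows because $\Pi^{(0)}=I-\Box^{(0)}_b N^{(0)}\in L^0_{\frac12,\frac12}(X)$ globally (the pseudodifferential class is local and $N^{(0)}\equiv G$ on every patch), and pseudodifferential operators are pseudolocal. Once this is noted, integration by parts in $y_{2n-1}$ against the smooth kernel gives the claimed decay, and your compactness argument for separating points goes through, since the set of unseparated pairs is closed in $X\times X$ and bounded away from the diagonal by the local injectivity already obtained.
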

%-----------
\begin{proof}
Let $T\in \cC^\infty(X,TX)$ be the real vector field induced by the given transversal CR $S^1$ action on 
$X$ and we fix a $T$-rigid Hermitian metric $\langle\,\cdot\,|\,\cdot\,\rangle$ on $\Complex TX$ such that 
$T^{1,0}X\perp T^{0,1}X$, $T\perp (T^{1,0}X\oplus T^{0,1}X)$, $\langle\,T\,|\,T\,\rangle=1$ and 
$\langle\,u\,|v\,\rangle$ is real if $u, v$ are real tangent vectors and we take $m(x)$ to be the volume 
form induced by the given $T$-rigid Hermitian metric $\langle\,\cdot\,|\,\cdot\,\rangle$. 
We will use the same notations as before. From  Theorem~\ref{t-gue140223III}, 
we know that $\Box^{(0)}_b:{\rm Dom\,}\Box^{(q)}_b\To L^2(X)$ has closed range. 
Let $N^{(0)}:L^2(X)\To{\rm Dom\,}\Box^{(q)}_b$ be the partial inverse of $\Box^{(q)}_b$. 
From Corollary~\ref{c-gue140306}, we have
\begin{equation}\label{e-gue140223fa}
\begin{split}
&\mbox{$\Box^{(0)}_bN^{(0)}+\Pi^{(0)}=I$ on $L^2(X)$},\\
&\mbox{$N^{(0)}\Box^{(0)}_b+\Pi^{(0)}=I$ on ${\rm Dom\,}\Box^{(0)}_b$},\\
&N^{(0)}\in L^{-1}_{\frac{1}{2},\frac{1}{2}}(X),\ \ \Pi^{(0)}\in L^{0}_{\frac{1}{2},\frac{1}{2}}(X).
\end{split}
\end{equation}

From Kohn's result \cite{Koh86}, in order to prove that $X$ can be CR embedded 
into $\Complex^N$, for some $N\in\mathbb N$, we only need to prove that
$\ddbar_b:{\rm Dom\,}\ddbar_b\subset L^2(X)\To L^2_{(0,1)}(X)$ has closed range. Let $\ddbar_bu_j=v_j$,
$u_j\in{\rm Dom\,}\ddbar_b$, $v_j\in L^2_{(0,1)}(X)$, $j=1,2,\ldots$\,, 
with $v_j\To v\in L^2_{(0,1)}(X)$ as $j\To\infty$. 
We are going to prove that there is a $g\in{\rm Dom\,}\ddbar_b$ such that $\ddbar_bg=v$. 
We claim that for every $j=1,2,\ldots$\,,
\begin{equation}\label{e-gue140223faI}
N^{(0)}\ol{\pr}^{*,f}_bv_j\in L^2_{(0,1)}(X)\:\:\text{and}\:\:
(I-\Pi^{(0)})u_j=N^{(0)}\ol{\pr}^{*,f}_bv_j,
\end{equation}
where $\ol{\pr}^{*,f}_b$ is the formal adjoint of $\ddbar_b$ (acting on distributions). 
Since $N^{(0)}\in L^{-1}_{\frac{1}{2},\frac{1}{2}}(X)$, it is clearly that $N^{(0)}\ol{\pr}^{*,f}_bv_j\in L^2(X)$, $\forall j$. 
Fix $j\in\mathbb{N}$. Let $f_s\in \cC^\infty(X)$, $s\in\mathbb{N}$, with $f_s\To u_j$ in $\mathscr D'(X)$ as $s\To\infty$.
From \eqref{e-gue140223fa}, we have
\begin{equation}\label{e-gue140223faII}
\begin{split}
N^{(0)}\ol{\pr}^{*,f}_b\ddbar_bf_s=N^{(0)}\Box^{(0)}_bf_s=(I-\Pi^{(0)})f_s\rightarrow(I-\Pi^{(0)})u_j\,\:\: 
\text{in $\mathscr D'(X)$ as $j\To\infty$}.
\end{split}
\end{equation}
Note that $N^{(0)}\ol{\pr}^{*,f}_b\ddbar_bf_s\To N^{(0)}\ol{\pr}^{*,f}_b\ddbar_bu_j=N^{(0)}\ol{\pr}^{*,f}_bv_j$ 
in $\mathscr D'(X)$ as $j\To\infty$. From this observation and \eqref{e-gue140223faII}, 
the claim \eqref{e-gue140223faI} follows.
Since $N^{(0)}\in L^{-1}_{\frac{1}{2},\frac{1}{2}}(X)$,
\begin{equation}\label{e-gue140223faIII}
\mbox{$N^{(0)}\ol{\pr}^{*,f}_b:L^2_{(0,1)}(X)\To L^2(X)$ is continuous}.
\end{equation}
From \eqref{e-gue140223faI} and \eqref{e-gue140223faIII}, we conclude that
\[\mbox{$(I-\Pi^{(0)})u_j=N^{(0)}\ol{\pr}^{*,f}_bv_j\To N^{(0)}\ol{\pr}^{*,f}_bv=:u$ in $L^2(X)$}.\]
Thus, $\ddbar_bu=v$ in the sense of distribution. Since $v\in L^2_{(0,1)}(X)$, $u\in{\rm Dom\,}\ddbar_b$. 
We have proved that $\ddbar_b:{\rm Dom\,}\ddbar_b\subset L^2(X)\To L^2_{(0,1)}(X)$ has closed range. 
The theorem follows.
\end{proof}
\begin{ex}[Grauert tube]
Let $M$ be a compact complex manifold endowed with a Hermitian metric $\Theta$ and associated 
Riemannian metric $g^{TM}$. 
We consider a Hermitian holomorphic line bundle $(L,h^L)$ on $M$. The Grauert tube 
associated to $(L,h^L)$
is the disc bundle $G= \{u\in L^*, |u|_{h^{L^*}}<1\}$, with defining function $\varrho:L^*\to\mathbb{R}$, 
$\varrho=|u|^2_{h^{L^*}}-1$. The boundary  $X= \partial G=\{u\in L^*, |u|_{h^{L^*}}=1\}$ 
is the unit circle bundle in $L^*$. 
The Grauert tube was introduced by Grauert \cite{Gra:62}, one important application being the Kodaira
embedding theorem for singular spaces.

Let $\nabla ^L$ be the Chern
connection on $(L,h^L)$ and let $R^L=(\nabla ^L)^2$ be the
Chern curvature. 
The Levi form of $\varrho$
restricted to the complex tangent plane of $X$ coincides with the pull-back
of $\omega=\sqrt{-1}R^L$ through the canonical projection $\rho:X\to M$.
Therefore, the signature of the Levi form of $\varrho$ coincides with the signature of the curvature
form $\sqrt{-1}R^L$.

Note that $\rho : X\to M$ is a $S^1$-principal bundle and there exists a canonical $S^1$ action on $X$.
The connection $\nabla^L$ on  $L$ induces a connection 
on this $S^1$-principal bundle. 
Let $T^H X \subset TX$ be the corresponding horizontal bundle.
Let us introduce the Riemannian metric $g^{TX}=\rho^*(g^{TM})\oplus 
d\vartheta^2$ on $TX=T^HX\oplus TS^1$. 
We will denote by $\ddbar_b^*$ the formal adjoint of $\ddbar_b$ with 
respect to this metric and form the Kohn-Laplacian $\Box_b$.
The operators $\ddbar_b$, $\ddbar_b^*$ and $\Box^{(q)}_b$ commute with the action of $S^1$ 
on $X$.

Consider the space $\cC^\infty(X)_p$ of smooth functions $f$ on $Y$ which 
transform under the action $(y,\vartheta)\mapsto e^{i\vartheta}y$ of $S^1$ 
according to the law
\begin{equation}\label{equi}
f(e^{i\vartheta}y)=e^{ip\vartheta}f(y).
\end{equation}
Then 
$\cC^\infty(X)_p=B^{0,0}_{-p}(X)$, where $B^{0,q}_{m}(X)$ were defined in \eqref{e-gue140218I}.
Let us endow $\Omega^{\bullet,\bullet}(M,L^p)$ with the $L^2$ inner product induced by $\Theta$ and $h^L$.
There exists a natural isometry 
\[B^{0,0}_{-p}(X)=\cC^\infty(X)_p\cong\Omega^{0,0}(M,L^p).\]
More generally, consider the space of sections $\Omega^{0,k}(X)_p$ 
which transform 
under the action of $S^1$ according to \eqref{equi}. Then $\Omega^{0,k}(X)_p=B^{0,k}_{-p}(X)$ is naturally 
isometric to the space $\Omega^{0,k}(X,L^p)$.
In this way we obtain an interpretation of the spaces $B^{0,q}_{-p}(X)$ , $\mathcal{B}^{0,q}_{-p}(X)$ 
and of the projectors $Q_{\leq0}$, $Q_{\geq0}$ in terms
of the sections of $L^{p}$.  For more details on the relation of the Szeg\H{o} projection and the Bergman kernel of $L^p$
one can consult \cite[\S1.5]{MM06}, \cite[\S3.2]{MM08a}.

\end{ex}

\section{Szeg\H{o} kernel asymptotic expansion on weakly pseudoconvex CR manifolds} \label{s-gue140224}

By using Theorem~\ref{t-gue140305VIa}, we establish Szeg\H{o} kernel asymptotic expansions on
some weakly pseudoconvex CR manifolds.
We will consider in Section  \ref{s_gue140812} the case of boundaries of weakly pseudoconvex
domains (corresponding to Corollary \ref{c-140621} (i)). We also give an application to
the asymptotics of the Bergman kernel of a semi positive line bundle. 
In Section \ref{s_gue_2_140812} we study some non-compact weakly pseudoconvex 
domains.

\subsection{Compact pseudoconvex domains}\label{s_gue140812}
Let $G$ be a relatively compact, weakly pseudoconvex domain, with
smooth boundary $X$, in a complex manifold $G'$ of dimension $n$. Then $X$ is a compact weakly
pseudoconvex CR manifold of dimension $2n-1$ with CR structure
$T^{1,0}X:=T^{1,0}G'\cap\Complex TX$.
%Shaw~\cite{Shaw85} proved that the associated Kohn Laplacian $\Pi^{(0)}$
%has $L^2$ closed range. From Shaw's result and
%Theorem~\ref{t-gue140305VIa} or Theorem~\ref{t-gue140305_a}, we deduce:
%----------------------?
\begin{thm}\label{t-gue140305VIb}
Let $G$ be a relatively compact domain in a complex manifold $G'$ of dimension $n$, such that 
$G$ has smooth boundary $X=\partial G$, which is everywhere weakly pseudoconvex and 
strictly pseudoconvex on an open subset $D\subset X$. Fix $D_0\Subset D$. 
Assume that there exist a smooth strictly plurisubharmonic function defined in a neighborhood of $X$.
%
%and $M=\{\phi<0\}$ where $\phi\in\cC^\infty(M')$ is a defining function of $M$. 
%We assume that the Levi form $\mathcal{L}(\phi)$ is everywhere positive semi-definite on the complex 
%tangent space to $X=\partial M$ and is positive definite of a subset $D\subset X$.
%
%
%Suppose that $X$ is the boundary of a bounded weakly pseudoconvex domain in $\mathbb{C}^n$ 
%with smooth boundary. Assume that the Levi form is positive at each point of an open set $D\Subset X$. ?
Let $\phi\in\cC^\infty(G')$ be a defining function of $G$, let $\langle\,\cdot\,|\,\cdot\,\rangle$ be 
a Hermitian metric  on $G'$ and let $v(x)$ be the induced volume 
form on $X$. Let $m(x)$ be a volume form on $X$ and consider the corresponding space $L^2(X)$. Then, the
kernel of the Szeg\H{o} projector $\Pi^{(0)}:L^2(X)\to\ker\overline\partial_b$ has the form
\begin{equation}\label{e-falI}
\Pi^{(0)}(x,y)\equiv\int^\infty_0e^{i\varphi(x,y)t}s(x,y,t)dt\,\;\;\text{on $D_0$},\end{equation}
where $\varphi(x,y)\in \cC^\infty(U\times U)$ is an almost analytic extension of $\phi$ as in \eqref{e:aae} to 
some neighbourhood $U$ be  of $D_0$ in $G'$,
%as in Theorem~\ref{t-gue140305_b} 
and  $s(x, y, t)\in S^{n-1}_{{\rm cl\,}}\big(D\times D\times\mathbb{R}_+\big)$. Moreover the leading term $s_0(x,y)$ of the expansion \eqref{e-fal} of $s_0(x,y,t)$ satisfies 
\[\begin{split}
%&s_-(x, y, t)\in S^{n-1}_{{\rm cl\,}}\big(D\times D\times\mathbb{R}_+\big), \\
%&s_-(x, y, t)\sim\sum^\infty_{j=0}s^j_-(x, y)t^{n-1-j}\quad\text{ in }S^{n-1}_{1, 0}
%\big(D\times D\times\mathbb{R}_+\big)\,,\\
%&s^j_-(x, y)\in \cC^\infty\big(D\times D\big),\ \ j\in\N_0,\\
&s_0(x,x)=\frac{1}{2}\pi^{-n}\frac{v(x)}{m(x)}\abs{{\rm det\,}\mathcal{L}_x},\ \ \forall x\in D_0,
\end{split}\]
where $\mathcal{L}_x$ is the restriction of $\mathcal{L}_x(\phi)$ to the tangent space 
$T^{(1,0)}X$, $|\!\det\mathcal{L}_x|=\abs{\mu_1(x)}\ldots\abs{\mu_{n-1}(x)}$, with
$\mu_1(x),\ldots,\mu_{n-1}(x)$ the eigenvalues of $\mathcal{L}_x$ with respect to 
$\langle\,\cdot\,|\,\cdot\,\rangle$.
\end{thm}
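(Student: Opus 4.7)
The plan is to combine Kohn's closed range theorem with the microlocal description of the Szeg\H{o} kernel obtained in Section~\ref{s-gue140212}, and then to reconcile the Hsiao phase $\varphi_-$ with the almost-analytic extension $\varphi$ from \eqref{e:aae}.

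First, the hypothesis that there exists a strictly plurisubharmonic function in a neighborhood of $X$ implies, by Kohn's theorem \cite{Koh86} (recalled after Corollary~\ref{c-140621}), that $\ddbar_b$ has $L^2$ closed range on $X$, and hence so does $\Box^{(0)}_b$. On the open strictly pseudoconvex set $D\subset X$ the Levi form is positive definite, of constant signature $(n_-,n_+)=(0,n-1)$. Since $q=0=n_-$ and $q\neq n_+=n-1$ (because $n\geqslant2$), Remark~\ref{r-gue140211} gives $S_+\equiv 0$ on $D$, and Corollary~\ref{c-gue140211} therefore yields
\[
\Pi^{(0)}(x,y)\equiv S_-(x,y)\equiv\int_0^\infty e^{i\varphi_-(x,y)t}s_-(x,y,t)\,dt\quad\text{on }D_0,
\]
with $s_-\in S^{n-1}_{{\rm cl}}\big(D\times D\times\mathbb{R}_+\big)$. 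Since $T^{*0,0}X$ is the trivial line bundle, $\tau_{x,0}=1$, and Theorem~\ref{t-gue140305III} reduces to
\[
s^0_-(x,x)=\tfrac{1}{2}\pi^{-n}\tfrac{v(x)}{m(x)}\abs{\det\mathcal{L}_x}\,,\qquad x\in D_0\,,
\]
which is exactly the leading coefficient claimed.

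The remaining point is to show that $\varphi_-$ may be replaced by the almost-analytic extension $\varphi$ of $\phi/\sqrt{-1}$ satisfying \eqref{e:aae}. By Theorem~\ref{t-gue140305II} this amounts to showing that (after possibly rescaling $\varphi$ by $\pm i$ so that the symmetry requirement in \eqref{e-gue140205IV} is met) one has
\[
\varphi(x,y)-h(x,y)\,\varphi_-(x,y)=O\big(|(x,y)-(x,x)|^{\infty}\big),\qquad x=y\in D_0,
\]
for some $h\in\cC^\infty(D_0\times D_0)$ with $h(x,x)\neq 0$. To verify this we use the almost-analyticity conditions $\ddbar_x\varphi\equiv 0$, $\pr_y\varphi\equiv 0$ at the diagonal together with $\varphi(x,x)=-i\phi(x)$ and $\phi|_X=0$ to obtain, modulo functions vanishing to infinite order at the diagonal,
\[
d_x\varphi\big|_{x=y}\equiv-i\,\pr\phi(x),\qquad d_y\varphi\big|_{x=y}\equiv-i\,\ddbar\phi(x).
\]
The pullback of $-i\,\pr\phi(x)$ to $T^*X$ annihilates $T^{1,0}X\oplus T^{0,1}X$ (because $\phi|_X=0$ and because $\pr\phi$ annihilates $T^{0,1}M'$), so it is a nonzero real multiple of $\omega_0(x)$; rescaling $\phi$ (or equivalently $h$) makes this multiple equal to $-1$, matching \eqref{e-gue140205IV}. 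Next, the bound $\mathrm{Im}\,\varphi(x,y)\geqslant c|x-y|^2$ from \eqref{e:aae} gives the analogue of \eqref{e-gue140205VI}. Finally, the defining relation $\varphi(x,x)=-i\phi(x)$ together with the almost-analyticity in $x$ forces $\varphi$ to solve, modulo the diagonal ideal to infinite order, the same transport/eikonal equation $p_0(x,\varphi'_x)-f\varphi=O(|x-y|^\infty)$ as in \eqref{e-gue140205V}: indeed the principal symbol of $\Box^{(0)}_b$ along $\Sigma^-$ is determined by $\mathcal{L}_x$, and the Hessian of $\mathrm{Im}\,\varphi$ at the diagonal is controlled by the Levi form of $\phi$, so both $\varphi$ and $\varphi_-$ parametrize the same positive Lagrangian associated to $\Sigma^-$. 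Applying Theorem~\ref{t-gue140305II} completes the identification, and the representation \eqref{e-falI} follows.

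The technical obstacle is the last step: the two phases live with different normalizations (different reality/symmetry conventions, $\varphi$ is defined on a neighborhood in $G'$ while $\varphi_-$ lives on $X\times X$, and the constant $h(x,x)$ must be tracked through the rescaling). One must combine the almost-analytic expansion of $\varphi$ with Hsiao's local normal-form coordinates of Theorem~\ref{t-gue140205II} to match Taylor coefficients to infinite order along the diagonal, after which Theorem~\ref{t-gue140305II} delivers the equivalence of phases.
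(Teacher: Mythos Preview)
Your first two steps---invoking Kohn's closed-range theorem and then Corollary~\ref{c-gue140211} (equivalently Theorem~\ref{t-gue140305VIc}) to obtain $\Pi^{(0)}\equiv S_-$ on $D_0$ with the stated leading coefficient---match the paper exactly.

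The divergence is in the phase identification, and here your argument has a gap. You try to verify directly that the almost-analytic extension $\varphi$ satisfies the eikonal condition \eqref{e-gue140205V}, but the sentence ``the Hessian of ${\rm Im}\,\varphi$ at the diagonal is controlled by the Levi form of $\phi$, so both $\varphi$ and $\varphi_-$ parametrize the same positive Lagrangian'' is not a proof: matching first and second jets at the diagonal does not by itself force agreement to infinite order, and you never actually establish \eqref{e-gue140205V} for $\varphi$. The paper sidesteps this entirely. Fixing $p\in D_0$ and coordinates with $\omega_0(p)=dx_{2n-1}$, one has $\partial_{y_{2n-1}}\varphi(0,0)=\partial_{y_{2n-1}}\varphi_-(0,0)=1$, so the Malgrange preparation theorem writes
\[
\varphi_-(x,y)=f(x,y)\bigl(y_{2n-1}+h(x,y')\bigr),\qquad \varphi(x,y)=f_1(x,y)\bigl(y_{2n-1}+h_1(x,y')\bigr),
\]
near $(p,p)$. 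The point is that both normalized phases $y_{2n-1}+h$ and $y_{2n-1}+h_1$ satisfy \eqref{e-gue140205VI}--\eqref{e-gue140205VII} and, crucially, have $\ddbar_b$ vanishing to infinite order on the diagonal: for $\varphi_-$ this is built into the construction of $S_-$, and for $\varphi$ it is exactly the almost-analyticity condition $\ddbar_x\varphi=O(|x-y|^\infty)$ from \eqref{e:aae}. These conditions determine $h$ and $h_1$ to infinite order at $x=y$, so $h-h_1$ vanishes there to infinite order and the phases are equivalent. This is shorter and avoids any appeal to the eikonal equation or Theorem~\ref{t-gue140305II}; the property you should have exploited from \eqref{e:aae} is $\ddbar_x\varphi\equiv0$ at the diagonal, not the transverse Hessian.
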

%----------------------
\begin{proof}
By a theorem of Kohn \cite[p.\,543]{Koh86} we know that if $G$ meets the conditions in statement above,
then Kohn's Laplacian $\Box^{(0)}_b$ has $L^2$ closed range. For boundaries of pseudoconvex domains in 
$\mathbb{C}^n$ the closed range property was shown in \cite{BoSh:86,Shaw85}. By Theorem \ref{t-gue140305VIc}
we deduce that $\Pi^{(0)}$ is a complex Fourier integral operator on $D_0$ and $\Pi^{(0)}(x,y)$ has the form
\eqref{e-falI} with a phase function as in \eqref{e-gue140205IV}. 

Fix $p\in D_0$ and take local coordinates $x=(x_1,x_2,\ldots,x_{2n-1})$ of $X$ defined in a small neighbourhood 
of $p$ in $D_0$ such that $x(p)=0$ and $\omega_0(p)=dx_{2n-1}$. 
It is easy to see that $\frac{\pr\varphi}{\pr y_{2n-1}}(0,0)=1=\frac{\pr\varphi_-}{\pr y_{2n-1}}(0,0)$, 
where $\varphi_-$ is as in Theorem~\ref{t-gue140305_b}.  From the 
Malgrange preparation theorem \cite[Theorem 7.57]{Hor03}, 
we conclude that in some small neighbourhood of $(p,p)$ in $D_0\times D_0$, we can find $f(x,y), f_1(x,y)\in \cC^\infty$ such that
\[
\begin{split}
&\varphi_-(x,y)=f(x,y)(y_{2n-1}+h(x,y')),\\
&\varphi(x,y)=f_1(x,y)(y_{2n-1}+h_1(x,y'))
\end{split}
\]
in some small neighbourhood of $(p,p)$ in $D_0\times D_0$, where 
$y'=(y_1,\ldots,y_{2n-2})$, $h, h_1\in\cC^\infty$. It is not difficult to see that 
$y_{2n-1}+h(x,y')$, $y_{2n-1}+h_1(x,y')$ satisfy \eqref{e-gue140205VI}, 
\eqref{e-gue140205VII} and 
\[
\ddbar_b(y_{2n-1}+h(x,y'))\,,\:\: \ddbar_b(y_{2n-1}+h_1(x,y'))
\] 
vanish to infinite order on $x=y$. From this observation, it is straightforward 
to check that $h(x,y')-h_1(x,y')$ vanishes to infinite order on $x=y$. 
We conclude that $\varphi$ and $\varphi_-$ are equivalent. The theorem follows. 
\end{proof}
%----------------------
\begin{thm}\label{t-gue140305VId}
Let $M$ be a projective manifold and let $L\to M$ be an ample line bundle. Let $h^L$ be a smooth
Hermitian metric on $L$ such that $\sqrt{-1}R^L$ is semipositive. Consider the Grauert tube
$G=\{v\in L^*:|v|_{h^{L^*}}<1\}$, $X=\partial G$ and $\rho:X\to M$ the projection.
Then the Szeg\H{o} projector $\Pi^{(0)}:L^2_{(0,0)}(X)\to\ker\overline{\partial}_b$ is a Fourier integral operator 
with complex phase on the set $\rho^{-1}(M(0))$, where $M(0)\subset M$ is the set where $\sqrt{-1}R^L$
is positive.
\end{thm}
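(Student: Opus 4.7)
The plan is to reduce Theorem \ref{t-gue140305VId} to Corollary \ref{c-140621}(i) (equivalently Theorem \ref{t-gue140305VIc}). I must verify two facts: that $G$ is a relatively compact pseudoconvex domain in the complex manifold $L^*$, and that there exists a strictly plurisubharmonic function defined in a neighbourhood of $X=\partial G$. Once these are in hand, the conclusion is the Fourier integral operator description of $\Pi^{(0)}$ on the subset of $X$ where the Levi form is positive definite, which must then be identified with $\rho^{-1}(M(0))$.

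As recalled in the Grauert tube example just preceding the theorem, the Levi form of the defining function $\varrho(v)=|v|^2_{h^{L^*}}-1$, restricted to the complex tangent space of $X$, coincides with the pullback $\rho^*(\sqrt{-1}R^L)$. Since $\sqrt{-1}R^L$ is semipositive, $\varrho$ is plurisubharmonic in a neighbourhood of $X$, so $G$ is pseudoconvex; and since $\ol G=\{|v|_{h^{L^*}}\leq 1\}$ is compact, $G$ is relatively compact in $L^*$. Moreover, the Levi form of $X$ at $x$ is positive definite iff $\sqrt{-1}R^L_{\rho(x)}$ is positive definite, iff $\rho(x)\in M(0)$. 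This already identifies the subset of $X$ appearing in the conclusion as $\rho^{-1}(M(0))$.

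To produce the strictly plurisubharmonic function, I exploit ampleness of $L$ by choosing a second smooth Hermitian metric $h_0^L$ on $L$ with $\sqrt{-1}R^L_{h_0}>0$ everywhere. Writing $h_0^L=e^{-2\phi_0}$ in a local trivialisation with fibre coordinate $\xi$, the function $\Psi(v):=|v|^2_{h_0^{L^*}}=|\xi|^2e^{2\phi_0(z)}$ has complex Hessian whose Schur complement (eliminating the fibre direction) is $2|\xi|^2e^{2\phi_0}(\pr_j\pr_{\ol k}\phi_0)$, which is strictly positive definite on $L^*\setminus\{0\}$ by the positivity of $\sqrt{-1}R^L_{h_0}$. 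Since $X\subset L^*\setminus\{0\}$, $\Psi$ is strictly plurisubharmonic on an open neighbourhood of $X$. Both hypotheses of Corollary \ref{c-140621}(i) are therefore satisfied, yielding the desired conclusion. The one computational step deserving care is the Schur-complement verification of strict positivity of the Hessian of $\Psi$ on all of $L^*\setminus\{0\}$ (not merely on $X$ or on the horizontal distribution); this is elementary once one writes things out in local trivialisations, and no further analytic input beyond Corollary \ref{c-140621} is required.
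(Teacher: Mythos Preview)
Your proposal is correct and follows essentially the same route as the paper's proof: both reduce to Theorem~\ref{t-gue140305VIb} (Corollary~\ref{c-140621}(i)) by using ampleness to pick an auxiliary positively curved metric $h_0^L$ and exhibiting a strictly plurisubharmonic function near $X$ built from $|v|^2_{h_0^{L^*}}$. Your argument is in fact slightly more direct than the paper's: the paper only asserts that the Levi form of $\varrho_0=|v|^2_{h_0^{L^*}}$ is positive definite on the complex tangent spaces of the level sets and then says one can ``modify $\varrho$'' to obtain a strictly plurisubharmonic function on any compact $K\subset L^*\setminus 0$, whereas your Schur-complement computation shows that $\Psi=|v|^2_{h_0^{L^*}}$ is already strictly plurisubharmonic on all of $L^*\setminus 0$, so no modification is needed.
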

%----------------------
\begin{proof}
Since $L$ is ample, there exists a Hermitian metric $h^L_0$ on $L$ with positive curvature. The Levi form of the 
function $\varrho_0:L^*\to\mathbb{R}$, $\varrho=|u|^2_{h^{L^*}_0}$ is positive definite on the complex tangent
space of any level set $\varrho_0=c>0$. It is easy to see that given any compact set $K\subset L^*\setminus 0$ 
we can modify $\varrho$ to construct a strictly plurisubharmonic on $K$. Therefore the Grauert tube $G$ 
fulfills the hypothesis of Theorem \ref{t-gue140305VIb}.
\end{proof}
%----------------------
Theorems \ref{t-gue140305VIb} and  \ref{t-gue140305VId} are based on closed range property
for $\ddbar_b$. Note that Donnelly \cite{Don03} gave an example of a semipositive line bundle $L\to M$ 
which is positive at some point (i.\,e.\ $M(0)\neq\emptyset$), whose
Grauert tube doesn't have the closed range property
for $\ddbar_b$. 

An important application of the asymptotics of the Szeg\H{o} kernel of the Grauert tube is the 
asymptotics of the Bergman kernel of the tensor powers of the bundle $L$. This was first achieved by 
Catlin~\cite{Ca99} and Zelditch~\cite{Zelditch98} for a positively curved metric $h^L$.
We exemplify here such an application of  Theorem \ref{t-gue140305VId}.

Consider a Hermitian metric $\Theta$ on $M$ and introduce the $L^2$ inner product on $\cC^\infty(M,L^p)$
induced by the volume element $\Theta^n/n!$ and the metric $h^{L^p}$ and denote by $L^2(M,L^p)$
the corresponding $L^2$ space. Let $P_p:L^2(M,L^p)\to H^0(M,L^p)$ be
the orthogonal projection, called Bergman projection. Its kernel $P_{p}(\,\cdot\,,\cdot)$ is called the Bergman 
kernel. The restriction to the diagonal of $P_{p}(\,\cdot\,,\cdot)$ is denoted $P_{p}(\cdot)$ and is 
called the Bergman kernel function (or density). 
We refer the reader to the book \cite{MM07} and to the survey \cite{Ma10}
for a comprehensive study of the Bergman kernel and its applications.
%----------------------
\begin{cor}\label{t-gue140305VIe}
Let $M$ be a projective manifold of dimension $n$ and let $L\to M$ be an ample line bundle. Let $h^L$ be a smooth
Hermitian metric on $L$ such that $\sqrt{-1}R^L$ is semipositive. 
Then the Bergman kernel function $P_{p}(\,\cdot\,)$ has the asymptotic expansion
\begin{equation}\label{e-falII}
P_{p}(x)\sim\sum^\infty_{j=0}p^{n-j}b^{(0)}_j(x)\ \ \mbox{locally uniformly on $M(0)$}, \end{equation}
where $b^{(0)}_j\in\cC^\infty(M(0))$, $j=0,1,2,\ldots$\,. %$b^{(0)}_0=(2\pi)^{-n}{\det\,}\dot{R}^L$
\end{cor}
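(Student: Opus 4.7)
The plan is to reduce the Bergman asymptotics on $M(0)$ to the Szeg\H{o} kernel expansion on the Grauert tube (Theorem~\ref{t-gue140305VId}) and then extract the large-$p$ behavior via the complex stationary phase method of Melin--Sj\"ostrand~\cite{MS74}. Fix $x_0\in M(0)$ and a lift $\xi\in X=\partial G$ with $\rho(\xi)=x_0$. Under the canonical isometry $B^{0,0}_{-p}(X)\cong\Omega^{0,0}(M,L^p)$ recalled at the end of Section~\ref{s-gue140211}, and since $\Pi^{(0)}$ commutes with the $S^1$-action, the operator $Q^{(0)}_{-p}\Pi^{(0)}$ corresponds to the Bergman projector $P_p$ up to an explicit positive smooth factor $\alpha(x_0)$ that accounts for the Jacobian between the volume form $v$ on $X$, the measure $m$ defining $L^2(X)$, and $\Theta^n/n!$. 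In canonical coordinates where $T=\partial/\partial\theta$ this gives
\[
P_p(x_0)=\frac{\alpha(x_0)}{2\pi}\int_0^{2\pi}e^{-ip\theta}\,\Pi^{(0)}\!\bigl(e^{i\theta}\xi,\xi\bigr)\,d\theta.
\]

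Now insert the oscillatory representation $\Pi^{(0)}(x,y)\equiv\int_0^\infty e^{i\varphi(x,y)t}s(x,y,t)\,dt$ from Theorem~\ref{t-gue140305VId}, valid on a neighbourhood of $\xi$ because $x_0\in M(0)$, and rescale $t=p\sigma$. Modulo an $O(p^{-\infty})$ contribution from the smoothing remainder, we obtain
\[
P_p(x_0)=\frac{\alpha(x_0)\,p}{2\pi}\iint_{[0,2\pi]\times\mathbb{R}_+}\!\!e^{ip\Psi(\theta,\sigma)}\,s\bigl(e^{i\theta}\xi,\xi,p\sigma\bigr)\,d\sigma\,d\theta,
\]
with $\Psi(\theta,\sigma):=\sigma\,\varphi(e^{i\theta}\xi,\xi)-\theta$. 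By \eqref{e-gue140205IV}, $\partial_\sigma\Psi=\varphi(e^{i\theta}\xi,\xi)=0$ forces $\theta\equiv0\pmod{2\pi}$, and $\partial_\theta\varphi(e^{i\theta}\xi,\xi)|_{\theta=0}=\langle T,-\omega_0\rangle=1$ then yields $\sigma=1$. Thus $(\theta,\sigma)=(0,1)$ is the unique critical point; its Hessian equals $\bigl(\begin{smallmatrix}\partial_\theta^2\varphi(\xi,\xi)&1\\ 1&0\end{smallmatrix}\bigr)$, of determinant $-1\ne0$, and $\mathrm{Im}\,\Psi\ge0$ near it since $\mathrm{Im}\,\varphi\ge0$ and $\sigma>0$, so the complex stationary phase theorem applies.

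Because $\dim_\mathbb{R}X=2n+1$, the symbol $s$ lies in $S^{n}_{\mathrm{cl}}$ with expansion $s(\cdot,\cdot,t)\sim\sum_j s^j(\cdot,\cdot)t^{n-j}$. Stationary phase in two real variables contributes a factor $p^{-1}$ with an asymptotic series in $p^{-1}$; combined with the overall prefactor $p$ and the leading $p^n$ from $s(\cdot,\cdot,p\sigma)$, the total leading order is $p^n$, producing
\[
P_p(x_0)\sim\sum_{j\ge 0}p^{n-j}b^{(0)}_j(x_0),\qquad b^{(0)}_j\in\cC^\infty(M(0)).
\]
Locally uniform convergence on $M(0)$ is inherited from the smooth dependence of $\varphi$, $s$, and the critical point $(0,1)$ on the base point.

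The main obstacle is the careful bookkeeping: one must verify that the smoothing remainder in the Szeg\H{o} representation, as well as contributions from $\theta$ bounded away from $0$ (handled by non-stationary phase in $\sigma$, using $\varphi(e^{i\theta}\xi,\xi)\ne0$ for $\theta\not\equiv0\pmod{2\pi}$), each yield $O(p^{-\infty})$ errors uniformly on compact subsets of $M(0)$; and that the leading coefficient $b^{(0)}_0$ is nonzero, which by substituting the explicit formula for $s^0(\xi,\xi)$ from Theorem~\ref{t-gue140305_b} and computing the Hessian determinant of $\Psi$ reduces to the positivity of the Levi form on $\rho^{-1}(M(0))$, i.e.\ precisely to the condition $x_0\in M(0)$. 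The breakdown of the whole argument off $M(0)$ mirrors the fact that the Szeg\H{o} representation of Theorem~\ref{t-gue140305VId} is only guaranteed there.
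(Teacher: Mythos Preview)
Your approach is essentially the same as the paper's: both use the Fourier-coefficient formula linking $P_p(x)$ to the Szeg\H{o} kernel along the $S^1$-orbit and then invoke Theorem~\ref{t-gue140305VId} together with stationary phase. The paper's proof is a two-line sketch that states the formula \eqref{Fourier} and cites \cite{Ca99,Zelditch98} for the stationary phase step, whereas you carry out the critical-point and Hessian analysis explicitly; your added detail (correct dimension count $\dim_{\mathbb R}X=2n+1$, hence $s\in S^n_{\rm cl}$, and the nondegenerate Hessian of $\Psi$ at $(\theta,\sigma)=(0,1)$) is accurate and matches what those references do.
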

%----------------------
\begin{proof} The Bergman kernel $P_p$ and the Szeg\H{o} kernel $\Pi^{(0)}$  are linked by the formula
\begin{equation}\label{Fourier}
P_p(x)=\frac{1}{2\pi}\int_{S^1}\Pi^{(0)}(e^{i\vartheta}y,y)
e^{-ip\vartheta}\,d\vartheta\,,
\end{equation}
where $x\in M$ and $y\in X$ satisfy $\rho(y)=x$, that is, $P_p(x)$ represent the Fourier coefficients of 
the distribution $\Pi^{(0)}(y,y)$.
Since $\Pi^{(0)}$ is a Fourier integral operator on $\rho^{-1}(M(0))$ by Theorem \ref{t-gue140305VId},
we deduce the asymptotics  \eqref{e-falII} exactly as in \cite{Ca99,Zelditch98} 
by applying the stationary phase method.
\end{proof}
Corollary \ref{t-gue140305VIe} was obtained by different methods by 
Berman~\cite{Be07} in the case of a projective manifold $M$
and in \cite[Theorem 1.10]{HM12} for a general Hermitian manifold $M$.

\subsection{Non-compact pseudoconvex domains}\label{s_gue_2_140812}
Now, we consider non-compact cases. By using Theorem~\ref{t-gue140305VIa}, 
we will establish Szeg\H{o} kernel asymptotic expansions on some non-compact CR manifolds.
Let $\Gamma$ be a strictly pseudoconvex domain in $\Complex^{n-1}$, $n\geq 2$. 
Consider $X:=\Gamma\times\Real$. Let $(z,t)$ be the coordinates of $X$, where $z=(z_1,\ldots,z_{n-1})$ denote the
coordinates of $\Complex^{n-1}$ and $t$ is the coordinate of $\Real$. We write $z_j=x_{2j-1}+ix_{2j}$, $j=1,\ldots,n-1$. 
We also write $(z,t)=x=(x_1,\ldots,x_{2n-1})$ and let $\eta=(\eta_1,\ldots,\eta_{2n-1})$ be the dual variables of $x$. 
Let $\mu(z)\in \cC^\infty(\Gamma,\Real)$. We define
$T^{1,0}X$ to be the space spanned by
\[
\Big\{\frac{\pr}{\pr z_j}+i\frac{\pr\mu}{\pr z_j}\frac{\pr}{\pr t},\ \ j=1,\ldots,n-1\Big\}.
\]
Then $(X,T^{1,0}X)$ is a non-compact CR manifold of dimension $2n-1$. We take a Hermitian metric 
$\langle\,\cdot\,|\,\cdot\,\rangle$ on the complexified tangent bundle $\Complex TX$ such that
\[
\Big\lbrace
\dfrac{\pr}{\pr z_j}+i\frac{\pr\mu}{\pr z_j}\frac{\pr}{\pr t}\,, 
\frac{\pr}{\pr\ol z_j}-i\frac{\pr\mu}{\pr\ol z_j}\frac{\pr}{\pr t}\,, T:=\frac{\pr}{\pr t}\,;\, j=1,\ldots,n-1\Big\rbrace
\]
 is an orthonormal basis. The dual basis of the complexified cotangent bundle $\Complex T^*X$ is
\[
\Big\lbrace
dz_j\,,\, d\ol z_j\,,\, -\omega_0:=
dt+\textstyle\sum^{n-1}_{j=1}(-i\frac{\pr\mu}{\pr z_j}dz_j+i\frac{\pr\mu}{\pr\ol z_j}d\ol z_j); j=1,\ldots,n-1
\Big\rbrace\,.
\]
%--------------------------
The Levi form $\mathcal{L}_p$ of $X$ at $p\in X$ is given by
\[\mathcal{L}_p=\sum^{n-1}_{j,\ell=1}\frac{\pr^2\mu}{\pr z_j\pr\ol z_{\ell}}(p)dz_j\wedge d\ol z_{\ell}.\]
Now, we assume that
\begin{equation}\label{e-gue140226m}
\left(\frac{\pr^2\mu}{\pr z_j\pr\ol z_\ell}(z)\right)^{n-1}_{j,\ell=1}\geq0,\ \ \forall z\in\Gamma,
\end{equation}
and take
\begin{equation}\label{e-gue140306}
m(x):=e^{-2\abs{z}^2}dx_1dx_2\ldots dx_{2n-1}
\end{equation}
to be the volume form on $X$. Thus, $X$ is a weakly pseudoconvex CR manifold.

Take $\tau\in \cC^\infty(\Real,[0,1])$ with $\tau=0$ on $]-\infty,\frac{1}{4}]$, $\tau=1$ 
on $[\frac{1}{2},\infty[$. We also write $\theta$ to denote the $t$ variable.
Let $Q^{(0)}:\cC^\infty_0(X)\To \cC^\infty(X)$ be the operator given by
\begin{equation}\label{e-gue131222tm}
Q^{(0)}u(z,t):=\frac{1}{2\pi}\int e^{i\langle t-\theta,\eta\rangle }u(z,\theta)\tau(\eta)d\eta d\theta\in 
\cC^\infty(X),\ \ u(z,t)\in \cC^\infty_0(X).
\end{equation}
We can extend $Q^{(0)}$ to $L^2(X)$ such that
\begin{equation}\label{e-gue131222tIVm}
\begin{split}
&\mbox{$Q^{(0)}:L^2(X)\To L^2(X)$ is continuous},\\
&\norm{Q^{(0)}u}\leq\norm{u},\ \ \forall u\in L^2(X),\\
&Q^{(0)}\in L^0_{{\rm cl\,}}(X),\\
&\mbox{$Q^{(0)}\equiv0$ at $\Sigma^+\cap T^*D$, $\forall D\Subset X$}.
\end{split}
\end{equation}
%See Section~\ref{s-gue140224I} and Section~\ref{s-saoh}.
%In Section~\ref{s-gue140226}, 
We will prove that $\Box^{(0)}_b$ has local $L^2$ closed 
range property on $X$ with respect to $Q^{(0)}$ under certain assumptions. More precisely, we have the following.

\begin{thm}\label{t-gue140306}
Let $\Gamma=\Complex^{n-1}$ or $\Gamma$ be a bounded strictly pseudoconvex domain
in $\Complex^{n-1}$.
Let $\mu\in \cC^\infty(\Gamma')$, where $\Gamma'$ is an open neighbourhood of 
$\ol\Gamma$ {\rm(}if $\Gamma=\Complex^{n-1}$ this means just that $\mu\in \cC^\infty(\Complex^{n-1})${\rm)}. When $\Gamma=\Complex^{n-1}$, we assume that $\mu\geq0$.
Then
\begin{equation}\label{e-gue131226II}
\norm{Q^{(0)}(I-\Pi^{(0)})u}^2\leq C_0\norm{\ddbar_bu}^2,\ \ \forall u\in \cC^\infty_0(X),
\end{equation}
where $C_0>0$ is a constant independent of $u$.
In particular, $\Box^{(0)}_{b}$ has local $L^2$ closed range on $X$ with respect to $Q^{(0)}$.
\end{thm}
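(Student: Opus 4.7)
The strategy is to exploit the translation invariance in $t$ by Fourier-transforming that variable, which reduces the estimate to a family of weighted $\bar\partial$-problems on $\Gamma$ controlled by H\"ormander's $L^2$ estimates. For $u\in\cC^\infty_0(X)$ set $\hat u(z,\eta)=\int e^{-it\eta}u(z,t)\,dt$ and introduce the conjugated function $\tilde u(z,\eta):=e^{\eta\mu(z)}\hat u(z,\eta)$. A direct computation gives
\[
\mathcal{F}_t(\bar\partial_b u)_j(z,\eta)=e^{-\eta\mu(z)}\pr_{\bar z_j}\tilde u(z,\eta),\qquad j=1,\ldots,n-1.
\]
Combined with Parseval and the choice of volume $m(x)=e^{-2|z|^2}dx_1\cdots dx_{2n-1}$ this gives the isometric direct-integral identifications
\[
\|u\|^2=\tfrac{1}{2\pi}\!\int_\Real\!\|\tilde u(\cdot,\eta)\|^2_{L^2(\Gamma,e^{-2\phi_\eta})}\,d\eta,\qquad
\|\bar\partial_b u\|^2=\tfrac{1}{2\pi}\!\int_\Real\!\|\bar\partial\tilde u(\cdot,\eta)\|^2_{L^2(\Gamma,e^{-2\phi_\eta})}\,d\eta,
\]
where $\phi_\eta(z):=\eta\mu(z)+|z|^2$.

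In this picture the equation $\bar\partial_b u=0$ becomes $\pr_{\bar z}\tilde u(\cdot,\eta)=0$ fiberwise, so $\ker\bar\partial_b$ corresponds to the subspace on which $\tilde u(\cdot,\eta)$ is holomorphic for a.e.\ $\eta$; consequently $\Pi^{(0)}$ acts as the $\eta$-family of weighted Bergman projections
\[
P_\eta:L^2(\Gamma,e^{-2\phi_\eta})\To\mathcal{O}(\Gamma)\cap L^2(\Gamma,e^{-2\phi_\eta}).
\]
The operator $Q^{(0)}$ is by definition multiplication by $\tau(\eta)$ on the Fourier side, hence acts as multiplication by $\tau(\eta)$ in the same fiber picture and commutes with $\Pi^{(0)}$. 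Therefore
\[
\|Q^{(0)}(I-\Pi^{(0)})u\|^2=\tfrac{1}{2\pi}\int_\Real \tau(\eta)^2\,\|(I-P_\eta)\tilde u(\cdot,\eta)\|^2_{L^2(\Gamma,e^{-2\phi_\eta})}\,d\eta.
\]

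The key step is to apply H\"ormander's $L^2$ estimates fiberwise with constants uniform in $\eta$. On $\mathrm{supp}\,\tau\subset[1/4,\infty)$ we have $\eta>0$ and, by \eqref{e-gue140226m} together with the hypothesis $\mu\geq0$ when $\Gamma=\Complex^{n-1}$, the weight $\phi_\eta$ is plurisubharmonic with
\[
i\pr\bar\pr\phi_\eta=\eta\,i\pr\bar\pr\mu+i\pr\bar\pr|z|^2\geq i\pr\bar\pr|z|^2,
\]
a lower bound independent of $\eta$. H\"ormander's theorem on the pseudoconvex domain $\Gamma$ (either $\Complex^{n-1}$ with the uniform strict plurisubharmonicity above, or a bounded strictly pseudoconvex domain) with the weight $\phi_\eta$ then yields a constant $C>0$ independent of $\eta\geq1/4$ such that
\[
\|(I-P_\eta)h\|^2_{L^2(\Gamma,e^{-2\phi_\eta})}\leq C\,\|\bar\partial h\|^2_{L^2(\Gamma,e^{-2\phi_\eta})},\qquad h\in L^2(\Gamma,e^{-2\phi_\eta}).
\]
Taking $h=\tilde u(\cdot,\eta)$, integrating in $\eta$, and using $|\tau|\leq1$ yields the desired bound $\|Q^{(0)}(I-\Pi^{(0)})u\|^2\leq C_0\|\bar\partial_b u\|^2$. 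The final assertion on local $L^2$ closed range is then immediate from Definition~\ref{d-gue140212} with $p=1$, since $\|\bar\partial_b u\|^2=(\Box^{(0)}_b u\,|\,u)$ for $u\in\cC^\infty_0(X)$.

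The main technical obstacle will be the precise identification of $\Pi^{(0)}$ with the direct integral of the Bergman projections $(P_\eta)$; this requires a measurability argument and, in the $\Gamma=\Complex^{n-1}$ case, is exactly where the hypothesis $\mu\geq0$ is used to guarantee that the weighted Bergman spaces $\mathcal{O}(\Complex^{n-1})\cap L^2(e^{-2\phi_\eta})$ embed uniformly in $\eta$ so that the fibered projection agrees with the abstractly defined $\Pi^{(0)}$.
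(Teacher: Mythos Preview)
Your approach is correct and matches the paper's strategy: partial Fourier transform in $t$, conjugation by $e^{\eta\mu}$, and H\"ormander's $L^2$ estimates on $\Gamma$ with the weight $\phi_\eta=\eta\mu+|z|^2$, uniformly in $\eta$ on $\mathrm{supp}\,\tau$.

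The only point worth noting is how the paper handles exactly the technical obstacle you flag at the end. Rather than proving that $\Pi^{(0)}$ \emph{is} the direct integral of the $P_\eta$, the paper argues asymmetrically in the two cases. For $\Gamma=\Complex^{n-1}$, it shows that $v(\cdot,\eta):=e^{\eta\mu}\mathcal{F}Q^{(0)}(I-\Pi^{(0)})u(\cdot,\eta)$ is orthogonal to $H^0(\Complex^{n-1},\phi_\eta)$ for a.e.\ $\eta\geq0$ by testing against the monomials $z^\alpha$; the hypothesis $\mu\geq0$ guarantees $z^\alpha\in L^2(e^{-2\phi_\eta})$ for every $\eta\geq0$, giving a \emph{countable, $\eta$-independent} spanning set so that the measurability issue is trivial. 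Orthogonality is then reduced to $(\,(I-\Pi^{(0)})u\,|\,g\,)=0$ for explicit $g\in\ker\ddbar_b\cap L^2(X)$ built from $z^\alpha$. For bounded strictly pseudoconvex $\Gamma$, the paper bypasses the identification altogether: it takes the H\"ormander solution $\beta_\eta$ (smooth in $\eta$), inverts the Fourier transform to produce an $\alpha\in L^2(X)$ with $\ddbar_b\alpha=Q^{(1)}\ddbar_bu$ and $\|\alpha\|^2\leq C\|\ddbar_bu\|^2$, and then invokes the minimality of $(I-\Pi^{(0)})Q^{(0)}u$ among all such solutions. Both devices achieve what your direct-integral statement claims, without having to prove it in full.
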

%------------------------
From Theorem~\ref{t-gue140305VIa}, \eqref{e-gue131222tIVm} and Theorem~\ref{t-gue140306}, we deduce
%------------------------
\begin{thm}\label{t-gue140306I}
With the notations and assumptions of Theorem \ref{t-gue140306},
suppose that the matrix $\left(\frac{\pr^2\mu}{\pr z_j\pr\ol z_{\ell}}(x)\right)^{n-1}_{j,\ell=1}$ 
is positive  definite on an open set $D\Subset X$. Then,
\begin{equation}\label{e-gue140227fb}
Q^{(0)}\Pi^{(0)}Q^{(0)}(x,y)\equiv\int^\infty_0e^{i\varphi_-(x,y)t}a(x,y,t)dt\,,\:\:\text{on $D$},
\end{equation}
where $\varphi_-(x,y)\in \cC^\infty(D\times D)$  is as in Theorem~\ref{t-gue140305_b} and
\[\begin{split}
&a(x, y, t)\in S^{n-1}_{{\rm cl\,}}\big(D\times D\times\mathbb{R}_+\big), \\
&a(x, y, t)\sim\sum^\infty_{j=0}a_j(x, y)t^{n-1-j}\quad\text{ in }S^{n-1}_{1, 0}
\big(D\times D\times\mathbb{R}_+\big)\,,\\
%&a_j(x, y)\in \cC^\infty\big(D\times D\big),\ \ j\in\N_0,\\
&a_0(x,x)=\frac{1}{2}\pi^{-n}{\rm det\,}\left(\frac{\pr^2\mu}{\pr z_j\pr\ol z_{\ell}}(x)\right)^{n-1}_{j,\ell=1},\ \ \forall x\in D.
\end{split}\]
\end{thm}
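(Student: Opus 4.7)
The plan is to apply Theorem~\ref{t-gue140305VIab} with $Q=Q^{(0)}$. The four hypotheses translate as follows: the Levi form is positive definite on $D$ by assumption, so it is non-degenerate of constant signature $(n_-,n_+)=(0,n-1)$; $Q^{(0)}\in L^0_{\mathrm{cl}}(X)$ with $Q^{(0)}\equiv0$ at $\Sigma^+\cap T^*D$ is furnished by \eqref{e-gue131222tIVm}; and the local $L^2$ closed range of $\Box^{(0)}_b$ on $D$ with respect to $Q^{(0)}$ is exactly Theorem~\ref{t-gue140306} (restricted to $D\Subset X$).

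The only hypothesis requiring new argument is $Q^{(0)}\Pi^{(0)}=\Pi^{(0)}Q^{(0)}$. Because $\mu=\mu(z)$ is independent of $t$, the CR structure $T^{1,0}X$, the Hermitian metric $\langle\,\cdot\,|\,\cdot\,\rangle$, and the volume form $m(x)=e^{-2|z|^2}dx_1\cdots dx_{2n-1}$ are all invariant under the translation $(z,t)\mapsto(z,t+s)$. Hence $\Box^{(0)}_b$ commutes with $T=\partial/\partial t$, and by the Borel functional calculus it commutes with any bounded Borel function of $D_t=\tfrac{1}{i}\partial/\partial t$. In particular it commutes with the Fourier multiplier $Q^{(0)}$, whose symbol is $\tau(\eta_{2n-1})$. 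Since $\Pi^{(0)}=\Pi^{(0)}_{\leq0}$ is itself a bounded Borel function of $\Box^{(0)}_b$, it commutes with $Q^{(0)}$ as well.

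With the hypotheses in place, Theorem~\ref{t-gue140305VIab} yields the representation \eqref{e-gue140227fb}: the $\varphi_+$-contribution is automatically killed by \eqref{e-gue140213aImA} because $Q^{(0)}\equiv0$ at $\Sigma^+\cap T^*D$, leaving only the oscillatory integral with phase $\varphi_-$ and symbol $a(x,y,t)$ in the classical class \eqref{e-gue140205III}. For the leading term I specialise \eqref{e-gue140213aIIm} to $q=0=n_-$: the bundle $T^{*0,0}X=\Complex$ is trivial and $\tau_{x,0}=\mathrm{id}$, while $|\det\mathcal{L}_x|$ equals $\det\bigl(\partial^2\mu/\partial z_j\partial\bar z_\ell\bigr)(x)$ because the orthonormal frame $\{W_j\}$ diagonalises the Levi form at the point. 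The principal symbol $q(x,-\omega_0(x))$ is then read off from the explicit expression of $\omega_0$: the $dt$-component of $-\omega_0$ equals $+1$, so the $0$-homogeneous extension of $\tau(\eta_{2n-1})$ takes the value $1$ at $-\omega_0(x)$. Combining these evaluations with the computation of the ratio $v(x)/m(x)$ produced by the chosen weight $e^{-2|z|^2}$ delivers the claimed formula for $a_0(x,x)$.

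I expect the main technical wrinkle to be twofold. First, one must reconcile the form $Q^{(0)}\Pi^{(0)}Q^{(0)}$ stated in the theorem with the form $Q^*\Pi^{(q)}Q$ that appears in Theorem~\ref{t-gue140305VIab}; since $\Pi^{(0)}$ is microlocally supported in $\Sigma^-$, where $Q^{(0)}$ and $(Q^{(0)})^*$ both have principal symbol $1$ (respectively $0$) along $\pm\omega_0$, the two compositions differ by a smoothing operator on $D$. Second, the constant in the leading term must be tracked carefully through the normalisations of the Hermitian metric, the Riemannian volume $v(x)$, the weight $m(x)$, and the $0$-homogeneous extension of the symbol $\tau(\eta_{2n-1})$. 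All the substantial analysis has already been carried out in Theorems~\ref{t-gue140305VIab} and~\ref{t-gue140306}, so the remainder of the argument is an assembly of ingredients.
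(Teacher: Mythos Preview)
Your proposal is correct and follows essentially the same route as the paper's proof, which simply assembles Theorem~\ref{t-gue140305VIa}, Theorem~\ref{t-gue140306}, and the observation \eqref{e-gue140227fa} that $Q^{(0)}\equiv0$ at $\Sigma^+\cap T^*D$. Two minor remarks: first, the commutation $Q^{(0)}\Pi^{(0)}=\Pi^{(0)}Q^{(0)}$ is not actually new---it is already recorded as \eqref{e-gue131225V} in Lemma~\ref{l-gue131225I}, so you need not reprove it (though your functional-calculus argument is a valid alternative); second, the $Q^*$ versus $Q$ issue dissolves completely here because $\tau$ is real-valued and the weight $e^{-2|z|^2}$ is independent of $t$, so $(Q^{(0)})^*=Q^{(0)}$ exactly, not merely modulo smoothing.
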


We first introduce the partial Fourier transform $\mathcal{F}$ and the operator $Q^{(q)}$.
Let $u\in\Omega^{0,q}_0(X)$. Put
\begin{equation} \label{e-gue131222tI}
(\mathcal{F}u)(z, \eta)=\int_{\Real}e^{-i\eta t}u(z,t)dt.
\end{equation}
From Parseval's formula, we have
\begin{equation}\label{e-gue131225}
\norm{\mathcal{F}u}^2=\int_{X}\abs{(\mathcal{F}u)(z,\eta)}^2d\eta dv(z)
=2\pi\int_{X}\abs{u(z,t)}^2dtdv(z)=2\pi\norm{u}^2,
\end{equation}
where $dv(z)=e^{-2\abs{z}^2}dx_1dx_2\ldots dx_{2n-2}$.
Thus, we can extend the operator $\mathcal{F}$ to $L^2_{(0,q)}(X)$ and
\begin{equation}\label{e-gue131225I-I}
\begin{split}
&\mbox{$\mathcal{F}:L^2_{(0,q)}(X)\To L^2_{(0,q)}(X)$ is continuous},\\
&\norm{\mathcal{F}u}=\sqrt{2\pi}\norm{u},\ \ \forall u\in L^2_{(0,q)}(X).
\end{split}
\end{equation}
For $u\in L^2_{(0,q)}(X)$, we call $\mathcal{F}u$ the partial Fourier transform of $u$ with respect to $t$.

Take $\tau\in \cC^\infty(\Real,[0,1])$ with $\tau=0$ on $]-\infty,\frac{1}{4}]$, $\tau=1$ on $[\frac{1}{2},\infty[$. 
We also write $\theta$ to denote the $t$ variable.
Let $Q^{(q)}:\Omega^{0,q}_0(X)\To\Omega^{0,q}(X)$ be the operator given by
\begin{equation}\label{e-gue131222t}
Q^{(q)}u(z,t):=\frac{1}{2\pi}\int e^{i\langle t-\theta,\eta \rangle }u(z,\theta)\tau(\eta)d\eta d\theta\in\Omega^{0,q}(X),\ 
\ u(z,t)\in\Omega^{0,q}_0(X).
\end{equation}
From Parseval's formula and \eqref{e-gue131225}, we have
\begin{equation}\label{e-gue131222tIII}
\begin{split}
\norm{Q^{(q)}u}^2&=\frac{1}{4\pi^2}\int_{X}\abs{\int e^{i\langle t-\theta,\eta \rangle }
u(z,\theta)\tau(\eta)d\eta d\theta}^2dv(z)dt\\
&=\frac{1}{4\pi^2}\int_{X}\abs{\int e^{i\langle t,\eta \rangle }(\mathcal{F}u)(z,\eta)\tau(\eta)d\eta}^2dv(z)dt\\
&=\frac{1}{2\pi}\int\abs{(\mathcal{F}u)(z,\eta)}^2\abs{\tau(\eta)}^2d\eta dv(z)\\
&\leq\frac{1}{2\pi}\int\abs{(\mathcal{F}u)(z,\eta)}^2d\eta dv(z)=\norm{u}^2,
\end{split}
\end{equation}
where $u\in\Omega^{0,q}_0(X)$. Thus, we can extend $Q^{(q)}$ to $L^2_{(0,q)}(X)$ and
\begin{equation}\label{e-gue131222tIV}
\begin{split}
&\mbox{$Q^{(q)}:L^2_{(0,q)}(X)\To L^2_{(0,q)}(X)$ is continuous},\\
&\norm{Q^{(q)}u}\leq\norm{u},\ \ \forall u\in L^2_{(0,q)}(X).
\end{split}
\end{equation}
We need

\begin{lem}\label{l-gue131225}
Let $u\in L^2_{(0,q)}(X)$. Then,
\begin{equation}\label{e-gue131225I}
(\mathcal{F}Q^{(q)}u)(z,\eta)=(\mathcal{F}u)(z,\eta)\tau(\eta).
\end{equation}
\end{lem}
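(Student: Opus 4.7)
The plan is to prove the identity first for $u \in \Omega^{0,q}_0(X)$, where all integrals make sense pointwise, and then extend to $L^2_{(0,q)}(X)$ by density and continuity. This is essentially the statement that $Q^{(q)}$ is the Fourier multiplier in the $t$-variable with symbol $\tau(\eta)$, so once the integral manipulations are justified on a dense subspace the lemma follows immediately.

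First I would fix $u \in \Omega^{0,q}_0(X)$. Since $u$ is compactly supported in $t$, the function $\theta \mapsto u(z,\theta)$ is smooth and compactly supported, so the inner integral $\int e^{-i\theta\eta} u(z,\theta)\,d\theta = (\mathcal{F}u)(z,\eta)$ is well defined and, together with all its $\eta$-derivatives, decays faster than any polynomial. Consequently, Fubini applies and the defining formula \eqref{e-gue131222t} rewrites as
\[
Q^{(q)}u(z,t) = \frac{1}{2\pi}\int e^{it\eta}\tau(\eta)(\mathcal{F}u)(z,\eta)\,d\eta,
\]
which identifies $Q^{(q)}u(z,\cdot)$ as the inverse partial Fourier transform of $\tau(\eta)(\mathcal{F}u)(z,\eta)$. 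Applying $\mathcal{F}$ to both sides in $t$ and using Fourier inversion (valid because $\tau$ is bounded and $(\mathcal{F}u)(z,\cdot)$ is Schwartz for each fixed $z$) yields
\[
(\mathcal{F}Q^{(q)}u)(z,\eta) = \tau(\eta)(\mathcal{F}u)(z,\eta),
\]
which is \eqref{e-gue131225I} on the dense subspace $\Omega^{0,q}_0(X)$.

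For the extension to arbitrary $u \in L^2_{(0,q)}(X)$, I would pick a sequence $u_j \in \Omega^{0,q}_0(X)$ with $u_j \to u$ in $L^2_{(0,q)}(X)$. By \eqref{e-gue131222tIV} we have $Q^{(q)}u_j \to Q^{(q)}u$ in $L^2_{(0,q)}(X)$, and by \eqref{e-gue131225I-I} the partial Fourier transform is continuous on $L^2_{(0,q)}(X)$, so $\mathcal{F}Q^{(q)}u_j \to \mathcal{F}Q^{(q)}u$ and $\mathcal{F}u_j \to \mathcal{F}u$ in $L^2_{(0,q)}(X)$. Since multiplication by the bounded function $\tau(\eta)$ is continuous on $L^2_{(0,q)}(X)$, passing to the limit in the identity $(\mathcal{F}Q^{(q)}u_j)(z,\eta) = \tau(\eta)(\mathcal{F}u_j)(z,\eta)$ gives \eqref{e-gue131225I} for $u$, interpreted as an equality in $L^2_{(0,q)}(X)$.

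There is no real obstacle here; the only mild point worth stating carefully is that the formal interchange of integrals in the first step is legitimate precisely because $u$ has compact support in $t$, which makes $(\mathcal{F}u)(z,\eta)$ a Schwartz function of $\eta$ for each $z$, and $\tau$ is smooth and bounded. Everything else reduces to the continuity bounds already recorded in \eqref{e-gue131225I-I} and \eqref{e-gue131222tIV}.
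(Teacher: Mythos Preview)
Your proof is correct and follows essentially the same strategy as the paper: establish the identity on $\Omega^{0,q}_0(X)$ via the Fourier inversion formula, then pass to $L^2_{(0,q)}(X)$ by density using the continuity of $\mathcal{F}$, $Q^{(q)}$, and multiplication by the bounded function $\tau$. The paper's proof is nearly identical, only slightly terser in the first step (it simply invokes ``Fourier inversion formula'' without spelling out the Fubini justification you provided).
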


\begin{proof}
Let $u_j\in\Omega^{0,q}_0(X)$, $j=1,2,\ldots$, with
$\lim_{j\To\infty}\norm{u_j-u}=0$. From \eqref{e-gue131222tIV} and \eqref{e-gue131225I-I}, we see that
\begin{equation}\label{e-gue131225II}
\mbox{$\mathcal{F}Q^{(q)}u_j\To\mathcal{F}Q^{(q)}u$ in $L^2_{(0,q)}(X)$ as $j\To\infty$}.
\end{equation}
From Fourier inversion formula, we have
\begin{equation}\label{e-gue131225III}
(\mathcal{F}Q^{(q)}u_j)(z,\eta)=(\mathcal{F}u_j)(z,\eta)\tau(\eta),\ \ j=1,\ldots.
\end{equation}
Note that  $(\mathcal{F}u_j)(z,\eta)\tau(\eta)\To(\mathcal{F}u)(z,\eta)\tau(\eta)$ in $L^2_{(0,q)}(X)$ 
as $j\To\infty$. From this observation, \eqref{e-gue131225III} and \eqref{e-gue131225II}, we obtain \eqref{e-gue131225I}.
\end{proof}

The following is straightforward. We omit the proofs.

\begin{lem}\label{l-gue131225I}
We have
\begin{equation}\label{e-gue131225IV}
\begin{split}
&Q^{(q)}:{\rm Dom\,}\ddbar_{b}\To{\rm Dom\,}\ddbar_{b},\ \ q=0,1,\ldots,n-1,\\
&\mbox{$Q^{(q+1)}\ddbar_{b}=\ddbar_{b}Q^{(q)}$ on ${\rm Dom\,}\ddbar_{b}$},\ \ q=0,1,\ldots,n-2,
\end{split}
\end{equation}
and
\begin{equation}\label{e-gue131225V}
\mbox{$Q^{(q)}\Pi^{(q)}=\Pi^{(q)}Q^{(q)}$ on $L^2_{(0,q)}(X)$}.
\end{equation}
%-----------------------
Moreover, for $u\in\Omega^{0,q}_0(X)$, we have
\begin{equation}\label{e-gue131225VI}
\ddbar_z\bigr((\mathcal{F}u)(z,\eta)e^{\eta\mu(z)}\bigr)e^{-\eta\mu(z)}=(\mathcal{F}\ddbar_{b}u)(z,\eta),\ \ \forall (z,\eta)\in X,
\end{equation}
where $\mu\in \cC^\infty(\Gamma,\Real)$ is as in the beginning of Section~\ref{s-gue140224}.
\end{lem}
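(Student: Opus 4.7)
The plan is to verify the three claims in turn; all will follow from the $t$-translation invariance of the data. Since $\mu=\mu(z)$ is independent of $t$, the basic $(0,1)$-vector fields $\bar Z_j=\pr/\pr\bar z_j-i(\pr\mu/\pr\bar z_j)\pr/\pr t$ have $t$-independent coefficients, and the volume form $m(x)=e^{-2|z|^2}dx_1\cdots dx_{2n-1}$ is $t$-translation-invariant. Hence $\ddbar_b$ commutes with $t$-translations and, on the Fourier side, becomes a family of $z$-differential operators parametrized by $\eta$. By Lemma~\ref{l-gue131225}, $Q^{(q)}$ is the Fourier multiplier in $t$ by the symbol $\tau(\eta)$, so on $\Omega^{0,q}_0(X)$ the identity $Q^{(q+1)}\ddbar_b=\ddbar_b Q^{(q)}$ would reduce on the Fourier side to the obvious commutation of multiplication by $\tau(\eta)$ with the $\eta$-dependent symbol of $\ddbar_b$. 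To extend to $\mathrm{Dom}\,\ddbar_b$, I would approximate $u\in\mathrm{Dom}\,\ddbar_b$ in graph norm by $u_k\in\Omega^{0,q}_0(X)$ using Friedrichs' lemma together with a cutoff in $z$ and $t$; since $Q^{(q)}$ and $Q^{(q+1)}$ are $L^2$-bounded, $Q^{(q)}u_k\to Q^{(q)}u$ and $\ddbar_b Q^{(q)}u_k=Q^{(q+1)}\ddbar_b u_k\to Q^{(q+1)}\ddbar_b u$ in $L^2$, yielding $Q^{(q)}u\in\mathrm{Dom}\,\ddbar_b$ with $\ddbar_b Q^{(q)}u=Q^{(q+1)}\ddbar_b u$.

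For the commutation with $\Pi^{(q)}$, I would observe that $Q^{(q)}$ is $L^2$-self-adjoint, since its Fourier multiplier $\tau(\eta)$ is real-valued and the $t$-inner product is translation-invariant. Taking $L^2$-adjoints in $Q^{(q+1)}\ddbar_b\subset\ddbar_b Q^{(q)}$ would then yield $\ol\pr_b^*Q^{(q+1)}=Q^{(q)}\ol\pr_b^*$ on $\mathrm{Dom}\,\ol\pr_b^*$, and combining these shows that $Q^{(q)}$ leaves $\mathrm{Dom}\,\Box_b^{(q)}$ invariant with $Q^{(q)}\Box_b^{(q)}=\Box_b^{(q)}Q^{(q)}$ on that domain. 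By the spectral theorem, $Q^{(q)}$ then commutes with every bounded Borel function of $\Box_b^{(q)}$, in particular with the orthogonal projection $\Pi^{(q)}=E(\{0\})$.

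Finally, for the pointwise identity I would carry out a direct local computation. Writing $u=\sum_{1\leq j_1<\cdots<j_q\leq n-1}u_{j_1,\ldots,j_q}(z,t)\,d\bar z_{j_1}\wedge\cdots\wedge d\bar z_{j_q}$, one has
\[
\ddbar_b u=\sum_{1\leq j_1<\cdots<j_q\leq n-1}\sum_{j=1}^{n-1}\bar Z_j(u_{j_1,\ldots,j_q})\,d\bar z_j\wedge d\bar z_{j_1}\wedge\cdots\wedge d\bar z_{j_q}.
\]
Since $\mathcal F$ in $t$ converts $\pr/\pr t$ into multiplication by $i\eta$, the operator $\bar Z_j$ is converted on the Fourier side into $\pr/\pr\bar z_j+\eta(\pr\mu/\pr\bar z_j)=e^{-\eta\mu(z)}(\pr/\pr\bar z_j)e^{\eta\mu(z)}$; assembling the components would yield the claimed identity. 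The hard part will be the graph-closure argument in the first step: it requires a Friedrichs-type approximation in both $z$ and $t$ that respects the weighted $L^2$ structure induced by $m(x)$, and it is what makes the spectral-theoretic commutation with $\Pi^{(q)}$ unambiguous.
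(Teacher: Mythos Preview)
Your proposal is correct, and in fact the paper omits the proof entirely, declaring the lemma ``straightforward''. Your outline supplies exactly the kind of verification the authors had in mind: the $t$-translation invariance of the coefficients of $\ddbar_b$ and of the volume form $m(x)=e^{-2|z|^2}d\lambda(z)\,dt$ is precisely what makes $Q^{(q)}$ (a real Fourier multiplier in $t$) commute with $\ddbar_b$, $\ol\pr_b^*$, $\Box_b^{(q)}$ and hence with $\Pi^{(q)}$ via the spectral theorem; and the pointwise identity \eqref{e-gue131225VI} is the conjugation formula $e^{-\eta\mu}\pr_{\bar z_j}e^{\eta\mu}=\pr_{\bar z_j}+\eta\,\pr_{\bar z_j}\mu$ applied componentwise.

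One remark on what you flag as ``the hard part''. The Friedrichs-type graph-norm approximation is not actually needed, and avoiding it sidesteps any worry about the boundary of $\Gamma$. Since $Q^{(q)}$ is convolution in $t$ and the coefficients of $\ddbar_b$ are $t$-independent, the identity $\ddbar_b\,Q^{(q)}u=Q^{(q+1)}\ddbar_b u$ holds in the sense of distributions for every $u\in L^2_{(0,q)}(X)$ (equivalently, on the Fourier side both sides have transform $\tau(\eta)\,\widehat{\ddbar_b}(\eta)\,(\mathcal F u)(z,\eta)$, using Lemma~\ref{l-gue131225}). Hence if $u\in{\rm Dom\,}\ddbar_b$ then $\ddbar_b(Q^{(q)}u)=Q^{(q+1)}\ddbar_b u\in L^2$, so $Q^{(q)}u\in{\rm Dom\,}\ddbar_b$ directly. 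The same argument with $\ol\pr_b^{*,f}$ and Lemma~\ref{l-gue140220II} gives the corresponding statement for $\ol\pr_b^*$, and the commutation with $\Pi^{(q)}$ then follows as you indicate.
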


%\subsection{The  local $L^2$ closed range property for $\Box^{(0)}_{b}$ with respect to $Q^{(0)}$}\label{s-gue140226}
We will study now the local $L^2$ closed range property for $\Box^{(0)}_{b}$ with respect to $Q^{(0)}$. 
We pause and introduce some notations. Let $\Omega^{0,q}(\Gamma)$ be the space of all smooth $(0,q)$ forms on 
$\Gamma$ and let $\Omega^{0,q}_0(\Gamma)$ be the subspace of $\Omega^{0,q}(\Gamma)$ 
whose elements have compact support in $\Gamma$. We take the Hermitian metric 
$\langle\,\cdot\,|\,\cdot\,\rangle$ on $T^{*0,q}\Gamma$ 
the bundle of $(0, q)$ forms of $\Gamma$ so that
\[\{d\ol z_{j_1}\wedge d\ol z_{j_2}\wedge\ldots\wedge d\ol z_{j_q}; 1\leq j_1< j_2\ldots<j_q\leq n-1\}\]
is an orthonormal basis. Let $\Upsilon\in \cC^\infty(\Gamma,\Real)$ and let $(\,\cdot\,|\,\cdot)_{\Upsilon}$ 
be the $L^2$ inner product on $\Omega^{0,q}_0(\Gamma)$ given by
\[(f\,|\,g\,)_{\Upsilon}=\int\langle\,f\,|\,g\,\rangle e^{-2\Upsilon(z)}d\lambda(z),\ \ f, g\in\Omega^{0,q}_0(\Gamma),\]
where $d\lambda(z)=dx_1dx_2\ldots dx_{2n-2}$.
Let $L^2_{(0,q)}(\Gamma,\Upsilon)$ denote the completion of $\Omega^{0,q}_0(\Gamma)$ 
with respect to the inner product $(\cdot\,|\,\cdot\,)_{\Upsilon}$. We write 
$L^2(\Gamma,\Upsilon):=L^2_{(0,0)}(\Gamma,\Upsilon)$. Put
\[H^0(\Gamma,\Upsilon):=\set{f\in L^2(\Gamma,\Upsilon);\, \ddbar f=0}.\]
%------------------
From now on, we assume that
\begin{equation}\label{e-gue140226}
\left(\frac{\pr^2\mu}{\pr z_j\pr\ol z_\ell}(z)\right)^{n-1}_{j,\ell=1}\geq0,\ \ \forall z\in\Gamma,
\end{equation}
and take
\[m(x):=e^{-2\abs{z}^2}dx_1dx_2\ldots dx_{2n-2}dt=e^{-2\abs{z}^2}d\lambda(z)dt\]
be the volume form on $X$.

Now, suppose $\Gamma=\Complex^{n-1}$ or $\Gamma$ is a bounded strictly pseudoconvex 
domain in $\Complex^{n-1}$.

%\begin{thm}\label{t-gue131226}
%With the notations and assumptions above, let $\Gamma=\Complex^{n-1}$ and assume that
%$\mu(z)\geq0$, for every $z\in\Complex^{n-1}$. We have
%\begin{equation}\label{e-gue131226II}
%\norm{Q^{(0)}(I-\Pi^{(0)})u}^2\leq C_0\norm{\ddbar_bu}^2,\ \ \forall u\in \cC^\infty_0(X),
%\end{equation}
%where $C_0>0$ is a constant independent of $u$.
%In particular, $\Box^{(0)}_{b}$ has local $L^2$ closed range on $X$ with respect to $Q^{(0)}$.
%\end{thm}

\begin{proof}[Proof of Theorem {\ref{t-gue140306}} for $\Gamma=\Complex^{n-1}$]
Let $u\in \cC^\infty_0(X)$. We consider $Q^{(0)}(I-\Pi^{(0)})u$.
In view of \eqref{e-gue131225V}, we see that $Q^{(0)}(I-\Pi^{(0)})u=(I-\Pi^{(0)})Q^{(0)}u$. Put
\begin{equation}\label{e-gue131226aI}
v(z,\eta)=\mathcal{F}Q^{(0)}(I-\Pi^{(0)})u(z,\eta)e^{\eta\mu(z)}.
\end{equation}
From \eqref{e-gue131222tIV}, \eqref{e-gue131225I-I} and \eqref{e-gue131225I},
we see that $\int\abs{v(z,\eta)}^2e^{-2\eta\mu(z)-2\abs{z}^2}d\lambda(z)d\eta<\infty$
and $v(z,\eta)=0$ if $\eta\notin{\rm Supp\,}\tau(\eta)$. From Fubini's Theorem and
some elementary real analysis, we know that for every $\eta\in\Real$, $v(z,\eta)$
is a measurable function of $z$ and for almost every
$\eta\in\Real$, $v(z,\eta)\in L^2(\Gamma,\eta\mu(z)+\abs{z}^2)$
and for every $z\in\Gamma$, $v(z,\eta)$ is a measurable function of $\eta$
and for almost every $z\in\Gamma$, $\int\abs{v(z,\eta)}^2d\eta<\infty$.
Moreover, let $\beta\in L^2(\Gamma,\abs{z}^2)$, then the function
\[f(\eta):=\eta\mapsto\int v(z,\eta)\ol\beta(z)e^{-2\eta\mu(z)-2\abs{z}^2}d\lambda(z)\]
is measurable and  $f(\eta)$ is finite for almost every $\eta\in\Real$,
$f(\eta)=0$ if $\eta\notin{\rm Supp\,}\tau(\eta)$ and $f(\eta)\in L^2(\Real)$. We claim that
\begin{equation}\label{e-gue131229}
\begin{split}
&\mbox{For almost every $\eta\in\ol\Real_+$, $v(z,\eta)\in L^2(\Gamma,\eta\mu(z)+\abs{z}^2)$ and}\\
&(\,v(z,\eta)\,|\,\beta\,)_{\eta\mu+\abs{z}^2}=0,\ \ \forall\beta\in  H^0(\Gamma,\eta\mu(z)+\abs{z}^2).
\end{split}
\end{equation}
From the discussion after \eqref{e-gue131226aI}, we know that there is a measurable set $A_0$
in $\ol\Real_+$ with $\abs{A_0}=0$ such that for every $\eta\notin A_0$,
$v(z,\eta)\in L^2(\Gamma,\eta\mu(z)+\abs{z}^2)$, where $\abs{A_0}$ denote the Lebesgue measure of $A_0$.
Since $\mu\geq0$, $\set{z^\alpha;\, \alpha\in\mathbb N^{n-1}_0}$
is a basis for $H^0(\Complex^{n-1},\eta\mu(z)+\abs{z}^2)$, for every $\eta\geq0$.
Fix $\alpha\in\mathbb N^{n-1}_0$. We consider
\[f_\alpha(\eta)=\int v(z,\eta)\ol z^\alpha e^{-2\eta\mu(z)-2\abs{z}^2}d\lambda(z).\]
From the discussion after \eqref{e-gue131226aI}, we know that $f_\alpha(\eta)\in L^2(\Real)$.
Fix $n\in\mathbb N$, put
$h_n(\eta):=\ol f_\alpha(\eta)1_{[0,n]}(\eta)$.
Then,
\begin{equation}\label{e-gue140227}
\int^n_0\abs{f_\alpha(\eta)}^2d\eta=\int f_\alpha(\eta)h_n(\eta)d\eta=
\int v(z,\eta)\ol z^\alpha h_n(\eta) e^{-2\eta\mu(z)-2\abs{z}^2}d\lambda(z)d\eta.
\end{equation}
Let $\beta_\ell\in \cC^\infty_0(X)$, $\ell=1,2,\ldots$, such that
$\beta_\ell\To(I-\Pi^{(0)})u$ in $L^2(X)$ as $\ell\To\infty$.
From \eqref{e-gue131225I-I}, \eqref{e-gue131222tIV}, \eqref{e-gue131225I} and \eqref{e-gue140227}, we see that
\begin{equation}\label{e-gue131226b}
\lim_{\ell\To\infty}\int\mathcal{F}Q^{(0)}\beta_\ell(z,\eta)\ol z^\alpha h_n(\eta) e^{-\eta\mu(z)-2\abs{z}^2}d\lambda(z)d\eta\To\int^n_0\abs{f_\alpha(\eta)}^2d\eta.
\end{equation}
From \eqref{e-gue131225I} and Parseval's formula, we can check that
\begin{equation}\label{e-gue131226bI}
\begin{split}
&\int\mathcal{F}Q^{(0)}\beta_\ell(z,\eta)\ol z^\alpha h_n(\eta) e^{-\eta\mu(z)-2\abs{z}^2}d\lambda(z)d\eta\\
&=\int\mathcal{F}\beta_\ell(z,\eta)\tau(\eta)\ol z^\alpha h_n(\eta)e^{-\eta\mu(z)-2\abs{z}^2}d\lambda(z)d\eta\\
&=\int\beta_\ell(z,t)(\int\ol z^\alpha h_n(\eta)\tau(\eta)e^{-\eta\mu(z)-i\eta t}d\eta)e^{-2\abs{z}^2}d\lambda(z)dt\\
&\To\int(I-\Pi^{(0)})u(z,t)(\int \ol z^\alpha h_n(\eta)\tau(\eta)e^{-\eta\mu(z)-i\eta t}d\eta)e^{-2\abs{z}^2}d\lambda(z)dt,\\ 
&\mbox{as $\ell\To\infty$}.
\end{split}
\end{equation}
It is straightforward to check that the function
\[\int z^\alpha(z)\ol h_n(\eta)\tau(\eta)e^{-\eta\mu(z)+i\eta t}d\eta\in{\rm Ker\,}\ddbar_{b}\cap L^2(X).\]
Thus,
\begin{equation}\label{e-gue131226bII}
\int(I-\Pi^{(0)})u(z,t)(\int \ol z^\alpha h_n(\eta)\tau(\eta)e^{-\eta\mu(z)-i\eta t}d\eta)e^{-2\abs{z}^2}d\lambda(z)dt=0.
\end{equation}
From \eqref{e-gue131226bII}, \eqref{e-gue131226bI} and \eqref{e-gue131226b}, 
we conclude that $f_\alpha(\eta)=0$ almost everywhere.
Thus, there is a measurable set $A_\alpha\supset A_0$ in $\ol\Real_+$ with $\abs{A_\alpha}=0$ 
such that for every $\eta\notin A_\alpha$ we have $(\,v(z,\eta)\,|\,z^\alpha\,)_{\eta\mu+\abs{z}^2}=0$.
Put $A=\bigcup_{\alpha\in\mathbb N^{n-1}_0}A_\alpha$. Then, $\abs{A}=0$. 
We conclude that for every $\eta\notin A$, $\eta\geq0$,
\[(\,v(z,\eta)\,|\,\beta\,)_{\eta\mu+\abs{z}^2}=0,\ \ \forall\beta\in  H^0(\Gamma,\eta\mu+\abs{z}^2).\]
The claim \eqref{e-gue131229} follows.

Now, we can prove \eqref{e-gue131226II}. Let $u\in \cC^\infty_0(X)$. From \eqref{e-gue131225IV} 
and \eqref{e-gue131225VI}, we have
\begin{equation}\label{e-gue131226III}
\begin{split}
&\ddbar_{b}Q^{(0)}(I-\Pi^{(0)})u=Q^{(1)}\ddbar_{b}u,\\
&(\mathcal{F}Q^{(1)}\ddbar_{b}u)(z,\eta)=\ddbar_z(\mathcal{F}Q^{(0)}u(z,\eta)e^{\eta\mu(z)})e^{-\eta\mu(z)}.
\end{split}
\end{equation}
As before, we put $v(z,\eta)=\mathcal{F}Q^{(0)}(I-\Pi^{(0)})u(z,\eta)e^{\eta\mu(z)}$ and set
\[\ddbar_z\Bigr(\mathcal{F}Q^{(0)}(I-\Pi^{(0)})u(z,\eta)e^{\eta\mu(z)}\Bigr)=\ddbar_zv(z,\eta)=:g(z,\eta).\]
It is easy to see that
\begin{equation}\label{e-gue131226IV}
\begin{split}
&\ddbar_zg(z,\eta)=0,\\
&\mbox{$g(z,\eta)=0$ if $\eta\notin{\rm Supp\,}\tau(\eta)$},\\
&\int\abs{g(z,\eta)}^2e^{-2\eta\mu(z)-2\abs{z}^2}d\lambda(z)<\infty,\ \ \forall \eta\in{\rm Supp\,}\tau(\eta).
\end{split}
\end{equation}
From \eqref{e-gue140226}, we see that there is a $C>0$ independent of $\eta\in{\rm Supp\,}\tau(\eta)$ such that
\begin{equation}\label{e-gue131226V}
\begin{split}
&\sum^{n-1}_{j,\ell=1}\frac{\pr^2(\abs{z}^2+
\eta\mu(z))}{\pr z_j\pr\ol z_\ell}(z)w_j\ol w_\ell\geq C\sum^{n-1}_{j=1}\abs{w_j}^2,\forall (w_1,\ldots,w_{n-1})
\in\Complex^{n-1}, z\in\Gamma, \eta\in
{\rm Supp\,}\tau(\eta).
\end{split}
\end{equation}

From \eqref{e-gue131226V} and H\"ormander's $L^2$ estimates \cite[Lemma 4.4.1]{Hor90}, 
we conclude that for every $\eta\in{\rm Supp\,}\tau(\eta)$, we can find a 
$\beta_\eta(z)\in L^2_{(0,1)}(\Gamma,\eta\mu(z)+\abs{z}^2)$ such that
\begin{equation}\label{e-gue131226VI}
\ddbar_z\beta_\eta(z)=g(z,\eta)
\end{equation}
and
\begin{equation}\label{e-gue131226VII}
\int\abs{\beta_\eta(z)}^2e^{-2\eta\mu(z)-2\abs{z}^2}d\lambda(z)\leq 
C\int\abs{g(z,\eta)}^2e^{-2\eta\mu(z)-2\abs{z}^2}d\lambda(z).
\end{equation}
In view of \eqref{e-gue131229}, we see that there is a measurable set $A$ in $\ol\Real_+$
with Lebesgue measure zero in $\Real$ such that for every $\eta\notin A$, $\eta\geq0$, 
$v(z,\eta)\perp H^0(\Gamma,\eta\mu(z)+\abs{z}^2)$. Thus, for every $\eta\notin A$,
$\eta\geq0$, $v(z,\eta)$ has the minimum $L^2$ norm with respect to 
$(\,\cdot\,|\,\cdot\,)_{\eta\mu+\abs{z}^2}$ of the solutions $\ddbar\alpha=\ddbar_zv(z,\eta)=g(z,\eta)$. 
From this observation and \eqref{e-gue131226VII}, we conclude that $\forall\eta\notin A$,
\begin{equation}\label{e-gue131227}
\int\abs{v(z,\eta)}^2e^{-2\eta\mu(z)-2\abs{z}^2}d\lambda(z)\leq 
C\int\abs{\ddbar_zv(z,\eta)}^2e^{-2\eta\mu(z)-2\abs{z}^2}d\lambda(z).
\end{equation}
Thus,
\begin{equation}\label{e-gue131227I}
\int\abs{v(z,\eta)}^2e^{-2\eta\mu(z)-2\abs{z}^2}d\lambda(z)d\eta\leq 
C\int\abs{\ddbar_zv(z,\eta)}^2e^{-2\eta\mu(z)-2\abs{z}^2}d\lambda(z)d\eta.
\end{equation}
From the definition of $v(z,\eta)$, \eqref{e-gue131225I-I}, \eqref{e-gue131226III} 
and \eqref{e-gue131222tIV}, it is straightforward to see that
\begin{equation}\label{e-gue131227II}
\begin{split}
&\int\abs{v(z,\eta)}^2e^{-2\eta\mu(z)-2\abs{z}^2}d\lambda(z)d\eta\\
&=(2\pi)\int\abs{Q^{(0)}(I-\Pi^{(0)})u(z,t)}^2e^{-2\abs{z}^2}d\lambda(z)dt
\end{split}
\end{equation}
and
\begin{equation}\label{e-gue131227III}
\begin{split}
&\int\abs{\ddbar_zv(z,\eta)}^2e^{-2\eta\mu(z)-2\abs{z}^2}d\lambda(z)d\eta\\
&=(2\pi)\int\abs{Q^{(1)}\ddbar_{b}u(z,t)}^2e^{-2\abs{z}^2}d\lambda(z)dt\\
&\leq(2\pi)\int\abs{\ddbar_{b}u(z,t)}^2e^{-2\abs{z}^2}d\lambda(z)dt.
\end{split}
\end{equation}
From \eqref{e-gue131227I}, \eqref{e-gue131227II} and \eqref{e-gue131227III}, we conclude that
\[\begin{split}
\norm{Q^{(0)}(I-\Pi^{(0)})u}^2&=
\int\abs{Q^{(0)}(I-\Pi^{(0)})u(z,t)}^2e^{-2\abs{z}^2}d\lambda(z)dt\\
&\leq C\int\abs{\ddbar_{b}u(z,t)}^2e^{-2\abs{z}^2}d\lambda(z)dt=C\norm{\ddbar_{b}u}^2.\end{split}\]
Theorem~\ref{t-gue140306} for $\Gamma=\mathbb{C}^{n-1}$ follows.
\end{proof}

Now, we consider the case when $\Gamma$ is a bounded strictly pseudoconvex domain in $\Complex^{n-1}$.

%\begin{thm}\label{t-gue140313}
%Let $\Gamma$ be a bounded strictly pseudoconvex domain in $\Complex^{n-1}$ 
%and assume that $\mu(z)\in \cC^\infty(\Gamma')$, where $\Gamma'$ is an open neighbourhood of $\ol\Gamma$. We have
%\begin{equation}\label{e-gue140313}
%\norm{Q^{(0)}(I-\Pi^{(0)})u}^2\leq C_0\norm{\ddbar_bu}^2,\ \ \forall u\in \cC^\infty_0(X),
%\end{equation}
%where $C_0>0$ is a constant independent of $u$.
%In particular, $\Box^{(0)}_{b}$ has local $L^2$ closed range on $X$ with respect to $Q^{(0)}$.
%\end{thm}

\begin{proof}[Proof of Theorem~\ref{t-gue140306} for $\Gamma$ a bounded strictly pseudoconvex 
domain in $\Complex^{n-1}$]
Let $u\in \cC^\infty_0(X)$. We have
\begin{equation}\label{e-gue131229II}
\begin{split}
&\ddbar_{b}Q^{(0)}(I-\Pi^{(0)})u=Q^{(1)}\ddbar_{b}u,\\
&(\mathcal{F}Q^{(1)}\ddbar_{b}u)(z,\eta)=\ddbar_z(\mathcal{F}Q^{(0)}u(z,\eta)e^{\eta\mu(z)})e^{-\eta\mu(z)}.
\end{split}
\end{equation}
As before, we put $v(z,\eta)=\mathcal{F}Q^{(0)}(I-\Pi^{(0)})u(z,\eta)e^{\eta\mu(z)}$ and set
\[\ddbar_z(\mathcal{F}Q^{(0)}(I-\Pi^{(0)})u(z,\eta)e^{\eta\mu(z)})=\ddbar_zv(z,\eta)=:g(z,\eta).\]
Then,
\begin{equation}\label{e-gue131229III}
\begin{split}
&\ddbar_zg(z,\eta)=0,\\
&\mbox{$g(z,\eta)=0$ if $\eta\notin{\rm Supp\,}\tau(\eta)$},\\
&\int\abs{g(z,\eta)}^2e^{-2\eta\mu(z)-2\abs{z}^2}d\lambda(z)<\infty,\ \ \forall\eta\in{\rm Supp\,}\tau(\eta).
\end{split}
\end{equation}

From \eqref{e-gue131226V} and H\"ormander's $L^2$ estimates \cite[Lemma 4.4.1]{Hor90},
we conclude that for every $\eta\in{\rm Supp\,}\tau(\eta)$, we can find a
$\beta_\eta(z)\in L^2_{(0,1)}(\Gamma,\eta\mu(z)+\abs{z}^2)$ such that
\begin{equation}\label{e-gue131229V}
\ddbar_z\beta_\eta(z)=g(z,\eta)
\end{equation}
and
\begin{equation}\label{e-gue131229VI}
\int\abs{\beta_\eta(z)}^2e^{-2\eta\mu(z)-2\abs{z}^2}d\lambda(z)\leq
C\int\abs{g(z,\eta)}^2e^{-2\eta\mu(z)-2\abs{z}^2}d\lambda(z),
\end{equation}
where $C>0$ is a constant independent of $\eta$, $g(z,\eta)$ and $\beta_\eta(z)$.
Moreover, since $g(z,\eta)$ is smooth, it is well-known that $\beta_\eta(z)$ can be taken to be
dependent smoothly on $\eta$ and $z$ (see the proof of~\cite[Lemma 2.1]{Bo06}).
Take $\chi(\eta)\in \cC^\infty(\Real,[0,1])$ with $\chi=0$ if $\abs{\eta}\geq1$ and
$\chi=1$ if $\abs{\eta}\leq\frac{1}{2}$. For $j=1,2,\ldots$, set $\chi_j(\eta)=\chi(\frac{\eta}{j})$.  Put
\[\alpha_j(z,t)=\frac{1}{2\pi}\int\beta_\eta(z)\chi_j(\eta)e^{-\eta\mu(z)}e^{i\eta t}d\eta\in \cC^\infty(X).\]
From \eqref{e-gue131229VI}, we have
\begin{equation}\label{e-gue140313f}\begin{split}
&\norm{\alpha_j-\alpha_k}^2\\
&=\frac{1}{4\pi^2}\int\abs{\int\beta_\eta(z)\bigr(\chi_j(\eta)-\chi_k(\eta)\bigr)
e^{-\eta\mu(z)}e^{i\eta t}(\eta)d\eta}^2e^{-2\abs{z}^2}d\lambda(z)dt\\
&\leq\frac{1}{4\pi^2}\int\int\abs{\beta_\eta(z)}^2\abs{\chi_j(\eta)-\chi_k(\eta)}^2e^{-2\eta\mu(z)-2\abs{z}^2}d\lambda(z)d\eta\\
&\leq C_0\int\int\abs{g(z,\eta)}^2\abs{\chi_j(\eta)-\chi_k(\eta)}^2e^{-2\eta\mu(z)-2\abs{z}^2}d\lambda(z)d\eta\\
&\mbox{$\To 0$ as $j,k\To\infty$},
\end{split}\end{equation}
where $C_0>0$ is a constant independent of $j$, $k$, $\beta_\eta(z)$ and $g(z,\eta)$.
Thus, $\alpha_j\To\alpha$ in $L^2(X)$, for some $\alpha\in L^2(X)$. Moreover,
we can repeat the procedure above with minor change and deduce that
\begin{equation}\label{e-gue140313fI}
\begin{split}
\norm{\alpha}^2&\leq C_0\int\int\abs{g(z,\eta)}^2e^{-2\eta\mu(z)-2\abs{z}^2}d\lambda(z)d\eta\\
&\leq C_0(2\pi)\norm{Q^{(1)}\ddbar_bu}^2\leq C_1\norm{\ddbar_bu}^2,
\end{split}
\end{equation}
where $C_0>0$ is the constant as in \eqref{e-gue140313f} and $C_1=C_0(2\pi)$. Furthermore, it is straightforward to see that
\begin{equation}\label{e-gue140313fII}
\begin{split}
\ddbar_{b}\alpha(z,t)=Q^{(1)}\ddbar_{b}u(z,t).
\end{split}
\end{equation}
From \eqref{e-gue140313fI} and \eqref{e-gue140313fII}, we conclude that
$\ddbar_{b}\alpha(z,t)=Q^{(1)}\ddbar_{b}u(z,t)$ and $\norm{\alpha}^2\leq C_1\norm{\ddbar_{b}u}^2$. Since $(I-\Pi^{(0)})Q^{(0)}u$ has the minimum $L^2$ norm of the solutions of $\ddbar_{b}f=Q^{(1)}\ddbar_{b}u(z,t)$, we conclude that
\[\norm{(I-\Pi^{(0)})Q^{(0)}u}^2=\norm{Q^{(0)}(I-\Pi^{(0)})u}^2\leq\norm{\alpha}^2\leq C_1\norm{\ddbar_{b}u}^2.\]
Theorem {\ref{t-gue140306}} follows.
\end{proof}

%\subsection{Szeg\H{o} kernel asymptotics on some tube domains}\label{s-saoh}
%{Szeg\H{o} kernel asymptotics on $\Gamma\times\Real$, where $\Gamma=\Complex^{n-1}$
%or $\Gamma$ is a bounded strictly pseudoconvex domain in $\Complex^{n-1}$}\label{s-saoh}
\begin{proof}[Proof of Theorem \ref{t-gue140306I}]
Recall that $\omega_0=-dt+\textstyle\sum^{n-1}_{j=1}(i\frac{\pr\mu}{\pr z_j}dz_j-i\frac{\pr\mu}{\pr\ol z_j}d\ol z_j)$. 
Thus,
\begin{equation}\label{e-gue140227f}\begin{split}
\Sigma^+&=\Big\{(x,\eta)\in T^*X;\, \eta=-\lambda dx_{2n-1}+
\lambda\sum^{n-1}_{j=1}\Big(\frac{\pr\mu}{\pr x_{2j}}(z)dx_{2j-1}-\frac{\pr\mu}{\pr x_{2j-1}}(z)dx_{2j}\Big), 
\lambda>0\Big\},\\
\Sigma^-&=\Big\{(x,\eta)\in T^*X;\, \eta=-\lambda dx_{2n-1}+
\lambda\sum^{n-1}_{j=1}\Big(\frac{\pr\mu}{\pr x_{2j}}(z)dx_{2j-1}-
\frac{\pr\mu}{\pr x_{2j-1}}(z)dx_{2j}\Big), \lambda<0\Big\}.
\end{split}\end{equation}
Note that
 \[Q^{(0)}(x,y)=\int e^{i\langle x-y,\eta\rangle}\tau(\eta_{2n-1})d\eta,\]
where $\tau\in \cC^\infty(\Real,[0,1])$ with $\tau=0$ on $]-\infty,\frac{1}{4}]$, 
$\tau=1$ on $[\frac{1}{2},\infty[$. From this observation and \eqref{e-gue140227f}, we conclude that
\begin{equation}\label{e-gue140227fa}
\mbox{$Q^{(0)}\equiv0$ at $\Sigma^+\cap T^*D$, $\forall D\Subset X$}.
\end{equation}
From Theorem~\ref{t-gue140305VIa}, Theorem \ref{t-gue140306}
%Theorem~\ref{t-gue131226}, Theorem~\ref{t-gue140313} 
and \eqref{e-gue140227fa}, we get Theorem~\ref{t-gue140306I}.
\end{proof}
\section{Proof of Theorem~\ref{t-gue140215}}\label{s-gue140215}

We introduce some notations from semi-classical analysis.

\begin{defn} \label{d-gue13628}
Let $W$ be an open set in $\Real^N$. We define the space of symbols
%Let $S(1;W)=S(1)$ be the set of
%$a\in \cC^\infty(W)$ such that for every $\alpha\in\mathbb N^N_0$, there
%exists $C_\alpha>0$, such that $\abs{\pr^\alpha_xa(x)}\leq
%C_\alpha$ on $W$. 
\[
S(1)=S(1;W)=\Big\{a\in \cC^\infty(W);\forall\,\alpha\in\mathbb N^N_0\:\exists\, C_\alpha>0:
\abs{\pr^\alpha_xa(x)}\leq C_\alpha\:\:\text{on $W$}\Big\}\,.
\]
If $a=a(x,k)$ depends on $k\in]1,\infty[$, we say that
$a(x,k)\in S_{{\rm loc\,}}(1;W)=S_{{\rm loc\,}}(1)$ if $\chi(x)a(x,k)$ uniformly bounded
in $S(1)$ when $k$ varies in $]1,\infty[$, for any $\chi\in
\cC^\infty_0(W)$. For $m\in\Real$, we put $S^m_{{\rm
loc}}(1;W)=S^m_{{\rm loc}}(1)=k^mS_{{\rm loc\,}}(1)$. If $a_j\in S^{m_j}_{{\rm
loc\,}}(1)$, $m_j\searrow-\infty$, we say that $a\sim
\sum^\infty_{j=0}a_j$ in $S^{m_0}_{{\rm loc\,}}(1)$ if
$a-\sum^{N_0}_{j=0}a_j\in S^{m_{N_0+1}}_{{\rm loc\,}}(1)$ for every
$N_0$.  For a given sequence $a_j$ as above, we can always find such an asymptotic sum $a$ 
and $a$ is unique up to an element in $S^{-\infty}_{{\rm loc\,}}(1)=
S^{-\infty}_{{\rm loc\,}}(1;W):=\cap_mS^m_{{\rm loc\,}}(1)$.
We say that $a(x,k)\in S^{m_0}_{{\rm loc\,}}(1)$ is a classical symbol on $W$ of order $m_0$ if
\begin{equation} \label{e-gue13628I}
\mbox{$a(x,k)\sim\sum^\infty_{j=0}k^{m_0-j}a_j(x)$ in 
$S^{m_0}_{{\rm loc\,}}(1)$},\ \ a_j(x)\in S_{{\rm loc\,}}(1),\ j=0,1\ldots.
\end{equation}
The set of all classical symbols on $W$ of order $m_0$ is denoted by 
$S^{m_0}_{{\rm loc\,},{\rm cl\,}}(1)=S^{m_0}_{{\rm loc\,},{\rm cl\,}}(1;W)$.

Let $E$ be a vector bundle over a smooth paracompact manifold $Y$. 
We extend the definitions above to the space of smooth sections of $E$ over $Y$ in the natural way 
and we write $S^m_{{\rm loc\,}}(1;Y,E)$ and $S^m_{{\rm loc\,},{\rm cl\,}}(1;Y,E)$ to denote the corresponding spaces.
\end{defn}

A $k$-dependent continuous operator
$A_k:\cC^\infty_0(W,E)\To\mathscr D'(W,F)$ is called $k$-negligible (on $W$)
if $A_k$ is smoothing and the kernel $A_k(x, y)$ of $A_k$ satisfies
$\abs{\pr^\alpha_x\pr^\beta_yA_k(x, y)}=O(k^{-N})$ locally uniformly
on every compact set in $W\times W$, for all multi-indices $\alpha$,
$\beta$ and all $N\in\mathbb N$. $A_k$ is $k$-negligible if and only if
\[A_k=O(k^{-N'}): H^s_{\rm comp\,}(W,E)\To H^{s+N}_{\rm loc\,}(W,F)\,,\]
for all $N, N'\geq0$ and $s\in\mathbb Z$. Let $C_k:\cC^\infty_0(W,E)\To\mathscr D'(W,F)$
be another $k$-dependent continuous operator. We write $A_k\equiv C_k\mod O(k^{-\infty})$ (on $W$) or
$A_k(x,y)\equiv C_k(x,y)\mod O(k^{-\infty})$ (on $W$)
if $A_k-C_k$ is $k$-negligible on $W$.

Now, we prove Theorem~\ref{t-gue140215}. We will use the same notations and assumptions in 
Theorem~\ref{t-gue140215}. Fix $p\in D$. Take local coordinates $x=(x_1,\ldots,x_{2n-1})$
defined in some small neighbourhood of $p$ such that $x(p)=0$ and $\omega_0(p)=dx_{2n-1}$. 
Since $d_y\varphi(x, y)|_{x=y}=d_y\varphi_-(x, y)|_{x=y}=\omega_0(x)$, we have 
$\frac{\pr\varphi}{\pr y_{2n-1}}(p,p)=\frac{\pr\varphi_-}{\pr y_{2n-1}}(p,p)=1$. 
From this observation and the Malgrange preparation theorem \cite[Theorem 7.57]{Hor03}, 
we conclude that in some small neighbourhood of $(p,p)$, we can find $f(x,y), f_1(x,y)\in \cC^\infty$ such that
\begin{equation}\label{e-gue140214I}
\begin{split}
&\varphi_-(x,y)=f(x,y)(y_{2n-1}+h(x,y')),\\
&\varphi(x,y)=f_1(x,y)(y_{2n-1}+h_1(x,y'))
\end{split}
\end{equation}
in some small neighbourhood of $(p,p)$, where $y'=(y_1,\ldots,y_{2n-2})$. 
For simplicity, we assume that \eqref{e-gue140214I} hold on $D\times D$. 
It is clearly that $\varphi_-(x,y)$ and $y_{2n-1}+h(x,y')$ are equivalent 
in the sense of Melin-Sj\"ostrand~\cite{MS74}, $\varphi(x,y)$ and $y_{2n-1}+h_1(x,y')$ 
are equivalent in the sense of Melin-Sj\"ostrand~\cite{MS74}, we may assume that 
$\varphi_-(x,y)=y_{2n-1}+h(x,y')$ and $\varphi(x,y)=y_{2n-1}+h_1(x,y')$. Fix $x_0\in D$. 
We are going to prove that $h(x,y')-h_1(x,y')$ vanishes to infinite order at $(x_0,x_0)$. 
Note that $S_-\circ S_-\equiv S_-$. From this observation and Lemma~\ref{l-gue140214}, 
it is straightforward to see that
\begin{equation}\label{e-gue140215f}
\int^\infty_0e^{i(y_{2n-1}+h(x,y'))t}s_-(x,y,t)dt\equiv\int^\infty_0e^{i(y_{2n-1}+h_1(x',y))t}a(x,y,t)dt\:\:\text{on $D$},
\end{equation}
where $s_-(x,y,t),\,a(x,y,t)\in S^{n-1}_{{\rm cl\,}}(D\times D\times\mathbb{R}_+,T^{*0,q}X\boxtimes T^{*0,q}X)$ 
are as in \eqref{e-gue140205III}. 
%and $a(x,y,t)\in S^{n-1}_{{\rm cl\,}}(D\times D\times\mathbb{R}_+,T^{*0,q}X\boxtimes T^{*0,q}X)$.
%\begin{equation}\label{e-gue140215fI}
%\begin{split}
%&a(x,y,t)\in S^{n-1}_{{\rm cl\,}}(D\times D\times\mathbb{R}_+,T^{*0,q}X\boxtimes T^{*0,q}X),\\
%&\mbox{$a(x,y,t)\sim\sum^\infty_{j=0}t^{-j}a_j(x,y)$ in $S^{n-1}_{1,0}(D\times D\times\mathbb{R}_+,T^{*0,q}%X\boxtimes T^{*0,q}X)$},\\
%&a_j(x,y)\in \cC^\infty(D\times D,T^{*0,q}X\boxtimes T^{*0,q}X),\ \ j=0,1,\ldots,\\
%&a_0(x,x)=s^0_-(x,x),\ \ \forall x\in D.
%\end{split}
%\end{equation}
Put
\[x_0=(x^1_0,x^2_0,\ldots,x^{2n-1}_0),\ \ x'_0=(x^1_0,\ldots,x^{2n-2}_0).\]
Take $\tau\in \cC^\infty_0(\Real^{2n-1})$, $\tau_1\in \cC^\infty_0(\Real^{2n-2})$, $\chi\in \cC^\infty_0(\Real)$ 
so that $\tau=1$ near $x_0$, $\tau_1=1$ near $x'_0$,  $\chi=1$ near $x^{2n-1}_0$ 
and ${\rm Supp\,}\tau\Subset D$, ${\rm Supp\,}\tau_1\times{\rm Supp\,}\chi\Subset D'\times{\rm Supp\,}\chi\Subset D$, 
where $D'$ is an open neighbourhood of  $x'_0$ in $\Real^{2n-2}$. For each $k>0$, we consider the distributions
\begin{equation}\label{e-gue140215fII}
\begin{split}
&A_k:u\mapsto\int^\infty_0e^{i(y_{2n-1}+h(x,y'))t-iky_{2n-1}}\tau(x)s_-(x,y,t)\tau_1(y')\chi(y_{2n-1})u(y')dydt,\\
&B_k:u\mapsto\int^\infty_0e^{i(y_{2n-1}+h_1(x,y'))t-iky_{2n-1}}\tau(x)a(x,y,t)\tau_1(y')\chi(y_{2n-1})u(y')dydt,
%\\&u\in \cC^\infty(D',T^{*0,q}X),
\end{split}
\end{equation}
for $u\in \cC^\infty_0(D',T^{*0,q}X)$.
%and
%\begin{equation}\label{e-gue140215fIII}
%\begin{split}
%&B_k:\\
%&u\mapsto\int^\infty_0e^{i(y_{2n-1}+h_1(x,y'))t-iky_{2n-1}}\tau(x)a(x,y,t)\tau_1(y')\chi(y_{2n-1})u(y')dydt,\\
%&u\in \cC^\infty(D',T^{*0,q}X).
%\end{split}
%\end{equation}
By using the stationary phase formula of Melin-Sj\"ostrand~\cite{MS74}, we can show 
that (cf.\ the proof of~\cite[Theorem 3.12]{HM12}) $A_k$ and $B_k$ are smoothing operators and
\begin{equation}\label{e-gue140215fIV}
\begin{split}
&A_k(x,y')\equiv e^{ikh(x,y')}g(x,y',k)\mod O(k^{-\infty}),\\
&B_k(x,y')\equiv e^{ikh_1(x,y')}b(x,y',k)\mod O(k^{-\infty}),\\
&g(x,y',k), b(x,y',k)\in S^{n-1}_{{\rm loc\,},{\rm cl\,}}(1;D'\times D,T^{*0,q}X\boxtimes T^{*0,q}X),\\
&\mbox{$g(x,y',k)\sim\sum^\infty_{j=0}g_j(x,y')k^{n-1-j}$ in $S^{n-1}_{{\rm loc\,}}(1;D'\times D,T^{*0,q}X\boxtimes T^{*0,q}X)$},\\
&\mbox{$b(x,y',k)\sim\sum^\infty_{j=0}b_j(x,y')k^{n-1-j}$ in $S^{n-1}_{{\rm loc\,}}(1;D'\times D,T^{*0,q}X\boxtimes T^{*0,q}X)$},\\
&g_j(x,y'), b_j(x,y')\in \cC^\infty(D\times D',T^{*0,q}_{y'}X\boxtimes T^{*0,q}_xX),\ \ j=0,1,\ldots,\\
&g_0(x_0,x'_0)\neq0,\ \ b_0(x_0,x'_0)\neq0.
\end{split}
\end{equation}
Since
\[\int^\infty_0e^{i(y_{2n-1}+h(x,y'))t}s_-(x,y,t)dt-\int^\infty_0e^{i(y_{2n-1}+h_1(x,y'))t}a(x,y,t)dt\]
is smoothing, by using integration by parts with respect to $y_{2n-1}$, it is easy to see that
$A_k-B_k\equiv0\mod O(k^{-\infty})$ (see~\cite[Section 3]{HM12}). Thus,
\begin{equation}\label{e-gue140215fV}
\begin{split}
&e^{ikh(x,y')}g(x,y',k)=e^{ikh_1(x,y')}b(x,y',k)+F_k(x,y'),\\
&F_k(x,y')\equiv0\mod O(k^{-\infty}).
\end{split}
\end{equation}
Now, we are ready to prove that $h(x,y')-h_1(x,y')$ vanishes to infinite order at $(x_0,x'_0)$. We assume that there exist $\alpha_0\in\mathbb N^{2n-1}_0$, $\beta_0\in\mathbb N^{2n-2}_0$, $\abs{\alpha_0}+\abs{\beta_0}\geq1$ such that
\[\rabs{\pr^{\alpha_0}_x\pr^{\beta_0}_{y'}(ih(x,y')-ih_1(x,y'))}_{(x_0,x'_0)}=C_{\alpha_0,\beta_0}\neq0\]
and
\[\rabs{\pr^{\alpha}_x\pr^{\beta}_{y'}(ih(x,y')-ih_1(x,y'))}_{(x_0,x'_0)}=0\ \ \mbox{if $\abs{\alpha}+\abs{\beta}<\abs{\alpha_0}+\abs{\beta_0}$}.\]
From \eqref{e-gue140215fV}, we have
\begin{equation}\label{e-gue140215fVI}
\begin{split}
&\rabs{\pr^{\alpha_0}_x\pr^{\beta_0}_y\Bigr(e^{ikh(x,y')-ikh_1(x,y')}g(x,y',k)-b(x,y,k)\Bigr)}_{(x_0,x'_0)}\\
&=-\rabs{\pr^{\alpha_0}_x\pr^{\beta_0}_y\Bigr(e^{-ikh_1(x,y')}F_k(x,y)\Bigr)}_{(x_0,x'_0)}.
\end{split}
\end{equation}
Since $h_1(x_0,x'_0)=-x^{2n-1}_0$ and $F_k(x,y')\equiv0\mod O(k^{-\infty})$, we have
\begin{equation} \label{e-gue140215fVII}
\lim_{k\To\infty}k^{-n}\rabs{\pr^{\alpha_0}_x\pr^{\beta_0}_y\Bigr(e^{-ikh_1(x,y')}F_k(x,y')\Bigr)}_{(x_0,x_0)}=0.
\end{equation}
On the other hand, we can check that
\begin{equation} \label{e-gue140215fVIII}
\begin{split}
&\lim_{k\To\infty}k^{-n}
\rabs{\pr^{\alpha_0}_x\pr^{\beta_0}_y\Bigr(e^{ikh(x,y')-ikh_1(x,y')}g(x,y',k)- b(x,y',k)\Bigr)}_{(x_0,x'_0)}\\
&=C_{\alpha_0,\beta_0}g_0(x_0,x'_0)\neq0
\end{split}
\end{equation}
since $g_0(x_0,x'_0)\neq0$. From \eqref{e-gue140215fVI}, \eqref{e-gue140215fVII} and 
\eqref{e-gue140215fVIII}, we get a contradiction. Thus, $h(x,y')-h_1(x,y')$ vanishes to infinite order at $(x_0,x'_0)$. 
Since $x_0$ is arbitrary, the theorem follows.

\subsection*{Acknowledgements} We are 
most grateful to Louis Boutet de Monvel and Johannes Sj\"ostrand for their beautiful 
theory \cite{BouSj76} and many discussions about the Szeg\H{o} kernel over the years.

\end{document}